\newtheorem{theorem}{Theorem}
\newtheorem{acknowledgement}[theorem]{Acknowledgement}
\newtheorem{claim}[theorem]{Claim}
\newtheorem{corollary}[theorem]{Corollary}
\newtheorem{definition}[theorem]{Definition}
\newtheorem{lemma}[theorem]{Lemma}
\newtheorem{proposition}[theorem]{Proposition}
\newtheorem{remark}[theorem]{Remark}
\numberwithin{equation}{section}
\numberwithin{theorem}{section}
\newcommand \del {\partial}
\newcommand\ep {\varepsilon}
\begin{document}
\title{The Fokker-Planck equation with absorbing boundary conditions }
\author{Hyung Ju Hwang}
\address{Department of Mathematics, Pohang University of Science and
Technology, Pohang, GyungBuk 790-784, Republic of Korea}
\email{hjhwang@postech.ac.kr}
\author{Juhi Jang}
\address{Department of Mathematics, University of California, Riverside,
Riverside, CA 92521, USA}
\email{juhijang@math.ucr.edu}
\author{Juan J. L. Vel\'{a}zquez}
\address{Institute of Applied Mathematics, University of Bonn, Endenicher
Allee 60, 53115 Bonn, Germany.}
\email{velazquez@iam.uni-bonn.de}

\begin{abstract}
We study the initial-boundary value problem for the Fokker-Planck equation
in an interval with absorbing boundary conditions. We develop a theory of
well-posedness of classical solutions for the problem. We also prove that
the resulting solutions decay exponentially for long times. To prove these
results we obtain several crucial estimates, which include hypoellipticity
away from the singular set for the Fokker-Planck equation with absorbing
boundary conditions, as well as the H\"{o}lder continuity of the solutions
up to the singular set.
\end{abstract}

\maketitle




\section{Introduction}


We consider the initial boundary value problem for the following
Fokker-Planck equation in an interval $[0,1]$.
\begin{align}
f_{t}+vf_{x}& =f_{vv},  \label{VFP} \\
f\left( x,v,0\right) & =f_{0}\left( x,v\right) ,  \label{id} \\
f\left( 0,v,t\right) & =0,\text{ for }v>0,~t>0,  \label{BC0} \\
f\left( 1,v,t\right) & =0,~\text{for }v<0,~t>0,  \label{BC1}
\end{align}

\noindent where $f\left( x,v,t\right) \geq 0$ is the distribution of
particles at position $x$, velocity $v$, and time $t$ for $(x,v,t)\in \left[
0,1\right] \times \mathbb{R}\times \mathbb{R}_{+}$ and $f_{0}\left(
x,v\right) \geq 0$ the initial charge distribution.

The kinetic boundary condition given in \eqref{BC0}-(\ref{BC1}) is the
so-called absorbing boundary condition or absorbing barrier (cf. \cite{G1},
\cite{MW1}). If we interpret (\ref{VFP})-(\ref{BC1}) as the equation for the
density in the phase space of a system of particles, the meaning of the
boundary conditions \eqref{BC0}-(\ref{BC1}) is that the particles reaching
the boundary of the domain containing them, can escape but not re-enter it.

Equations with the form (\ref{VFP}) and boundary conditions like (\ref{BC0})
appear in the study of different problems of statistical physics. For
instance, they arise in the study of Brownian particles moving in bounded
domains (cf. \cite{MW1}), or in the study of the statistics of polymer
chains (cf. \cite{Bu}).

The Fokker-Planck operator is a well-known hypoelliptic operator. Diffusion
in $v$ together with the transport term $v\cdot \nabla _{x}$ has a
regularizing effect for solutions not only in $v$ but also in $t$ and $x$,
which can be obtained by applying H\"{o}rmander's commutator (cf. \cite{H})
to the linear Fokker-Planck operator. For more details, see \cite{B}. Note
that these results were obtained in the whole space without boundaries.

On the other hand, the Fokker-Planck operator is also known as a
hypocoercive operator, which concerns the rate of convergence to equilibria.
Indeed, the trend to equilibria with a certain rate has been investigated in
many papers (cf. \cite{DV}, \cite{HJ}, \cite{HN04}, \cite{V}) in the
Maxwellian regime and in the whole space or in the periodic box. For more
details, we refer to \cite{V}.

The hypoelliptic and hypocoercive property have also been explored for other
kinetic equations. Among others, we briefly review theories of existence,
regularity, and asymptotic behaviors for the Vlasov-Poisson-Fokker-Planck
system in the whole space, which is one of the important models in
mathematical physics and has been widely studied. Global existence of
classical solutions were studied in \cite{B2}, \cite{RW}, \cite{VO}.
Asymptotic behaviors and time decay of the solutions in the vacuum regime
were considered in \cite{Carpio98}, \cite{CSV}, \cite{OS}. We mention the
works in \cite{CS}, \cite{Vi}, where the global weak solutions were
constructed, and the work in \cite{B3}, where the smoothing effect was
observed.

Compared to the theory in the case of the whole space, little progress has
been made towards the boundary-value problems for these equations. In \cite%
{BCS}, \cite{Ca}, global weak solutions and asymptotic behaviors for the
Vlasov-Poisson-Fokker-Planck equations were studied in bounded domains with
absorbing and reflective type boundary conditions. In \cite{Mis}, a global
stability of DiPerna-Lions renormalized solutions to some kinetic equations
including the Vlasov-Fokker-Planck equation was studied under the Maxwell
boundary conditions.

However, to our knowledge, the hypoellipticity property for the
Fokker-Planck equation has not been studied in bounded domains other than
the periodic boundary condition, and no convergence rate for solutions of
the Fokker-Planck equation has been investigated for an interval in the
vacuum regime.

In this paper we develop a theory for classical solutions of \eqref{VFP}-(%
\ref{BC1}). We will also prove that the solutions of this problem vanish
exponentially fast as $t\rightarrow \infty .$

From the technical point of view, the main obstruction to develop a theory
for classical solutions of \eqref{VFP}-(\ref{BC1}) is the presence of the
so-called singular set. This set can be defined for some kinetic equations
(cf. \cite{G1}, \cite{G2}, \cite{HV}). In the case of \eqref{VFP}-(\ref{BC1}%
), the singular set reduces to the points $\left( x,v\right) \in \left\{
\left( 0,0\right) ,\left( 1,0\right) \right\} .$ The fact that the solutions
of kinetic equations cannot have arbitrary regularity near the singular set
was first noticed by Guo in the Vlasov-Poisson system (cf. \cite{G1}). In
this paper we will prove that the solutions of (\ref{VFP})-(\ref{BC1}) are
not $C^{\infty }$ in general near the singular set.

Notice that the equation (\ref{VFP}) contains the second derivative that
yields the regularizing effects only in the variable $v.$ On the other hand,
the presence of the transport term $vf_{x}$ has the following consequence
that the solutions of (\ref{VFP}) become $C^{\infty }$ for any $t>0$ in the
set $\left( \left[ 0,1\right] \times \mathbb{R}\right) \setminus \left\{
\left( 0,0\right) ,\left( 1,0\right) \right\} .$ This property is known as
hypoellipticity. However, such regularizing effects do not take place at the
singular set. Indeed, it turns out that there exist some explicit solutions
of (\ref{VFP}) with boundary conditions (\ref{BC0}), and it indicates the
maximum regularity that we can expect is $C_{x,v}^{1/6,1/2}.$

In order to prove the results of this paper we will use extensively maximum
principles and comparison arguments combined with suitable sub and
super-solutions. We will first construct a theory of weak solutions of (\ref%
{VFP})-(\ref{BC1}) by studying a regularized version of this problem
followed by a limit procedure. Using the maximum principle we will derive
suitable $L^{\infty }$ estimates for the corresponding weak solutions as
well as the uniqueness.

As a next step we will prove the hypoellipticity property for the solutions
of (\ref{VFP})-(\ref{BC1}) at the interior points of the domain $\left(
0,1\right) \times \mathbb{R}$ and also at the boundary points which do not
belong to the singular set. The proof of the hypoellipticity property for (%
\ref{VFP})-(\ref{BC1}) is classical at interior points. In order to prove it
at the boundary points we will use an integral representation formula for
the solutions of (\ref{VFP})-(\ref{BC1}) near the boundary points which do
not belong to the singular set.

We will then study the regularity of the solutions of (\ref{VFP})-(\ref{BC1}%
) near the singular set. This will be made using suitable sub and
super-solutions and comparison arguments. We will prove in this way that the
solutions $f\left( \cdot ,t\right) $ of (\ref{VFP})-(\ref{BC1}) belong to $%
C_{x,v}^{1/6-\varepsilon ,1/2-\varepsilon }$ for any $t>0,$ with $%
\varepsilon >0$ arbitrarily small.

We will also prove that the solutions of (\ref{VFP})-(\ref{BC1}) decay
exponentially fast as $t\rightarrow \infty .$ The main idea used in the
proof of this result is that, due to the hypoellipticity property, the
particle fluxes along the boundaries $\left\{ \left( 0,v\right) :v<0\right\}
\cup \left\{ \left( 1,v\right) :v>0\right\} $ are comparable to the total
number of particles at a given time (cf. Section 4). This implies the
exponential decay for the total number of particles of the system. However,
in order to make this argument precise a careful treatment is needed in
order to control the amount of mass near the singular set, because the
hypoellipticity property is not valid there. To control the mass in such
regions we will use again suitable sub and super-solutions.

\subsection{Main Results}

We first introduce notations for the domain and boundaries. Define
\begin{equation}
U_{T}=\Omega \times (0,T):=\{\left( x,v,t\right) \in (0,1)\times \left(
-\infty ,\infty \right) \times (0,T)\},  \label{Ut}
\end{equation}%
where $\Omega =(0,1)\times \left( -\infty ,\infty \right) .$

We also define the incoming, outgoing, and grazing kinetic boundary of $%
U_{T} $ as
\begin{eqnarray*}
\Gamma _{T}^{-} &:&=\Omega \times \left\{ t=0\right\} \cup \left\{
x=0\right\} \times (0,\infty )\times (0,T)\cup \left\{ x=1\right\} \times
(-\infty ,0)\times (0,T), \\
\Gamma _{T}^{+} &:&=\Omega \times \left\{ t=0\right\} \cup \left\{
x=0\right\} \times (-\infty ,0)\times (0,T)\cup \left\{ x=1\right\} \times
(0,\infty )\times (0,T), \\
\Gamma _{T}^{0} &:&=\Omega \times \left\{ t=0\right\} \cup \left\{
x=0\right\} \times \{v=0\}\times (0,T)\cup \left\{ x=1\right\} \times
\{v=0\}\times (0,T).
\end{eqnarray*}%
In addition, we define the incoming, outgoing, and grazing boundary of $%
U_{T} $ as%
\begin{eqnarray*}
\gamma _{T}^{-} &:&=\left\{ x=0\right\} \times (0,\infty )\times (0,T)\cup
\left\{ x=1\right\} \times (-\infty ,0)\times (0,T), \\
\gamma _{T}^{+} &:&=\left\{ x=0\right\} \times (-\infty ,0)\times (0,T)\cup
\left\{ x=1\right\} \times (0,\infty )\times (0,T), \\
\gamma _{T}^{0} &:&=\left\{ x=0\right\} \times \{v=0\}\times (0,T)\cup
\left\{ x=1\right\} \times \{v=0\}\times (0,T).
\end{eqnarray*}

We give a definition of a weak solution of (\ref{VFP})-(\ref{BC1}) in the
following.

\begin{definition}
\label{weak-solution}We say that $f\in L^{\infty }\left( \left[ 0,T\right]
;L^{1}\cap L^{\infty }\left( \Omega \right) \right) $ is a weak solution of (%
\ref{VFP})-(\ref{BC1}) if the function
\begin{equation*}
t \rightarrow \int f(x,v,t) \psi (x,v,t) dxdv
\end{equation*}
is continuous on $[0,T]$ for any test function $\psi \left( x,v,t\right) \in
C_{x,v,t}^{1,2,1}\left( U_{T}\right) $ such that supp$\left( \psi \left(
\cdot ,\cdot ,t\right) \right) \subset \left[ 0,1\right] \times \left[ -R,R%
\right] $ for some $R>0$ and if it satisfies for every $t\in \left[ 0,T%
\right] $ and any test function $\psi \left( x,v,s\right) \in
C_{x,v,s}^{1,2,1}\left( U_{t}\right) $ such that supp$\left( \psi \left(
\cdot ,\cdot ,s\right) \right) \subset \left[ 0,1\right] \times \left[ -R,R%
\right] $ for some $R>0$ and $\psi |_{\gamma _{t}^{+}}=0$,
\begin{align*}
& \int_{\Omega}f\left( x,v,t\right) \psi \left( x,v,t\right) dxdv-
\int_{\Omega}f\left( x,v,0\right) \psi \left( x,v,0\right) dxdv \\
& =\int_{U_t} f\left( x,v,s\right) \left[ \psi _{t}\left( x,v,s\right)
+v\psi _{x}\left( x,v,s\right) +\psi _{vv}\left( x,v,s\right) \right] dxdvds.
\end{align*}
\end{definition}

We are now ready to state our main results. The first result concerns the
existence of a unique weak solution of (\ref{VFP})-(\ref{BC1}).

\begin{theorem}
\label{MainTheorem1} Let $T>0$ and $f_{0}\in L^{1}\cap L^{\infty }\left(
\Omega \right) $ with $f_{0}\geq 0$ given. Then there exists a unique weak
solution $f\in L^{\infty }\left( [0,T];L^{1}\cap L^{\infty }(\Omega )\right)
$ with $f\geq 0$ of the Fokker-Planck equation with the absorbing boundary
condition (\ref{VFP})-(\ref{BC1}). Moreover, the weak solution $f(t)$
satisfies the following bounds.
\begin{equation*}
\Vert f(t)\Vert _{L^{\infty }\left( \Omega \right) }\leq \Vert f_{0}\Vert
_{L^{\infty }\left( \Omega \right) }\text{ and }\Vert f(t)\Vert
_{L^{1}\left( \Omega \right) }\leq \Vert f_{0}\Vert _{L^{1}\left( \Omega
\right) }
\end{equation*}%
for each $t\in \lbrack 0,T]$.
\end{theorem}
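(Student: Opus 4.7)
The plan is to follow a regularize-estimate-pass-to-limit scheme, with a duality argument for uniqueness. First I would mollify $f_{0}$ into a sequence of nonnegative $f_{0}^{\delta }\in C_{c}^{\infty }([0,1]\times \mathbb{R})$, supported in $v$ within $[-R,R]$, converging in $L^{1}\cap L^{p}$ for every $p<\infty $. On the bounded domain $\Omega _{R}:=(0,1)\times (-R,R)$ I would solve the uniformly parabolic regularized problem
\begin{equation*}
f_{t}^{\epsilon }+vf_{x}^{\epsilon }=f_{vv}^{\epsilon }+\epsilon f_{xx}^{\epsilon },\quad f^{\epsilon }|_{t=0}=f_{0}^{\delta },\quad f^{\epsilon }|_{\partial \Omega _{R}}=0,
\end{equation*}
for which standard linear parabolic theory provides a unique smooth classical solution. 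Nonnegativity and the uniform bound $0\leq f^{\epsilon }\leq \Vert f_{0}\Vert _{L^{\infty }}$ follow from the weak maximum principle, while integration over $\Omega _{R}$ together with Hopf's lemma (applied to the nonnegative $f^{\epsilon }$ vanishing on $\partial \Omega _{R}$) shows that the $v$- and $x$-flux boundary terms all have the correct sign, so that $\frac{d}{dt}\Vert f^{\epsilon }(t)\Vert _{L^{1}(\Omega _{R})}\leq 0$.

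By Banach--Alaoglu I extract a subsequential weak-$\ast $ limit $f\in L^{\infty }((0,T)\times \Omega )$ after sending $\delta ,\epsilon \to 0$ and $R\to \infty $ via a diagonal argument. Multiplying the regularized equation by an admissible test function $\psi $ as in Definition \ref{weak-solution} and integrating by parts produces boundary contributions at $x=0,1$ that vanish identically in the approximation (because $f^{\epsilon }=0$ there) and in the limit (because $f$ vanishes on $\gamma _{t}^{-}$ while $\psi |_{\gamma _{t}^{+}}=0$, so every term $v f \psi $ on $\{x=0,1\}$ is zero), plus an error $\epsilon \int f^{\epsilon }\psi _{xx}$ of size $O(\epsilon )$. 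Passing to the limit yields the desired weak identity; the $L^{1}$ and $L^{\infty }$ bounds for $f$ are inherited via weak-$\ast $ lower semicontinuity combined with $v$-truncations (since the constant function is not admissible), and continuity of $t\mapsto \int f\psi \, dx dv$ is immediate from the identity itself.

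For uniqueness, let $f_{1},f_{2}$ be two weak solutions with the same initial datum and set $g=f_{1}-f_{2}$. For arbitrary $\varphi \in C_{c}^{\infty }(\Omega \times (0,T))$, I would construct $\psi $ solving the backward adjoint problem
\begin{equation*}
\psi _{s}+v\psi _{x}+\psi _{vv}=-\varphi ,\quad \psi (\cdot ,\cdot ,T)=0,\quad \psi |_{\gamma _{T}^{+}}=0,
\end{equation*}
by repeating the regularization/compactness scheme above after the time reversal $s\mapsto T-s$, which maps $\gamma _{T}^{+}$ to an incoming boundary for the reversed Fokker--Planck equation. Using this $\psi $ in the weak identity for $g$ yields $\int_{0}^{T}\int_{\Omega }g\varphi \,dxdvds=0$ for every such $\varphi $, and hence $g\equiv 0$.

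The principal technical obstacle is the singular set $\{(0,0),(1,0)\}$, where hypoellipticity fails and neither $f^{\epsilon }$ nor the adjoint $\psi $ can be expected to be smooth up to the corner. For existence this is absorbed into the weak formulation, which requires only integrability together with vanishing conditions on the test function, not pointwise regularity at the singular set. For uniqueness, however, the dual $\psi $ must genuinely lie in $C_{x,v,t}^{1,2,1}$ with $\psi |_{\gamma _{T}^{+}}=0$, and it is at this stage that the corner singularity has to be handled carefully, typically by approximating $\psi $ by admissible test functions or by first working with $\varphi $ supported away from the singular set and then invoking a density argument. A secondary issue, unboundedness of $v$, is handled routinely by the $R$-truncation together with uniform weighted moment bounds.
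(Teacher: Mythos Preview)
Your overall scheme (regularize, obtain uniform $L^1$/$L^\infty$ bounds, pass to the limit, then prove uniqueness by duality) matches the paper's in spirit, but the concrete regularization is quite different, and that difference is not cosmetic. The paper does \emph{not} add artificial viscosity $\epsilon f_{xx}$; instead it replaces $f_{vv}$ by a bounded jump operator $Q^\varepsilon$ and simultaneously cuts off the transport coefficient $v$ near the singular set via $\beta_\varepsilon(v)+(v-\beta_\varepsilon(v))\eta_\varepsilon(x)$. This turns the approximate problem into a transport equation with a bounded integral operator, solvable by characteristics; the point is that the approximate problem has \emph{no} singular set (the characteristic speed vanishes near $(0,0)$ and $(1,0)$), so the \emph{adjoint} problem~(\ref{adjoint}) admits genuinely $C^\infty$ solutions (Lemma~\ref{psiExist}). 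Those smooth adjoint solutions are then admissible test functions in the weak formulation, and the maximum/minimum principles for weak solutions (Lemmas~\ref{maxPrinciple}, \ref{minicomparison}, and their analogues \ref{maxPrinciple copy(1)}, \ref{maxPrinciple copy(2)} for the limit problem) follow by a duality argument; uniqueness is then immediate from the linearity.

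Your proposal has two issues that the paper's choice of regularization is specifically engineered to avoid. First, in the passage to the limit the extra term is not ``$\epsilon\int f^\epsilon\psi_{xx}$'': test functions in Definition~\ref{weak-solution} are only $C^1$ in $x$, so after one integration by parts you are left with $-\epsilon\int f^\epsilon_x\psi_x$ plus a boundary contribution $\epsilon\int f^\epsilon_x\psi\big|_{x=0}^{x=1}$, and on $\gamma_T^-$ the test function $\psi$ need not vanish while $f^\epsilon_x$ is not uniformly controlled there. This can probably be repaired, but not as written. Second, and more seriously, your uniqueness argument requires the backward adjoint solution $\psi$ to lie in $C^{1,2,1}_{x,v,t}(U_T)$ so that it is an admissible test function; but $\psi$ solves the same kinetic Fokker--Planck problem (time-reversed), so it inherits exactly the corner singularity you flag, and the ``approximate $\psi$ by admissible test functions / density in $\varphi$'' fix you sketch is precisely the hard step, not a routine afterthought. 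The paper sidesteps this entirely: because the \emph{regularized} adjoint has no singular set, its solutions are smooth and can be plugged directly into the weak formulation to yield the comparison principle, from which uniqueness follows without ever needing regularity of the adjoint to the limiting equation.
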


The next results concern the regularity of weak solutions.

\begin{theorem}
\label{MainTheorem} Let $f\left( x,v,t\right) $ be the weak solution of (\ref%
{VFP})-(\ref{BC1}) with $f_{0}\in L^{1}\cap L^{\infty }\left( \Omega \right)
$ with $f_{0}\geq 0$. Then the following holds:

\begin{description}
\item[(i)] For each $t>0$, $f\in H^{k,m}_{loc}(\bar{\Omega}\setminus
\{(0,0),\left( 1,0\right) \}),$ where $H^{k,m}=H_{x,v}^{k,m}$ and for any $k$%
, $m\in \mathbb{N}$.

\item[(ii)] For all $t>0$, $f(x,v,t)$ is continuous in $\bar{\Omega}$ such
that $f(0,0,t)=f(1,0,t)=0$ for all $t>0$ and $\lim_{(x,v)\rightarrow
(0,0),(1,0)}f(x,v,t)=0$ for all $t>0$. In fact, $f$ is H\"{o}lder continuous
up to the singular set: $f\in C_{x,v}^{\alpha ,3\alpha }(\bar{\Omega})$ for
any number $0<\alpha <1/6$.
\end{description}
\end{theorem}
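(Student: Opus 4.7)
For part (i) I would combine interior hypoellipticity with a boundary integral representation. Writing \eqref{VFP} as $\partial_t + v\partial_x - \partial_v^2 = X_0 - X_1^2$ with $X_0 = \partial_t + v\partial_x$ and $X_1 = \partial_v$, the commutator $[X_1,X_0] = \partial_x$ supplies the missing direction, so H\"{o}rmander's theorem yields local $C^\infty$ (hence $H^{k,m}_{\mathrm{loc}}$ for every $k,m$) at any interior point with $x \in (0,1)$. At a boundary point $(0,v_0,t_0)$ with $v_0 \neq 0$ (symmetrically at $(1,v_0,t_0)$), characteristics cross transversally, and I would follow the strategy announced in the introduction: represent $f$ near such a point by a Duhamel formula built from the explicit Kolmogorov fundamental solution of \eqref{VFP}, fed by the interior trace on a thin transversal slice and by the vanishing data on the incoming lateral face. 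Since $v_0 \neq 0$, the kernel and the integration region are smooth in the relevant variables on a small neighborhood, and an elementary bootstrap in the representation formula delivers full Sobolev regularity up to the boundary away from $\{v = 0\}$.

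For part (ii), the only remaining issue is at the singular corners $(0,0)$ and $(1,0)$, which are symmetric under $(x,v) \mapsto (1-x,-v)$; I focus on $(0,0)$. The strategy is a comparison argument using stationary self-similar super-solutions adapted to the natural Kolmogorov scaling $(x,v,t) \mapsto (\lambda^3 x, \lambda v, \lambda^2 t)$, which already imposes the $1{:}3$ ratio between the $x$- and $v$-H\"{o}lder exponents. I would look for $\Phi(x,v) = x^{\alpha} F(\eta)$ with $\eta = v x^{-1/3}$; a direct calculation gives
\[
v\Phi_x - \Phi_{vv} = x^{\alpha - 2/3}\bigl[\alpha\,\eta\,F(\eta) - \tfrac{1}{3}\eta^{2} F'(\eta) - F''(\eta)\bigr],
\]
so $\Phi$ is a stationary super-solution of \eqref{VFP} exactly when the bracket is nonnegative. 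I would then construct a smooth, strictly positive profile $F$ on $\mathbb{R}$ satisfying this differential inequality and growing like $|\eta|^{3\alpha}$ as $\eta \to \pm\infty$, so that $\Phi(x,v) \asymp x^\alpha + |v|^{3\alpha}$ near the corner while $\Phi \to 0$ at $(0,0)$. A careful asymptotic analysis of the ODE --- paying particular attention to the side $\eta > 0$, where the first-order term $\tfrac{1}{3}\eta^2 F'$ transports toward the corner and where the vanishing data $f = 0$ on $\{x=0,\,v>0\}$ imposes the sharpest constraint --- identifies the admissible range as $\alpha < 1/6$, in agreement with the critical regularity indicated by the explicit self-similar solutions mentioned in the introduction.

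With $\Phi$ in hand, I would close the argument on a thin cylinder $Q_\delta = \{0 \leq x \leq \delta,\, |v| \leq \delta^{1/3}\} \times [t_0/2, T]$ around the singular edge. Theorem~\ref{MainTheorem1} gives $0 \leq f \leq \|f_0\|_{L^\infty}$, while part (i) furnishes smooth, bounded estimates of $f$ on the portion of the parabolic boundary of $Q_\delta$ bounded away from $(0,0)$. Together with the incoming condition $f = 0$ on $\{x = 0,\, v>0\}$ and the positivity of $\Phi$ elsewhere, this yields $f \leq M\Phi$ on the parabolic boundary of $Q_\delta$ for $M$ sufficiently large; the degenerate parabolic maximum principle for $\partial_t + v\partial_x - \partial_v^2$ then propagates the inequality inside, so $f(x,v,t) \leq C(t_0)(x^\alpha + |v|^{3\alpha})$ in a neighborhood of $(0,0)$, and in particular $f(0,0,t) = 0$. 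Interpolating this anisotropic vanishing bound with the interior regularity of (i) upgrades it to the global statement $f \in C^{\alpha,3\alpha}_{x,v}(\bar\Omega)$ claimed in (ii).

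The main obstacle is the construction of the profile $F$: one needs a globally positive, smooth function on $\mathbb{R}$ satisfying the differential inequality with prescribed polynomial growth on both sides, and it is the matching between the $\eta > 0$ and $\eta < 0$ branches --- forced by the asymmetric boundary condition at $x = 0$ --- that both pins down the critical value $\alpha = 1/6$ and requires the most delicate estimates. A secondary technical point is justifying the degenerate maximum principle on the truncated cylinder $Q_\delta$ with artificial lateral faces at $|v| = \delta^{1/3}$; this can be absorbed into the constant $M$ using the global $L^\infty$ bound from Theorem~\ref{MainTheorem1}.
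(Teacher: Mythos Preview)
Your overall architecture matches the paper's closely: interior hypoellipticity plus a boundary representation for (i), and a self-similar barrier plus comparison for (ii). Your ODE computation for $\Phi(x,v)=x^{\alpha}F(vx^{-1/3})$ is correct, and the paper indeed identifies the threshold $\alpha<1/6$ via exactly the two-sided asymptotic matching you anticipate (it does so by writing the profile explicitly as the Tricomi function $U(-\alpha,\tfrac{2}{3},-\zeta^{3})$ and computing $K_{+}=2\cos(\pi(\alpha+\tfrac{1}{3}))$, which is positive precisely for $\alpha<1/6$).

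There is, however, a genuine gap in your comparison step. Your barrier $\Phi$ is \emph{stationary} and satisfies $\Phi(0,0)=0$. On the initial face $\{t=t_{0}/2\}\cap Q_{\delta}$ you only know $0\le f\le \|f_{0}\|_{\infty}$; near the corner $(0,0)$ the right-hand side of $f\le M\Phi$ tends to zero while the left-hand side need not, so no finite $M$ closes the inequality unless you already know $f(0,0,t_{0}/2)=0$ --- which is exactly what you are trying to prove. The ``secondary technical point'' you flag (the artificial faces at $|v|=\delta^{1/3}$) is not the obstruction; the corner on the time slice is.

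The paper repairs this with two extra ingredients you are missing. First, it upgrades the stationary profile to a \emph{time-dependent} self-similar super-solution
\[
\hat f_{0}(x,v,t)=\min\bigl\{K\,Z_{0}\bigl(x/t^{3/2},\,v/t^{1/2}\bigr),\,1\bigr\},
\]
which equals $1$ at $t=0^{+}$ and hence dominates $f_{0}/\|f_{0}\|_{\infty}$ globally, so the comparison can start from $t=0$ rather than from a positive time where nothing is yet known at the corner. Second, to run the weak maximum principle rigorously (the solution is only known to be weak), the paper adds \emph{singular} stationary solutions $\varepsilon f_{k}^{*}$ that blow up at $(0,0)$ and $(1,0)$; these force the comparison function to dominate $f$ in any neighborhood of the singular set, one then cuts off near the corners, lets the cut-off radius go to zero, and finally sends $\varepsilon\to 0$. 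Without at least one of these devices your argument is circular.

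A smaller remark on (i): for a solution known only in the weak sense, writing a Duhamel representation for $f$ itself near the boundary presupposes enough regularity to define traces. The paper sidesteps this by duality: it solves the \emph{adjoint} problem near $(0,v_{0})$ with the integral representation (this is where the boundary-layer computation with the Kolmogorov kernel actually lives), uses the resulting smooth $\phi$ as a test function against the weak solution $f$, and reads off $H^{k,m}$ regularity by duality. Your bootstrap idea is morally the same, but the duality formulation is what makes it rigorous at the level of weak solutions.
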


\bigskip

The last main theorem shows the exponential trend of the solution to $0$ in $%
L^{1}$ and $L^{\infty}$ sense:

\begin{theorem}
Let $f_{0}\left( x,v\right) \in L^{1}\cap L^{\infty }\left( \Omega \right) $
with $f_{0}\geq 0$ and let $f\left( x,v,t\right) $ be a solution to (\ref%
{VFP})-(\ref{BC1}). Then the following holds.

\begin{description}
\item[(i)] $f$ decays exponentially in time in $L^{1}\left( \Omega \right) .$
In particular, there exists $\kappa >0$ such that%
\begin{equation*}
\left\Vert f\left( t\right) \right\Vert _{L^{1}\left( \Omega \right) }\leq
\left\Vert f_{0}\right\Vert _{L^{1}\left( \Omega \right) }\exp \left(
-\kappa t\right) .
\end{equation*}

\item[(ii)] $f$ decays exponentially in time in $L^{\infty }\left( \Omega
\right) .$ In particular, there exist $\kappa >0$ and $C>0$ such that%
\begin{equation*}
\left\Vert f\left( t\right) \right\Vert _{L^{\infty }\left( \Omega \right)
}\leq C\exp \left( -\kappa t\right) ,
\end{equation*}%
where $C$ depends on $\left\Vert f_{0}\right\Vert _{L^{1}\left( \Omega
\right) }$ and $\left\Vert f_{0}\right\Vert _{L^{\infty }\left( \Omega
\right) }$.
\end{description}
\end{theorem}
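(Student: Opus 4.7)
The overall plan is to prove (i) via a mass--flux balance combined with a lower bound on the outgoing flux, and then to deduce (ii) from (i) by coupling parabolic smoothing away from the singular set with super-solutions near it.

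\emph{Step 1 (mass balance).} Using the regularity of $f(\cdot,t)$ away from the singular set provided by Theorem \ref{MainTheorem}, I multiply (\ref{VFP}) by $1$, integrate over $\Omega$, and use (\ref{BC0})--(\ref{BC1}) to obtain, for $0\le s<t$,
\[
\|f(t)\|_{L^1(\Omega)}=\|f(s)\|_{L^1(\Omega)}-\int_s^t J(\tau)\,d\tau,
\]
with outgoing flux
\[
J(\tau)=\int_0^\infty v\,f(1,v,\tau)\,dv+\int_{-\infty}^0|v|\,f(0,v,\tau)\,dv\ge 0.
\]
Thus $\|f(t)\|_{L^1}$ is nonincreasing, and (i) reduces to a uniform one-step contraction
\[
\|f(t+\tau_0)\|_{L^1(\Omega)}\le(1-\delta)\|f(t)\|_{L^1(\Omega)},
\]
with $\tau_0,\delta>0$ independent of $t$; iterating gives $\kappa=\tau_0^{-1}\log(1-\delta)^{-1}$.

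\emph{Step 2 (the contraction).} The main obstruction is the singular set $\{(0,0),(1,0)\}$, where the hypoellipticity of Theorem \ref{MainTheorem}(i) fails and mass can sit with arbitrarily small flux. I would fix a small $\eta>0$ and decompose $\Omega=A_\eta\cup B_\eta$, with $B_\eta$ the union of the $\eta$-balls around the two singular points. On $A_\eta$, I combine the hypoelliptic regularity of Theorem \ref{MainTheorem}(i) with the boundary integral representation used in its proof---or, alternatively, with a direct pointwise subsolution construction for (\ref{VFP})--(\ref{BC1}) on compact subsets of $\bar\Omega\setminus\{(0,0),(1,0)\}$---to obtain a bound of the form
\[
\int_t^{t+\tau_0}J(\tau)\,d\tau\;\ge\;c(\eta)\int_{A_\eta}f(x,v,t)\,dx\,dv,
\]
expressing that any particle starting away from the singular set exits through $\gamma^+$ within bounded time. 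On $B_\eta$, I would dominate $f$ pointwise by a family of self-similar super-solutions of the type underlying the critical $C^{1/6,1/2}$ regularity mentioned in the introduction, obtaining $\int_{B_\eta}f(x,v,t)\,dx\,dv\le C\|f_0\|_{L^\infty}\,\eta^{2+\alpha}$ for some $\alpha>0$. Choosing $\eta$ small so that the $B_\eta$-mass is at most a small fraction of the total, then $\tau_0$ large so that $c(\eta)\tau_0\ge\tfrac12$, closes the step. The hard part will be making this choice uniform as the iteration runs: since $\|f(t)\|_{L^1}$ shrinks exponentially while $\|f(t)\|_{L^\infty}\le\|f_0\|_{L^\infty}$ need not, the ratio of $B_\eta$-mass to total mass a priori grows. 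I would handle this by proving the contraction once at $t=0$ and then reapplying it to the shifted solutions $f(\cdot,\cdot,k\tau_0)$, which enjoy the same $L^\infty$ bound through Theorem \ref{MainTheorem1}.

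\emph{Step 3 (deducing (ii)).} For $t\in[0,1]$, the max-principle bound $\|f(t)\|_{L^\infty}\le\|f_0\|_{L^\infty}$ of Theorem \ref{MainTheorem1} gives the desired control. For $t\ge 1$, I would use the integral representation underlying Theorem \ref{MainTheorem}(i) to obtain an $L^1\to L^\infty$ smoothing estimate on every compact $K\subset\bar\Omega\setminus\{(0,0),(1,0)\}$,
\[
\|f(t)\|_{L^\infty(K)}\le C_K\|f(t-1)\|_{L^1(\Omega)},
\]
which combined with (i) yields $\|f(t)\|_{L^\infty(K)}\le C'_K e^{-\kappa t}\|f_0\|_{L^1}$. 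On a small neighborhood $B_\eta$ of the singular set, I would dominate $f$ by a time-shifted super-solution of the type used in proving Theorem \ref{MainTheorem}(ii), with amplitude controlled by an interpolation $\|f(t-1)\|_{L^1}^\theta\|f_0\|_{L^\infty}^{1-\theta}$; this transfers the $L^1$ decay to an $L^\infty$ smallness on $B_\eta$, possibly at the cost of a slightly smaller exponential rate $\kappa'\in(0,\kappa)$. Since the resulting estimates are uniform in $\eta$ once $\eta$ is fixed small, this closes (ii), with $C$ depending on $\|f_0\|_{L^1}$ through (i) and on $\|f_0\|_{L^\infty}$ through the super-solution bound.
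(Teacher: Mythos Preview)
Your Step~1 and the overall architecture are fine, and Step~3 is close in spirit to the paper. The genuine gap is in Step~2, precisely at the point you yourself flag as ``the hard part.''

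Your proposed resolution---``proving the contraction once at $t=0$ and then reapplying it to the shifted solutions $f(\cdot,\cdot,k\tau_0)$, which enjoy the same $L^\infty$ bound''---does not close. The contraction you set up requires
\[
\int_{B_\eta} f(x,v,t)\,dx\,dv \;\le\; C\,\|f(t)\|_{L^\infty}\,\eta^{2+\alpha} \;\le\; \tfrac12\,M(t),
\]
so that at least half the mass sits in $A_\eta$ and can be fed into the flux lower bound. But at step $k$ you only know $\|f(k\tau_0)\|_{L^\infty}\le\|f_0\|_{L^\infty}$, while $M(k\tau_0)\le(1-\delta)^k M(0)$. Hence the admissible $\eta$ must shrink like $(1-\delta)^{k/(2+\alpha)}$, which forces $c(\eta)\to 0$ and $\tau_0\to\infty$; no uniform one-step contraction of $M$ alone is available. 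The circularity is that you bound the singular-set mass by the global $L^\infty$ norm, and the global $L^\infty$ norm will not decay until you already have $L^1$ decay.

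The paper breaks the circularity by tracking \emph{two} quantities simultaneously: $M_n=M(n)$ and $z_n=\zeta_s(n):=\|f(\cdot,n)\|_{L^\infty(S)}$ on a fixed neighborhood $S$ of the singular set. Two estimates replace your one-step contraction:
\begin{itemize}
\item[(a)] Self-similar super-solutions (your Step~2 ingredient, but applied over a unit time interval rather than from $t=0$) give $\zeta_s(n+1)\le\theta\,\|f(\cdot,n)\|_{L^\infty}$, and hypoellipticity gives $\|f(\cdot,n)\|_{L^\infty(\Omega\setminus S)}\le C_s\,M(n-1)$; together,
\[
\zeta_s(n+1)\le\theta\max\{\zeta_s(n),\,C_s M(n-1)\}.
\]
\item[(b)] If $\zeta_s(n)<A\,M(n-1)$ for a large fixed $A$, then a definite fraction of the mass lies away from $S$, and an escape-to-boundary sub-solution argument yields $M(n+T)\le\beta\,M(n-1)$ for some fixed $T$ and $\beta<1$.
\end{itemize}
These feed into an abstract coupled-sequence lemma (Lemma~4.1) which outputs exponential decay of both $M_n$ and $\zeta_s(n)$ at once. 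The key point you are missing is (a): the sup near the singular set at time $n+1$ is controlled by the sup at time $n$ (with a factor $\theta<1$), not merely by $\|f_0\|_{L^\infty}$. This is what allows $\zeta_s$ to decay in tandem with $M$ rather than staying frozen.

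Once both $M_n$ and $\zeta_s(n)$ decay exponentially, Part~(ii) is immediate from $\|f(n)\|_{L^\infty}\le\max\{\zeta_s(n),\,C_s M(n-1)\}$; no interpolation of the form $\|f\|_{L^1}^\theta\|f_0\|_{L^\infty}^{1-\theta}$ is needed.
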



\

Some of the first related results for the problem were obtained in \cite{McK}%
, where the probability distribution for the velocity with which an
accelerated Brownian particle -- a Brownian particle, i.e. a particle whose
paths are the ones associated to the Orstein-Uhlenbeck process -- exits from
the domain $\left\{ x>0\right\} $ is computed. A consequence of that formula
is the following: the probability that a particle leaves the origin with
some positive velocity $v_{0}$ at time $t=0$ but has not left the domain $%
\left\{ x>0\right\} $ at time $t$, decreases as $t^{-{1}/{4}}.$
This power law in the problem which can be considered as a discrete analogue
of the accelerated random walk problem and its derivation can be found in
\cite{Si}.

The Laplace transform of propagators of the Orstein-Uhlenbeck process in a
half-line with absorbing boundary conditions at $x=0$ was computed in \cite%
{MW1} by using the Wiener-Hopf methods. This yields equations similar to (%
\ref{VFP}) but with an additional friction term. In the limit case in which
the friction coefficient tends to zero, the formulas in \cite{MW1} reduce to
the ones in \cite{McK}. The exit time of the Orstein-Uhlenbeck process in a
suitable asymptotic limit was considered in \cite{HDL}.

The power law decay of the solutions of the equation (\ref{VFP}) obtained by
means of the explicit formulas mentioned above is in contrast with the
exponential decay which we will derive in this paper for the case of
solutions in the interval $0<x<1.$ We remark that the mean exit time for an
accelerated Brownian particle in an interval $0<x<1$ was obtained in \cite%
{MP}.

The approach of the references above that concerned about the asymptotics of
the solutions of (\ref{VFP}) in the half-line or an interval is based on
explicit or semi-explicit representation formulas for the derived
quantities. The approach of this paper relies more on PDE arguments, like
mass balance equations and maximum principle arguments, applied to arbitrary
initial distributions and hopefully can be applied to more general cases.

The paper proceeds as follows. In Section 2 we develop a theory of the
existence and the uniqueness of weak solutions for (\ref{VFP})-(\ref{BC1}).
Section 3 contains a regularity theory which allows us to prove that the
solutions are $C^{\infty }$ outside the singular set and have suitable H\"{o}%
lder estimates near the singular set. Section 4 proves that every solution
of (\ref{VFP})-(\ref{BC1}) decreases exponentially in $L^{1}$ and $L^{\infty
}$ sense as $t\rightarrow \infty .$


\section{Weak solutions in an interval}


The goal of this section is to construct a weak solution to \eqref{VFP}-%
\eqref{BC1} for a given bounded and integrable initial data $f_{0}\in
L^{1}\cap L^{\infty }\left( \Omega \right) $ with $f_{0}\geq 0$.


\subsection{Approximation}


The first step for constructing a weak solution is to regularize the
equation \eqref{VFP}, in particular the transport term $v\del_{x}f$ near the
grazing boundary set $(x,v)\in\{(0,0),\left( 1,0\right) \}$. This will be
achieved by approximating it with cut-off functions. Define $\beta
_{\varepsilon }\left( v\right) \in C^{\infty }\left( -\infty ,\infty \right)
$ and $\eta _{\varepsilon }\left( x\right) \in C^{\infty }\left( 0,1 \right)
$ as follows.

\begin{equation*}
\beta _{\varepsilon }\left( v\right) =\left\{
\begin{array}{c}
0,~~~\left\vert v\right\vert <\varepsilon ^{2} \\
\in \left[ -v,v\right] ,~~~\varepsilon ^{2}\leq \left\vert v\right\vert \leq
2\varepsilon ^{2} \\
v,~~~\left\vert v\right\vert >2\varepsilon ^{2}%
\end{array}%
\right.
\end{equation*}%
\begin{equation*}
\eta _{\varepsilon }\left( x\right) =\left\{
\begin{array}{c}
0,~~~0\leq x<\varepsilon ,~1-\varepsilon <x\leq 1 \\
\in \left[ 0,1\right] ,~~~\varepsilon \leq x\leq 2\varepsilon
,~1-2\varepsilon \leq x\leq 1-\varepsilon \\
1,~~~\ \ \ \ \ \ \ \ \ \ \ 2\varepsilon <x<1-2\varepsilon .%
\end{array}%
\right.
\end{equation*}%
We will also approximate the diffusion term $f_{vv}$ by choosing a cut-off
function $\xi \left( \zeta \right) \in C_{c}^{\infty }\left( -\infty ,\infty
\right) $ such that

\begin{equation*}
\int_{-\infty }^{\infty }\xi \left( \zeta \right) ~d\zeta =1,~\int_{-\infty
}^{\infty }\zeta \xi \left( \zeta \right) ~d\zeta =0,~\int_{-\infty
}^{\infty }\zeta ^{2}\xi \left( \zeta \right) ~d\zeta =1.
\end{equation*}%
Then $f_{vv}$ can be approximated as

\begin{equation*}
Q^{\varepsilon }\left[ f\right] \left( x,v,t\right) :=\frac{2}{\varepsilon
^{2}}\int_{-\infty }^{\infty }\left[ f\left( x,v+\varepsilon \zeta ,t\right)
-f\left( x,v,t\right) \right] \xi \left( \zeta \right) d\zeta .
\end{equation*}%
From the Taylor's Theorem, we see that $Q^{\ep}[f]\rightarrow f_{vv}$ as $\ep%
\rightarrow 0$ at least formally.


We consider the following approximate equation.%
\begin{equation}
f_{t}^{\varepsilon }+\left[ \beta _{\varepsilon }\left( v\right) +\left(
v-\beta _{\varepsilon }\left( v\right) \right) \eta _{\varepsilon }\left(
x\right) \right] f_{x}^{\varepsilon }=Q^{\varepsilon }\left[ f^{\varepsilon }%
\right]  \label{VFP_A}
\end{equation}%
with the same initial and boundary conditions:
\begin{equation}
f^{\varepsilon }|_{t=0}=f_{0},\quad f^{\ep}|_{\gamma _{T}^{-}}=0.
\label{VFP_ABC}
\end{equation}%
The approximate equation \eqref{VFP_A} is essentially a transport equation
combined with the jump process $Q^{\varepsilon },$ where the transport term
is truncated in the small neighborhood of the grazing set whose area is of $%
O(\ep^{3})$. We first study the regularized version (\ref{VFP_A}) of (\ref%
{VFP}) by using the method of characteristics and prove its well-posedness
by exploiting a weak maximum principle.

The corresponding equation of characteristics to (\ref{VFP_A}) reads as
follows.
\begin{eqnarray}
\frac{dX\left( s;x,v,t\right) }{ds} &=&\beta _{\varepsilon }\left( v\right)
+\left( v-\beta _{\varepsilon }\left( v\right) \right) \eta _{\varepsilon
}\left( X\left( s\right) \right) ,\ V\left( s;x,v,t\right) =v,~s<t~,
\label{char} \\
X\left( t;x,v,t\right) &=&x.  \notag
\end{eqnarray}%
For simplicity, we will use $X\left( s\right) $ and $V\left( s\right) $
instead of $X\left( s;x,v,t\right) $ and $V\left( s;x,v,t\right) $
respectively. Due to the cut-off functions and the absorbing boundary
condition, it is not trivial to write the backward characteristics
explicitly. To get around it, for a given $\left( x,v,t\right) ,$ we define $%
0<t_{0}\leq t$ if there exists $t_{0}=t_{0}\left( x,v,t\right) >0$ satisfying%
\begin{eqnarray*}
x &=&\int_{t_{0}}^{t}\left[ \beta _{\varepsilon }\left( v\right) +\left(
v-\beta _{\varepsilon }\left( v\right) \right) \eta _{\varepsilon }\left(
X\left( s\right) \right) \right] ds,\text{ or } \\
x &=&1+\int_{t_{0}}^{t}\left[ \beta _{\varepsilon }\left( v\right) +\left(
v-\beta _{\varepsilon }\left( v\right) \right) \eta _{\varepsilon }\left(
X\left( s\right) \right) \right] ds,
\end{eqnarray*}%
otherwise, set $t_{0}=0.$

We first calculate the Jacobian $J\left( s;t\right) ,$ for $t_{0}<s<t,$ of
the transformation%
\begin{equation*}
\left( x,v\right) \rightarrow \left( X\left( s\right) ,v\right) .
\end{equation*}%
Note that $J\left( s;t\right) \ $measures the change rate of the unit volume
in the phase space along the characteristics as follows. Let%
\begin{equation*}
J\left( s;t\right) :=\det \left(
\begin{array}{cc}
\frac{\partial X\left( s\right) }{\partial x} & \frac{\partial X\left(
s\right) }{\partial v} \\
0 & 1%
\end{array}%
\right) =\frac{\partial X\left( s\right) }{\partial x},
\end{equation*}%
where $X\left( s\right) $ is the characteristics defined in (\ref{char}).
Then the following lemma holds.

\begin{lemma}
\label{Jacobian}For $0\leq t_{0}<s<t\leq T$ with $T>0$ given, we have the
following estimates%
\begin{eqnarray}
1-O\left( \varepsilon \right) \leq \left\vert J\left( s;t\right) \right\vert
&=&\left\vert \frac{\partial X\left( s\right) }{\partial x}\right\vert \leq
1+O\left( \varepsilon \right) ,~  \label{Jacobian1} \\
1-O\left( \varepsilon \right) \leq \left\vert J\left( t;s\right) \right\vert
&=&\left\vert J\left( s;t\right) ^{-1}\right\vert \leq 1+O\left( \varepsilon
\right) ,  \label{Jacobian2}
\end{eqnarray}%
where $O\left( \varepsilon \right) =\varepsilon CTe^{\varepsilon CT}$.
\end{lemma}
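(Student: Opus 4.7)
The plan is to derive a linear ODE for the sensitivity $\partial X(s)/\partial x$ along the characteristics and then apply a Gronwall-type estimate exploiting the fact that the truncated drift $\beta_\ep(v) + (v-\beta_\ep(v))\eta_\ep(x)$ differs from $v$ only in a set where both factors $(v-\beta_\ep(v))$ and $\eta_\ep'$ are well controlled in terms of $\ep$.

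First, I would differentiate the characteristic equation \eqref{char} in $x$, keeping $v$ fixed, to obtain the linear variational equation
\begin{equation*}
\frac{d}{ds}\left(\frac{\partial X(s)}{\partial x}\right) = \bigl(v-\beta_\ep(v)\bigr)\eta_\ep'\bigl(X(s)\bigr)\,\frac{\partial X(s)}{\partial x},\qquad \frac{\partial X(t)}{\partial x}=1,
\end{equation*}
on the interval $s\in(t_0,t)$ where the forward characteristic is defined without hitting the boundary. Note that $\beta_\ep$ is independent of $x$ and that $\eta_\ep$ depends only on $X$, so the inhomogeneous piece of the usual variational equation drops out, and we get a purely linear ODE with a scalar coefficient.

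Next, I would estimate the coefficient $a(s):=(v-\beta_\ep(v))\eta_\ep'(X(s))$ uniformly in $s$. By construction of $\beta_\ep$, the factor $v-\beta_\ep(v)$ vanishes for $|v|>2\ep^2$ and is bounded by $C\ep^2$ otherwise, so $|v-\beta_\ep(v)|\le C\ep^2$ throughout. Meanwhile, $\eta_\ep$ makes its transition over an interval of length $\ep$, giving $\|\eta_\ep'\|_\infty \le C/\ep$. Therefore $|a(s)|\le C\ep$ uniformly, with $C$ independent of $s,t,x,v$.

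Then I would integrate the variational equation backwards from $s=t$ and apply Gronwall to obtain
\begin{equation*}
\exp\!\left(-\int_s^t |a(\tau)|\,d\tau\right)\le \left|\frac{\partial X(s)}{\partial x}\right|\le \exp\!\left(\int_s^t |a(\tau)|\,d\tau\right)\le e^{C\ep T}.
\end{equation*}
Using the elementary inequalities $e^{y}\le 1+ye^{y}$ and $e^{-y}\ge 1-ye^{y}$ for $y\ge 0$ with $y=C\ep T$ converts these bounds into the stated form \eqref{Jacobian1} with $O(\ep)=\ep CT e^{\ep CT}$. The estimate \eqref{Jacobian2} for $J(t;s)=J(s;t)^{-1}$ follows immediately by taking reciprocals and using the same elementary inequalities.

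The only real subtlety is a bookkeeping one: I have to check that $\partial X(s)/\partial x$ makes sense for $s\in(t_0,t)$ when $t_0>0$, i.e.\ when the backward characteristic meets the spatial boundary. Since by definition $t_0(x,v,t)$ is exactly the (backward) exit time from $[0,1]$, for $s>t_0$ the characteristic stays strictly inside $(0,1)$ and the ODE for $X$ and for $\partial X/\partial x$ is classically solvable, so the Gronwall argument applies on the full interval $(t_0,t)$ without modification. No other genuine obstacle arises; the result is really a statement that truncating the drift on a set of measure $O(\ep^3)$ produces only an $O(\ep)$ shear of the flow on bounded time intervals.
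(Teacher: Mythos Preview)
Your proof is correct and follows essentially the same approach as the paper: differentiate the characteristic equation in $x$, bound the coefficient $(v-\beta_\ep(v))\eta_\ep'(X)$ by $C\ep$ using the cutoff scalings, and apply Gronwall. The only cosmetic difference is in the final step, where the paper substitutes the Gronwall bound $|\partial X/\partial x|\le e^{\ep CT}$ back into the integral inequality to produce $1\pm \ep CT e^{\ep CT}$, whereas you use the elementary inequalities $e^{\pm y}\lessgtr 1\pm ye^{y}$ directly; both arrive at the same constant.
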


\begin{proof}
We integrate (\ref{char}) over time from $t$ to $s$ and differentiate the
resulting equation with respect to $x$ to get%
\begin{equation*}
\frac{\partial X\left( s\right) }{\partial x}=1+\left( v-\beta _{\varepsilon
}\left( v\right) \right) \int_{t}^{s}\eta _{\varepsilon }^{\prime }\left(
X\left( \tau \right) \right) \frac{\partial X\left( \tau \right) }{\partial x%
}d\tau .
\end{equation*}

Then using the definitions of the cut-off functions $(v-\beta _{\varepsilon
}\left( v\right) )=O\left( \varepsilon ^{2}\right) ,\eta _{\varepsilon
}^{\prime }\left( X\left( \tau \right) \right) =O\left( 1/\varepsilon
\right) $ yields%
\begin{equation}
1-C\varepsilon \int_{s}^{t}\left\vert \frac{\partial X\left( \tau \right) }{%
\partial x}\right\vert d\tau \leq \left\vert \frac{\partial X\left( s\right)
}{\partial x}\right\vert \leq 1+C\varepsilon \int_{s}^{t}\left\vert \frac{%
\partial X\left( \tau \right) }{\partial x}\right\vert d\tau  \label{dX}
\end{equation}%
for some constant $C>0$. We now apply a standard Gronwall's inequality to get%
\begin{equation*}
e^{-\varepsilon C|t-s|}\leq \left\vert \frac{\partial X\left( s\right) }{%
\partial x}\right\vert \leq e^{\varepsilon C|t-s|}.
\end{equation*}

Since $\left\vert \frac{\partial X\left( s\right) }{\partial x}\right\vert
\leq e^{\varepsilon CT}$ for all $s<t\leq T,$ using this in (\ref{dX}) leads
to (\ref{Jacobian1})-(\ref{Jacobian2}).
\end{proof}

We now introduce the standard notion of a mild solution to \eqref{VFP_A}-%
\eqref{VFP_ABC}.

\begin{definition}
We say that $F\in C\left( \left[ 0,T\right] ;L^{1}\left( \Omega \right)
\right) \cap L^\infty \left( \left[ 0,T \right] ;L^{\infty }\left( \Omega
\right) \right) $ is a mild solution of (\ref{VFP_A})-(\ref{VFP_ABC}) if it
satisfies for every $t\in \left[ 0,T\right],$
\begin{equation}
F\left( x,v,t\right) =\bar{f}_{0}\left( X\left( t_{0}\right) ,v\right)
+\int_{t_{0}}^{t}Q^{\varepsilon }\left[ F\right] \left( X\left( s\right)
,v\right) ~ds=:\mathcal{T}\left[ F\right] \left( x,v,t\right) ,  \label{mild}
\end{equation}
where $\bar{f}_{0}\left( X\left( t_{0}\right) ,v\right) = f_0 \left( X\left(
{0}\right) ,v\right) $ if $t_{0}=0,$ otherwise $\bar{f}_{0}\left( X\left(
t_{0}\right) ,v\right) =0.$
\end{definition}

We show in the following lemma the existence and the uniqueness of a mild
solution (\ref{mild}) of (\ref{VFP_A})-(\ref{VFP_ABC}).

\begin{lemma}
\label{2} For any given $\varepsilon >0$ and any $f_{0}\in L^{1}\cap
L^{\infty }\left( \Omega \right) $ with $f_{0}\geq 0,$ there exist a time $%
T=T\left( \varepsilon \right) >0$ independent of $f_{0}$ and a unique mild
solution of (\ref{VFP_A})-(\ref{VFP_ABC}) in $\left[ 0,T\right] $.
\end{lemma}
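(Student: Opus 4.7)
The plan is to construct the mild solution as the unique fixed point of the operator $\mathcal{T}$ defined in \eqref{mild} via the Banach contraction principle on a suitable complete metric space. Specifically, I would work in
\[
\mathcal{B}_{T} := C\bigl([0,T]; L^{1}(\Omega)\bigr) \cap L^{\infty}\bigl([0,T]; L^{\infty}(\Omega)\bigr)
\]
equipped with the norm $\|F\|_{\mathcal{B}_{T}} := \sup_{0 \le t \le T}\bigl(\|F(\cdot,t)\|_{L^{1}(\Omega)} + \|F(\cdot,t)\|_{L^{\infty}(\Omega)}\bigr)$, and seek the mild solution in the closed ball of radius $3\|f_{0}\|_{L^{1}\cap L^{\infty}}$ centred at the origin. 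Because $\mathcal{T}$ is affine, invariance of such a ball and the contraction estimate will follow from the same bound.

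The two ingredients required are the following. First, the regularized diffusion operator $Q^{\varepsilon}$ is bounded on $L^{p}(\Omega)$:
\[
\|Q^{\varepsilon}[F](\cdot,t)\|_{L^{p}(\Omega)} \le \frac{4\|\xi\|_{L^{1}}}{\varepsilon^{2}}\|F(\cdot,t)\|_{L^{p}(\Omega)}, \qquad p\in\{1,\infty\},
\]
which is immediate from its definition as a scaled average-minus-identity operator. Second, the Jacobian estimates of Lemma \ref{Jacobian} ensure that the pull-back $F(X(s;\cdot,\cdot,t),\cdot,s)$ preserves $L^{p}$ norms up to a multiplicative factor $1+O(\varepsilon)$. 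The inhomogeneous piece $\bar{f}_{0}(X(t_{0}),v)$ equals either $f_{0}(X(0),v)$ or $0$ depending on whether the backward characteristic terminates at $t=0$ or strikes $\gamma_{T}^{-}$; in either case it lies in $\mathcal{B}_{T}$ with norm at most $(1+O(\varepsilon))\|f_{0}\|_{L^{1}\cap L^{\infty}}$ uniformly in $t$.

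Combining these facts with the representation \eqref{mild} yields, for any $F_{1}, F_{2}\in \mathcal{B}_{T}$,
\[
\|\mathcal{T}[F_{1}] - \mathcal{T}[F_{2}]\|_{\mathcal{B}_{T}} \le \frac{CT}{\varepsilon^{2}}\,\|F_{1} - F_{2}\|_{\mathcal{B}_{T}}
\]
for an absolute constant $C$. Choosing $T = T(\varepsilon) := \varepsilon^{2}/(2C)$, which is independent of $f_{0}$, makes $\mathcal{T}$ a strict contraction that also preserves the ball above, so Banach's fixed-point theorem supplies the desired unique mild solution on $[0,T(\varepsilon)]$.

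The main technical obstacle is verifying that $\mathcal{T}$ genuinely maps $\mathcal{B}_{T}$ into itself, and in particular that the map $t \mapsto \mathcal{T}[F](\cdot,t)$ is continuous into $L^{1}(\Omega)$. This reduces to a measurable, piecewise continuous dependence of the characteristic data $(X(s;x,v,t), t_{0}(x,v,t))$ on $(x,v,t)$. The set on which $t_{0}$ fails to depend continuously consists of those parameters whose backward characteristic grazes the lateral boundary $\{0,1\}\times\mathbb{R}_{v}$; this is a null set in phase space thanks to the strict transversality of the regularized transport field wherever it does not vanish, and off this set the flow is smooth because of the cut-offs $\beta_{\varepsilon}$ and $\eta_{\varepsilon}$. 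The desired $L^{1}$-continuity in time then follows from dominated convergence, completing the verification.
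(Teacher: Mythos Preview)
Your proposal is correct and follows essentially the same route as the paper: both arguments apply Banach's fixed-point theorem to the operator $\mathcal{T}$ of \eqref{mild} on the same space $C([0,T];L^{1})\cap L^{\infty}([0,T];L^{\infty})$, using the trivial $L^{p}$-bound $\|Q^{\varepsilon}[F]\|_{p}\le (4/\varepsilon^{2})\|F\|_{p}$ together with the Jacobian control of Lemma~\ref{Jacobian} to obtain a Lipschitz constant of order $T/\varepsilon^{2}$, and then choose $T\sim\varepsilon^{2}$. The only cosmetic differences are the radius of the invariant ball ($3\|f_{0}\|$ versus $2\|f_{0}\|$) and that the paper invokes monotone convergence rather than dominated convergence for the $L^{1}$-continuity at $t=0$.
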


\begin{proof}
We will show the existence by a fixed point argument. Let
\begin{align*}
\mathcal{U}& =\left\{ F\in C\left( \left[ 0,T\right] ;L^{1}\left( \Omega
\right) \right) \cap L^{\infty }\left( \left[ 0,T\right] ;L^{\infty }\left(
\Omega \right) \right) :\sup_{0\leq t\leq T}\left\Vert F\left( \cdot ,\cdot
,t\right) \right\Vert _{L^{\infty }\left( \Omega \right) }\leq 2\left\Vert
f_{0}\right\Vert _{L^{\infty }\left( \Omega \right) },\right. \\
&
\,\,\,\,\,\,\,\,\,\,\,\,\,\,\,\,\,\,\,\,\,\,\,\,\,\,\,\,\,\,\,\,\,\,\,\,\,\,%
\,\,\,\,\,\,\,\,\,\,\,\,\,\,\,\,\,\,\,\,\,\,\,\,\,\,\,\,\,\,\,\,\,\,\,\,\,\,%
\,\,\,\,\,\,\,\,\,\,\,\,\,\,\,\,\,\,\,\,\,\,\,\,\,\,\,\,\,\,\,\,\,\,\,\,\,\,%
\,\,\,\,\,\,\,\,\,\,\,\,\,\,\,\,\,\left. \sup_{0\leq t\leq T}\left\Vert
F\left( \cdot ,\cdot ,t\right) \right\Vert _{L^{1}\left( \Omega \right)
}\leq 2\Vert f_{0}\Vert _{L^{1}\left( \Omega \right) }\right\} \\
& \subset C\left( \left[ 0,T\right] ;L^{1}\left( \Omega \right) \right) \cap
L^{\infty }\left( \left[ 0,T\right] ;L^{\infty }\left( \Omega \right)
\right) .
\end{align*}%
We aim to show that $\mathcal{T}$ maps $\mathcal{U}$ into $\mathcal{U}$ and
is a contraction if $T=T\left( \varepsilon \right) $ is sufficiently small.
We first estimate $\mathcal{T}\left[ F\right] $ for $F\in L^{\infty }\left(
\Omega \right) .$ It is easy to see, for $F\in \mathcal{U,}$
\begin{equation*}
\left\vert Q^{\varepsilon }\left[ F\right] \left( X\left( s\right) ,v\right)
\right\vert \leq \frac{4}{\varepsilon ^{2}}\left\Vert F\left( \cdot ,\cdot
,s\right) \right\Vert _{L^{\infty }\left( \Omega \right) }\int_{-\infty
}^{\infty }\xi \left( \zeta \right) d\zeta \leq \frac{8}{\varepsilon ^{2}}%
\left\Vert f_{0}\right\Vert _{L^{\infty }\left( \Omega \right) }.
\end{equation*}%
Thus we get, for all $t\in \left[ 0,T\right] ,$%
\begin{equation*}
\left\Vert \mathcal{T}\left[ F\right] \left( \cdot ,\cdot ,t\right)
\right\Vert _{L^{\infty }\left( \Omega \right) }\leq \left\Vert \bar{f}%
_{0}\right\Vert _{L^{\infty }\left( \Omega \right) }+\frac{8T}{\varepsilon
^{2}}\left\Vert f_{0}\right\Vert _{L^{\infty }\left( \Omega \right) }\leq
2\left\Vert f_{0}\right\Vert _{L^{\infty }\left( \Omega \right) },
\end{equation*}%
if $\frac{8T}{\varepsilon ^{2}}<1.$ This implies $\mathcal{T}\left[ F\right]
\in L^{\infty }\left( \left[ 0,T\right] ;L^{\infty }\left( \Omega \right)
\right) .$ We now estimate $\mathcal{T}\left[ F\right] $ for $F\in
L^{1}\left( \Omega \right) .$ If $F\in \mathcal{U},$ then%
\begin{eqnarray*}
\left\Vert F\left( \cdot ,\cdot ,t\right) \right\Vert _{L^{1}\left( \Omega
\right) } &\leq &\int_{\Omega }\bar{f}_{0}\left( X\left( t_{0}\right)
,v\right) dxdv+\int_{\Omega }\int_{t_{0}}^{t}\left\vert Q^{\varepsilon }%
\left[ F\right] \right\vert \left( X\left( s\right) ,v\right) dsdxdv \\
&=&I+II.
\end{eqnarray*}%
We first estimate $I$ as follows. Using (\ref{Jacobian2}) in Lemma \ref%
{Jacobian} yields
\begin{eqnarray*}
I &=&\int_{\Omega }\bar{f}_{0}\left( X\left( t_{0}\right) ,v\right)
dxdv=\int_{\Omega _{1}}f_{0}\left( X\left( 0\right) ,v\right) dxdv \\
&=&\int_{\tilde{\Omega}_{1}}f_{0}\left( X\left( 0\right) ,v\right)
\left\vert J\left( t;0\right) \right\vert dX\left( 0\right) dv \\
&\leq &\left( 1+O\left( \varepsilon \right) \right) \int_{\Omega
}f_{0}\left( y,v\right) dydv \\
&\leq &\frac{3}{2}\left\Vert f_{0}\right\Vert _{L^{1}\left( \Omega \right) },
\end{eqnarray*}%
provided $T$ is chosen in such a way that $O\left( \varepsilon \right) \leq
1/2.$ Here we denote by $\Omega _{1},$ through the back-time
characteristics,
\begin{equation}
\Omega _{1}=\left\{ \left( x,v\right) \in \Omega ~|~\left( x,v,t\right)
\text{ connects with }\left( X\left( 0\right) ,v,0\right) \right\} \subset
\Omega .  \label{Omega1}
\end{equation}%
For $II$, if $F\in \mathcal{U},$ then
\begin{eqnarray*}
II &\leq &\frac{2}{\varepsilon ^{2}}\int_{-\infty }^{\infty }\xi \left(
\zeta \right) \int_{\Omega }\int_{t_{0}}^{t}\left[ \left\vert F\right\vert
\left( X\left( s\right) ,v+\varepsilon \zeta ,s\right) +\left\vert
F\right\vert \left( X\left( s\right) ,v,s\right) \right] dsdvdxd\zeta \\
&\leq &\frac{2}{\varepsilon ^{2}}\int_{0}^{t}\int_{-\infty }^{\infty }\xi
\left( \zeta \right) d\zeta \left[ \int_{\Omega }\left[ \left\vert
F\right\vert \left( X\left( s\right) ,v+\varepsilon \zeta ,s\right)
+\left\vert F\right\vert \left( X\left( s\right) ,v,s\right) \right]
\left\vert J\left( t;s\right) \right\vert dX\left( s\right) dv\right] ds \\
&\leq &\frac{4}{\varepsilon ^{2}}\int_{0}^{t}\int_{\Omega }\left\vert
F\right\vert \left( X\left( s\right) ,v,s\right) \left\vert J\left(
t;s\right) \right\vert dX\left( s\right) dvds \\
&\leq &\frac{4}{\varepsilon ^{2}}\left( 1+O\left( \varepsilon \right)
\right) \int_{0}^{t}ds\int_{\Omega }\left\vert F\right\vert \left( X\left(
s\right) ,v,s\right) dX\left( s\right) dv \\
&\leq &\frac{4\left( 1+O\left( \varepsilon \right) \right) T}{\varepsilon
^{2}}\left[ \sup_{0\leq t\leq T}\left\Vert F\left( \cdot ,\cdot ,t\right)
\right\Vert _{L^{1}\left( \Omega \right) }\right] \\
&\leq &\frac{8\left( 1+O\left( \varepsilon \right) \right) T}{\varepsilon
^{2}}\left\Vert f_{0}\right\Vert _{L^{1}\left( \Omega \right) } \\
&\leq &\frac{1}{2}\left\Vert f_{0}\right\Vert _{L^{1}\left( \Omega \right) }
\end{eqnarray*}%
if $T$ is further made in such a way that $\frac{8\left( 1+O\left(
\varepsilon \right) \right) T}{\varepsilon ^{2}}<1/2.$ Then by the estimates
of $I$ and $II,$ we obtain%
\begin{equation*}
\sup_{0\leq t\leq T}\left\Vert F\left( \cdot ,\cdot ,t\right) \right\Vert
_{L^{1}\left( \Omega \right) }\leq 2\Vert f_{0}\Vert _{L^{1}\left( \Omega
\right) }.
\end{equation*}%
For the continuity in time\ of $\mathcal{T}\left[ F\right] $ in $L^{1}\left(
\Omega \right) ,$ we use the absolute continuity in $L^{1}\left( \Omega
\right) $ of $L^{1}$ functions, the continuity in $t$ of $t_{0},X\left(
t_{0}\right) $ as functions of $t$, and the monotone convergence theorem. In
particular, we treat the continuity of $\mathcal{T}\left[ F\right] $ in $%
L^{1}\left( \Omega \right) $ as $t$ goes to $0$ in a more careful way, where
the monotone convergence theorem applies. This can be seen in the following
integral%
\begin{equation*}
\int_{0}^{1}\int_{x/t}^{\infty }f_{0}\left( x,v\right) dvdx\searrow 0\text{
as }t\searrow 0.
\end{equation*}%
Thus $\mathcal{T}\left[ F\right] \in C\left( \left[ 0,T\right] ;L^{1}\left(
\Omega \right) \right) $ and this implies that $\mathcal{T}$ maps $\mathcal{U%
}$ into $\mathcal{U}$. Similar arguments yield%
\begin{equation*}
\begin{split}
& \left\Vert \mathcal{T}\left[ F_{1}\right] \left( \cdot ,\cdot ,t\right) -%
\mathcal{T}\left[ F_{2}\right] \left( \cdot ,\cdot ,t\right) \right\Vert
_{L^{1}\left( \Omega \right) \cap L^{\infty }\left( \Omega \right) } \\
& \quad \quad \quad \leq \frac{8\left( 1+O\left( \varepsilon \right) \right)
T}{\varepsilon ^{2}}\sup_{0\leq t\leq T}\left\Vert F_{1}\left( \cdot ,\cdot
,t\right) -F_{2}\left( \cdot ,\cdot ,t\right) \right\Vert _{L^{1}\left(
\Omega \right) \cap L^{\infty }\left( \Omega \right) }
\end{split}%
\end{equation*}%
so that $\mathcal{T}$ is a contraction if $\frac{8\left( 1+O\left(
\varepsilon \right) \right) T}{\varepsilon ^{2}}<1.$ Therefore if $T=T\left(
\varepsilon \right) <\frac{\varepsilon ^{2}}{8\left( 1+O\left( \varepsilon
\right) \right) }$, then there exists a unique mild solution by a fixed
point theorem.
\end{proof}

We obtain in the following lemma the existence of solutions of (\ref{VFP_A}%
)-(\ref{VFP_ABC}) for an arbitrary time $T$, which does not depend on $%
\varepsilon $.

\begin{corollary}
\label{3}For any given $\varepsilon >0,T>0$ independent of $\varepsilon ,$
and $f_{0}\in L^{1}\cap L^{\infty }\left( \Omega \right) $ with $f_{0}\geq
0, $ there exists a unique mild solution of (\ref{VFP_A})-(\ref{VFP_ABC}) in
$\left[ 0,T\right] $.
\end{corollary}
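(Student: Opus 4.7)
The plan is to iterate Lemma \ref{2}. The decisive observation is that the local existence time $T(\varepsilon)$ produced by Lemma \ref{2} is constrained only by $T(\varepsilon)\leq \varepsilon^{2}/[8(1+O(\varepsilon))]$ and therefore depends solely on $\varepsilon$, not on the size of the initial datum in $L^{1}$ or $L^{\infty}$. Given an arbitrary $T>0$, I fix an integer $N$ with $NT(\varepsilon)\geq T$ and construct $f^{\varepsilon}$ inductively on the subintervals $I_{k}:=[kT(\varepsilon),(k+1)T(\varepsilon)]$ for $k=0,1,\dots,N-1$, invoking Lemma \ref{2} at each stage with initial datum $f^{\varepsilon}(\cdot,\cdot,kT(\varepsilon))$. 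The restart data lie in $L^{1}\cap L^{\infty}(\Omega)$ because the bounds built into the proof of Lemma \ref{2} yield
\begin{equation*}
\|f^{\varepsilon}(\cdot,\cdot,t)\|_{L^{\infty}(\Omega)}\leq 2\|f^{\varepsilon}(\cdot,\cdot,kT(\varepsilon))\|_{L^{\infty}(\Omega)},\quad \|f^{\varepsilon}(\cdot,\cdot,t)\|_{L^{1}(\Omega)}\leq 2\|f^{\varepsilon}(\cdot,\cdot,kT(\varepsilon))\|_{L^{1}(\Omega)}
\end{equation*}
on each $I_{k}$. Iterating $N$ times gives the crude a priori estimate $\|f^{\varepsilon}(\cdot,\cdot,t)\|_{L^{1}\cap L^{\infty}}\leq 2^{N}\|f_{0}\|_{L^{1}\cap L^{\infty}}$ on $[0,T]$, which is not uniform in $\varepsilon$ but is finite for each fixed pair $(\varepsilon,T)$ and suffices for the construction.

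To verify that the concatenated function is a mild solution of (\ref{VFP_A})--(\ref{VFP_ABC}) on $[0,T]$ in the sense of (\ref{mild}), I fix $(x,v,t)$ with $t\in I_{k}$ and follow the backward characteristic $X(\cdot;x,v,t)$ of Lemma \ref{Jacobian}. If it exits $\Omega$ before time $kT(\varepsilon)$, the $s$-integral in (\ref{mild}) is split at the junction times $jT(\varepsilon)$ for $j=k,k-1,\dots,1$, and the mild identities on the successive $I_{j}$ are chained together using the continuity of $f^{\varepsilon}$ in $L^{1}$ at each junction (itself a conclusion of Lemma \ref{2}); otherwise the characteristic exits inside $I_{k}$ and (\ref{mild}) reduces to the identity used at the $k$th step. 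Uniqueness on $[0,T]$ propagates by the analogous induction: two mild solutions coinciding on $[0,kT(\varepsilon)]$ must coincide on $I_{k}$ by the uniqueness half of Lemma \ref{2} applied with common initial datum at time $kT(\varepsilon)$.

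The argument introduces no genuinely new analytic ingredient beyond Lemma \ref{2}; the whole content of the corollary is the observation that the local existence time of Lemma \ref{2} is independent of the initial norms and can therefore be reused finitely many times. The only portion requiring attention in the write-up is the bookkeeping for a backward characteristic that crosses several restart times $jT(\varepsilon)$---one must confirm that the spliced Duhamel--type identity does reproduce (\ref{mild}) on the full interval $[0,T]$---but this involves nothing more than carefully applying the $L^{1}$-continuity in $t$ already provided by Lemma \ref{2} at each junction.
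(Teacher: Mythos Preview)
Your proposal is correct and follows exactly the approach indicated by the paper's one-line proof, which simply says to iterate Lemma \ref{2} using that the local existence time $T(\varepsilon)$ is independent of the initial norms. You have written out the details of the iteration (the restart data, the concatenation of the Duhamel identities, and the propagation of uniqueness) that the paper leaves implicit.
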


\begin{proof}
We know the existence time $T=T\left( \varepsilon \right) $ and then we can
apply the same argument as in the proof of Lemma \ref{2} to extend the
existence time to the given time $T.$
\end{proof}


\subsection{Well-posedness of the approximate equations}


We will show in this subsection the existence and the uniqueness of weak
solutions of the approximate equation (\ref{VFP_A}) with the initial and
boundary conditions (\ref{VFP_ABC}). For that purpose, several maximum
principles will be used in this subsection. Maximum principle properties
have been extensively studied in the analysis of elliptic and parabolic
problems (See \cite{E}, \cite{F}, \cite{GT}, \cite{LSU}, \cite{PS}). We
adapt them to the corresponding definitions and results of this paper.

\begin{definition}
\label{weakSoln}We say that $F\in C\left( \left[ 0,T\right] ;L^{1}\left(
\Omega \right) \right) \cap L^{\infty }\left( \left[ 0,T\right] ;L^{\infty
}\left( \Omega \right) \right) $ is a weak solution of (\ref{VFP_A}) with (%
\ref{VFP_ABC}) if for every $t\in \left[ 0,T\right] $ and any test function $%
\psi \left( x,v,s\right) \in C^{1}\left( U_{t}\right) $ such that supp$%
\left( \psi \left( \cdot ,\cdot ,s\right) \right) \subset \left[ 0,1\right]
\times \left[ -R,R\right] $ for some $R>0$ and $\psi |_{\gamma _{t}^{+}}=0$,
it satisfies
\begin{align*}
& -\int_{U_{t}}F\left( x,v,s\right) \left[ \psi _{t}\left( x,v,s\right)
+\partial _{x}\left( \left[ \beta _{\varepsilon }\left( v\right) +\left(
v-\beta _{\varepsilon }\left( v\right) \right) \eta _{\varepsilon }\left(
x\right) \right] \psi \left( x,v,s\right) \right) \right] dxdvds \\
& +\int_{\Omega }F\left( x,v,t\right) \psi \left( x,v,t\right)
dxdv-\int_{\Omega }f_{0}\left( x,v\right) \psi \left( x,v,0\right) dxdv \\
& =\frac{2}{\varepsilon ^{2}}\int_{U_{t}}F\left( x,v,s\right) \int_{-\infty
}^{\infty }\left[ \psi \left( x,v-\varepsilon \zeta ,s\right) -\psi \left(
x,v,s\right) \right] \xi \left( \zeta \right) d\zeta dxdvds.
\end{align*}
\end{definition}

Then we have the lemma which states the existence of a weak solution to (\ref%
{VFP_A}).

\begin{lemma}
\label{weakSol}Let $T>0$ and let $f_{0}\in L^{1}\cap L^{\infty }\left(
\Omega \right) $ with $f_{0}\geq 0$. Then there exists a weak solution $%
f^{\varepsilon }\in C\left( \left[ 0,T\right] ;L^{1}\left( \Omega \right)
\right) \cap L^\infty \left( \left[ 0,T \right] ;L^{\infty }\left( \Omega
\right) \right) $ to (\ref{VFP_A})-(\ref{VFP_ABC}).
\end{lemma}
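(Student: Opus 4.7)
The plan is to take the unique mild solution $f^{\varepsilon}$ from Corollary \ref{3} and verify that it satisfies the weak formulation of Definition \ref{weakSoln}. Write $A(x,v):=\beta_{\varepsilon}(v)+(v-\beta_{\varepsilon}(v))\eta_{\varepsilon}(x)$ for the modified velocity field. Along the backward characteristic $s\mapsto X(s;x,v,t)$ the mild formula (\ref{mild}) implies that $s\mapsto f^{\varepsilon}(X(s),v,s)$ is absolutely continuous on $[t_{0},t]$ with derivative $Q^{\varepsilon}[f^{\varepsilon}](X(s),v,s)$. For any admissible test function $\psi$ the chain rule then yields the pointwise identity
\[
\frac{d}{ds}\bigl(f^{\varepsilon}\psi\bigr)(X(s),v,s)=(Q^{\varepsilon}[f^{\varepsilon}])\psi+f^{\varepsilon}\bigl(\psi_{s}+A(X(s),v)\psi_{x}\bigr)
\]
evaluated at $(X(s),v,s)$. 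Integrating in $s$ from $t_{0}(x,v,t)$ to $t$ and then in $(x,v)$ over $\Omega$, and applying Fubini together with the change of variables $(x,v)\mapsto(X(s),v)$---whose Jacobian is controlled uniformly by Lemma \ref{Jacobian}---recasts every interior integrand as one on $U_{t}$.

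The $s=t_{0}$ boundary term splits according to the value of $t_{0}$: when $t_{0}=0$ the change of variables $(x,v)\mapsto(X(0),v)$ reduces the contribution to $\int_{\Omega}f_{0}(y,v)\psi(y,v,0)\,dy\,dv$ (the piece of $\Omega$ not covered by characteristics with $t_{0}=0$ corresponds to initial points whose forward trajectory exits $\Omega$ before time $t$, where $\psi$ vanishes by $\psi|_{\gamma_{t}^{+}}=0$); when $t_{0}>0$ the exit point $(X(t_{0}),v)\in\{0,1\}\times\mathbb{R}$ lies on the incoming portion of $\gamma_{t}$ for the forward trajectory, so $f^{\varepsilon}(X(t_{0}),v,t_{0})=0$ by (\ref{VFP_ABC}). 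To match the exact form of Definition \ref{weakSoln}, I integrate by parts in $x$, converting $\int f^{\varepsilon}A\psi_{x}\,dx$ into $-\int f^{\varepsilon}\partial_{x}(A\psi)\,dx$ plus boundary contributions at $x\in\{0,1\}$. Each such boundary contribution vanishes because at least one of $f^{\varepsilon}, \psi, A$ is zero: $f^{\varepsilon}(0,v,s)=0$ on $\{v>0\}$ and $f^{\varepsilon}(1,v,s)=0$ on $\{v<0\}$ by the boundary condition, $\psi$ vanishes at $x=0$ on $\{v<0\}$ and at $x=1$ on $\{v>0\}$ by the test function assumption, and on $\{|v|<\varepsilon^{2}\}$ we have $A(0,v)=A(1,v)=\beta_{\varepsilon}(v)=0$ by the definitions of $\beta_{\varepsilon}$ and $\eta_{\varepsilon}$, which takes care of the remaining case $v=0$. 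Finally, the jump term is transferred onto $\psi$ by the substitution $v\mapsto v-\varepsilon\zeta$, which turns $\int_{\mathbb{R}}f^{\varepsilon}(x,v+\varepsilon\zeta,s)\psi(x,v,s)\,dv$ into $\int_{\mathbb{R}}f^{\varepsilon}(x,v,s)\psi(x,v-\varepsilon\zeta,s)\,dv$ and produces exactly the right-hand side of Definition \ref{weakSoln}.

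The continuity of $t\mapsto\int f^{\varepsilon}(t)\psi(t)\,dxdv$ comes for free from $f^{\varepsilon}\in C([0,T];L^{1}(\Omega))$ and $\psi\in L^{\infty}$. The main technical obstacle is the bookkeeping of the characteristic partition of $\Omega$ into $\{t_{0}=0\}$ versus $\{t_{0}\in(0,t)\}$ and the justification of Fubini and the change of variables at the $L^{1}\cap L^{\infty}$ regularity of $f^{\varepsilon}$; the monotone and dominated convergence arguments already invoked in the proof of Lemma \ref{2} handle the measurability and absolute continuity issues. The delicate spot in the boundary analysis is the grazing region $\{|v|\lesssim\varepsilon^{2}\}\cap\{x\in\{0,1\}\}$, where the cut-offs in $\beta_{\varepsilon}$ and $\eta_{\varepsilon}$ have been designed precisely so that $A\equiv 0$, eliminating a boundary contribution that would otherwise be uncontrolled near the singular set $\{(0,0),(1,0)\}$---which is exactly the reason the approximation scheme was chosen in this form.
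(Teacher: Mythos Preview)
Your route---showing that the mild solution of Corollary~\ref{3} satisfies the weak identity by differentiating $f^\varepsilon\psi$ along characteristics and changing variables---is exactly what the paper sketches (and whose details it omits). But the execution has a real gap at the change-of-variables step. The approximate drift $A(x,v)=\beta_\varepsilon(v)+(v-\beta_\varepsilon(v))\eta_\varepsilon(x)$ is \emph{not} divergence-free in $x$, so the characteristic flow is not measure-preserving: the Jacobian from Lemma~\ref{Jacobian} is $1+O(\varepsilon)$, not $1$. Pushing the $(x,v)$-integral back along characteristics therefore does not give an integral over $U_t$; it gives one with a Jacobian weight and over a strict subregion (the points whose forward trajectory remains in $\Omega$ up to time $t$). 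Your ``integrate by parts in $x$, converting $\int f^\varepsilon A\psi_x\,dx$ into $-\int f^\varepsilon\partial_x(A\psi)\,dx$'' cannot repair this, because an honest integration by parts would transfer $\partial_x$ onto $f^\varepsilon A$, and no $x$-derivative of $f^\varepsilon$ is available at this stage. Likewise, the claim that on the piece of $\Omega$ ``not covered by characteristics with $t_0=0$'' one has $\psi=0$ via $\psi|_{\gamma_t^+}=0$ is false: those are interior initial points, not points of $\gamma_t^+$.

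The missing idea is that the Jacobian itself supplies the term $(\partial_xA)\psi$ you need: the Liouville identity $\tfrac{d}{ds}J=(\partial_xA)\,J$ for the Jacobian $J$ of the forward flow means that differentiating $J\cdot(f^\varepsilon\psi)$ along the characteristic (rather than $f^\varepsilon\psi$ alone) produces $J\bigl[Q^\varepsilon[f^\varepsilon]\psi+f^\varepsilon(\psi_s+\partial_x(A\psi))\bigr]$, and the change of variables then removes the weight $J$ exactly. The portion of $U_t$ not reached by forward characteristics from $\{t=0\}$ is covered by characteristics entering through $\gamma_t^-$ (where the mild formula gives $f^\varepsilon=0$) and by those exiting through $\gamma_t^+$ (where $\psi=0$); both endpoint contributions vanish and the identity closes.
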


\begin{proof}
It is easy to see that a mild solution is a weak solution by multiplying to (%
\ref{mild}) a test function $\psi \left( x,v,s\right) \in C^{1}\left(
U_{t}\right) $ with a compact support and $\psi |_{\gamma _{t}^{+}}=0$ and
by integrating the resulting equation over $x,v,$ and $t$. We omit the
details.
\end{proof}

We will use smooth solutions of the adjoint problem to \eqref{VFP_A} as test
functions in the weak formulation although they may not have compact
supports, since the smooth solutions can be approximated by test functions
with compact supports in Definition \ref{weakSoln}. Thus the solutions
indeed satisfy the formula in Definition \ref{weakSoln}. We first show the
existence of such smooth solutions.\

Define%
\begin{equation}
\mathcal{\bar{L}}\psi :=\psi _{t}+\partial _{x}\left( \left[ \beta
_{\varepsilon }\left( v\right) +\left( v-\beta _{\varepsilon }\left(
v\right) \right) \eta _{\varepsilon }\left( x\right) \right] \psi \right) -%
\bar{Q}^{\varepsilon }\left[ \psi \right] \left( x,v,t\right) =0,~
\label{adjoint}
\end{equation}%
\begin{equation*}
\psi |_{t=T}=\psi _{T},~~\ \psi |_{_{\gamma _{T}^{+}}}=0,
\end{equation*}%
where $\bar{Q}\left[ \psi \right] \left( x,v,t\right) =\frac{2}{\varepsilon
^{2}}\int_{-\infty }^{\infty }\left[ \psi \left( x,v,t\right) -\psi \left(
x,v-\varepsilon \zeta ,t\right) \right] \xi \left( \zeta \right) d\zeta $
and $\psi _{T}\left( x,v\right) \in C^{\infty }\left( \Omega \right) $ given.

We take the data $\psi _{T}\left( x,v\right) $ so as to satisfy the
following compatibility condition:%
\begin{equation}
\psi _{T}\left( x,v\right) =0,\text{ \ for all }\left( x,v\right) \in N,
\label{comp}
\end{equation}%
where $N=\left[ \left\{ x^{2}+\beta _{\varepsilon }^{2}\left( v\right)
<\delta \right\} \cup \left\{ \left( x-1\right) ^{2}+\beta _{\varepsilon
}^{2}\left( v\right) <\delta \right\} \right] \cap \Omega $ for some $\delta
>0$ small.

\smallskip

\begin{lemma}
\label{psiExist}Let $\psi _{T}\left( x,v\right) \in C^{\infty }\left( \Omega
\right) $ be a smooth data at $t=T$ and satisfy (\ref{comp}) with $\psi
_{T}\geq 0.$ Then there exists a smooth solution $\psi \left( x,v,t\right)
\in C^{\infty }\left( U_{T}\right) $ satisfying (\ref{adjoint}).
\end{lemma}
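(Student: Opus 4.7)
The plan is to recast (\ref{adjoint}) as a forward initial-boundary value problem via the change of variables $\tau = T-t$, $\tilde{\psi}(x,v,\tau) := \psi(x,v,T-\tau)$, and then construct its solution along characteristics by a contraction argument directly parallel to Lemma \ref{2} and Corollary \ref{3}. Writing $a(x,v) := \beta_{\varepsilon}(v) + (v-\beta_{\varepsilon}(v))\eta_{\varepsilon}(x)$ and expanding $\partial_x(a\psi) = a\psi_x + a_x \psi$, the function $\tilde{\psi}$ satisfies
\[
\tilde{\psi}_{\tau} - a(x,v)\,\tilde{\psi}_x = -a_x(x,v)\,\tilde{\psi} + \bar{Q}^{\varepsilon}[\tilde{\psi}],
\]
with initial datum $\tilde{\psi}(\cdot,0)=\psi_T$ and boundary condition $\tilde{\psi}=0$ on $\gamma_T^{+}$. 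The characteristic flow $dX/d\sigma = -a(X,v)$, $V\equiv v$, runs opposite to that of (\ref{VFP_A}), so $\gamma_T^{+}$ is now the incoming boundary of the reversed problem.

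As a first step I would introduce the backward-in-$\tau$ entry time $\tau_0 = \tau_0(x,v,\tau)\in[0,\tau]$ at which the reversed characteristic through $(x,v,\tau)$ either meets $\gamma_T^{+}$ (so $\tau_0 > 0$) or survives down to $\sigma=0$ (so $\tau_0 = 0$), and then absorb the zeroth-order term by the integrating factor $E(s):=\exp\!\bigl(\int_s^{\tau}(-a_x)(X(\sigma),v)\,d\sigma\bigr)$. This yields the mild formula
\[
\tilde{\psi}(x,v,\tau) = E(\tau_0)\,\bar{\psi}_T(X(\tau_0),v) + \int_{\tau_0}^{\tau} E(s)\,\bar{Q}^{\varepsilon}[\tilde{\psi}](X(s),v,s)\,ds,
\]
with $\bar{\psi}_T = \psi_T$ when $\tau_0 = 0$ and $\bar{\psi}_T = 0$ when $\tau_0 > 0$. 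A Banach fixed-point argument in $C([0,T'];L^{\infty}(\Omega))$, patterned on Lemma \ref{2} and extended by iteration as in Corollary \ref{3}, produces a unique continuous mild solution on $[0,T]$. Nonnegativity $\tilde{\psi}\ge 0$ is preserved by the iteration once the subtractive piece $\frac{2}{\varepsilon^{2}}\tilde{\psi}$ in $\bar{Q}^{\varepsilon}$ is moved into the integrating factor, leaving only the nonnegative convolution part as the source.

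The principal step is bootstrapping $C^{\infty}$ regularity. For fixed $\varepsilon>0$, $a$ and all its derivatives are bounded and smooth, so $(x,v,\tau)\mapsto(X(\tau_0),\tau_0)$ is smooth separately on $\{\tau_0 = 0\}$ and $\{\tau_0 > 0\}$. The compatibility condition (\ref{comp}) is exactly what matches these two regimes smoothly: the transition surface between them consists of characteristics that graze $\gamma_T^{+}$ at $\tau=0$, i.e., that pass arbitrarily close to the corners $(0,0)$ and $(1,0)$, and (\ref{comp}) forces $\psi_T\equiv 0$ on a full neighborhood $N$ of these corners. Moreover, because $\beta_{\varepsilon}(v)=0$ for $|v|<\varepsilon^{2}$ and $\eta_{\varepsilon}(x)=0$ for $x\in[0,\varepsilon)\cup(1-\varepsilon,1]$, the drift $a$ vanishes identically on $N$, so characteristics there are stationary and $\bar{\psi}_T$ extends by zero as a $C^{\infty}$ function across the joining surface. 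Differentiating the mild formula in $(x,v,\tau)$ then gives, at each order, an integral equation for the derivative with bounded smooth kernel; the same contraction closes at every order and yields $\tilde{\psi}\in C^{\infty}(U_T)$. The main obstacle throughout is precisely this compatibility at the grazing set: without (\ref{comp}) the solution would generically develop a jump propagating from the corner along the characteristic, and the entire point of the hypothesis is to eliminate that obstruction.
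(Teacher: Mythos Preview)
Your proposal is correct and follows essentially the same route as the paper: construct a mild solution along characteristics by a fixed-point argument parallel to Lemma~\ref{2} and Corollary~\ref{3}, then bootstrap $C^{\infty}$ regularity by differentiating the integral representation, with the compatibility condition (\ref{comp}) used exactly to match the two characteristic regimes smoothly across the grazing set. One minor slip: the recast equation should have right-hand side $a_x\tilde{\psi} - \bar{Q}^{\varepsilon}[\tilde{\psi}]$, not $-a_x\tilde{\psi} + \bar{Q}^{\varepsilon}[\tilde{\psi}]$, but this does not affect the structure of the argument.
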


\begin{proof}
We solve the adjoint problem to (\ref{VFP_A}) with the data $\psi _{T}\left(
x,v\right) $ at time $t=T$ and we find a smooth solution $\psi \left(
x,v,t\right) \in C^{\infty }\left( U_{T}\right) $ satisfying%
\begin{equation}
\psi _{t}+\partial _{x}\left( \left[ \beta _{\varepsilon }\left( v\right)
+\left( v-\beta _{\varepsilon }\left( v\right) \right) \eta _{\varepsilon
}\left( x\right) \right] \psi \right) -\bar{Q}\left[ \psi \right] \left(
x,v,t\right) =0,  \label{psi}
\end{equation}%
\begin{equation*}
\psi \left( x,v,T\right) =\psi _{T}\left( x,v\right) ,~~\psi |_{_{\gamma
_{T}^{+}}}=0.
\end{equation*}%
First we can show that there exists a mild solution $\psi \left(
x,v,t\right) \in C\left( \left[ 0,T\right] ;L^{1}\left( \Omega \right) )\cap
L^{\infty }(\left[ 0,T\right] ;L^{\infty }\left( \Omega \right) \right) ~$of
(\ref{psi}) by applying a method similar to that in Lemma \ref{2} and
Corollary \ref{3}. Then we can show that the mild solution of (\ref{psi}) is
indeed smooth, that is, $\psi \in C^{\infty }\left( U_{T}\right) .$ This can
be proved by observing that $\psi _{T}\left( x,v\right) \in C^{\infty
}\left( \Omega \right) $ satisfies the compatibility condition (\ref{comp}).
Then we can apply a fixed point argument to the integral representation for
the derivatives of $\psi $ by differentiating the integral representation
for $\psi $ itself.
\end{proof}

The next few lemmas concern the non-negativity of $\psi $ and $L^{1}$
estimate.

\begin{lemma}
\label{nonnegative}Let $\psi \left( x,v,t\right) \in C^{\infty }\left(
U_{T}\right) $ be a solution of the adjoint equation (\ref{adjoint})
backwards in time. Then it satisfies $\psi \left( x,v,t\right) \geq 0$ for
all $\left( x,v,t\right) \in U_{T}.$
\end{lemma}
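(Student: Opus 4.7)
The plan is to derive a mild representation for $\psi$ along forward characteristics of the adjoint equation, arrange the integrating factor so that the Duhamel kernel is manifestly positive, and then invoke the Picard iteration framework already developed in Lemma~\ref{2} and Corollary~\ref{3}.

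First I would rewrite (\ref{adjoint}) in non-divergence form
\[
\psi_t + a(x,v)\psi_x + a_x(x,v)\psi = \bar{Q}^\varepsilon[\psi],\qquad a(x,v):=\beta_\varepsilon(v)+(v-\beta_\varepsilon(v))\eta_\varepsilon(x),
\]
and split $\bar{Q}^\varepsilon[\psi] = \tfrac{2}{\varepsilon^2}\psi - K[\psi]$, where
\[
K[\psi](x,v,t) := \tfrac{2}{\varepsilon^2}\int_{-\infty}^{\infty}\psi(x,v-\varepsilon\zeta,t)\,\xi(\zeta)\,d\zeta.
\]
Since $\xi\geq 0$, the operator $K$ preserves non-negativity. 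Following the characteristic $Y(s)$ defined by $dY/ds = a(Y(s),v)$ with $Y(t_0)=x$ forward in $s$ until the first time $t^*\in(t_0,T]$ at which either $s=T$ or $(Y(s),v)\in\gamma_T^+$, the function $\phi(s):=\psi(Y(s),v,s)$ satisfies the scalar ODE
\[
\phi'(s) + \Bigl(a_x(Y(s),v) - \tfrac{2}{\varepsilon^2}\Bigr)\phi(s) = -K[\psi](Y(s),v,s).
\]
The integrating factor $M(s):=J(s;t_0)\,e^{-2(s-t_0)/\varepsilon^2}$ is strictly positive by Lemma~\ref{Jacobian}, so integrating from $t_0$ to $t^*$ produces the mild representation
\[
\psi(x,v,t_0) = M(t^*)\,\psi(Y(t^*),v,t^*) + \int_{t_0}^{t^*}M(s)\,K[\psi](Y(s),v,s)\,ds.
\]

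The first term is non-negative: it equals $M(t^*)\psi_T(Y(T),v)\geq 0$ when the trajectory reaches $t^*=T$, and it vanishes when the trajectory exits through $\gamma_T^+$, where $\psi=0$. I would then run the Picard iteration $\psi^{(0)}(x,v,t_0):=M(t^*)\psi(Y(t^*),v,t^*)$ and
\[
\psi^{(n+1)}(x,v,t_0) := \psi^{(0)}(x,v,t_0) + \int_{t_0}^{t^*}M(s)\,K[\psi^{(n)}](Y(s),v,s)\,ds.
\]
By induction $\psi^{(n)}\geq 0$ for every $n$, since $M>0$ and $K$ maps non-negative functions to non-negative functions. The same estimate as in the proof of Lemma~\ref{2} bounds $\psi^{(n+1)}-\psi^{(n)}$ in $L^\infty$ by $(1+O(\varepsilon))\bigl(1-e^{-2\tau/\varepsilon^2}\bigr)\|\psi^{(n)}-\psi^{(n-1)}\|_\infty$ on a sub-interval of length $\tau$; choosing $\tau=O(\varepsilon^2)$ so that this factor is strictly less than one and invoking uniqueness of the mild solution (as built in Lemma~\ref{psiExist}) yields $\psi^{(n)}\to\psi$ uniformly, hence $\psi\geq 0$ on $[T-\tau,T]$.

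To reach all of $[0,T]$ I would iterate: on each sub-interval $[T-(k+1)\tau,T-k\tau]$ the value $\psi(\cdot,\cdot,T-k\tau)\geq 0$ obtained at the previous step serves as non-negative terminal data for the next step, and the same contraction argument gives $\psi\geq 0$ there. After finitely many such steps the non-negativity is established on all of $U_T$. The main technical step is picking the correct integrating factor $M(s)$ so that the Duhamel kernel is positive; once that is in place, non-negativity is forced automatically by the positivity of $K$ together with the fixed-point construction already used in Lemma~\ref{2} and Corollary~\ref{3}.
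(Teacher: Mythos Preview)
Your argument is correct but follows a genuinely different route from the paper's own proof. The paper argues by a strict maximum principle: it perturbs $\psi$ to $\psi^{k}=\psi+[k-k(t-T)]e^{-L(t-T)}$, chooses $L$ large enough (using that $(v-\beta_{\varepsilon}(v))\eta_{\varepsilon}'(x)$ is compactly supported) so that $\bar{\mathcal{L}}\psi^{k}<0$ strictly, and then rules out case by case that $\psi^{k}$ can touch zero at an interior point, at the regularized corner region $\{|v|\leq\varepsilon,\ x\leq\varepsilon\text{ or }x\geq 1-\varepsilon\}$, on the slice $t=T_{*}$, or on the outgoing boundary; letting $k\to 0$ gives $\psi\geq 0$. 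Your proof instead isolates the positive part of the jump operator, $K[\psi]=\tfrac{2}{\varepsilon^{2}}\int\psi(\cdot,v-\varepsilon\zeta,\cdot)\xi(\zeta)\,d\zeta$, and writes a Duhamel formula along forward characteristics with a strictly positive weight $M(s)$, so that non-negativity propagates through the Picard iterates.

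Both approaches rely (implicitly in the paper, explicitly in yours) on $\xi\geq 0$. The paper's barrier method is representation-free and handles the boundary geometry through a direct case analysis; your approach is more constructive, recycles the contraction machinery of Lemma~\ref{2}, and makes the positivity-preserving structure of the approximate diffusion $Q^{\varepsilon}$ transparent. One small point worth tightening: when you iterate on sub-intervals $[T-(k+1)\tau,T-k\tau]$, the exit time $t^{*}$ in your mild formula should be redefined as the minimum of the hitting time of $\gamma_{T}^{+}$ and the right endpoint $T-k\tau$, so that the terminal value in $\psi^{(0)}$ is either $0$ or the already non-negative trace $\psi(\cdot,\cdot,T-k\tau)$; you say this in words, but the displayed formula for $\psi^{(0)}$ still carries $t^{*}$ from the original interval.
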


\begin{proof}
Suppose that $\psi $ is as in the statement of the lemma. We define $\psi
^{k}=\psi +\left[ k-k\left( t-T\right) \right] e^{-L\left( t-T\right) }$
with $k>0$ small and $L$ depending on $\varepsilon $ to be made precise
later. Using (\ref{adjoint}) we obtain \
\begin{eqnarray*}
\mathcal{\bar{L}}\psi ^{k} &=&-ke^{-L\left( t-T\right) }-L\left[ k-k\left(
t-T\right) \right] e^{-L\left( t-T\right) }+\psi _{t} \\
&&+\left[ \beta _{\varepsilon }\left( v\right) +\left( v-\beta _{\varepsilon
}\left( v\right) \right) \eta _{\varepsilon }\left( x\right) \right]
\partial _{x}\psi +\left( v-\beta _{\varepsilon }\left( v\right) \right)
\eta _{\varepsilon }^{\prime }\left( x\right) \psi \\
&&+\left( k-k\left( t-T\right) \right) \left( v-\beta _{\varepsilon }\left(
v\right) \right) \eta _{\varepsilon }^{\prime }\left( x\right) e^{-L\left(
t-T\right) }-\bar{Q}^{\varepsilon }\left[ \psi \right] \\
&=&\left[ -k+\left( -L+\left( v-\beta _{\varepsilon }\left( v\right) \right)
\eta _{\varepsilon }^{\prime }\left( x\right) \right) \left( k-k\left(
t-T\right) \right) \right] e^{-L\left( t-T\right) }.
\end{eqnarray*}%
By using the fact that $\left( v-\beta _{\varepsilon }\left( v\right)
\right) $ and $\eta _{\varepsilon }^{\prime }\left( x\right) $ are compactly
supported as well as the fact that $\left( k-k\left( t-T\right) \right) >0$
for $t\leq T,$ it then follows that, choosing $L>0$ sufficiently large, we
obtain, for $t\leq T$,%
\begin{equation}
\mathcal{\bar{L}}\psi ^{k}\leq -ke^{-L\left( t-T\right) }<0.  \label{In1}
\end{equation}

We define domains $U_{T_{1},T}=\left\{ \left( x,v,t\right) :x\in \left(
0,1\right) ,\ v\in \mathbb{R},\ 0\leq T_{1}\leq t\leq T\right\} .$ Denote as
$T_{\ast }$ the infimum of the values of $T_{1}\geq 0$ such that $\psi
^{k}>0 $ in $U_{T_{1},T}.$ Notice that, since $\psi ^{k}\geq k>0$ at $t=T,$
we have $T_{\ast }<T$ and by definition $T_{\ast }\geq 0.$ By continuity of $%
\psi ^{k}$ we have $\psi ^{k}\geq 0$ in $\overline{U_{T_{\ast },T}}.$ We now
apply maximum principle arguments in this set. We may assume that the
minimum of $\psi ^{k}$ in $\overline{U_{T_{\ast },T}}$ is $0,$ since
otherwise $\psi ^{k}>0$ in $U_{T}$ and thus we are done. Suppose that the
minimum $0$ of $\psi ^{k}$ at $\overline{U_{T_{\ast },T}}$ is attained at
one interior point of this set. Then $\psi _{t}^{k}\left( x,v,t\right) =\psi
_{x}^{k}\left( x,v,t\right) =0$ while $\bar{Q}^{\varepsilon }\left[ \psi ^{k}%
\right] \left( x,v,t\right) \leq 0$ so that $\mathcal{\bar{L}}\psi
^{k}\left( x,v,t\right) \geq 0$ since $\psi ^{k}=0$ at the point. Thus it
cannot occur due to (\ref{In1}).\texttt{\ }On the other hand, we will prove
now that minimum $0$ cannot be obtained near the singular set. Suppose that $%
\psi ^{k}$ has its minimum in the set $\left\vert v\right\vert \leq
\varepsilon ,\ 0\leq x\leq \varepsilon ,\ 1-\varepsilon \leq x\leq 1.$ Then
the definition of $\beta _{\varepsilon }\left( v\right) $ and $\eta
_{\varepsilon }\left( x\right) $ implies that $\left[ \beta _{\varepsilon
}\left( v\right) +\left( v-\beta _{\varepsilon }\left( v\right) \right) \eta
_{\varepsilon }\left( x\right) \right] \psi _{x}^{k}=0$ in that set. We also
have $\psi _{t}^{k}\geq 0$ and $\bar{Q}^{\varepsilon }\left[ \psi ^{k}\right]
\leq 0$ at that minimum point so that $\mathcal{\bar{L}}\psi ^{k}\geq 0$ at
that point. This is again a contradiction. Therefore, the minimum of $\psi
^{k}$ in $\overline{U_{T_{\ast },T}}$ cannot be achieved in that set. Now
suppose that this minimum $0$ is attained at $\left( x,v,T_{\ast }\right) $
with $\left( x,v\right) \in \Omega .$ Then $\psi _{t}^{k}\left( x,v,T_{\ast
}\right) \geq 0,\psi _{x}^{k}\left( x,v,T_{\ast }\right) =0,\bar{Q}%
^{\varepsilon }\left[ \psi ^{k}\right] \left( x,v,T_{\ast }\right) \leq 0$
so that $\mathcal{\bar{L}}\psi ^{k}\left( x,v,T_{\ast }\right) \geq 0$.
Again it cannot happen. Suppose that $\psi ^{k}$ has its minimum $0$ at $%
\left( 0,v,t\right) $ with $v>0$ and $t>0$ or at $\left( 1,v,t\right) $ with
$v<0$ and $t>0.$ Then $\psi _{t}^{k}=0,\left[ \beta _{\varepsilon }\left(
v\right) +\left( v-\beta _{\varepsilon }\left( v\right) \right) \eta
_{\varepsilon }\left( x\right) \right] \psi _{x}^{k}\geq 0,\ \bar{Q}%
^{\varepsilon }\left[ \psi ^{k}\right] \leq 0$ so that $\mathcal{\bar{L}}%
\psi ^{k}\geq 0.$ This leads to a contradiction. Therefore $\psi ^{k}$ has
its minimum $k$ at $t=T$ or ${x=0,\ v<0}$ or ${x=1,v>0}$.

We then have obtained that $\psi ^{k}\geq k>0$ in $U_{T_{\ast },T}.$ If $%
T_{\ast }>0$ it would be possible to prove that $\psi ^{k}>0$ in some set $%
U_{T_{\ast }-\delta ,T}$ for some $\delta >0$ and this would contradict the
definition of $T_{\ast }.$ Therefore $T_{\ast }=0.$ We then have $\psi
^{k}>0 $ in $U_{T}$ for any $k>0.$ Taking the limit $k\rightarrow 0$ we
obtain $\psi \geq 0$ and complete the proof.
\end{proof}

We now have the following result.

\begin{lemma}
\label{nonnegative copy(1)}Let $\psi \left( x,v,t\right) \in C^{\infty
}\left( U_{T}\right) $ be a solution of the adjoint equation (\ref{adjoint})
and let $F\in C\left( \left[ 0,T\right] ;L^{1}\left( \Omega \right) )\cap
L^{\infty }(\left[ 0,T\right] ;L^{\infty }\left( \Omega \right) \right) $ be
a weak solution of (\ref{VFP_A}) with (\ref{VFP_ABC}). Then we have for any $%
t\in \left[ 0,T\right] ,$%
\begin{equation*}
\int_{\Omega }F\left( x,v,t\right) \psi \left( x,v,t\right)
dxdv-\int_{\Omega }F\left( x,v,0\right) \psi \left( x,v,0\right) dxdv=0.
\end{equation*}
\end{lemma}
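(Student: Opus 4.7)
The plan is to use $\psi$ itself as a test function in the weak formulation of Definition \ref{weakSoln}. The only obstruction is that $\psi$ need not have compact support in $v$, so I would first multiply by a smooth velocity cutoff $\chi_R(v)\in C_c^\infty(\mathbb{R})$ with $\chi_R\equiv 1$ on $\{|v|\le R\}$, $\chi_R\equiv 0$ on $\{|v|\ge R+1\}$, and $0\le \chi_R\le 1$, then let $R\to\infty$.

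First I would verify that $\Psi(x,v,s):=\psi(x,v,s)\chi_R(v)$ is admissible in Definition \ref{weakSoln}: it is $C^1$ with compact $v$-support, and since $\chi_R$ depends only on $v$, the boundary vanishing $\Psi|_{\gamma_t^+}=0$ is inherited from $\psi|_{\gamma_t^+}=0$. Because $\chi_R$ commutes with $\partial_t$ and $\partial_x$, substituting $\Psi$ into the weak formulation and invoking the adjoint equation $\psi_t+\partial_x(A\psi)=\bar Q^{\varepsilon}[\psi]$, with $A(x,v):=\beta_\varepsilon(v)+(v-\beta_\varepsilon(v))\eta_\varepsilon(x)$, produces -- after combining the two nonlocal contributions via the algebraic identity
\[
\chi_R(v)\bigl[\psi(v)-\psi(v-\varepsilon\zeta)\bigr] + \psi(v-\varepsilon\zeta)\chi_R(v-\varepsilon\zeta) - \psi(v)\chi_R(v) = \psi(v-\varepsilon\zeta)\bigl[\chi_R(v-\varepsilon\zeta)-\chi_R(v)\bigr]
\]
-- the cutoff identity
\[
\int_\Omega F(t)\psi\chi_R\,dxdv - \int_\Omega f_0\,\psi\chi_R\,dxdv = \frac{2}{\varepsilon^{2}}\int_{U_t}F\int_{-\infty}^{\infty}\psi(x,v-\varepsilon\zeta,s)\bigl[\chi_R(v-\varepsilon\zeta)-\chi_R(v)\bigr]\xi(\zeta)\,d\zeta\,dxdvds.
\]

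Finally I would pass to the limit $R\to\infty$. The factor $\chi_R(v-\varepsilon\zeta)-\chi_R(v)$ is bounded by $2$, converges to $0$ pointwise in $v$, and is supported in $\{R-C\varepsilon\le|v|\le R+1+C\varepsilon\}$ since $\xi$ has compact support. Given the $L^\infty$ bound on $\psi$ built into the mild-solution framework of Lemma \ref{psiExist} and the $L^\infty([0,T];L^1(\Omega))$ bound on $F$, dominated convergence drives the right-hand side to $0$ and turns the left-hand side into $\int_\Omega F(t)\psi\,dxdv - \int_\Omega f_0\psi\,dxdv$, which is the claim. The only potentially tricky point is this $R\to\infty$ limit, and it reduces to a routine dominated-convergence argument once the a priori $L^\infty$-bound on $\psi$ is in hand.
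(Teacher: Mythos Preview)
Your argument is correct and follows the same approach as the paper: the paper's proof is the one-liner ``It follows from Definition \ref{weakSoln} and from (\ref{adjoint}),'' relying on the remark preceding Lemma \ref{psiExist} that adjoint solutions may be used as test functions via approximation by compactly supported ones. You have simply made that approximation step explicit with the velocity cutoff $\chi_R$, and your commutator computation and dominated-convergence justification are both correct.
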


\begin{proof}
It follows from Definition \ref{weakSoln} and from (\ref{adjoint}).
\end{proof}

Solutions to the adjoint problem satisfy the following property: $L^{1}$%
-norm of a solution of (\ref{adjoint}) does not increase backward in time.

\begin{lemma}
\label{psiInt}Let $\psi \left( x,v,t\right) \in C^{\infty }\left(
U_{T}\right) $ be a solution of the adjoint equation (\ref{adjoint})
backwards in time. Then it satisfies%
\begin{equation*}
\int_{\Omega }\psi \left( x,v,0\right) dxdv\leq \int_{\Omega }\psi \left(
x,v,T\right) dxdv.
\end{equation*}
\end{lemma}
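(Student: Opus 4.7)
The plan is to integrate the adjoint equation (\ref{adjoint}) over $\Omega$ and track the sign of each resulting term, showing that $t\mapsto \int_{\Omega}\psi(x,v,t)\,dxdv$ is monotone non-decreasing in $t$. Integrating the identity $\psi_{t}+\partial_{x}\!\left([\beta_{\varepsilon}(v)+(v-\beta_{\varepsilon}(v))\eta_{\varepsilon}(x)]\psi\right)=\bar{Q}^{\varepsilon}[\psi]$ in $(x,v)$ gives
\begin{equation*}
\frac{d}{dt}\int_{\Omega}\psi\,dxdv
=-\int_{\mathbb{R}}\Bigl[a_{\varepsilon}(1,v)\psi(1,v,t)-a_{\varepsilon}(0,v)\psi(0,v,t)\Bigr]dv+\int_{\Omega}\bar{Q}^{\varepsilon}[\psi]\,dxdv,
\end{equation*}
where $a_{\varepsilon}(x,v):=\beta_{\varepsilon}(v)+(v-\beta_{\varepsilon}(v))\eta_{\varepsilon}(x)$. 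The first step will be to note that $\eta_{\varepsilon}(0)=\eta_{\varepsilon}(1)=0$, which collapses $a_{\varepsilon}(0,v)$ and $a_{\varepsilon}(1,v)$ to $\beta_{\varepsilon}(v)$.

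Next I will exploit the outflow boundary condition $\psi|_{\gamma_{T}^{+}}=0$, which forces $\psi(1,v,t)=0$ for $v>0$ and $\psi(0,v,t)=0$ for $v<0$. Consequently the flux term reduces to
\begin{equation*}
-\int_{-\infty}^{0}\beta_{\varepsilon}(v)\psi(1,v,t)\,dv+\int_{0}^{\infty}\beta_{\varepsilon}(v)\psi(0,v,t)\,dv.
\end{equation*}
Because $\beta_{\varepsilon}$ has the same sign as $v$, the first integrand is non-positive and the second is non-negative; combined with $\psi\geq 0$ (Lemma \ref{nonnegative}), the total boundary contribution to $\tfrac{d}{dt}\int\psi$ is non-negative (note the outer minus sign).

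For the non-local term, the key observation is that $\bar{Q}^{\varepsilon}$ is invariant under integration in $v$: by Fubini and the translation $v\mapsto v-\varepsilon\zeta$,
\begin{equation*}
\int_{\Omega}\bar{Q}^{\varepsilon}[\psi]\,dxdv=\frac{2}{\varepsilon^{2}}\int_{-\infty}^{\infty}\xi(\zeta)\!\int_{\Omega}\!\bigl[\psi(x,v,t)-\psi(x,v-\varepsilon\zeta,t)\bigr]\,dxdv\,d\zeta=0,
\end{equation*}
so the diffusion-like term contributes nothing. Combining the two observations yields $\tfrac{d}{dt}\int_{\Omega}\psi(x,v,t)\,dxdv\geq 0$, and integrating from $0$ to $T$ gives the desired inequality.

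The only subtlety I anticipate is justifying these Fubini/integration-by-parts steps, since $v$ ranges over all of $\mathbb{R}$. This is handled by the fact that the smooth solution $\psi$ constructed in Lemma \ref{psiExist} inherits $L^{1}\cap L^{\infty}$ bounds from the fixed-point construction of Lemma \ref{2} (applied to the adjoint mild formulation), so $\psi(\cdot,\cdot,t)\in L^{1}(\Omega)$ uniformly in $t$, and the decay of $\psi_{T}$ at $|v|\to\infty$ propagates backward through the transport-plus-jump dynamics. With integrability of $\psi$ in hand, all boundary and Fubini manipulations above are legitimate, completing the argument.
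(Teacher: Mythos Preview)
Your proof is correct and follows essentially the same approach as the paper: integrate the adjoint equation over $\Omega$, use $\eta_{\varepsilon}(0)=\eta_{\varepsilon}(1)=0$ to reduce the boundary flux to $\beta_{\varepsilon}$, apply the outflow condition $\psi|_{\gamma_{T}^{+}}=0$ together with $\psi\geq 0$ from Lemma~\ref{nonnegative}, and observe that the $\bar{Q}^{\varepsilon}$ term integrates to zero. Your write-up is in fact more explicit than the paper's, which silently drops the $\bar{Q}^{\varepsilon}$ contribution and does not comment on integrability in $v$.
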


\begin{proof}
We integrate (\ref{adjoint}) in $x$ and \thinspace $v$ to get%
\begin{eqnarray*}
\frac{d}{dt}\int_{\Omega }\psi \left( x,v,t\right) dxdv &=&\int_{-\infty
}^{\infty }\beta _{\varepsilon }\left( v\right) \psi \left( 0,v,t\right)
dv-\int_{-\infty }^{\infty }\beta _{\varepsilon }\left( v\right) \psi \left(
1,v,t\right) dv \\
&=&\int_{0}^{\infty }\beta _{\varepsilon }\left( v\right) \psi \left(
0,v,t\right) dv-\int_{-\infty }^{0}\beta _{\varepsilon }\left( v\right) \psi
\left( 1,v,t\right) dv.
\end{eqnarray*}%
Now using Lemma \ref{nonnegative}, we can deduce the lemma.
\end{proof}

\

We now go back to our original approximated Fokker-Planck problem and
establish the following maximum and minimum principles for weak solutions of
(\ref{VFP_A}). For that purpose, we first recall a basic lemma in measure
theory.

\begin{lemma}
\label{meas}Let $A$ be a set with a positive measure and let $\delta >0$
small be given. Then there exists a ball $B$ such that%
\begin{equation*}
\frac{\text{meas}\left( B\cap A\right) }{\text{meas}\left( B\right) }%
>1-\delta .
\end{equation*}
\end{lemma}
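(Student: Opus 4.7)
The plan is to invoke the Lebesgue density theorem, which is the natural tool here since the statement is exactly a local density assertion. Concretely, I would begin by recalling that for any Lebesgue measurable set $A\subset\mathbb{R}^n$, almost every point of $A$ is a point of density one, i.e.,
\[
\lim_{r\to 0^+}\frac{\text{meas}\bigl(B_r(x)\cap A\bigr)}{\text{meas}\bigl(B_r(x)\bigr)}=1
\qquad\text{for a.e. }x\in A.
\]
This is standard and follows from the Lebesgue differentiation theorem applied to the indicator function $\chi_A$.

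Next, I would use the hypothesis that $A$ has positive measure to conclude that the set of density points of $A$ has the same positive measure, and in particular is non-empty. Pick any such density point $x_0\in A$. Given $\delta>0$, apply the definition of density point: there exists $r_0>0$ such that for all $0<r\le r_0$,
\[
\frac{\text{meas}\bigl(B_r(x_0)\cap A\bigr)}{\text{meas}\bigl(B_r(x_0)\bigr)}>1-\delta.
\]
Taking $B=B_{r_0}(x_0)$ (or any smaller concentric ball) yields the desired conclusion.

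I do not expect any real obstacle here, since the statement is a direct consequence of a well-known theorem. The only minor point worth flagging would be measurability of $A$, which is tacit in the lemma's formulation. An alternative, more elementary route, if one wants to avoid citing the differentiation theorem, is to use outer regularity to cover $A$ by an open set $U$ with $\text{meas}(U)<\text{meas}(A)/(1-\delta)$, decompose $U$ into an almost-disjoint countable union of dyadic cubes, and apply the pigeonhole principle to locate a single cube (hence a comparable ball) on which the density of $A$ exceeds $1-\delta$. Either approach gives the lemma in essentially one page; the density-point argument is the cleanest.
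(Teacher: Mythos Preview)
Your proposal is correct and matches the paper's own proof essentially verbatim: the paper also invokes the Lebesgue density theorem (citing Stein), picks a density point $x_0\in A$, and chooses $r>0$ small enough. Your additional remark about the outer-regularity/dyadic-cube alternative is a nice extra but not needed here.
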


\begin{proof}
Let $A$ be a set with a positive measure. Let us denote as $B_{r}\left(
x_{0}\right) $ the ball $\left\{ \left\vert x-x_{0}\right\vert <r\right\} .$
Then we get%
\begin{equation*}
\frac{\text{meas}\left( B_{r}\left( x_{0}\right) \cap A\right) }{\text{meas}%
\left( B_{r}\left( x_{0}\right) \right) }\rightarrow 1\text{ ,\ \ }a.e\text{%
.\ }x_{0}\in A\text{\ as }r\rightarrow 0,
\end{equation*}%
where $\chi _{\Omega }$ is the characteristic function on the set $\Omega $
(cf. \cite{St}), whence the result follows, choosing a suitable $x_{0}\in A$
and $r>0$ small.
\end{proof}

We now present the maximum principle for weak solutions of (\ref{VFP_A}):

\begin{lemma}
\label{maxPrinciple} \label{cor1} If $f_{0}\in L^{\infty }\left( \Omega
\right) ,$ then a weak solution $f^{\varepsilon }$ to (\ref{VFP_A})-(\ref%
{VFP_ABC}) satisfies
\begin{equation*}
f^{\varepsilon }\left( x,v,t\right) \leq \left\Vert f_{0}\right\Vert
_{L^{\infty }\left( \Omega \right) }
\end{equation*}%
up to a measure zero set.
\end{lemma}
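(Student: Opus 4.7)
The approach is to combine duality with the adjoint equation \eqref{adjoint} and a Lebesgue-density argument using Lemma \ref{meas}. Fix $t \in (0,T]$ and let $\Phi \in C^\infty(\Omega)$ be any nonnegative function satisfying the compatibility condition \eqref{comp}. Lemma \ref{psiExist}, applied on the cylinder $U_t$ with terminal datum $\Phi$, produces a smooth solution $\psi$ of the backward adjoint problem \eqref{adjoint}. Lemma \ref{nonnegative} ensures $\psi \geq 0$ in $U_t$, Lemma \ref{psiInt} gives $\int_\Omega \psi(\cdot,\cdot,0)\,dxdv \leq \int_\Omega \Phi\,dxdv$, and Lemma \ref{nonnegative copy(1)} combined with the bound $f_0 \leq \|f_0\|_{L^\infty(\Omega)}$ yields
\begin{equation*}
\int_\Omega f^\varepsilon(x,v,t)\,\Phi(x,v)\,dxdv \;=\; \int_\Omega f_0(x,v)\,\psi(x,v,0)\,dxdv \;\leq\; \|f_0\|_{L^\infty(\Omega)}\int_\Omega \Phi(x,v)\,dxdv.
\end{equation*}
This inequality holds for every admissible test function $\Phi$.

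I would then argue by contradiction. Suppose that for some $t_* \in (0,T]$ and some $\alpha > 0$ the set $A := \{(x,v) \in \Omega : f^\varepsilon(x,v,t_*) > \|f_0\|_{L^\infty(\Omega)} + \alpha\}$ has positive Lebesgue measure. Since the parameter $\delta$ appearing in \eqref{comp} is free, $\mathrm{meas}(N)$ can be made arbitrarily small, so $\mathrm{meas}(A \setminus N) > 0$ as well. Invoking Lemma \ref{meas} on $A \setminus N$ furnishes a ball $B$, which after a small contraction centered at a Lebesgue-density point of $A \setminus N$ lying outside $N$ satisfies $\overline{B} \cap N = \emptyset$ and $\mathrm{meas}(B \cap A) \geq (1-\tau)\,\mathrm{meas}(B)$ for any preselected $\tau > 0$. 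Choose a mollified bump $\Phi \in C^\infty(\Omega)$ with $0 \leq \Phi \leq 1$, $\Phi \equiv 1$ on $B$, $\mathrm{supp}\,\Phi \subset B^{*}$, where $B^{*} \supset B$ is an enlargement with $\mathrm{meas}(B^{*}) \leq (1+\tau)\,\mathrm{meas}(B)$ and $B^{*} \cap N = \emptyset$; such $\Phi$ satisfies \eqref{comp}. Using the non-negativity $f^\varepsilon \geq 0$ (inherited from the mild iteration in Lemma \ref{2}, since the jump operator $Q^\varepsilon$ preserves non-negativity under the natural choice $\xi \geq 0$), the displayed inequality forces
\begin{equation*}
\bigl(\|f_0\|_{L^\infty(\Omega)} + \alpha\bigr)(1-\tau)\,\mathrm{meas}(B) \;\leq\; \|f_0\|_{L^\infty(\Omega)}\,(1+\tau)\,\mathrm{meas}(B),
\end{equation*}
which fails once $\tau$ is chosen small relative to $\alpha/\|f_0\|_{L^\infty(\Omega)}$. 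Since $\alpha > 0$ is arbitrary, this gives $f^\varepsilon \leq \|f_0\|_{L^\infty(\Omega)}$ a.e.\ on $\Omega$ at every time $t$.

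The main obstacle will be reconciling two competing requirements on the test function $\Phi$: it must concentrate sharply on the putative super-level set $A$, yet must vanish on the singular neighbourhood $N$ forced by \eqref{comp}. This is resolved by exploiting the flexibility that $\mathrm{meas}(N) \to 0$ as $\delta \to 0$, so the concentration argument can always be carried out on a ball disjoint from $N$, with Lemma \ref{meas} supplying such a ball from inside the positive-measure remainder $A \setminus N$.
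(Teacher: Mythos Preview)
Your proof follows essentially the same route as the paper's: duality via the adjoint problem \eqref{adjoint}, combined with a Lebesgue-density contradiction using Lemma~\ref{meas}. The one substantive difference lies in how you extract the lower bound for $\int_\Omega f^\varepsilon(t_*)\,\Phi$. You drop the contribution from $B\setminus A$ by invoking $f^\varepsilon \geq 0$; the paper instead bounds $\bigl|\int_{B\setminus A} f^\varepsilon\,\psi_T\bigr|$ by $\|f^\varepsilon\|_{L^\infty}\|\psi_T\|_{L^\infty}\,\mathrm{meas}(B\setminus A)$, which is small because $\mathrm{meas}(B\setminus A) < \tau\,\mathrm{meas}(B)$ and $f^\varepsilon \in L^\infty$ by construction of the mild solution. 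This lets the paper avoid any appeal to non-negativity at this stage.

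Your route is fine in spirit, but the justification you give for $f^\varepsilon \geq 0$ is not correct as written: the operator $Q^\varepsilon[F]$ does \emph{not} preserve non-negativity (it approximates $\partial_{vv}$ and is typically negative at local maxima of $F$), so the Picard iterates of $\mathcal{T}$ in Lemma~\ref{2} need not remain non-negative. What does work, under the extra hypothesis $\xi \geq 0$, is to absorb the loss term $-\tfrac{2}{\varepsilon^2}F$ into an exponential integrating factor along characteristics and recast the mild formulation as the fixed point of a manifestly positivity-preserving map; alternatively, $f^\varepsilon \geq 0$ is precisely Lemma~\ref{minicomparison}, proved independently by the same duality method. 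Either fix closes the gap, but the paper's version is cleaner here because it never needs the sign of $f^\varepsilon$.

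Your handling of the compatibility condition \eqref{comp}---shrinking $\delta$ so that $\mathrm{meas}(N)$ is negligible and then centering the density ball inside $A\setminus N$---is more explicit than the paper's, which leaves this point implicit.
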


\begin{proof}
Let $M=\left\Vert f_{0}\right\Vert _{L^{\infty }\left( \Omega \right) }$,
then we want to prove that $f^{\varepsilon }\left( x,v,t\right) \leq M$ for
all $\left( x,v,t\right) \in \left( 0,1\right) \times \left( -\infty ,\infty
\right) \times \left( 0,\infty \right) $ almost everywhere. We prove this by
contradiction. Suppose the weak solution $f^{\varepsilon }\left( \cdot
,\cdot ,T\right) >M$ at time $t=T$ on a set with a positive measure. Then
there is $\kappa >0$ small such that $f^{\varepsilon }>M+\kappa $ on a set
with a positive set, say $A.$ Since $A$ has a positive measure, we apply
Lemma \ref{meas} to ensure that for any given $\delta >0$ small there exists
a ball $B$ $\subset \Omega $ such that%
\begin{equation}
\text{meas}\left( B\cap A\right) >\text{meas}\left( B\right) \left( 1-\delta
\right) .
\end{equation}%
Then we choose a smooth function $\psi _{T}\left( x,v\right) \in C^{\infty
}\left( \Omega \right) $ such that $\psi _{T}\geq 0$, supp$(\psi _{T})$ is
contained in $\bar{B}$, $\psi _{T}$ is uniformly bounded, and
\begin{equation}
\left\vert \int_{\Omega }\frac{\chi _{B}}{\text{meas}\left( B\right) }%
dxdv-\int_{\Omega }{\psi _{T}}\left( x,v\right) dxdv\right\vert <\delta .
\label{psiT}
\end{equation}%
Then by Lemma \ref{psiExist}, Lemma \ref{nonnegative}, Lemma \ref{psiInt},
there exists a smooth test function $\psi \left( x,v,t\right) \in C^{\infty
}\left( U_{T}\right) $ such that $\psi \geq 0$ and $\int_{\Omega }\psi
\left( x,v,0\right) dxdv\leq \int_{\Omega }\psi \left( x,v,T\right) dxdv$ and%
\begin{equation*}
\psi _{t}+\partial _{x}\left( \left[ \beta _{\varepsilon }\left( v\right)
+\left( v-\beta _{\varepsilon }\left( v\right) \right) \eta _{\varepsilon
}\left( x\right) \right] \psi \right) -\bar{Q}^{\varepsilon }\left[ \psi %
\right] \left( x,v,t\right) =0,~\psi \left( x,v,T\right) =\psi _{T}\left(
x,v\right) ,~~\ \psi |_{_{\gamma _{T}^{+}}}=0.
\end{equation*}%
Then we estimate $\int_{\Omega }f^{\varepsilon }\left( x,v,0\right) \psi
\left( x,v,0\right) dxdv$ and $\int_{\Omega }f^{\varepsilon }\left(
x,v,T\right) \psi \left( x,v,T\right) dxdv$ respectively.%
\begin{eqnarray}
\int_{\Omega }f^{\varepsilon }\left( x,v,0\right) \psi \left( x,v,0\right)
dxdv &\leq &M\int_{\Omega }\psi \left( x,v,0\right) dxdv\leq M\int_{\Omega
}\psi \left( x,v,T\right) dxdv  \notag \\
&=&M\int_{\Omega }\psi _{T}\left( x,v\right) dxdv\leq M\left( 1+\delta
\right) \leq M+C_{1}\delta ,  \label{IntatZero}
\end{eqnarray}%
where we used Lemma \ref{psiInt} and (\ref{psiT}). To estimate $\int_{\Omega
}f^{\varepsilon }\left( x,v,T\right) \psi \left( x,v,T\right) dxdv$%
\begin{equation*}
\int_{\Omega }f^{\varepsilon }\left( x,v,T\right) \psi \left( x,v,T\right)
dxdv=\int_{B}f^{\varepsilon }\left( x,v,T\right) \psi _{T}\left( x,v\right)
dxdv
\end{equation*}%
\begin{equation*}
=\int_{B\cap A}f^{\varepsilon }\left( x,v,T\right) \psi _{T}\left(
x,v\right) dxdv+\int_{B\backslash A}f^{\varepsilon }\left( x,v,T\right) \psi
_{T}\left( x,v\right) dxdv=:I+II.
\end{equation*}%
From the construction of $\psi _{T}$ and the definition of the set $A$ with
a positive measure, we deduce%
\begin{eqnarray*}
I &\geq &\left( M+\kappa \right) \int_{B\cap A}\psi _{T}\left( x,v\right)
dxdv \\
&\geq &\left( M+\kappa \right) \left[ \int_{B\cap A}\frac{\chi _{B}}{\text{%
meas}\left( B\right) }dxdv-\int_{B\cap A}\left\{ \frac{\chi _{B}}{\text{meas}%
\left( B\right) }-\psi _{T}\left( x,v\right) \right\} dxdv\right] \\
&\geq &\left( M+\kappa \right) \left[ \frac{\text{meas}\left( B\cap A\right)
}{\text{meas}\left( B\right) }-\delta \right] >\left( M+\kappa \right)
\left( 1-2\delta \right) =M+\kappa -2\left( M+\kappa \right) \delta .
\end{eqnarray*}%
For $II$, we use the fact that $f^{\varepsilon }$ and $\psi _{T}$ are
bounded and (\ref{meas}) to get%
\begin{equation*}
\left\vert II\right\vert \leq \left\Vert f^{\varepsilon }\right\Vert
_{L^{\infty }}\left\Vert \psi _{T}\right\Vert _{L^{\infty }}\text{meas}%
\left( B\backslash A\right) <\left\Vert f^{\varepsilon }\right\Vert
_{L^{\infty }}\left\Vert \psi _{T}\right\Vert _{L^{\infty }}\text{meas}%
\left( B\right) \delta .
\end{equation*}%
Combining the estimates for $I$ and $II,$ we obtain%
\begin{equation}
\int_{\Omega }f^{\varepsilon }\left( x,v,T\right) \psi \left( x,v,T\right)
dxdv\geq M+\kappa -C_{2}\delta ,  \label{IntatT}
\end{equation}%
where $C_{2}$ is independent of $\delta $ and depends only on $M+\kappa
,\left\Vert f^{\varepsilon }\right\Vert _{L^{\infty }},\left\Vert \psi
_{T}\right\Vert _{L^{\infty }},$ and meas$\left( B\right) .$

Now suppose $f^{\varepsilon }$ is a weak solution in Definition \ref%
{weakSoln}. Then we choose our test function $\psi \left( x,v,t\right) $ as
in the above to apply Lemma \ref{nonnegative copy(1)} and get%
\begin{equation*}
\int_{\Omega }f^{\varepsilon }\left( x,v,T\right) \psi \left( x,v,T\right)
dxdv=\int_{\Omega }f^{\varepsilon }\left( x,v,0\right) \psi \left(
x,v,0\right) dxdv.
\end{equation*}%
Then using the estimates (\ref{IntatZero}), (\ref{IntatT}), we deduce%
\begin{equation*}
M+\kappa -C_{2}\delta \leq \int_{\Omega }f^{\varepsilon }\left( x,v,T\right)
\psi \left( x,v,T\right) dxdv=\int_{\Omega }f^{\varepsilon }\left(
x,v,0\right) \psi \left( x,v,0\right) dxdv\leq M+C_{1}\delta .
\end{equation*}%
Thus if $\delta $ is chosen sufficiently small in such a way that $\left(
C_{1}+C_{2}\right) \delta <\kappa /2$, we can get a contradiction.
Therefore, $\left\Vert f^{\varepsilon }\right\Vert _{L^{\infty }\left(
U_{T}\right) }\leq M$, that is, $f^{\varepsilon }$ satisfies the maximum
principle. We complete the proof.
\end{proof}

We also derive the non-negativity of solutions, which is the minimum
principle for weak solutions.

\begin{lemma}
\label{minicomparison}If $f_{0}\left( x,v\right) \geq 0$ and $f^{\varepsilon
}\left( x,v,t\right) $ is a weak solution of (\ref{VFP_A})-(\ref{VFP_ABC}),
then%
\begin{equation*}
f^{\varepsilon }\left( x,v,t\right) \geq 0
\end{equation*}%
up to a measure zero set.
\end{lemma}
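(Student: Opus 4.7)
The plan is to mimic the maximum principle argument of Lemma \ref{maxPrinciple}, replacing the upper bound $M$ by the lower bound $0$ and reversing the sign of the test function's effect. Suppose, for contradiction, that on some time slice $t=T$ the set $A := \{(x,v)\in\Omega : f^\varepsilon(x,v,T) < 0\}$ has positive measure. Then for some $\kappa > 0$ the set $A_\kappa := \{f^\varepsilon(\cdot,\cdot,T) < -\kappa\}$ still has positive measure. Since the singular neighborhood $N$ in \eqref{comp} has finite measure and is fixed independently of $A_\kappa$, and since Lemma \ref{meas} produces arbitrarily small balls of high density, I can find a ball $B \subset \Omega \setminus N$ with $\mathrm{meas}(B \cap A_\kappa) > (1-\delta)\,\mathrm{meas}(B)$ for any prescribed small $\delta > 0$.

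Next, I would choose a nonnegative smooth terminal datum $\psi_T \in C^\infty(\Omega)$ supported in $\overline{B}$, uniformly bounded, and satisfying $\big|\int_\Omega\psi_T\, dx\, dv - \int_\Omega \chi_B/\mathrm{meas}(B)\, dx\, dv\big| < \delta$. Because $\mathrm{supp}(\psi_T) \subset \overline{B} \subset \Omega \setminus N$, the compatibility condition \eqref{comp} holds automatically, so Lemma \ref{psiExist} supplies a smooth solution $\psi \in C^\infty(U_T)$ of the adjoint equation \eqref{adjoint} with this terminal datum. By Lemma \ref{nonnegative}, $\psi \geq 0$ throughout $U_T$, and by Lemma \ref{psiInt}, $\int_\Omega \psi(\cdot,\cdot,0)\, dx\, dv \leq \int_\Omega \psi_T\, dx\, dv \leq 1 + \delta$.

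Now I exploit the duality identity of Lemma \ref{nonnegative copy(1)}. Since $f_0 \geq 0$ and $\psi(\cdot,\cdot,0) \geq 0$, the initial integral is nonnegative:
\[
\int_\Omega f^\varepsilon(x,v,0)\psi(x,v,0)\, dx\, dv \geq 0.
\]
On the other hand, splitting $B = (B \cap A_\kappa) \cup (B \setminus A_\kappa)$ and using the uniform bound $\|f^\varepsilon\|_{L^\infty} \leq \|f_0\|_{L^\infty}$ from Lemma \ref{maxPrinciple}, together with $\|\psi_T\|_{L^\infty} \leq C$, I get
\[
\int_\Omega f^\varepsilon(x,v,T)\psi_T(x,v)\, dx\, dv \leq -\kappa(1-2\delta) + C\,\mathrm{meas}(B)\,\delta \leq -\kappa/2
\]
provided $\delta$ is sufficiently small relative to $\kappa$, $\|f_0\|_{L^\infty}$ and $\mathrm{meas}(B)$. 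This strictly negative upper bound contradicts the nonnegative lower bound for the equal initial-time integral, so $A_\kappa$ must have measure zero; letting $\kappa \to 0$ gives $f^\varepsilon \geq 0$ a.e.

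The only delicate point, just as in the maximum principle, is arranging that the chosen test function $\psi_T$ be admissible for the adjoint problem — i.e. that it vanishes on the singular neighborhood $N$. This is handled by selecting the density-point ball $B$ inside the open set $\Omega \setminus N$, which is possible because $A_\kappa \cap (\Omega \setminus N)$ still has positive measure (otherwise $A_\kappa \subset N$, but then one uses that on $N$ the relevant quantities are controlled, or one simply remarks that $\mathrm{meas}(A_\kappa \setminus N) > 0$ once $\delta$ is refined). Everything else is a direct transcription of the argument in Lemma \ref{maxPrinciple} with signs flipped.
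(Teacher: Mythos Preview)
Your proposal is correct and follows exactly the approach the paper intends: the paper's own proof simply reads ``It can be proved similarly to the proof of Lemma~\ref{maxPrinciple} and we skip its proof,'' and you have faithfully carried out that sign-reversed duality argument. The only remark is that your caveat about the neighborhood $N$ is slightly overcautious---since the parameter $\delta$ in \eqref{comp} is free and the density ball $B$ from Lemma~\ref{meas} may be taken arbitrarily small about a Lebesgue point of $A_\kappa$ away from $\{(0,0),(1,0)\}$, one can always arrange $\overline{B}\cap N=\varnothing$ directly.
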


\begin{proof}
It can be proved similarly to the proof of Lemma \ref{maxPrinciple} and we
skip its proof.
\end{proof}

The above two lemmas together provide the uniqueness result for solutions of
the approximate Fokker-Planck equation (\ref{VFP_A}) with (\ref{VFP_ABC}).

\begin{corollary}
\label{uniqueness-e}Let $f_{1}^{\ep},f_{2}^{\ep}$ be two weak solutions of (%
\ref{VFP_A}) with the same initial and boundary conditions (\ref{VFP_ABC}).
Then $f_{1}^{\ep}=f_{2}^{\ep}$ in $L^{\infty }(U_{T})$.
\end{corollary}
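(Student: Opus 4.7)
The plan is to exploit linearity of the approximate equation together with the maximum principle of Lemma \ref{maxPrinciple}, applied to the difference of two solutions and to its negative. Let $g:=f_{1}^{\varepsilon}-f_{2}^{\varepsilon}$. Since both $f_{1}^{\varepsilon}$ and $f_{2}^{\varepsilon}$ satisfy the weak formulation of Definition \ref{weakSoln} with the same initial datum $f_{0}$ and the same vanishing boundary values on $\gamma_{T}^{-}$, subtracting the two identities shows that $g$ also satisfies that weak formulation with initial datum $g_{0}\equiv 0$ (the equation and the incoming-boundary condition are linear in $f^{\varepsilon}$, and the jump operator $Q^{\varepsilon}$ is linear). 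In particular $g\in C([0,T];L^{1}(\Omega))\cap L^{\infty}([0,T];L^{\infty}(\Omega))$ is a weak solution in the sense of Definition \ref{weakSoln} whose initial datum has $L^{\infty}$-norm equal to zero.

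The crucial observation is that the maximum principle in Lemma \ref{maxPrinciple} is stated and proved without any sign assumption on the initial datum: its conclusion $f^{\varepsilon}\leq \Vert f_{0}\Vert_{L^{\infty}(\Omega)}$ only uses that the adjoint test function $\psi$ constructed via Lemmas \ref{psiExist}--\ref{psiInt} is non-negative, together with the pointwise bound $f_{0}\leq \Vert f_{0}\Vert_{L^{\infty}}$. Applying Lemma \ref{maxPrinciple} to $g$ therefore yields $g\leq 0$ almost everywhere in $U_{T}$. Since $-g$ is again a weak solution of (\ref{VFP_A})--(\ref{VFP_ABC}) with zero initial datum, the same lemma applied to $-g$ gives $-g\leq 0$ almost everywhere, hence $g\geq 0$. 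Combining the two inequalities forces $g=0$ in $L^{\infty}(U_{T})$, which is the desired conclusion.

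The only subtle point worth verifying is the first step, namely that the class of weak solutions is genuinely closed under subtraction; this is where linearity of $\beta_{\varepsilon}(v)+(v-\beta_{\varepsilon}(v))\eta_{\varepsilon}(x)$ in the coefficients of the transport term and of $Q^{\varepsilon}$ is essential. Beyond this, there is no real obstacle: the heavy lifting has already been done in Lemma \ref{maxPrinciple}, and the uniqueness statement reduces to applying it twice, to $g$ and to $-g$, each with vanishing initial data.
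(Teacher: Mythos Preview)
Your argument is correct and is essentially the paper's own proof: the paper simply cites Lemma~\ref{maxPrinciple} and Lemma~\ref{minicomparison}, which amounts precisely to applying the maximum principle to the difference $g=f_{1}^{\varepsilon}-f_{2}^{\varepsilon}$ (giving $g\leq 0$) and then to $-g$ (the minimum principle, giving $g\geq 0$). Your observation that Lemma~\ref{maxPrinciple} needs no sign assumption on the initial datum is exactly what makes this work.
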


\begin{proof}
It follows from Lemma \ref{maxPrinciple} and Lemma \ref{minicomparison}.
\end{proof}

Next, we show that the total mass is non-increasing up to a correction $O(\ep%
^{2})$. The subtlety here is that we are not able to use the integration by
parts since regularity has yet to be shown.

\begin{lemma}
\label{mass} Let $f_{0}\in L^{1}\cap L^{\infty }\left( \Omega \right) $ and $%
f_{0}\geq 0$ given. Then the total mass of a mild solution $f^{\varepsilon }$
for $0\leq t\leq T$ satisfies the following inequality.
\begin{equation}
\int_{\Omega }f^{\varepsilon }\left( x,v,t\right) dxdv\leq \int_{\Omega
}f_{0}(x,v)dxdv+r^{\varepsilon }(t)+q^{\ep}(t),  \label{ineq}
\end{equation}%
where $0\leq r^{\ep}(t)\leq 8\ep^{4}CTe^{\ep CT}\Vert f_{0}\Vert _{L^{\infty
}\left( \Omega \right) }$, $|q^{\ep}(t)|\leq 64C\varepsilon ^{2}T^{2}(1+\ep %
T)e^{\ep CT}\Vert f_{0}\Vert _{\infty },$ and $C$ is independent of $T$ and $%
\ep$.
\end{lemma}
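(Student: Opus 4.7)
The plan is to work directly from the mild formulation \eqref{mild}, since regularity is not yet available and we cannot integrate the PDE against $\psi\equiv 1$. Integrating \eqref{mild} over $\Omega$ gives
\begin{equation*}
\int_\Omega f^\varepsilon(x,v,t)\,dx\,dv = \int_\Omega \bar{f}_0(X(t_0),v)\,dx\,dv + \int_\Omega\int_{t_0}^t Q^\varepsilon[f^\varepsilon](X(s),v)\,ds\,dx\,dv,
\end{equation*}
and I would identify the first summand as $\int_\Omega f_0\,dx\,dv + r^\varepsilon(t)$ and bound the second by $q^\varepsilon(t)$.

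For the initial-data term I would split $\Omega=\Omega_1\cup(\Omega\setminus\Omega_1)$ as in \eqref{Omega1}, noting that $\bar{f}_0\equiv 0$ on the complement by definition. On $\Omega_1$ the change of variables $y=X(0;x,v,t)$ introduces the Jacobian $|J(t;0)|$ of Lemma~\ref{Jacobian}, giving
\begin{equation*}
\int_{\Omega_1}\bar{f}_0(X(0),v)\,dx\,dv = \int_{\tilde\Omega_1} f_0(y,v)\,|J(t;0)|\,dy\,dv \leq \int_\Omega f_0\,dy\,dv + r^\varepsilon(t),
\end{equation*}
where $r^\varepsilon(t)=\int_{\tilde\Omega_1}f_0(y,v)(|J(t;0)|-1)_+\,dy\,dv\ge 0$. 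The delicate point is the $\varepsilon^4$-bound: from the integral equation in the proof of Lemma~\ref{Jacobian}, $|J-1|$ vanishes unless $(v-\beta_\varepsilon(v))\neq 0$ (forcing $|v|\le 2\varepsilon^2$) and unless the characteristic meets $\mathrm{supp}(\eta_\varepsilon')$; since the characteristic has speed $|dX/ds|\le 4\varepsilon^2$ on this slow set, the $y$-coordinate stays in an $O(\varepsilon)$-neighborhood of $\{0,1\}$, so the support of $|J-1|$ has two-dimensional measure $O(\varepsilon^3)$. Combined with the pointwise bound $|J-1|\le\varepsilon CTe^{\varepsilon CT}$ and $\|f_0\|_\infty$, this yields the stated $r^\varepsilon\le 8\varepsilon^4 CTe^{\varepsilon CT}\|f_0\|_\infty$.

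For the $Q^\varepsilon$ term I would first interchange the $s$-integral with the $(x,v)$-integral and then, for each fixed $s$, change variables $(x,v)\mapsto(y,v)=(X(s;x,v,t),v)$, arriving at
\begin{equation*}
\int_0^t\int_{\Omega_s^\ast}Q^\varepsilon[f^\varepsilon](y,v,s)\,|J(t;s)|\,dy\,dv\,ds,
\end{equation*}
where $\Omega_s^\ast$ is the image of $\Omega\cap\{t_0<s\}$. Splitting $|J(t;s)|=1+(|J(t;s)|-1)$, the Jacobian-correction piece is handled as in the initial-data step using the maximum-principle bound $\|Q^\varepsilon[f^\varepsilon]\|_\infty\le 4\varepsilon^{-2}\|f_0\|_\infty$: multiplying $\varepsilon^{-2}$ by the $O(\varepsilon^3)$ support area and the $\varepsilon CTe^{\varepsilon CT}$ amplitude and integrating in $s$ produces an $O(\varepsilon^2 T^2)$ contribution. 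For the $|J|\equiv 1$ piece I would exploit the crucial cancellation
\begin{equation*}
\int_{\mathbb R}Q^\varepsilon[f^\varepsilon](y,v,s)\,dv = 0 \quad\text{for a.e.\ } y\in(0,1),
\end{equation*}
obtained by a shift of variable in the kernel of $Q^\varepsilon$, so that the integral over $\Omega_s^\ast$ equals the negative of the integral over the absorbed set $A_s=\Omega\setminus\Omega_s^\ast$; a shift-of-variable argument on $\chi_{A_s}$, together with the explicit $v$-boundary curves $v=(1-y)/(t-s)$ and $v=-y/(t-s)$ of $A_s$, will then close the estimate on $q^\varepsilon$.

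The main obstacle will be the geometric bookkeeping: pinning down the $O(\varepsilon^3)$ support of $|J-1|$, describing the $v$-boundary of $A_s$, and combining these with the cancellation $\int_{\mathbb R}Q^\varepsilon\,dv=0$ so that the positive powers of $\varepsilon$ emerge in both $r^\varepsilon$ and $q^\varepsilon$. Without the support refinement the best bound on $r^\varepsilon$ would be $O(\varepsilon)$, and without the cancellation the pointwise $\varepsilon^{-2}$ growth of $Q^\varepsilon$ would force $q^\varepsilon$ to dominate the mass, rendering the whole mass estimate useless.
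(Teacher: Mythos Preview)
Your treatment of the initial-data term $I$ is essentially identical to the paper's: both change variables $x\mapsto y=X(0)$, write $|J(t;0)|=1+(\text{correction})$, and observe that the correction is supported on a set of $(y,v)$-measure $O(\varepsilon^{3})$ (because $v-\beta_\varepsilon(v)\ne 0$ forces $|v|\le 2\varepsilon^{2}$, and meeting $\mathrm{supp}\,\eta_\varepsilon'$ forces $y$ into an $O(\varepsilon)$ strip near $\{0,1\}$). The paper extracts the sign of the correction a bit more carefully, but the $r^\varepsilon$ bound you sketch is correct.

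The $Q^\varepsilon$ term is where your plan diverges, and here there is a genuine gap. After your change of variables the ``$|J|=1$'' piece becomes $\int_0^t\!\int_{\Omega_s^\ast}Q^\varepsilon[f^\varepsilon]\,dy\,dv\,ds$, and you invoke $\int_{\mathbb R}Q^\varepsilon\,dv=0$ to convert this into $-\int_0^t\!\int_{A_s}Q^\varepsilon[f^\varepsilon]$. But the absorbed set $A_s$ is \emph{not} small: for $|v|>2\varepsilon^{2}$ its $y$-slice has measure $\min(|v|(t-s),1)$, independently of $\varepsilon$. Your ``shift of $\chi_{A_s}$'' gains at most one factor of $\varepsilon$ (the $v$-width of the boundary layer of $A_s$), so after multiplying by the $2/\varepsilon^{2}$ in $Q^\varepsilon$ and integrating in $s$ you are left with a bound of order $\varepsilon^{-1}T\|f_0\|_\infty$, nowhere near the $O(\varepsilon^{2}T^{2})$ required for $q^\varepsilon$. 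The moment condition $\int\zeta\xi=0$ does not help here without $v$-regularity of $f^\varepsilon$, which is unavailable at this stage.

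The paper avoids this difficulty by \emph{not} keeping $Q^\varepsilon$ assembled. It splits $Q^\varepsilon[f^\varepsilon]=\frac{2}{\varepsilon^{2}}\int\xi(\zeta)\big[f^\varepsilon(X(s),v+\varepsilon\zeta,s)-f^\varepsilon(X(s),v,s)\big]d\zeta$ into the two pieces $(iii)$ and $(iv)$ and applies a \emph{different} change of variables to each: $(x,v)\to(y,v)$ for $(iv)$, but $(x,v)\to(y,w)$ with $w=v+\varepsilon\zeta$ for $(iii)$. After this simultaneous shift both leading terms become the same integral $\int_0^t\!\int f^\varepsilon(y,\cdot,s)$ and cancel exactly; what survives are only the Jacobian corrections $(iii)_2,(iv)_2$, which---just as in the $r^\varepsilon$ analysis---are supported where $(v-\beta_\varepsilon(v))\eta_\varepsilon'\ne 0$, hence on a set of measure $O(\varepsilon^{3}(1+\varepsilon T))$. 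This localisation absorbs the $\varepsilon^{-2}$ and yields the $O(\varepsilon^{2}T^{2})$ bound. The missing idea in your proposal is precisely this extra $v\mapsto w=v+\varepsilon\zeta$ substitution in the gain term of $Q^\varepsilon$.
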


\begin{proof}
We will estimate $L^{1}$ norm from the integral form \eqref{mild}. By
integrating \eqref{mild} over $\Omega $, we obtain
\begin{equation}
\begin{split}
\int_{\Omega }f^{\varepsilon }(x,v,t)dxdv& =\int_{\Omega }\bar{f}_{0}\left(
X\left( t_{0}\right) ,v\right) dxdv+\int_{\Omega
}\int_{t_{0}}^{t}Q^{\varepsilon }\left[ f\right] \left( X\left( s\right)
,v\right) ~dsdxdv \\
& =:I+II.
\end{split}
\label{integral_f}
\end{equation}

We start with the estimation of $I$.
\begin{equation}
I=\int_{\Omega }\bar{f}_{0}\left( X\left( t_{0}\right) ,v\right)
dxdv=\int_{\Omega _{1}}f_{0}(X(0),v)dxdv,  \label{2.3}
\end{equation}%
where $\Omega _{1}$ is defined in (\ref{Omega1}), that is, we only treat the
particles $\left( x,v,t\right) $ which connect to $t_{0}=0.$ Our goal is to
show that
\begin{equation}
I\leq \int_{\Omega }f_{0}(x,v)dxdv+r^{\varepsilon }(t).  \label{con}
\end{equation}%
By the change of variables,
\begin{eqnarray*}
I &=&\int_{\Omega _{1}}f_{0}\left( X\left( 0\right) ,v\right) dxdv=\int_{%
\tilde{\Omega}_{1}}f_{0}\left( X\left( 0\right) ,v\right) \left\vert J\left(
t;0\right) \right\vert dX\left( 0\right) dv, \\
&&
\end{eqnarray*}%
where
\begin{equation*}
J(t;0)=1+\frac{\partial }{\partial X(0)}\left[ \int_{0}^{t}(v-\beta
_{\varepsilon }\left( v\right) )\,\eta _{\varepsilon }\left( X\left(
s\right) \right) ds\right] .
\end{equation*}%
Notice that $\frac{\partial }{\partial X(0)}\left[ \int_{0}^{t}(v-\beta
_{\varepsilon }\left( v\right) )\,\eta _{\varepsilon }\left( X\left(
s\right) \right) ds\right] =O(\varepsilon )$ as in Lemma \ref{Jacobian} and
hence $J(t;0)>0$ for sufficiently small $\varepsilon >0$. Thus we see that
\begin{equation}
\begin{split}
I& =\int_{\Omega _{1}}f_{0}\left( X\left( 0\right) ,v\right) dxdv=\int_{%
\tilde{\Omega}_{1}}f_{0}\left( X\left( 0\right) ,v\right) \left\vert J\left(
t;0\right) \right\vert dX\left( 0\right) dv \\
& =\int_{\tilde{\Omega}_{1}}f_{0}\left( X(0),v\right) dX(0)dv \\
& \quad +\int_{\tilde{\Omega}_{1}}\frac{\partial }{\partial X(0)}\left[
\int_{0}^{t}(v-\beta _{\varepsilon }\left( v\right) )\,\eta _{\varepsilon
}\left( X\left( s\right) \right) ds\right] f_{0}\left( X\left( 0\right)
,v\right) dX(0)dv \\
& =:(i)+(ii).
\end{split}
\label{2.6}
\end{equation}%
For $(i)$, we write it as
\begin{equation}
(i)=\int_{\Omega }f_{0}(X(0),v)dX(0)dv-\int_{\Omega \setminus \tilde{\Omega}%
_{1}}f_{0}(X(0),v)dX(0)dv\leq \int_{\Omega }f_{0}(x,v)dxdv.  \label{L1}
\end{equation}%
For the second term $(ii)$, we write it as
\begin{equation*}
\begin{split}
(ii)& =\int_{\tilde{\Omega}_{1}}\frac{\partial }{\partial X(0)}\left[
\int_{0}^{t}(v-\beta _{\varepsilon }\left( v\right) )\,\eta _{\varepsilon
}\left( X\left( s\right) \right) ds\right] f_{0}\left( X\left( 0\right)
,v\right) dX(0)dv \\
& =\int_{\tilde{\Omega}_{1}\cap \{v>0\}}+\int_{\tilde{\Omega}_{1}\cap
\{v<0\}}=:(ii)^{+}+(ii)^{-}.
\end{split}%
\end{equation*}%
Let us first estimate $(ii)^{+}$. Notice that the first factor $\frac{%
\partial }{\partial {X(0)}}\left[ \int_{0}^{t}(v-\beta _{\varepsilon }\left(
v\right) )\eta _{\varepsilon }\left( X\left( s\right) \right) ds\right] =0$
for $X(s)\leq \varepsilon $ or $2\varepsilon \leq X(s)\leq 1-2\varepsilon $
or $X(s)\geq 1-\varepsilon $ or $v>2\varepsilon ^{2}$ due to the cutoffs.
Thus we only need to integrate it over the set $A_{0}^{+}=\{(X(0),v)\big|%
\,\varepsilon <X(s)<2\varepsilon \text{ and }0<v<2\varepsilon ^{2}\text{ for
some }0<s<t\,\}$ and $A_{1}^{+}=\{(X(0),v)\big|\,1-2\varepsilon
<X(s)<1-\varepsilon \text{ and }0<v<2\varepsilon ^{2}\text{ for some }%
0<s<t\,\}$. On one hand, we see that $\frac{\partial }{\partial X(0)}\left[
\int_{0}^{t}(v-\beta _{\varepsilon }\left( v\right) )\eta _{\varepsilon
}\left( X\left( s\right) \right) ds\right] \leq 0$ over $A_{1}^{+}$ since
the cutoff function $\eta _{\varepsilon }$ decreases for $1-2\varepsilon
<X(s)<1-\varepsilon $. Hence, we deduce that
\begin{equation}
\begin{split}
(ii)^{+}& \leq \int_{A_{0}^{+}\cap \tilde{\Omega}_{1}}\frac{\partial }{%
\partial X(0)}\left[ \int_{0}^{t}(v-\beta _{\varepsilon }\left( v\right)
)\eta _{\varepsilon }\left( X\left( s\right) \right) ds\right]
f_{0}(X(0),v)dX(0)dv \\
& =:r_{+}^{\ep}(t),
\end{split}
\label{L11}
\end{equation}%
where $r_{+}^{\ep}(t)\geq 0$. On the other hand, since $\beta _{\varepsilon
}\left( v\right) +(v-\beta _{\varepsilon }\left( v\right) )\,\eta
_{\varepsilon }\left( X\left( s\right) \right) >0$, we see that if $X(0)\geq
2\ep$, then $X(s)>2\ep$ for all $s>0$. Therefore, 
\begin{equation*}
A_{0}^{+}\subset \{(X(0),v)\big|\,0<X(0)<2\ep,\;0<v<2\ep^{2}\,\}=:B^{+},
\end{equation*}%
where $|B^{+}|=4\ep^{3}$. Thus
\begin{equation}
\begin{split}
0\leq r_{+}^{\varepsilon }(t)& \leq \Vert f_{0}\Vert _{L^{\infty }\left(
\Omega \right) }\int_{B^{+}}\frac{\partial }{\partial X(0)}\left[
\int_{0}^{t}(v-\beta _{\varepsilon }\left( v\right) )\eta _{\varepsilon
}\left( X\left( s\right) \right) ds\right] dX(0)dv \\
& \leq \Vert f_{0}\Vert _{L^{\infty }\left( \Omega \right) }\int_{B^{+}}\ep %
Cte^{\ep Ct}dX(0)dv\;\text{ by Lemma }\ref{Jacobian} \\
& \leq \ep Cte^{\ep Ct}\Vert f_{0}\Vert _{L^{\infty }\left( \Omega \right)
}|B^{+}| \\
& \leq 4\ep^{4}Cte^{\ep Ct}\Vert f_{0}\Vert _{L^{\infty }\left( \Omega
\right) }.
\end{split}
\label{2.7}
\end{equation}%
For $(ii)^{-}$, as in the case of $v>0$, we first note that the first factor
vanishes $X(s)\leq \varepsilon $ or $2\varepsilon \leq X(s)\leq
1-2\varepsilon $ or $X(s)\geq 1-\varepsilon $ or $v<-2\varepsilon ^{2}$.
Thus we only need to integrate it over the set $A_{0}^{-}=\{(X(0),v)\big|%
\,\varepsilon <X(s)<2\varepsilon \text{ and }-2\varepsilon ^{2}<v<0\text{
for some }0<s<t\,\}$ and $A_{1}^{-}=\{(X(0),v)\big|\,1-2\varepsilon
<X(s)<1-\varepsilon \text{ and }-2\varepsilon ^{2}<v<0\text{ for some }%
0<s<t\,\}$. This time, we see that $\frac{\partial }{\partial X(0)}\left[
\int_{0}^{t}(v-\beta _{\varepsilon }\left( v\right) )\eta _{\varepsilon
}\left( X\left( s\right) \right) ds\right] \leq 0$ over $A_{0}^{-}$ and
hence we deduce that
\begin{equation}
\begin{split}
(ii)^{-}& \leq \int_{A_{1}^{-}\cap \tilde{\Omega}_{1}}\frac{\partial }{%
\partial X(0)}\left[ \int_{0}^{t}(v-\beta _{\varepsilon }\left( v\right)
)\eta _{\varepsilon }\left( X\left( s\right) \right) ds\right]
f_{0}(X(0),v)dX(0)dv \\
& =:r_{-}^{\ep}(t).
\end{split}
\label{L12}
\end{equation}%
Moreover, we see that 
\begin{equation*}
A_{1}^{-}\subset \{(X(0),v)\big|\,1-2\ep<X(0)<1,\;-2\ep^{2}<v<0\,\}=:B^{-},
\end{equation*}%
where $|B^{-}|=4\ep^{3}$ and hence by the same argument as in \eqref{2.7},
we obtain
\begin{equation}
0\leq r_{-}^{\ep}(t)\leq 4\ep^{4}Cte^{\ep Ct}\Vert f_{0}\Vert _{L^{\infty
}\left( \Omega \right) }.  \label{L13}
\end{equation}

Combining \eqref{2.3}, \eqref{2.6}-\eqref{L13}, we obtain
\begin{equation*}
I\leq \int_{\Omega }f_{0}(X(0),v)dX(0)dv+r^{\ep}(t),
\end{equation*}%
where $r^{\ep}(t):=r_{+}^{\ep}(t)+r_{-}^{\ep}(t)$ and complete the proof of %
\eqref{con}.

We now turn to the second term $II$ in \eqref{integral_f}.
\begin{equation*}
\begin{split}
II& =\int_{\Omega }\int_{t_{0}}^{t}Q^{\varepsilon }\left[ f\right] \left(
X\left( s\right) ,v\right) dsdxdv \\
& =\frac{2}{\varepsilon ^{2}}\left\{ \int_{\Omega
}\int_{t_{0}}^{t}\int_{-\infty }^{\infty }\left\{ f^{\varepsilon }\left(
X(s),v+\varepsilon \zeta ,s\right) -f^{\varepsilon }\left( X(s),v,s\right)
\right\} \xi \left( \zeta \right) d\zeta dsdxdv\right\} \\
& =\frac{2}{\varepsilon ^{2}}\big\{(iii)-(iv)\big\}.
\end{split}%
\end{equation*}%
We will estimate $(iv)$ first. Recall that for $t_{0}<s<t$,
\begin{equation*}
x=X(s)+\int_{s}^{t}\left[ \beta _{\varepsilon }(v)+(v-\beta _{\varepsilon
}(v))\eta _{\epsilon }(X(\tau ))\right] d\tau .
\end{equation*}%
Consider the following change of variables: $(x,v,s)\rightarrow (y=X(s),v,s)$%
. Then by the definition of $t_{0}$, we see that the domain of integration
changes from $(x,v,s)\in (0,1)\times (-\infty ,\infty )\times (t_{0},t)$ to $%
(y=X(s),v,s)\in (0,1)\times (-\infty ,\infty )\times (0,t)$. Hence, by
Fubini's Theorem,
\begin{equation*}
\begin{split}
(iv)& =\int_{0}^{t}\int_{\Omega }f^{\varepsilon }\left( y,v,s\right) \left(
1+\frac{\partial }{\partial y}\left[ \int_{s}^{t}\left[ \beta _{\varepsilon
}(v)+(v-\beta _{\varepsilon }(v))\eta _{\epsilon }(X(\tau ))\right] d\tau %
\right] \right) dydvds \\
& =\int_{0}^{t}\int_{\Omega }f^{\varepsilon }\left( y,v,s\right) dydvds \\
& \quad \quad +\int_{0}^{t}\int_{\Omega }f^{\varepsilon }\left( y,v,s\right)
\left[ \int_{s}^{t}(v-\beta _{\varepsilon }(v))\eta _{\epsilon }^{\prime
}(X(\tau ))\frac{\partial X(\tau )}{\partial y}d\tau \right] dydvds \\
& =\int_{0}^{t}\int_{\Omega }f^{\varepsilon }\left( y,v,s\right)
dydvds+(iv)_{2}.
\end{split}%
\end{equation*}%
Now for the second term $(iv)_{2}$, since $|(v-\beta _{\ep}(v))\eta _{\ep%
}^{\prime }(X(\tau ))|\leq C\ep$ and $(v-\beta _{\ep}(v))\eta _{\ep}^{\prime
}(X(\tau ))=0$ if $|v|>2\ep^{2}$ or $X(\tau )\leq \varepsilon $ or $%
2\varepsilon \leq X(\tau )\leq 1-2\varepsilon $ or $X(\tau )\geq
1-\varepsilon $, by using Lemma \ref{Jacobian} and Lemma \ref{cor1}
\begin{equation*}
\begin{split}
|(iv)_{2}|& =\left\vert \int_{0}^{t}\int_{A}\int_{s}^{t}(v-\beta
_{\varepsilon }(v))\eta _{\epsilon }^{\prime }(X(\tau ))\frac{\partial
X(\tau )}{\partial y}f^{\varepsilon }\left( y,v,s\right) d\tau
dydvds\right\vert \\
& \leq {C\varepsilon t^{2}e^{\ep Ct}\Vert f_{0}\Vert _{\infty }}\left\vert
\int_{A}dydv\right\vert ,
\end{split}%
\end{equation*}%
where $A=\{(y,v):|v|\leq 2\varepsilon ^{2}\text{ and }\varepsilon <X(\tau
)<2\varepsilon \text{ or }1-2\varepsilon <X(\tau )<1-\varepsilon \text{ for
some }s<\tau <t\}$. Note that $A\subset \{(y,v):|v|\leq 2\varepsilon ^{2}%
\text{ and }y\leq 2\varepsilon (1+\ep t)\text{ or }1-y\leq 2\varepsilon (1+%
\ep t)\}$ and hence
\begin{equation*}
|(iv)_{2}|\leq 16C\varepsilon ^{4}t^{2}(1+\ep t)e^{\ep Ct}\Vert f_{0}\Vert
_{\infty }\,,
\end{equation*}%
where C is a constant independent of $t$ and $\ep$.

For $(iii)$, we consider the following change of variables: $y=X(s)$, $%
w=v+\varepsilon \zeta $ and $s=s$. Then similarly, by Fubini's Theorem we
obtain
\begin{equation*}
\begin{split}
(iii)& =\int_{-\infty }^{\infty }\int_{0}^{t}\int_{\Omega }f^{\varepsilon
}\left( y,w,s\right) \left( 1+\frac{\partial }{\partial y}\left[
\int_{s}^{t}(w-\varepsilon \zeta -\beta _{\varepsilon }(w-\varepsilon \zeta
))\eta _{\epsilon }(X_{w-\varepsilon \zeta }(\tau ))d\tau \right] \right)
dydwds\xi (\zeta )d\zeta \\
& =\int_{0}^{t}\int_{\Omega }f^{\varepsilon }\left( y,w,s\right) dydwds \\
& \quad +{\int_{-\infty }^{\infty }\int_{0}^{t}\int_{\Omega }f^{\varepsilon
}\left( y,w,s\right) \left[ \int_{s}^{t}(w-\varepsilon \zeta -\beta
_{\varepsilon }(w-\varepsilon \zeta ))\eta _{\epsilon }^{\prime
}(X_{w-\varepsilon \zeta }(\tau ))\frac{\partial X_{w-\varepsilon \zeta
}(\tau )}{\partial y}d\tau \right] dydwds\xi (\zeta )d\zeta } \\
& =\int_{0}^{t}\int_{\Omega }f^{\varepsilon }\left( y,w,s\right)
dydwds+(iii)_{2}.
\end{split}%
\end{equation*}%
As in the previous case, it is easy to see that the second term $|(iii)_{2}|$
is bounded by $16C\varepsilon ^{4}t^{2}(1+\ep t)e^{\ep Ct}\Vert f_{0}\Vert
_{\infty }$. Therefore, we deduce that
\begin{equation*}
II=\int_{\Omega }\int_{t_{0}}^{t}Q^{\varepsilon }\left[ f\right] \left(
X\left( s\right) ,v\right) dsdxdv=\frac{2}{\ep^{2}}\big\{(iii)_{2}-(iv)_{2}%
\big\}=:q^{\ep}(t),
\end{equation*}%
where $|q^{\ep}(t)|\leq 64C\varepsilon ^{2}t^{2}(1+\ep t)e^{\ep Ct}\Vert
f_{0}\Vert _{\infty }$ for a constant $C$ independent of $t$ and $\ep$. This
completes the proof of the lemma.
\end{proof}


\subsection{Well-posedness of weak solutions for the Fokker-Planck equation}


We now obtain a weak limit of the approximating sequence $\left\{
f^{\varepsilon }\right\} $ as a candidate for a weak solution$.$

\begin{proposition}
\label{weakConv}Let $T>0$ and let $f_{0}\in L^{1}\cap L^{\infty }\left(
\Omega \right) $ with $f_{0}\geq 0$. Then $f^{\varepsilon }$ converges
weakly to $f$ in $L^{\infty }\left( \left[ 0,T\right] ;L^{1}\cap L^{\infty
}\left( \Omega \right) \right) $ with $f\geq 0.$ Moreover, the following
holds.
\begin{equation*}
f\left( x,v,t\right) \leq \left\Vert f_{0}\right\Vert _{L^{\infty }\left(
\Omega \right) }\text{ and }\int_{\Omega }f\left( x,v,t\right) dxdv\leq
\int_{\Omega }f_{0}(x,v)dxdv.
\end{equation*}
\end{proposition}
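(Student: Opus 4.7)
The plan is to extract a weak-$\ast$ limit of the approximating family $\{f^{\varepsilon}\}$ using the uniform bounds from Lemma \ref{cor1}, Lemma \ref{minicomparison}, and Lemma \ref{mass}, and then transfer those bounds to the limit $f$.

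By Lemmas \ref{cor1} and \ref{minicomparison}, the family $\{f^{\varepsilon}\}_{\varepsilon > 0}$ is uniformly bounded in $L^{\infty}(U_T)$ by $\|f_0\|_{L^{\infty}(\Omega)}$ and is nonnegative. Applying the Banach--Alaoglu theorem to the duality $L^{\infty}(U_T) = (L^1(U_T))^{\ast}$, I would extract a subsequence (still denoted $f^{\varepsilon}$) converging weakly-$\ast$ to some $f \in L^{\infty}(U_T)$. Testing against nonnegative functions in $L^1(U_T)$ preserves the nonnegativity of $f$, while weak-$\ast$ lower semicontinuity of the $L^{\infty}$ norm gives $f(x,v,t) \leq \|f_0\|_{L^{\infty}(\Omega)}$ for almost every $(x,v,t) \in U_T$.

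For the $L^1$ mass bound, I would combine the weak-$\ast$ convergence with Lemma \ref{mass}. The inequality
\[
\int_{\Omega} f^{\varepsilon}(x,v,s)\, dxdv \leq \|f_0\|_{L^1(\Omega)} + r^{\varepsilon}(s) + q^{\varepsilon}(s),
\]
together with the $O(\varepsilon^2)$ estimates on $r^{\varepsilon}$ and $q^{\varepsilon}$, is uniform in $\varepsilon$. Testing $f^{\varepsilon}$ against indicators $\chi_{[0,t]} \otimes \chi_K$ for compact $K \subset \Omega$ and passing to the weak-$\ast$ limit, then letting $K \nearrow \Omega$ by monotone convergence, yields $\int_0^t \int_\Omega f(x,v,s)\, dxdvds \leq t\,\|f_0\|_{L^1(\Omega)}$ for every $t \in [0,T]$. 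This places $f$ in $L^{\infty}([0,T]; L^1 \cap L^{\infty}(\Omega))$.

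The main obstacle is promoting this integrated-in-time bound to the pointwise-in-$t$ inequality $\int_\Omega f(x,v,t)\, dxdv \leq \|f_0\|_{L^1(\Omega)}$ stated in the proposition, since weak-$\ast$ convergence in $L^{\infty}(U_T)$ does not, by itself, control individual time slices. To handle this I would exploit the strong time continuity $f^{\varepsilon} \in C([0,T]; L^1(\Omega))$ from Lemma \ref{2}: for any compactly supported $\psi \in C_c(\Omega)$ the maps $t \mapsto \int_\Omega f^{\varepsilon}(t) \psi\, dxdv$ are continuous and uniformly bounded. A diagonal argument over a countable dense family of such $\psi$, combined with the uniform $L^{\infty}$ bound to preclude concentration at infinity in $v$, produces a weakly continuous-in-$t$ representative of the limit $f$ against which the mass inequality of Lemma \ref{mass} passes pointwise in $t$, the $O(\varepsilon^2)$ error terms vanishing as $\varepsilon \to 0$. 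This delivers the two asserted bounds and finishes the proposition.
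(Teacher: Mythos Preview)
Your proposal is correct and follows the same route as the paper: the paper's proof consists of a single sentence citing Lemmas~\ref{cor1}, \ref{minicomparison}, and \ref{mass} and ``taking the limit in the weak topology as $\varepsilon\to 0$,'' and you have simply unpacked that sentence with the standard Banach--Alaoglu machinery. The technical point you raise about promoting the integrated-in-time mass bound to a pointwise-in-$t$ statement is legitimate; the paper does not address it in the proof of this proposition but instead establishes the weak continuity of $t\mapsto\int_\Omega f(t)\psi\,dxdv$ separately in the proof of Theorem~\ref{MainTheorem1} immediately afterward, using exactly the kind of uniform-in-$\varepsilon$ equicontinuity argument you sketch.
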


\begin{proof}
It follows from Lemma \ref{cor1}, Lemma \ref{minicomparison}, Lemma \ref%
{mass}, and by taking the limit in the weak toplogy as $\varepsilon
\rightarrow 0.$
\end{proof}

We now prove Theorem \ref{MainTheorem1}, which is the existence of a weak
solution of (\ref{VFP}) in the following.

\begin{proof}[Proof of Theorem \protect\ref{MainTheorem1} (Existence)]
We show that \label{weak copy(1)}$f$ in the Proposition \ref{weakConv}\
above is indeed a weak solution of (\ref{VFP}). We first show the weak
continuity of $f(t)$. Let a test function $\psi (x,v,t)$ compactly supported
be given and $t_{1},t_{2}\in \lbrack 0,T]$ and let $\varepsilon >0$ be
given. Note that for the solution $f^{\varepsilon }$ of the regularized
problem (\ref{VFP_A})-(\ref{VFP_ABC}),
\begin{equation*}
\begin{split}
& \int_{\Omega }f^{\varepsilon }(t_{1})\psi (t_{1})dxdv-\int_{\Omega
}f^{\varepsilon }(t_{2})\psi (t_{2})dxdv \\
& \quad =\int_{\Omega }f^{\varepsilon }(t_{1})[\psi (t_{1})-\psi
(t_{2})]dxdv+\int_{\Omega }[f^{\varepsilon }(t_{1})-f^{\varepsilon
}(t_{2})]\psi (t_{2})dxdv. \\
&
\end{split}%
\end{equation*}%
Since $f^{\varepsilon }$ is in $C\left( \left[ 0,T\right] ;L^{1}\left(
\Omega \right) \right) \cap L^{\infty }\left( \left[ 0,T\right] ;L^{\infty
}\left( \Omega \right) \right) $, both terms on the right-hand side can be
made small uniformly in $\varepsilon $ if $|t_{1}-t_{2}|$ is sufficiently
small. Since $f^{\varepsilon }$ converges weakly to $f$, by taking $%
\varepsilon \rightarrow 0$, we deduce the weak continuity of $f(t)$ and in
particular, $\int_{\Omega }f(t)\psi (t)dxdv$ is well-defined for each $t\in
\lbrack 0,T]$. Now by using Lemma \ref{weakSol} and Proposition \ref%
{weakConv} and by noting that in $L^{1}\left( U_{t}\right) ,$ as $%
\varepsilon \rightarrow 0$
\begin{align*}
\frac{2}{\varepsilon ^{2}}\int_{-\infty }^{\infty }\left[ \psi \left(
x,v-\varepsilon \zeta ,s\right) -\psi \left( x,v,s\right) \right] \xi \left(
\zeta \right) d\zeta & \rightarrow \psi _{vv}\left( x,v,s\right) , \\
\partial _{x}\left( \left[ \beta _{\varepsilon }\left( v\right) +\left(
v-\beta _{\varepsilon }\left( v\right) \right) \eta _{\varepsilon }\left(
x\right) \right] \psi \left( x,v,s\right) \right) & \rightarrow \partial
_{x}\left( v\psi \left( x,v,s\right) \right) ,
\end{align*}%
we can easily deduce that $f$ is indeed a weak solution.
\end{proof}

We can now derive the maximum and minimum principles for the Fokker-Planck
operator.

Define

\begin{equation}
\mathcal{M}f:=f_{t}+vf_{x}-f_{vv}.  \label{M}
\end{equation}

We can deduce the maximum and minimum principle for weak solutions of (\ref%
{VFP}).

\begin{lemma}
\label{maxPrinciple copy(1)} The operator $\mathcal{M}$ defined in (\ref{M})
has a maximum principle for weak solutions: Let $f\in L^{\infty }(\left[ 0,T%
\right] ;L^{1}\cap L^{\infty }\left( \Omega \right) )$ be a weak solution of
(\ref{VFP})-(\ref{BC1}) as in Definition \ref{weak-solution} and let $%
f_{0}\in L^{1}\cap L^{\infty }\left( \Omega \right) $ with $f_{0}\geq 0$.
Then we have%
\begin{equation*}
f\left( x,v,t\right) \leq \left\Vert f_{0}\right\Vert _{L^{\infty }\left(
\Omega \right) }.
\end{equation*}%
up to a measure zero set.
\end{lemma}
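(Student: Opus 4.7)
The plan is to reduce the statement to the uniform maximum principle already established at the approximate level and then invoke uniqueness. Write $M := \|f_{0}\|_{L^{\infty}(\Omega)}$. By Lemma \ref{cor1}, every approximate solution satisfies $f^{\varepsilon} \leq M$ a.e.\ in $U_{T}$, uniformly in $\varepsilon > 0$, and by Proposition \ref{weakConv} the sequence $\{f^{\varepsilon}\}$ converges weakly-$\ast$ in $L^{\infty}([0,T]; L^{1}\cap L^{\infty}(\Omega))$ to a weak solution $\tilde{f}$ of (\ref{VFP})-(\ref{BC1}) that also satisfies $\tilde{f} \leq M$ a.e. The upper bound passes to the weak-$\ast$ limit by testing against $\chi_{E}$ for arbitrary measurable $E \subset U_{T}$ of finite measure: the uniform inequality $\int_{E} f^{\varepsilon}\,dxdvdt \leq M|E|$ gives $\int_{E} \tilde{f}\,dxdvdt \leq M|E|$ in the limit, hence $\tilde{f} \leq M$ almost everywhere.

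The second step is to identify an arbitrary weak solution $f$ in the sense of Definition \ref{weak-solution} with the constructed limit $\tilde{f}$. This is exactly the uniqueness statement contained in Theorem \ref{MainTheorem1}: two weak solutions with the same initial datum $f_{0}$ agree a.e.\ in $U_{T}$. Therefore $f = \tilde{f}$ a.e., and the bound $f \leq M$ follows.

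The one point to watch is potential circularity: if the proof of uniqueness in Theorem \ref{MainTheorem1} were itself to rely on the present $L^{\infty}$ bound, the route above would be unavailable. In that case, I would instead mimic the duality argument of Lemma \ref{maxPrinciple} directly for the operator $\mathcal{M}$. Assume by contradiction that $f > M+\kappa$ on a set $A \subset \Omega$ of positive measure at some time $T$; localize on a ball $B$ with $\mathrm{meas}(B \cap A) > (1-\delta)\mathrm{meas}(B)$ via Lemma \ref{meas}, and build a smooth nonnegative datum $\psi_{T}$ concentrated on $B$ and vanishing near the singular set (a compatibility analogous to (\ref{comp})). Then solve the adjoint equation $\psi_{t} + v\psi_{x} + \psi_{vv} = 0$ backward in time with $\psi|_{\gamma_{T}^{+}} = 0$. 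Arguments parallel to Lemmas \ref{nonnegative}-\ref{psiInt} yield $\psi \geq 0$ and $\int_{\Omega} \psi(0)\,dxdv \leq \int_{\Omega}\psi(T)\,dxdv$, and the identity $\int_{\Omega} f(T)\psi(T)\,dxdv = \int_{\Omega} f_{0}\psi(0)\,dxdv$ from the weak formulation produces the same contradiction as in Lemma \ref{maxPrinciple}.

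The main obstacle, should this direct route be needed, is producing a sufficiently regular nonnegative solution of the adjoint Fokker-Planck equation backward in time in the unregularized setting. The boundary condition $\psi|_{\gamma_{T}^{+}} = 0$ together with compatibility at the singular set are exactly the ingredients developed in Section 2.2, so the adjoint construction carries over with only minor modifications (no friction truncation, now with genuine $\psi_{vv}$ in place of $\bar{Q}^{\varepsilon}[\psi]$); no genuinely new difficulty arises beyond what has already been handled for the regularized problem.
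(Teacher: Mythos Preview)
Your first route is circular in this paper: the uniqueness half of Theorem~\ref{MainTheorem1} is proved \emph{after} the present lemma and is deduced from it together with the minimum principle (Lemma~\ref{maxPrinciple copy(2)}), in the same way that Corollary~\ref{uniqueness-e} follows from Lemmas~\ref{maxPrinciple} and~\ref{minicomparison}. So the limit-plus-uniqueness shortcut is unavailable here, and your anticipated fallback is not optional but required.

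That fallback is exactly the paper's argument: the proof is simply declared ``analogous to Lemma~\ref{maxPrinciple}'', i.e., the same duality/contradiction scheme run with the unregularized adjoint problem $\psi_t + v\psi_x + \psi_{vv}=0$, $\psi|_{\gamma_T^+}=0$, in place of the $\bar Q^{\varepsilon}$-adjoint. Your sketch of this step (localize on a density ball via Lemma~\ref{meas}, choose $\psi_T\geq 0$ vanishing near the singular set, solve backward, use nonnegativity and $L^1$-monotonicity of $\psi$ to force a contradiction) matches the intended proof. The only caveat you flag---constructing a sufficiently regular nonnegative adjoint solution without the $\varepsilon$-regularization---is real but is addressed later in the paper (Proposition~\ref{hyperellip}, Lemma~\ref{nonnegAdj}); the paper is implicitly relying on that machinery when it says ``analogous''.
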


\begin{proof}
It is analogous to Lemma \ref{maxPrinciple}.
\end{proof}

\begin{lemma}
\label{maxPrinciple copy(2)} The operator $\mathcal{M}$ defined in (\ref{M})
has a minimum principle for weak solutions: Let $f\in L^{\infty }(\left[ 0,T%
\right] ;L^{1}\cap L^{\infty }\left( \Omega \right) )$ be a weak solution of
(\ref{VFP})-(\ref{BC1}) as in Definition \ref{weak-solution} and let $%
f_{0}\in L^{1}\cap L^{\infty }\left( \Omega \right) $ with $f_{0}\geq 0,$
then we have%
\begin{equation*}
f\left( x,v,t\right) \geq 0
\end{equation*}%
up to a measure zero set.
\end{lemma}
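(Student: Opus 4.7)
The plan is to mirror the duality proof of Lemma \ref{maxPrinciple}, replacing the approximate operator of (\ref{VFP_A}) by the Fokker--Planck operator $\mathcal{M}$ from (\ref{M}). Specifically, one needs: (i) a smooth nonnegative solution $\psi\in C^\infty(U_T)$ of the adjoint equation $\psi_t+v\psi_x+\psi_{vv}=0$ with prescribed terminal datum $\psi_T\geq 0$ (compactly supported, vanishing near the singular set $\{(0,0),(1,0)\}$) and outgoing boundary condition $\psi|_{\gamma_T^+}=0$; (ii) the duality identity $\int_\Omega f(T)\psi_T\,dxdv=\int_\Omega f_0\,\psi(\cdot,\cdot,0)\,dxdv$, which is immediate from Definition \ref{weak-solution} once $\psi$ is used as a test function; and (iii) the localization device of Lemma \ref{meas} to bump $\psi_T$ onto a hypothetical set where $f(\cdot,\cdot,T)$ is strictly negative.

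To produce $\psi$, I would either take a subsequential limit of the smooth approximate adjoint solutions $\psi^\varepsilon$ from Lemma \ref{psiExist} as $\varepsilon\to 0$ (using uniform $L^\infty$ bounds from the maximum principle together with the hypoelliptic estimates of Section 3 to upgrade smoothness in the interior and on the non-singular part of the boundary), or construct $\psi$ directly by solving a forward hypoelliptic parabolic problem after the time reversal $\tau=T-t$, which transforms the adjoint into $\tilde\psi_\tau-\tilde\psi_{vv}-v\tilde\psi_x=0$ with zero incoming boundary data and nonnegative initial datum $\psi_T$. Nonnegativity of $\psi$ then follows exactly as in Lemma \ref{nonnegative}: at a putative strictly negative interior minimum one has $\psi_t=\psi_x=0$ and $\psi_{vv}\geq 0$, contradicting the adjoint equation; analogous contradictions cover minima on the outgoing portion of the lateral boundary. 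The $\psi_{vv}$-diffusion, run backward in time from compactly supported $\psi_T$, produces Gaussian tails in $v$, which lets $\psi$ be used in Definition \ref{weak-solution} after a routine $v$-truncation and limit.

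Suppose now for contradiction that $f(\cdot,\cdot,T)<-\kappa$ on a set $A\subset\Omega$ of positive measure for some $\kappa>0$. Given $\delta>0$, Lemma \ref{meas} furnishes a ball $B\subset\Omega$, which I may take bounded away from the singular set, with $\text{meas}(B\cap A)>(1-\delta)\text{meas}(B)$; I then choose $\psi_T\geq 0$ smooth, supported in $\bar B$, uniformly bounded, vanishing near the singular set, and approximating $\chi_B/\text{meas}(B)$ in the sense of (\ref{psiT}). Splitting the integral over $B\cap A$ and $B\setminus A$ exactly as in Lemma \ref{maxPrinciple} yields
\[
\int_\Omega f(x,v,T)\,\psi_T(x,v)\,dxdv\;\leq\;-\kappa(1-2\delta)+C\delta,
\]
which is strictly negative for $\delta$ small. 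The duality identity equates this with $\int_\Omega f_0(x,v)\,\psi(x,v,0)\,dxdv\geq 0$, giving the contradiction. The only step that does not come for free by analogy with Lemma \ref{maxPrinciple} is the construction of the smooth adjoint solution $\psi$, since the true adjoint carries the genuine diffusion $\psi_{vv}$ and interacts with the singular set; this is the principal technical obstacle, to be handled either by the $\varepsilon\to 0$ limit procedure just described or by invoking the hypoelliptic parabolic theory developed in Section 3.
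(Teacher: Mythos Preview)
Your proposal is correct and follows essentially the same approach as the paper, which simply records ``It is analogous to Lemma \ref{minicomparison}'' (itself proved ``similarly to the proof of Lemma \ref{maxPrinciple}''). You are in fact more explicit than the paper about the one nontrivial point---constructing the smooth nonnegative adjoint solution for the true operator $\mathcal{M}$ rather than its $\varepsilon$-regularization---and your suggested routes (passing to the limit in the $\psi^\varepsilon$ of Lemma \ref{psiExist}, or appealing to the adjoint theory later made precise in Proposition \ref{hyperellip} and Lemma \ref{nonnegAdj}) are exactly what the paper's ``analogous'' is tacitly invoking.
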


\begin{proof}
It is analogous to Lemma \ref{minicomparison}.
\end{proof}

We then show the uniqueness of weak solutions of \eqref{VFP}-(\ref{BC1}) as
follows.

\begin{proof}[Proof of Theorem 1.2 (Uniqueness)]
Let $f_{1},f_{2}$ be two weak solutions of (\ref{VFP}) with the same initial
and boundary conditions (\ref{id})-(\ref{BC1}). Then $f_{1}=f_{2}$ in $%
L^{\infty }(U_{T})$. The proof is analogous to Corollary \ref{uniqueness-e}.
\end{proof}


Before we conclude this section, we present the maximum and minimum
principles for classical solutions of (\ref{VFP}). Some of the results will
be used in Section 4 after we establish the regularity of weak solutions.

We begin with the maximum principle in bounded domains. Let $%
U_{T}^{1}=\left( 0,1\right) \times \left( -L,L\right) \times \left(
0,T\right) $ with $L>0,$ $T>0.$

\begin{lemma}
\label{maxStrongBnd}The operator $\mathcal{M}$ defined in (\ref{M}) has a
maximum principle: Let $f$ be in $C_{x,v,t}^{1,2,1}\left( U_{T}^{1}\right)
\cap C\left( \bar{U}_{T}^{1}\right) $ and satisfy $\mathcal{M}f\leq 0$, then
$f$ attains its maximum either at $t=0$ or at $x=0,v>0$ or at $x=1,v<0$ or
at $v=\pm L$.
\end{lemma}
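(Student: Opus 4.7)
The approach is the classical parabolic maximum principle, adapted to the kinetic operator $\mathcal{M}=\partial_t + v\partial_x - \partial_{vv}$. Denote by
\[
\Gamma := \{t=0\}\cup\{x=0,\,v\geq 0\}\cup\{x=1,\,v\leq 0\}\cup\{v=\pm L\}
\]
the admissible portion of $\partial U_T^1$, and set $M:=\max_{\Gamma}f$, which is finite by continuity on the compact set $\Gamma$. It suffices to prove $f\leq M$ on $\bar U_T^1$, since continuity on the compact set $\bar U_T^1$ then forces $f$ to attain its maximum on $\Gamma$. To convert the hypothesis $\mathcal{M}f\leq 0$ into a strict inequality, I introduce the auxiliary function $g:=f-\delta t$ for $\delta>0$; then $g\in C^{1,2,1}_{x,v,t}(U_T^1)\cap C(\bar U_T^1)$ and $\mathcal{M}g=\mathcal{M}f-\delta\leq-\delta<0$ on $U_T^1$.

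Let $P_0=(x_0,v_0,t_0)$ be a point at which $g$ attains its maximum over $\bar U_T^1$. The plan is to rule out every position of $P_0$ outside $\Gamma$ by checking the sign of $\mathcal{M}g(P_0)$. If $P_0$ is interior to $U_T^1$, or if $t_0=T$ with $(x_0,v_0)\in(0,1)\times(-L,L)$, then $g_t\geq 0$ (one-sided if $t_0=T$), $g_x=g_v=0$, and $g_{vv}\leq 0$, yielding $\mathcal{M}g(P_0)\geq 0$, in contradiction with $\mathcal{M}g\leq-\delta$. On the outgoing lateral boundary, say $P_0=(0,v_0,t_0)$ with $v_0<0$, the boundary-maximum condition forces $g_x(P_0)\leq 0$, hence $v_0 g_x\geq 0$; combined with $g_t\geq 0$ and $g_{vv}\leq 0$ this again produces $\mathcal{M}g\geq 0$; the symmetric argument at $(1,v_0,t_0)$ with $v_0>0$ uses $g_x\geq 0$. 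At a grazing point $(0,0,t_0)$ or $(1,0,t_0)$ with $t_0>0$, the transport term $v_0 g_x$ vanishes identically and the same comparison yields $\mathcal{M}g\geq 0$. Thus $P_0\in\Gamma$, whence $g\leq M$ on $\bar U_T^1$, so $f\leq M+\delta T$; letting $\delta\downarrow 0$ gives $f\leq M$.

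The only delicate point is justifying the one-sided derivative inequalities at maximizers on $\partial U_T^1$, since $f$ is assumed $C^{1,2,1}$ only on the open cylinder $U_T^1$. The standard workaround is to perform the argument first on the slightly smaller cylinder $\bar U_{T-\eta}^1$ with $\eta>0$, where all relevant interior derivatives are defined in an open neighborhood of every extremum considered, and then pass to $\eta\downarrow 0$ using the continuity of $f$ on $\bar U_T^1$; the lateral one-sided derivatives in $x$ at $x\in\{0,1\}$ inherit the required sign directly from the maximizer property of $P_0$, so no regularity beyond the stated hypothesis is actually used. This is the only real subtlety, and it is handled by the same device as in the classical parabolic theory cited in \cite{E}, \cite{F}, \cite{GT}, \cite{LSU}, \cite{PS}.
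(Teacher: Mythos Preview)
Your proof is correct and follows essentially the same route as the paper: reduce to the strict inequality $\mathcal{M}g<0$ via the perturbation $g=f-\delta t$, then rule out each non-admissible boundary component by checking the sign of $\mathcal{M}g$ at a putative maximizer, and finally let $\delta\to 0$. Your explicit treatment of the grazing points and your remark on the one-sided derivative issue at the lateral boundary are a bit more careful than the paper's presentation, but the underlying argument is identical.
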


\begin{proof}
We extend $f$ to the domain outside of $\left( -L,L\right) $ with respect to
$v$ by defining it to be zero. First we assume that $\mathcal{M}f<0$. We
prove this, case by case. First we suppose the solution $f$ attains its
maximum at an interior point $\left( x,v,t\right) \in U_{T}^{1}$. Then $%
f_{t}(x,v,t)=f_{x}\left( x,v,t\right) =0$ while $f_{vv}\leq 0$ so that $%
\mathcal{M}f\left( x,v,t\right) \geq 0$. Thus it cannot occur. Now suppose
its maximum is attained at $\left( x,v,T\right) $ with $\left( x,v\right)
\in \Omega .$ Then $f_{x}\left( x,v,T\right) =0$, $f_{t}(x,v,T)\geq 0$ and $%
f_{vv}\leq 0$ so that $\mathcal{M}f\left( x,v,t\right) \geq 0$. Again, it
cannot happen. Lastly we suppose that $f$ has its maximum at $\left(
0,v,t\right) $ with $v<0$ and $0<t$ or at $\left( 1,v,t\right) $ with $v>0$
and $0<t.$ Then for $x=0,v<0$ or $x=1,v>0,$ we have $f_{t}(x,v,t)\geq 0,$ $%
vf_{x}\left( 0,v,t\right) \geq 0\,,$ and $f_{vv}\leq 0$ so that $\mathcal{M}%
f\left( x,v,t\right) \geq 0$. Therefore $f$ has a maximum at the kinetic
boundary. Next we will show the lemma in the case of $\mathcal{M}f\leq 0.$
In this case, we use $g=f-kt,$ $k>0$ to derive the maximum principle by
letting $k\rightarrow 0.$ We skip the details. This completes the proof.
\end{proof}

In order to prove the maximum principle for unbounded domains with respect
to $v$, we will find a barrier function near $v=\infty .$

\begin{lemma}
\label{barrier}There exists \ a super-solution $\phi \left( v,t\right) \in
C_{v,t}^{2,1}\left( \left( -\infty ,\infty \right) \times \left( 0,T\right)
\right) $ with $\phi \geq 0$ of $\mathcal{M}$ satisfying $\mathcal{M}\phi
\geq 0$ and $\phi \rightarrow \infty $ as $\left\vert v\right\vert
\rightarrow \infty $ uniformly in $t\in \left[ 0,T\right] $. This will be
called a barrier function at infinity.
\end{lemma}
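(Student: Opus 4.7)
The plan is to exploit that a barrier depending only on $(v,t)$ annihilates the transport term of $\mathcal{M}$. Indeed, if $\phi = \phi(v,t)$ then $\phi_{x} \equiv 0$, so $\mathcal{M}\phi = \phi_{t} - \phi_{vv}$, and the condition $\mathcal{M}\phi \geq 0$ collapses to a one-dimensional heat-equation super-solution inequality on $(v,t)$.

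The simplest candidate is the quadratic barrier
\[
\phi(v,t) := 1 + v^{2} + 2t,
\]
for which $\phi_{t} = 2$ and $\phi_{vv} = 2$, giving $\mathcal{M}\phi = 0$ identically. This $\phi$ is $C^{\infty}$ (hence in $C^{2,1}_{v,t}$), and it satisfies $\phi \geq 1 > 0$; moreover, for every $t \in [0,T]$ we have the pointwise lower bound $\phi(v,t) \geq 1 + v^{2}$, so $\phi(v,t) \to \infty$ as $|v| \to \infty$ uniformly in $t \in [0,T]$. All requirements of the lemma are thereby verified.

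Because the construction is a single direct verification, there is essentially no obstacle. The only mild subtlety is anticipating whether subsequent applications of the lemma require a faster-growing barrier in $v$ (for instance, to dominate solutions with super-polynomial growth so that the maximum principle on the infinite strip $\Omega = (0,1) \times \mathbb{R}$ can be obtained as a limit of the bounded-domain principle of Lemma \ref{maxStrongBnd}). If so, one can replace the polynomial $\phi$ with the exponential barrier $\phi(v,t) = \cosh(\alpha v)\, e^{\alpha^{2} t}$ for any $\alpha > 0$, which is an exact solution of the one-dimensional heat equation in $(v,t)$ and therefore also satisfies $\mathcal{M}\phi = 0$, with $\phi \to \infty$ exponentially in $|v|$ uniformly on $[0,T]$. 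Either choice yields the required super-solution with the stated properties.
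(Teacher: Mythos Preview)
Your proposal is correct and essentially identical to the paper's proof: the paper also seeks $\phi$ of the form $a_{0}(t) + a_{1}(t)v^{2}$, reduces $\mathcal{M}\phi \geq 0$ to $a_{0}'(t) + a_{1}'(t)v^{2} \geq 2a_{1}(t)$, and explicitly lists $a_{0}(t)=2t+1$, $a_{1}(t)=1$ (i.e., your $\phi = 1 + v^{2} + 2t$) as one of the admissible choices.
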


\begin{proof}
We find $\phi $ of the form $\phi \left( v,t\right) =a_{0}\left( t\right)
+a_{1}\left( t\right) v^{2}$ and plug in it into $\mathcal{M}\phi \geq 0$.
Then we have%
\begin{equation*}
\frac{d}{dt}a_{0}\left( t\right) +\frac{d}{dt}a_{1}\left( t\right) v^{2}\geq
2a_{1}\left( t\right) .
\end{equation*}

Indeed, there exist many super-solutions which satisfy the equation above.
For instance, $a_{0}\left( t\right) =e^{2t},a_{1}\left( t\right) =e^{2t}$ or
$a_{0}\left( t\right) =2t+1,a_{1}\left( t\right) =1$ will work.
\end{proof}

We can obtain the maximum principle for unbounded domains in $v$.

\begin{lemma}
\label{maxStrong}The operator $\mathcal{M}$ defined in (\ref{M}) has a
maximum principle: Let $f$ be in $C_{x,v,t}^{1,2,1}\left( U_{T}\right) \cap
C\left( \bar{U}_{T}\right) $ and satisfy $\mathcal{M}f\leq 0$, then $f$
attains its maximum at its kinetic boundary $\Gamma _{T}^{-}$, i.e., either
at $t=0$ or at $x=0,v>0$ or at $x=1,v<0$.
\end{lemma}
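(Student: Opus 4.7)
The plan is to reduce the unbounded-domain case to the bounded-domain case Lemma \ref{maxStrongBnd} by subtracting off a small multiple of the barrier $\phi$ produced in Lemma \ref{barrier}. Concretely, assume that $f$ is bounded above on $\bar{U}_{T}$ (otherwise the statement is vacuous) and set $M:=\sup_{\bar{U}_{T}}f$, $M_{1}:=\sup_{\Gamma_{T}^{-}}f$. For each $\delta>0$ I would introduce
\[
f_{\delta}(x,v,t):=f(x,v,t)-\delta\,\phi(v,t),
\]
so that $\mathcal{M}f_{\delta}=\mathcal{M}f-\delta\,\mathcal{M}\phi\leq 0$ and $f_{\delta}\in C^{1,2,1}_{x,v,t}(U_{T})\cap C(\bar{U}_{T})$.

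Next I would invoke the fact that $\phi(v,t)\to\infty$ as $|v|\to\infty$ uniformly in $t\in[0,T]$ to choose $L=L(\delta)>0$ large enough that $\delta\,\phi(v,t)>M-M_{1}+1$ whenever $|v|=L$ and $t\in[0,T]$. On the slab $U_{T}^{1}=(0,1)\times(-L,L)\times(0,T)$ this ensures that $f_{\delta}\leq M_{1}-1$ on the two artificial faces $\{v=\pm L\}$. On the remaining pieces of the kinetic boundary $\Gamma_{T}^{-}\cap\bar{U}_{T}^{1}$, the positivity of $\phi$ gives $f_{\delta}\leq f\leq M_{1}$. Applying Lemma \ref{maxStrongBnd} to $f_{\delta}$ on $\bar{U}_{T}^{1}$ therefore yields
\[
\sup_{\bar{U}_{T}^{1}}f_{\delta}\leq M_{1},
\]
which rearranges to $f(x,v,t)\leq M_{1}+\delta\,\phi(v,t)$ for every $(x,v,t)\in\bar{U}_{T}^{1}$. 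Fixing $(x,v,t)\in\bar{U}_{T}$, choosing $L$ (and hence the slab $U_{T}^{1}$) to contain it, and then sending $\delta\to 0$ gives $f(x,v,t)\leq M_{1}$. Hence $M=M_{1}$, which is the claimed maximum principle. The case $\mathcal{M}f\leq 0$ (rather than strict inequality) is already handled inside Lemma \ref{maxStrongBnd} by the $kt$-perturbation trick, so no further work is needed there.

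The only delicate ingredient is the uniform growth of $\phi$ at $|v|=\infty$ in $t\in[0,T]$, since that is what lets me push the artificial $v=\pm L$ piece below $M_{1}$ simultaneously for all $t$; this is precisely the content of Lemma \ref{barrier} and is why an explicit barrier such as $\phi(v,t)=e^{2t}(1+v^{2})$ is needed rather than a purely spatial one. Apart from that, the argument is the standard Phragmén–Lindelöf style reduction: barrier subtraction, finite-slab maximum principle, and a limit $\delta\to 0$.
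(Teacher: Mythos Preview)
Your proof is correct and follows essentially the same Phragm\'en--Lindel\"of reduction as the paper: subtract a small multiple of the barrier $\phi$ from Lemma~\ref{barrier}, apply the bounded-slab maximum principle Lemma~\ref{maxStrongBnd}, and pass to the limit. The only cosmetic difference is that the paper centers the barrier at an arbitrary point $w$ (writing $g=f-\lambda\phi(v-w,t)$) and reads off the bound at $v=w$, whereas you center at the origin and use the bound on the entire slab; both variants are standard and equivalent.
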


\begin{proof}
Fix $w\in \mathbb{R},~\lambda >0,$ and let $\phi $ be a barrier function at
infinity as in Lemma \ref{barrier}. We then define%
\begin{equation*}
g\left( x,v,t\right) =f\left( x,v,t\right) -\lambda \phi \left( v-w,t\right)
.
\end{equation*}%
Then we have $\mathcal{M}g\leq 0.$ Thus Lemma \ref{maxStrongBnd} applies to $%
g$ in $U_{T}^{1}=\left( 0,1\right) \times \left( w-r,w+r\right) \times
\left( 0,T\right) $ for any $r>0$. For any $0<x<1,-\infty <v<\infty ,$ $%
g\left( x,v,0\right) =f\left( x,v,0\right) -\lambda \phi \left( v-w,0\right)
\leq f\left( x,v,0\right) $. For $0<v<\infty ,0<t<T,$ $g\left( 0,v,t\right)
=f\left( 0,v,t\right) -\lambda \phi \left( v-w,t\right) \leq f\left(
0,v,t\right) $ and for $-\infty <v<0,0<t<T,$ $g\left( 1,v,t\right) =f\left(
1,v,t\right) -\lambda \phi \left( v-w,t\right) \leq f\left( 1,v,t\right) .$
Now for $v=w\pm r,$ $g\left( x,v,t\right) =f\left( x,v,t\right) -\lambda
\phi \left( \pm r,t\right) \leq \sup_{x,v}f\left( x,v,0\right) $ if $r>0$ is
sufficiently large. Thus $g\left( x,w,t\right) \leq \sup_{\Gamma
_{T}^{-}}f\left( x,v,t\right) $ for all $\left( x,w,t\right) \in \bar{U}%
_{T}. $ Letting $\lambda \rightarrow 0,$ we get $f\left( x,w,t\right) \leq
\sup_{\Gamma _{T}^{-}}f\left( x,v,t\right) $ for all $\left( x,w,t\right)
\in \bar{U}_{T}.$ This completes the proof.
\end{proof}

We can also derive a minimum principle for the Fokker-Planck operation.

\begin{lemma}
\label{minPrinciple copy(1)} The operator $\mathcal{M}$ defined in (\ref{M})
has a minimum principle: Let $f$ be in $C_{x,v,t}^{1,2,1}\left( U_{T}\right)
\cap C\left( \bar{U}_{T}\right) $ and satisfy $\mathcal{M}f\geq 0.$ Then $f$
has a minimum at its kinetic boundary $\Gamma _{T}^{-}$, i.e., either at $%
t=0 $ or at $x=0,v>0$ or at $x=1,v<0$.
\end{lemma}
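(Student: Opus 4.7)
The statement is the direct counterpart of Lemma \ref{maxStrong} for minima, so the cleanest route is to exploit the linearity of $\mathcal{M}$. Setting $g := -f$, one has $g \in C^{1,2,1}_{x,v,t}(U_T) \cap C(\bar U_T)$ and
\[
\mathcal{M}g \,=\, -\mathcal{M}f \,\le\, 0,
\]
so Lemma \ref{maxStrong} applies and places the supremum of $g$ on the incoming kinetic boundary $\Gamma_T^-$; equivalently, $f$ attains its infimum on $\Gamma_T^-$. In principle, this one-line reduction suffices.

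For a self-contained replay that mirrors the proofs of Lemmas \ref{maxStrongBnd} and \ref{maxStrong}, the same steps go through with signs reversed. First reduce to the strict case by passing from $f$ to $f_k := f - kt$ with $k > 0$, which satisfies $\mathcal{M}f_k = \mathcal{M}f - k$; the argument below will prove the claim under the strict assumption $\mathcal{M}f_k \ge 0$ replaced by $>0$, after which $k \to 0^+$ recovers the original statement. On a bounded slab $U_T^1 = (0,1)\times(-L,L)\times(0,T)$, suppose for contradiction that $f$ attains a minimum at a point $P$ lying neither on $\Gamma_T^-$ nor on $\{v = \pm L\}$. At an interior point $f_t = f_x = 0$ and $f_{vv}\ge 0$ force $\mathcal{M}f(P) \le 0$; at $(x,v,T)$ with $(x,v)\in\Omega$ the only change is $f_t \le 0$; at a point $(0,v,t)$ with $v<0$ one has $f_x \ge 0$ (since $\partial_x$ points into $\Omega$ at $x=0$), hence $v f_x \le 0$, and combined with $f_t \le 0$ and $f_{vv}\ge 0$ this again yields $\mathcal{M}f(P) \le 0$; the symmetric case $(1,v,t)$ with $v>0$ gives $f_x \le 0$ and the same conclusion. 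Each case contradicts $\mathcal{M}f > 0$.

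To remove the artificial cutoff $|v|\le L$, introduce the barrier $\phi$ from Lemma \ref{barrier}. For fixed $w\in\mathbb{R}$ and $\lambda>0$, set $h(x,v,t) := f(x,v,t) + \lambda\,\phi(v-w,t)$; then $\mathcal{M}h \ge 0$ and $h \to +\infty$ as $|v-w|\to\infty$, uniformly in $t$. Choosing $r$ so large that $h \ge \inf_{\Gamma_T^-} f$ on the faces $v = w\pm r$, the bounded-slab minimum principle just proved forces the infimum of $h$ on $(0,1)\times(w-r,w+r)\times(0,T)$ to be attained on $\Gamma_T^-$. Evaluating at $v = w$ and sending $\lambda\to 0^+$ gives $f(x,w,t) \ge \inf_{\Gamma_T^-} f$ for every $(x,w,t)\in\bar U_T$, and $w$ was arbitrary.

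There is no genuine obstacle here; the result is symmetric to the already-established maximum principle. The only point requiring attention is the sign bookkeeping on the outgoing boundary components $\{x=0,\,v<0\}$ and $\{x=1,\,v>0\}$: the geometry of $\Omega$ forces $f_x$ and $v$ to have opposite signs at a boundary minimum, so that the transport contribution $v f_x$ is non-positive and cannot compensate the non-positive diffusion contribution $-f_{vv}$, thereby excluding these outgoing faces from the possible minimum locations.
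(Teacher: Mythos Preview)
Your primary reduction---setting $g=-f$ so that $\mathcal{M}g\le 0$ and invoking Lemma~\ref{maxStrong}---is correct and is exactly what the paper intends by ``analogous to Lemma~\ref{maxStrong}.''

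One small slip in your optional replay: to pass from $\mathcal{M}f\ge 0$ to the strict case you should take $f_k:=f+kt$, not $f-kt$, since $\mathcal{M}(f+kt)=\mathcal{M}f+k>0$; with your choice $f_k=f-kt$ one gets $\mathcal{M}f_k\ge -k$, which is not strictly positive. The rest of the sign bookkeeping in the replay (interior points, $t=T$, and the outgoing faces $\{x=0,\,v<0\}$, $\{x=1,\,v>0\}$) is handled correctly.
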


\begin{proof}
It is analogous to Lemma \ref{maxStrong}.
\end{proof}

\section{Regularity}


In this section, we will establish the regularity (hypoellipticity) of the
weak solutions obtained in the previous section by studying the adjoint
problem. As a preparation, we first recall the fundamental solution to the
forward Fokker-Planck equation in the whole space.


\subsection{Preliminaries}

\subsubsection{Fundamental solution in the absence of boundary}


The fundamental solution $G$ for the Fokker-Planck equation \eqref{VFP} in
the whole space $(x,v,t)\in \mathbb{R}\times\mathbb{R}\times \mathbb{R}_+$
is given by (for instance, see \cite{IK64})
\begin{equation}
\begin{split}  \label{G}
G(x,v,t;\xi,\nu,\tau)&=G(x-\xi,v,\nu,t-\tau) \\
&=\frac{3^{\frac{1}{2}}}{2\pi (t-\tau)^2}\exp \left(-\frac{%
3|x-\xi-(t-\tau)(v+\nu)/2|^2}{(t-\tau)^3} -\frac{|v-\nu|^2}{4(t-\tau)}%
\right).
\end{split}%
\end{equation}

Any solution of the linear problem \eqref{VFP} with initial data $f_{0}\in
L^{1}\cap L^{\infty }(\mathbb{R}^{2})$ has the integral expression
\begin{equation*}
f(x,v,t)=\int_{\mathbb{R}^{2}}G(x,v,t;\xi ,\nu ,0)f_{0}(\xi ,\nu )d\xi d\nu
\,.
\end{equation*}

\bigskip

For the purpose of our study on the boundary hypoelliptic regularity, we
further investigate in the following lemma the behavior of the fundamental
solution $G$ near $x=0$ in the integral form. Since the behavior near $x=1$
can be studied in a similar manner, we will skip it.

\begin{lemma}
\label{limit} The fundamental solution $G$ given in \eqref{G} satisfies the
following right limit at $x=0$ in the integral form. Let $t>0$ and $v>0$ be
given fixed positive time and velocity and let $\lambda $ be a given
integrable and continuous function. Then we have
\begin{equation}
\begin{split}
& \lim_{x\rightarrow 0^{+}}\int_{0}^{t}ds\int_{\mathbb{R}}dw\lambda
(w,s)G(x,v,w,t-s) \\
& \quad \quad \quad =\frac{\lambda (v,t)}{v}+\int_{0}^{t}ds\int_{\mathbb{R}%
}dw\lambda (w,s)G(0,v,w,t-s).
\end{split}
\label{int}
\end{equation}
\end{lemma}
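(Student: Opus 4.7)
The plan is to localize the singular contribution of $G$ near the ballistic trajectory $x\approx v(t-s)$, $w\approx v$, which is the only part of the integration region where the kernel fails to be continuous at $x=0^+$. With the change of variable $\tau = t-s$, I split
$$I(x) := \int_0^t\!\int_\mathbb{R} \lambda(w, t-\tau)\, G(x, v, w, \tau)\, dw\, d\tau = I^{(1)}_\delta(x) + I^{(2)}_\delta(x),$$
where $I^{(1)}_\delta$ collects the contribution of $\tau\in(0,\delta)$ and $I^{(2)}_\delta$ that of $\tau\in(\delta,t)$. For $\tau\geq\delta$ the kernel $G(\cdot,v,w,\tau)$ is jointly smooth at $x=0$ with a uniform Gaussian envelope in $w$, so dominated convergence gives $I^{(2)}_\delta(x) \to I^{(2)}_\delta(0)$ as $x\to 0^+$.

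On the singular piece, at $x=0$ the exponent of $G$ reduces to $-(v^2+vw+w^2)/\tau$, a positive-definite quadratic in $w$ bounded below by $3v^2/(4\tau)$. Performing the $w$-integral yields
$$\int_\mathbb{R} G(0, v, w, \tau)\,dw = \frac{\sqrt{3}}{2\sqrt{\pi}\,\tau^{3/2}}\,e^{-3v^2/(4\tau)},$$
which is integrable near $\tau=0$ thanks to the super-exponential decay in $1/\tau$. Hence $I^{(1)}_\delta(0)\to 0$ as $\delta\to 0^+$, and the proof reduces to showing
$$\lim_{x\to 0^+} I^{(1)}_\delta(x) = \frac{\lambda(v,t)}{v} + o_\delta(1) \quad\text{as }\delta\to 0^+.$$

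For this main step I would perform a Laplace-type asymptotic analysis at the natural scales of $G$ near $\tau = x/v$. Introducing the rescalings $\tau = x/v + (x/v)^{3/2}\eta$ and $w = v + \sqrt{2x/v}\,z$, the Jacobian $d\tau\,dw = \sqrt{2}\,(x/v)^{2}\,d\eta\,dz$ cancels the prefactor $\tau^{-2}$ exactly. Using $x - \tau(v+w)/2 = -v(x/v)^{3/2}\eta + O(x^{3/2})$ and $\tau^3 = (x/v)^3(1+O(\sqrt{x}))$, the full exponent expands as
$$-\frac{3\bigl(x-\tau(v+w)/2\bigr)^2}{\tau^3} - \frac{(v-w)^2}{4\tau} = -3v^2\eta^2 - \frac{z^2}{2} + O(\sqrt{x})$$
uniformly on compact $(\eta,z)$-sets. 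Continuity of $\lambda$ gives $\lambda(w, t-\tau)\to\lambda(v,t)$, and a standard Gaussian evaluation produces
$$\lambda(v,t)\cdot\frac{\sqrt{6}}{2\pi}\int_{\mathbb{R}^2} e^{-3v^2\eta^2 - z^2/2}\,d\eta\,dz = \frac{\lambda(v,t)}{v},$$
which is the desired limit.

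The main obstacle is producing a uniform Gaussian majorant in the rescaled variables, valid for all $(\eta,z)\in\mathbb{R}^2$ and small $x>0$, so that dominated convergence applies rather than mere pointwise convergence. This requires a careful but routine estimate of the quadratic form in the exponent exploiting $v>0$ fixed, which is needed to control the range $\eta\in(-\sqrt{v/x},\infty)$ as $x\to 0^+$. The companion limit at $x=1^-$ for $v<0$ follows by the symmetry $x\mapsto 1-x$, $v\mapsto -v$.
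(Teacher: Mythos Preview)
Your overall strategy matches the paper's: isolate the short-time window $\tau=t-s\in(0,\delta)$, handle the remainder by dominated convergence, and extract $\lambda(v,t)/v$ from the piece concentrating near $\tau\approx x/v$. The execution differs. The paper first integrates $w$ out \emph{exactly} (a one-dimensional Gaussian), obtaining
\[
\int_{\mathbb{R}} G(x,v,w,\tau)\,dw=\frac{\sqrt{3}}{2\sqrt{\pi}}\,\tau^{-3/2}\exp\!\Big(-\frac{3(x-\tau v)^2}{4\tau^3}\Big),
\]
and then analyzes this one-variable integral by splitting $(0,\epsilon)$ into $\tau<(1-\alpha)x/v$, $|\tau-x/v|<\alpha x/v$, and $\tau>(1+\alpha)x/v$ with $\alpha=\epsilon^{1/4}$. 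Your direct two-dimensional Laplace scaling is a legitimate alternative, but the ``careful but routine'' majorant you defer is essentially the paper's three-region decomposition: the $O(\sqrt{x})$ expansion of the exponent is only valid on compact $(\eta,z)$-sets and breaks down as $\eta\to -(v/x)^{1/2}$ (i.e.\ $\tau\to 0^+$), so the tails still require separate treatment.

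There is also a slip in your exponent expansion. With your scaling,
\[
x-\tfrac{\tau(v+w)}{2}=-\Big(\tfrac{x}{v}\Big)^{3/2}\Big(v\eta+\tfrac{z}{\sqrt 2}\Big)+O(x^2),
\]
so the limiting exponent is $-3\big(v\eta+z/\sqrt{2}\big)^2-z^2/2=-3v^2\eta^2-3\sqrt{2}\,v\eta z-2z^2$, not the diagonal form $-3v^2\eta^2-z^2/2$ you state. The two Gaussian integrals happen to agree (both quadratic forms have determinant $3v^2/2$), so your value $1/v$ is correct; the underlying reason is precisely that the $w$-integral can be performed first, after which the exponent depends on $\tau$ only through $x-\tau v$. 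With this correction your route goes through, but the paper's one-dimensional reduction makes the tail control explicit rather than deferred.
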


\begin{proof}
Let $\epsilon >0$ be a given arbitrary small number. Let $x>0$ be
sufficiently small, say $x=o(\epsilon )$ so that $\lim_{\epsilon \rightarrow
0^{+}}x/\epsilon =0$. We now divide the integral on the left-hand side of %
\eqref{int} into two parts. One is when $t-\epsilon <s<t$ and the other is
its complement: $0<s<t-\epsilon $. Then
\begin{equation}
\begin{split}
& \int_{0}^{t}ds\int_{\mathbb{R}}dw\lambda (w,s)G(x,v,w,t-s) \\
& =\int_{t-\epsilon }^{t}ds\int_{\mathbb{R}}dw\lambda
(w,s)G(x,v,w,t-s)+\int_{0}^{t-\epsilon }ds\int_{\mathbb{R}}dw\lambda
(w,s)G(x,v,w,t-s) \\
& =:(i)+(ii).
\end{split}
\label{iii}
\end{equation}%
We first compute the first part $(i)$. Since $G$ is integrable, we can write
$(i)$ as
\begin{equation}
\begin{split}
(i)& =\lambda (v,t)\int_{t-\epsilon }^{t}ds\int_{\mathbb{R}}dwG(x,v,w,t-s) \\
& \quad +\int_{t-\epsilon }^{t}ds\int_{\mathbb{R}}dw\left( \lambda
(w,s)-\lambda (v,t)\right) G(x,v,w,t-s)=:\lambda (v,t)\,(i)_{1}+(i)_{2}.
\end{split}
\label{(i)}
\end{equation}

The second integral $(i)_2$ in \eqref{(i)} converges to 0 as $%
\epsilon\rightarrow 0$. This can be established by splitting the integral
into two parts: $|w-v|<\delta$ and $|w-v|>\delta$. When $w$ is close enough
to $v$: $|w-v|<\delta$, one can use the continuity of $\lambda$ and when $%
|w-v|>\delta$ and $0<t-s<\epsilon$, it can be shown that the remaining
integral can be made as small as possible. We omit the details.

In what follows, we will show that the first integral in \eqref{(i)}, $%
(i)_{1}\rightarrow 1/v$ as $\epsilon \rightarrow 0$. To do so, we use the
explicit expression for $G$: first, substituting $t-s$ with a new variable
(denoted again as $s$) in $(i)_{1}$, we see that
\begin{equation*}
(i)_{1}=\frac{3^{\frac{1}{2}}}{2\pi }\int_{0}^{\epsilon }\int_{\mathbb{R}}%
\frac{1}{s^{2}}\exp \left( -\frac{3|x-s(v+w)/2|^{2}}{s^{3}}-\frac{|v-w|^{2}}{%
4s}\right) dwds.
\end{equation*}%
We first note that the $w$-integral can be explicitly computed: for any
fixed $x,s,v>0$,
\begin{equation*}
\begin{split}
& \int_{\mathbb{R}}\exp \left( -\frac{3|x-s(v+w)/2|^{2}}{s^{3}}-\frac{%
|v-w|^{2}}{4s}\right) dw \\
& =\int_{\mathbb{R}}\exp \left( -\frac{|w-v-\frac{3}{2}(\frac{x}{s}-v)|^{2}}{%
s}-\frac{3|\frac{x}{s}-v|^{2}}{4s}\right) dw \\
& =\exp \left( -\frac{3|{x}-sv|^{2}}{4s^{3}}\right) \int_{\mathbb{R}}\exp
\left( -\frac{|\tilde{w}|^{2}}{s}\right) d\tilde{w}=\pi ^{\frac{1}{2}}s^{%
\frac{1}{2}}\exp \left( -\frac{3|{x}-sv|^{2}}{4s^{3}}\right)
\end{split}%
\end{equation*}%
and thus $(i)_{1}$ can be rewritten as
\begin{equation}
(i)_{1}=\frac{3^{\frac{1}{2}}}{2\pi ^{\frac{1}{2}}}\int_{0}^{\epsilon }\frac{%
1}{s^{\frac{3}{2}}}\exp \left( -\frac{3|{x}-sv|^{2}}{4s^{3}}\right) ds=\frac{%
3^{\frac{1}{2}}}{2\pi ^{\frac{1}{2}}}\underbrace{\left( \int_{0}^{(1-\alpha )%
\frac{x}{v}}+\int_{(1-\alpha )\frac{x}{v}}^{(1+\alpha )\frac{x}{v}%
}+\int_{(1+\alpha )\frac{x}{v}}^{\epsilon }\right) }%
_{=:(i)_{11}+(i)_{12}+(i)_{13}}  \label{(i)1}
\end{equation}%
for a sufficiently small positive number $\alpha $ to be determined. We will
estimate each term respectively. For $(i)_{11}$, notice that $0<s<(1-\alpha )%
\frac{x}{v}$ implies $x-vs>\frac{\alpha }{1-\alpha }vs>0$, which in turn
implies $-|x-sv|^{2}<-(\frac{\alpha }{1-\alpha })^{2}v^{2}s^{2}$. Hence
\begin{equation*}
(i)_{11}=\int_{0}^{(1-\alpha )\frac{x}{v}}\frac{1}{s^{\frac{3}{2}}}\exp
\left( -\frac{3|{x}-sv|^{2}}{4s^{3}}\right) ds<\int_{0}^{(1-\alpha )\frac{x}{%
v}}\frac{1}{s^{\frac{3}{2}}}\exp \left( -\frac{3\alpha ^{2}v^{2}}{4(1-\alpha
)^{2}}\frac{1}{s}\right) ds.
\end{equation*}%
Now to see the dependence on $\alpha $ of the integral, we make the
substitution, $t=\frac{s}{\alpha ^{2}v^{2}}$:
\begin{equation*}
(i)_{11}<\int_{0}^{\frac{(1-\alpha )}{\alpha ^{2}v^{2}}\frac{x}{v}}\frac{1}{%
\alpha ^{3}v^{3}t^{\frac{3}{2}}}\exp \left( -\frac{3}{4(1-\alpha )^{2}}\frac{%
1}{t}\right) \alpha ^{2}v^{2}dt=\frac{1}{\alpha v}\int_{0}^{\frac{(1-\alpha
)x}{\alpha ^{2}v^{3}}}\frac{1}{t^{\frac{3}{2}}}\exp \left( -\frac{3}{%
4(1-\alpha )^{2}t}\right) dt.
\end{equation*}%
Since $v$ is bounded away from zero, the integrand is uniformly bounded, and
therefore, we deduce that
\begin{equation}
(i)_{11}\leq \frac{C_{1}x}{\alpha ^{3}v^{4}}\;\text{ for some uniform
constant }C_{1}.  \label{(i)11}
\end{equation}%
For $(i)_{13}$, $(1+\alpha )\frac{x}{v}<s$ implies $vs-x>\frac{\alpha }{%
1+\alpha }vs>0$ and thus $-|x-vs|^{2}<-\frac{\alpha ^{2}}{(1+\alpha )^{2}}%
v^{2}s^{2}$. Hence we get
\begin{equation*}
(i)_{13}=\int_{(1+\alpha )\frac{x}{v}}^{\epsilon }\frac{1}{s^{\frac{3}{2}}}%
\exp \left( -\frac{3|{x}-sv|^{2}}{4s^{3}}\right) ds<\int_{(1+\alpha )\frac{x%
}{v}}^{\epsilon }\frac{1}{s^{\frac{3}{2}}}\exp \left( -\frac{3\alpha
^{2}v^{2}}{4(1+\alpha )^{2}}\frac{1}{s}\right) ds.
\end{equation*}%
As before, letting $t=\frac{s}{\alpha ^{2}v^{2}}$, we obtain
\begin{equation*}
(i)_{13}<\int_{\frac{(1+\alpha )}{\alpha ^{2}v^{2}}\frac{x}{v}}^{\frac{%
\epsilon }{\alpha ^{2}v^{2}}}\frac{1}{\alpha ^{3}v^{3}t^{\frac{3}{2}}}\exp
\left( -\frac{3}{4(1+\alpha )^{2}}\frac{1}{t}\right) \alpha ^{2}v^{2}dt<%
\frac{1}{\alpha v}\int_{0}^{\frac{\epsilon }{\alpha ^{2}v^{2}}}\frac{1}{t^{%
\frac{3}{2}}}\exp \left( -\frac{3}{4(1+\alpha )^{2}t}\right) dt
\end{equation*}%
and hence we deduce that
\begin{equation}
(i)_{13}\leq \frac{C_{2}\epsilon }{\alpha ^{3}v^{3}}\;\text{ for some
uniform constant }C_{2}.  \label{(i)13}
\end{equation}%
The integral $(i)_{12}$ is when $s$ is very close to $x/v$. Notice that $%
(1-\alpha )\frac{x}{v}<s<(1+\alpha )\frac{x}{v}$ as well as $|s-\frac{x}{v}|<%
\frac{\alpha x}{v}$. Therefore, we can bound $(i)_{12}$ as
\begin{equation}
I^{-}\leq (i)_{12}=\int_{|s-\frac{x}{v}|<\frac{\alpha x}{v}}\frac{1}{s^{%
\frac{3}{2}}}\exp \left( -\left\vert \frac{\sqrt{3}v}{2s^{\frac{3}{2}}}%
\left( s-\frac{x}{v}\right) \right\vert ^{2}\right) ds\leq I^{+},
\label{(i)12}
\end{equation}%
where
\begin{equation*}
I^{\pm }=\int_{|s-\frac{x}{v}|<\frac{\alpha x}{v}}\frac{1}{((1\mp \alpha )%
\frac{x}{v})^{\frac{3}{2}}}\exp \left( -\left\vert \frac{\sqrt{3}v}{2((1\pm
\alpha )\frac{x}{v})^{\frac{3}{2}}}\left( s-\frac{x}{v}\right) \right\vert
^{2}\right) ds.
\end{equation*}%
Letting $z=\frac{\sqrt{3}v}{2((1\pm \alpha )\frac{x}{v})^{\frac{3}{2}}}%
\left( s-\frac{x}{v}\right) $, we rewrite $I^{\pm }$ as follows:
\begin{equation}
I^{\pm }=\frac{2}{\sqrt{3}v}\left( \frac{1\pm \alpha }{1\mp \alpha }\right)
^{\frac{3}{2}}\int_{|z|<\frac{\sqrt{3}\alpha v^{\frac{3}{2}}}{2(1\pm \alpha
)^{\frac{3}{2}}\sqrt{x}}}e^{-z^{2}}dz.  \label{I}
\end{equation}%
From \eqref{(i)11}-\eqref{I}, it is clear that if we choose $\alpha
=\epsilon ^{1/4}$, the following holds
\begin{equation*}
\lim_{\epsilon \rightarrow 0}(i)_{11}=\lim_{\epsilon \rightarrow
0}(i)_{13}=0\;\text{ and }\;\lim_{\epsilon \rightarrow 0}I^{\pm
}=\lim_{\epsilon \rightarrow 0}(i)_{12}=\frac{2\sqrt{\pi }}{\sqrt{3}v}
\end{equation*}%
and therefore, from \eqref{(i)1} we conclude that
\begin{equation*}
(i)_{1}\rightarrow \frac{1}{v}\text{ as }\epsilon \rightarrow 0.
\end{equation*}

It now remains to compute the limit of $(ii)$ in \eqref{iii}. It is clear
that for $t-s>\epsilon >x=o(\epsilon )$, there exists a uniform constant $%
C_{3}>0$ so that
\begin{equation*}
\begin{split}
\frac{1}{(t-s)^{2}}\exp & \left( -\frac{3|x-(t-s)(v+w)/2|^{2}}{(t-s)^{3}}-%
\frac{|v-w|^{2}}{4(t-s)}\right) \\
& \leq \frac{1}{(t-s)^{2}}\exp \left( -C_{3}\frac{|v+w|^{2}}{(t-s)}-\frac{%
|v-w|^{2}}{4(t-s)}\right)
\end{split}%
\end{equation*}%
and therefore, by the dominated convergence theorem, we can pass to the
limit:
\begin{equation*}
\lim_{\epsilon \rightarrow 0}(ii)=\int_{0}^{t}ds\int_{\mathbb{R}}dw\lambda
(w,s)G(0,v,w,t-s).
\end{equation*}%
This completes the proof of Lemma.
\end{proof}

As Lemma \ref{limit} indicates, the fundamental solution for the
Fokker-Planck equation is different from the heat kernel: due to the
hyperbolic (transport) nature of the Fokker-Planck equation (\ref{VFP}), $G$
displays more singular behavior in $x$ than the non-degenerate variable $v$.
This lemma will be used crucially for the boundary hypoellipticity result.

\


\subsubsection{Adjoint problem}


We recall from Definition \ref{weak-solution} that the weak solutions
\newline
$f\in L^{\infty }\left( \left[ 0,T\right] ;L^{1}\cap L^{\infty }\left(
\Omega \right) \right) $ to (\ref{VFP})-(\ref{BC1}) with initial data $%
f_{0}\in L^{1}\cap L^{\infty }\left( \Omega \right) $ satisfy
\begin{equation}
\int_{U_{t}}\mathcal{M}^{\ast }(\psi )f+\int_{\Omega }\psi
(t)f(t)=\int_{\Omega }\psi (0)f_{0},  \label{wf}
\end{equation}%
where
\begin{equation}
\mathcal{M}^{\ast }(\psi )=-\psi _{t}-v\psi _{x}-\psi _{vv}  \label{M*}
\end{equation}%
for every $t\in \left[ 0,T\right] $ and any test function $\psi \left(
x,v,s\right) \in C^{1}\left( U_{t}\right) $ satisfying supp$\left( \psi
\left( \cdot ,\cdot ,s\right) \right) \subset \left[ 0,1\right] \times \left[
-R,R\right] $ for some $R>0$ and $\psi |_{\gamma _{t}^{+}}=0$.

The adjoint problem is to solve the following adjoint equation (the backward
Fokker-Planck equation)
\begin{equation}
\mathcal{M}^{\ast }(\psi )=0\text{ for }t<T  \label{back}
\end{equation}%
for a given data at $t=T$ so that $\psi \in L^1\cap L^{\infty }(\Omega)$.

Notice that $g(x,v,t):=f(x,-v,T-t)$, where $f$ is the solution to the
forward Fokker-Planck equation, solves the backward Fokker-Planck %
\eqref{back} for $t<T$. Thus the transformation $t\rightarrow t_{0}-t$ and $%
v\rightarrow -v$ in $G$ yields the fundamental solution to the backward
Fokker-Planck equation.


\subsection{Hypoellipticity away from the singular set}

\subsubsection{Interior hypoellipticity}


The goal of this subsection is to prove the following interior
hypoellipticity:

\begin{proposition}
\label{Prop}Let $f$ be the weak solution with given initial data $f_{0}\in
L^{1}\cap L^{\infty }(\Omega )$. Then for each $t>0$, $f\in H_{\text{loc}%
}^{k,m}(\Omega ),$ where $H^{k,m}=H_{x,v}^{k,m}$.
\end{proposition}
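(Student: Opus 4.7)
The approach is to localize away from $\partial\Omega = \{x=0\}\cup\{x=1\}$ and from $t=0$, and then to run a bootstrap using the whole-space fundamental solution $G$ from \eqref{G}. Fix a relatively compact set $K \subset \Omega$ and times $0 < t_1 < t_2 \leq T$, and choose a cut-off $\chi(x,v,t) \in C_c^{\infty}(\Omega \times (0,T))$ with $\chi \equiv 1$ on a neighborhood of $K \times [t_1,t_2]$. Since the weak formulation in Definition \ref{weak-solution} gives $\mathcal{M} f = 0$ in the distributional sense on $\Omega \times (0,T)$, the function $g := \chi f$, extended by zero to $\mathbb{R}^{2}\times \mathbb{R}_{+}$, satisfies
\[
\mathcal{M} g \;=\; (\mathcal{M}\chi)\,f - 2\chi_{v} f_{v} \;=:\; h,
\]
which is just the expansion $\mathcal{M}(\chi f) = \chi\mathcal{M} f + [\mathcal{M},\chi]f$.

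The engine of the bootstrap is the representation $g = G \ast h$ on the whole space, where $G$ is the kernel \eqref{G}. The Gaussian factor $\exp(-|v-\nu|^{2}/4(t-\tau))$ smooths by order two in $v$ per convolution, whereas the transport-Gaussian factor $\exp(-3|x-\xi-(t-\tau)(v+\nu)/2|^{2}/(t-\tau)^{3})$ yields a smoothing of order $2/3$ in $x$ (and $1/3$ in $t$). This is the concrete form of H\"ormander's rank condition for $\mathcal{M}$: with $X_{0}=\partial_{t}+v\partial_{x}$ and $X_{1}=\partial_{v}$ one has $[X_{1},X_{0}]=\partial_{x}$, so $X_{1}$ and $[X_{1},X_{0}]$ span the full tangent space in $(x,v)$. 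Each iteration upgrades the differentiability of $g$ in a controlled way, trading $v$-regularity (obtained from the Gaussian in $v$) for $x$- and $t$-regularity (obtained from the transport kernel and the equation $\mathcal{M} g = h$ itself). After finitely many iterations one obtains $g \in H^{k,m}_{x,v}$ locally, and since $K$ and $t_{1}>0$ were arbitrary, this proves $f(\cdot,\cdot,t)\in H^{k,m}_{\text{loc}}(\Omega)$ for every $t>0$.

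The main obstacle is kicking off the iteration: the source $h$ involves $\chi_{v} f_{v}$, and the weak solution is a priori only in $L^{\infty}\cap L^{1}$, so $f_v$ is only a distribution. To remedy this, I would first regularize by convolution. Let $\rho_{\delta}(x,v,t)$ be a standard mollifier supported in a ball of radius $\delta$, and form $f^{\delta}=\rho_{\delta}\ast f$ on a region slightly smaller than the support of $\chi$. Then $f^{\delta}$ is smooth and satisfies $\mathcal{M} f^{\delta} = r^{\delta}$, where the commutator $r^{\delta}=[v\partial_{x},\,\rho_{\delta}\,\ast]\, f$ tends to $0$ in $L^{2}_{\text{loc}}$ as $\delta\to 0$ by Friedrichs's lemma. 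Testing $\mathcal{M} f^{\delta}=r^{\delta}$ against $\chi^{2} f^{\delta}$ and integrating by parts (now legitimate) gives a Caccioppoli-type estimate producing a uniform-in-$\delta$ bound on $\|\chi f^{\delta}_{v}\|_{L^{2}}$ in terms of $\|f\|_{L^{2}}$ and $\|\chi\|_{C^{2}}$. Passing $\delta\to 0$ yields $f_{v}\in L^{2}_{\text{loc}}$, so that $h \in L^{2}$, and the $G$-convolution bootstrap of the previous paragraph then applies unconditionally to produce arbitrary Sobolev regularity.
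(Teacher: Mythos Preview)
Your approach is correct and is genuinely different from the paper's. You work \emph{directly}: localize $f$ by $\chi$, handle the commutator term $\chi_v f_v$ via a mollification/Friedrichs/Caccioppoli step to obtain $f_v\in L^2_{\mathrm{loc}}$, and then bootstrap using the smoothing of the whole-space Kolmogorov kernel $G$. The paper instead argues by \emph{duality}: for any compactly supported $\varphi$ it solves the backward equation $\mathcal{M}^{\ast}\phi=0$ with terminal datum $\varphi$ in the whole space, where $\phi$ is automatically $C^\infty$ by classical hypoellipticity; then $\psi=\zeta\phi$ (with a spatial cut-off $\zeta$) is an admissible test function, and the weak formulation yields $\int_{B_\rho}\varphi\,f(t_0)\le C\|f_0\|_{L^1}$. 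Replacing $\phi$ by $\partial_x^{k'}\partial_v^{m'}\phi$ gives the same bound for derivatives, and duality immediately produces $f\in H^{k,m}_{\mathrm{loc}}$.

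The trade-off is this: the paper's route is shorter and cleaner because the hard hypoelliptic gain is borrowed wholesale from the whole-space theory for the \emph{adjoint} solution, which is smooth from the outset, so no Friedrichs lemma, no energy estimate, and no iterated bootstrap on $f$ are needed; one simply differentiates the smooth test function. Your route is self-contained and makes the mechanism explicit (the $L^2$ control of $f_v$ and the anisotropic gain from $G$), but it requires carrying out the iteration with nested cut-offs and tracking the fractional $x$-gain at each step, which you leave somewhat schematic. The paper's duality argument also generalizes more easily to the boundary points with $v\neq 0$, where it suffices to build an adjoint solution vanishing on the outgoing set (their Lemma~\ref{lambda1}); adapting your direct bootstrap there would be more delicate since extending $\chi f$ by zero across $x=0$ introduces a measure supported on the boundary.
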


\begin{proof}
Let $(x_{0},v_{0},t_{0}),$ where $x_{0}>0$ and $t_{0}>0,$ be given. Suppose
the data $\varphi $ at $t=t_{0}$ is supported in the interior: supp$%
\,\varphi \subset B_{\rho }(x_{0},v_{0})\subset \Omega $ and $\varphi \in
C(B_{\rho })$. We consider the following backward Fokker-Planck equation in
the whole space:
\begin{equation}
\mathcal{M}^{\ast }(\phi )=0\;\text{ for }t<t_{0}\text{ where }\;\phi
(x,v,t_{0})=\varphi (x,v).
\end{equation}%
Notice that we can solve this equation in the whole space via the
fundamental solution and moreover, the solution $\phi $ is smooth due to the
hypoellipticity of the Fokker-Planck operator: $\phi \in C^{\infty }(\mathbb{%
R}^{2}\times (-\infty ,t_{0}))$. For a detailed discussion on
hypoellipticity, see \cite{H, HN04, V}. Choose $0<\rho <\rho _{1}<\rho _{2}$
so that $B_{\rho _{2}}(x_{0},v_{0})$ is contained in the interior and
consider a smooth cutoff function $\zeta \in C^{\infty }(\mathbb{R}^{2})$
such that
\begin{equation*}
\zeta =%
\begin{cases}
1 & \text{on }B_{\rho _{1}}, \\
0 & \text{on }\mathbb{R}^{2}\setminus B_{\rho _{2}}.%
\end{cases}%
\end{equation*}%
Letting ${\psi }=\phi \zeta $, we see that ${\psi }$ satisfies the following
\begin{equation}
-\mathcal{M}^{\ast }({\psi })={\psi }_{t}+v{\psi }_{x}+{\psi }_{vv}=%
\underbrace{v\zeta _{x}\phi +2\zeta _{v}\phi _{v}+\zeta _{vv}\phi }_{=:R}.
\end{equation}%
Since $\zeta _{x},\zeta _{v},\zeta _{vv}$ are smooth, supported in $%
\overline{B}_{\rho _{2}}\setminus B_{\rho _{1}},$ and $\phi $ is also smooth
in there, we deduce that $R$ is smooth and supp$\,R\subset \overline{B}%
_{\rho _{2}}\setminus B_{\rho _{1}}$. Note that $\psi (0,v,t)=0$ for $v<0$.
We will use this ${\psi }=\phi \zeta $ as a test function in \eqref{wf} to
get
\begin{equation}
\int_{B_{\rho }}\varphi f(t_{0})=\int_{0}^{t_{0}}\int_{\overline{B}_{\rho
_{2}}\setminus B_{\rho _{1}}}dxdv\,Rf+\int_{\Omega }dxdv\,\psi (0)f_{0}.
\label{(3.15)}
\end{equation}%
Notice that the right-hand side is bounded by $\Vert f_{0}\Vert _{L^{1}}$.
Thus we deduce that
\begin{equation*}
\int_{B_{\rho }}\varphi f(t_{0})\leq C.
\end{equation*}%
Since $\varphi \in C(B_{\rho })$ can be taken arbitrarily, by density
argument, this can be extended for all functions in $L^{2}(B_{\rho })$. Thus
by duality, $f\in L^{2}(B_{\rho })$. Now if we take a test function: $\psi =%
\frac{\partial ^{k^{\prime }+m^{\prime }}\phi }{\partial x^{k^{\prime
}}\partial v^{m^{\prime }}}\zeta $ for $k^{\prime }\leq k$, $m^{\prime }\leq
m$ in \eqref{wf}, by the duality characterization of $H_{x,v}^{k,m}(B_{\rho
})$, we conclude that $f\in H_{x,v}^{k,m}(B_{\rho })$. This completes the
proof.
\end{proof}


\subsubsection{Boundary hypoellipticity}


The goal of this subsection is to prove the boundary hypoellipticity away
from the singular set $\{(0,0),\left( 1,0\right) \}$. Before we prove it, we
derive a lemma which will be used to obtain the boundary hypoellipticity.

Let $v_{0}<0$. Choose $\delta >0$ such that $2\delta <|v_{0}|$. We consider
the following backward Fokker-Planck problem:
\begin{equation}
\begin{cases}
\mathcal{M}^{\ast }(\phi )=0\text{ for }t<t_{0} \\
\phi (x,v,t_{0})=\varphi (x,v)\text{ where }\text{supp}\,\varphi \subset
B_{\delta }(0,v_{0})\text{ and }\varphi \in C(B_{\delta }) \\
\phi (0,v,t)=0\text{ for }|v-v_{0}|<2\delta .%
\end{cases}
\label{adjP}
\end{equation}%
We show the existence of a solution $\phi $ to \eqref{adjP}.

\begin{lemma}
\label{lambda1}There exists $\lambda (w,t)\in L^{1}\left( \mathbb{R}\times %
\left[ 0,t_{0}\right] \right) $ such that its support in $w$ lies in $%
|w-v_{0}|\leq 2\delta $ and it is smooth in $|w-v_{0}|<2\delta $ and that $%
\phi (x,v,t)$ defined by the following expression
\begin{equation}
\begin{split}
\phi (x,v,t)=& \int_{\mathbb{R}^{2}}d\xi dw\varphi (\xi ,w)G(x-\xi
,-v,-w,t_{0}-t) \\
& \quad +\int_{t_{0}}^{t}ds\int_{v_{0}-2\delta }^{v_{0}+2\delta }dw\lambda
(w,s)G(x,-v,-w,s-t)
\end{split}
\label{phi}
\end{equation}%
solves the problem \eqref{adjP}. Here $G$ is the fundamental solution to the
forward Fokker-Planck equation given in \eqref{G}.
\end{lemma}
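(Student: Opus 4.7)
The plan is to realize $\phi$ as the superposition of a free-space solution carrying the final data $\varphi$ and a single-layer-type potential against an unknown density $\lambda$ supported on the lateral boundary $\{x=0\}$, and then to choose $\lambda$ by imposing the vanishing Dirichlet condition on $\{x=0,\ |v-v_{0}|<2\delta\}$. The first step is to verify that, for any $\lambda\in L^{1}$ with the stated support, the right-hand side of \eqref{phi} is a classical solution of $\mathcal{M}^{\ast}(\phi)=0$ in $\{x>0\}\times\{t<t_{0}\}$ and matches $\varphi$ at $t=t_{0}$. This reduces to checking that, for each fixed $(w,s)$, the function $(x,v,t)\mapsto G(x,-v,-w,s-t)$ satisfies the backward Fokker-Planck equation (a direct consequence of the forward identity for $G$ under the substitutions $v\mapsto -v$ and $t\mapsto s-t$), together with the observation that the boundary integral in \eqref{phi} vanishes at $t=t_{0}$ because its time interval collapses there.

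Next I would compute the trace of \eqref{phi} at $x=0^{+}$ and turn it into an integral equation for $\lambda$. Evaluating the first term of \eqref{phi} at $x=0$ produces a function $\Phi_{0}(v,t)$ that is smooth and bounded on the strip $\{|v-v_{0}|<2\delta,\ 0\le t\le t_{0}\}$, since for such $v$ and for $(\xi,w)\in \mathrm{supp}\,\varphi$ the kernel $G(-\xi,-v,-w,t_{0}-t)$ stays nonsingular. For the boundary potential, after the change of variables $V=-v$, $W=-w$, $\sigma=s-t$, and using that $2\delta<|v_{0}|$ guarantees $V=-v\ge |v_{0}|-2\delta>0$ uniformly on the relevant strip, Lemma \ref{limit} applies and yields
\begin{equation*}
\lim_{x\to 0^{+}}\int_{t_{0}}^{t}ds\int dw\,\lambda(w,s)\,G(x,-v,-w,s-t)
=\frac{\lambda(v,t)}{|v|}+\mathcal{K}[\lambda](v,t),
\end{equation*}
where $\mathcal{K}[\lambda](v,t):=\int_{t_{0}}^{t}ds\int dw\,\lambda(w,s)\,G(0,-v,-w,s-t)$ has a bounded, integrable kernel on the strip because $v$ is kept away from $0$. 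Imposing $\phi(0,v,t)=0$ on that strip therefore produces the Volterra-type equation
\begin{equation*}
\lambda(v,t)=-|v|\bigl(\Phi_{0}(v,t)+\mathcal{K}[\lambda](v,t)\bigr),
\qquad |v-v_{0}|<2\delta,\ 0\le t\le t_{0},
\end{equation*}
with $\lambda$ extended by zero outside the strip.

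Finally, I would solve this equation by a Picard/contraction argument. The prefactor $|v|$ is uniformly bounded on the strip, $\Phi_{0}$ is bounded and smooth in $v$, and $\mathcal{K}$ is causal in $t$ with a kernel whose relevant operator norm over a sufficiently short time interval can be made strictly less than one; iterating on time slabs of controlled length then produces a unique fixed point $\lambda\in L^{1}(\mathbb{R}\times[0,t_{0}])$ supported in $|w-v_{0}|\le 2\delta$. Smoothness of $\lambda$ in $w$ on the open interval $|w-v_{0}|<2\delta$ is read off by differentiating the equation under the integral sign, using the smoothness of $\Phi_{0}$ inherited from $\varphi$ and the smoothness of $G(0,-v,-w,s-t)$ in $v,w$ on regions where those variables stay away from $0$. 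The main technical obstacle is the sign- and time-reversal bookkeeping required to bring the boundary potential into the exact form treated by Lemma \ref{limit} and to correctly identify the jump coefficient $1/|v|$; once this is settled, the remainder of the argument is a standard Volterra fixed-point construction.
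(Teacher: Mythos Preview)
Your proposal is correct and follows essentially the same route as the paper: reduce the boundary condition $\phi(0,v,t)=0$ on the strip $|v-v_{0}|<2\delta$ to a Volterra-type integral equation for $\lambda$ by applying Lemma~\ref{limit} to the single-layer potential, then solve by a fixed-point argument. Your bookkeeping on the sign conventions and the jump coefficient $1/|v|=-1/v$ (since $v<0$ on the strip) matches the paper's, and your explicit treatment of the smoothness of $\lambda$ in $w$ via differentiation under the integral is slightly more detailed than what the paper records; the paper additionally notes the sharper kernel bound $|K(v,w,s-t)|\le Ce^{-A/|t-s|}$, which makes the Volterra contraction immediate on the whole interval without slab iteration.
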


\begin{proof}
Denote the right-hand side of \eqref{phi} as $\Phi \lbrack \lambda ](x,v,t)$%
. Then it is clear that $\mathcal{M}(\Phi \lbrack \lambda ])=0$ in $\Omega $
and $\Phi \lbrack \lambda ](t=t_{0})=\varphi $. We want to show that there
exists a $\lambda =\lambda (w,t)$ such that $\lambda =0$ for $%
|v-v_{0}|>2\delta $ (by defining $\lambda =0$ for $|v-v_{0}|>2\delta $ for
instance) and $\Phi \lbrack \lambda ]=0$ for $x=0$, $|v-v_{0}|<2\delta $.
Let us first see what equation $\lambda $ would obey to satisfy the desired
properties. $\Phi \lbrack \lambda ]=0$ for $|v-v_{0}|<2\delta $ at $x=0$ is
equivalent to
\begin{equation}
0=\overline{\phi }(0,v,t)+\lim_{x\rightarrow
0^{+}}\int_{t_{0}}^{t}ds\int_{v_{0}-2\delta }^{v_{0}+2\delta }dw\lambda
(w,s)G(x,-v,-w,s-t)\text{ for }|v-v_{0}|<2\delta ,  \label{cond}
\end{equation}%
where $\overline{\phi }$ is the first term of $\Phi \lbrack \lambda ]$: the
homogeneous solution in the whole space. Now by Lemma \ref{limit}, %
\eqref{cond} can be written as follows.
\begin{equation}
0=\overline{\phi }(0,v,t)-\frac{\lambda (v,t)}{v}+\int_{t_{0}}^{t}ds%
\int_{v_{0}-2\delta }^{v_{0}+2\delta }dw\lambda (w,s)G(0,-v,-w,s-t).
\end{equation}%
Here instead of $1/v$, $-1/v$ comes out in front of $\lambda $ because $G$
is evaluated at $-v$ and $-w$.

Note that $|G(0,v,w,s-t)|$ is bounded by $e^{-\frac{A}{s-t}}$ for $v,w\in
(v_{0}-2\delta ,v_{0}+2\delta )$. Thus $\lambda $ satisfies the following
integral equation: for $|v-v_{0}|<2\delta $ and $t<t_{0},$
\begin{equation}
\lambda (v,t)=q(v,t)+\int_{t_{0}}^{t}ds\int_{|w-v_{0}|<2\delta }dw\lambda
(w,t)K(v,w,s-t),  \label{lambda}
\end{equation}%
where $q$ is a given smooth function and the given smooth kernel $K$ has the
following bound:
\begin{equation*}
|K(v,w,s-t)|\leq Ce^{-\frac{A}{|t-s|}}\text{ for }|v-v_{0}|<2\delta \text{
and }|w-v_{0}|<2\delta
\end{equation*}%
for some positive $A>0$. Hence by a fixed point argument, we can find a $%
\lambda $ satisfying \eqref{lambda}. This completes the proof.
\end{proof}

\label{br}

\begin{proof}[Proof of Theorem 1.3 (i)]
Proposition \ref{Prop} proves the hypoellipticity away from the boundary. It
then suffices to establish the regularity near the boundary: $%
(0,v_{0},t_{0}) $ where $v_{0}\neq 0$ and $t_{0}>0$ since the other boundary
$(1,v_{0},t_{0}) $ where $v_{0}\neq 0$ and $t_{0}>0$ can be treated
similarly. We divide into two cases: when $v_{0}<0$ and $v_{0}>0$.

We first treat the case when $x_{0}=0,v_{0}<0.$ To do so, we choose a smooth
cutoff function $\zeta \in C^{\infty }(\mathbb{R}^{2})$ such that
\begin{equation*}
\zeta =%
\begin{cases}
1 & \text{on }B_{\delta }(0,v_{0}) \\
0 & \text{on }\mathbb{R}^{2}\setminus B_{2\delta }(0,v_{0}).%
\end{cases}%
\end{equation*}%
Following the same argument as for the interior regularity, we pick a test
function $\psi $ as ${\psi }=\phi \zeta $ where $\phi $ is the solution to %
\eqref{adjP} in Lemma \ref{lambda1}. First we see that ${\psi }$ satisfies
the following
\begin{equation}
-\mathcal{M}^{\ast }({\psi })={\psi }_{t}+v{\psi }_{x}+{\psi }_{vv}={v\zeta
_{x}\phi +2\zeta _{v}\phi _{v}+\zeta _{vv}\phi =:R}\,.
\end{equation}%
Since $\zeta _{x},\zeta _{v},\zeta _{vv}$ are smooth and supported in $%
\overline{B}_{2\delta }\setminus B_{\delta }$ and $\phi $ is also smooth in
there, we deduce that $R$ is smooth and supp$\,R\subset \overline{B}%
_{2\delta }\setminus B_{\delta }$. Moreover, since $\phi (0,v,t)=0$ for $%
|v-v_{0}|<2\delta $ and $\zeta (0,v)=0$ for $|v-v_{0}|\geq 2\delta >0$, $%
\psi (0,v,t)=0$ for all $v<0$. Thus we can use this ${\psi }=\phi \zeta $ as
a test function in \eqref{wf} by restricting to
\begin{equation}
\int_{B_{\delta }\,\cap \,\Omega }\varphi f(t_{0})=\int_{0}^{t_{0}}\int_{%
\overline{B}_{2\delta }\setminus B_{\delta }\,\cap \Omega
}dxdv\,Rf+\int_{\Omega }dxdv\,\psi (0)f_{0}.  \label{(3.22)}
\end{equation}%
Notice that the right-hand side is bounded by $\Vert f_{0}\Vert _{L^{1}}$.
Thus we deduce that
\begin{equation*}
\int_{B_{\delta }\,\cap \,\Omega }\varphi f(t_{0})\leq C.
\end{equation*}%
Since $\varphi \in C(B_{\delta })$ can be taken arbitrarily, by density
argument, this can be extended for all functions in $L^{2}(B_{\delta }\,\cap
\,\Omega )$. Thus by duality, $f\in L^{2}(B_{\delta }\,\cap \,\Omega )$. Now
if we take a test function: $\psi =\frac{\partial ^{k^{\prime }+m^{\prime
}}\phi }{\partial x^{k^{\prime }}\partial v^{m^{\prime }}}\zeta $ for $%
k^{\prime }\leq k$, $m^{\prime }\leq m$ in \eqref{wf}, by the duality
characterization of $H_{x,v}^{k,m}(B_{\delta }\,\cap \,\Omega )$, we
conclude that $f\in H_{x,v}^{k,m}(B_{\delta }\,\cap \Omega \,)$.

It now remains to treat when $x_{0}=0$ and $v_{0}>0$. This can be treated in
the same way as in the interior case: since there is no restriction on the
boundary values of the test functions $\psi $ for $v>0$, by a suitable
choice of a cutoff function, we can easily find an appropriate test function
localized near $(0,v_{0})$ for $v_{0}>0$. We omit the details.\
\end{proof}

\begin{remark}
\label{Rem}Notice that as in the proof of the hypoellipticity (see %
\eqref{(3.15)} and \eqref{(3.22)}) the supremum norm of $f$ away from the
singular set is bounded by $\Vert f_{0}\Vert _{L^{1}}$.
\end{remark}


\subsubsection{Optimal estimates for the derivatives near the singular set}

We derive in this subsection the following estimates near the singular set
for the derivatives of solutions to (\ref{VFP})-(\ref{BC1}) by a scaling
argument and the hypoellipticity.

\begin{lemma}
\label{hypoellipticity-der}Let $f \left( x,v,t\right) \in C^{\infty }\left(
U_{T}\right) $ be a solution of (\ref{VFP})-(\ref{BC1}). Then it satisfies%
\begin{equation}
\left( \left\vert v\right\vert ^{3}+\left\vert x-x_{0}\right\vert \right)
\left\Vert f _{x}\right\Vert _{L^{\infty }}+\left( \left\vert v\right\vert
^{3}+\left\vert x-x_{0}\right\vert \right) ^{2/3}\left\Vert
f_{vv}\right\Vert _{L^{\infty }}+\left( \left\vert v\right\vert
^{3}+\left\vert x-x_{0}\right\vert \right) ^{1/3}\left\Vert f
_{v}\right\Vert _{L^{\infty }}\leq C,  \label{derOfAdj}
\end{equation}%
where $x_{0}=0$ or $1$ and $C$ depends only $\left\Vert f _{0}\right\Vert
_{L^{1}(\Omega )}$ and $\left\Vert f _{0}\right\Vert _{L^{\infty }(\Omega )}$%
.
\end{lemma}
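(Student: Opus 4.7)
The plan is to exploit the intrinsic anisotropic scaling $(x,v,t)\mapsto(\lambda^{3}x,\lambda v,\lambda^{2}t)$ which leaves the Fokker--Planck operator $\partial_{t}+v\partial_{x}-\partial_{vv}$ invariant. Under this scaling, the natural parabolic distance from a point $(x_{1},v_{1})$ to a singular point $(x_{0},0)$ is
\[
\rho := \bigl(|v_{1}|^{3}+|x_{1}-x_{0}|\bigr)^{1/3}.
\]
Fix a base point $P_{1}=(x_{1},v_{1},t_{1})\in U_{T}$ close to the singular set (when $\rho$ is bounded below, Proposition \ref{Prop} and the boundary hypoellipticity from the proof of Theorem \ref{MainTheorem}(i) directly yield the bound). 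I would introduce the rescaled function
\[
\tilde f(y,w,s) := f\bigl(x_{1}+\rho^{3}y,\; v_{1}+\rho w,\; t_{1}+\rho^{2}s\bigr)
\]
on a fixed unit-sized box $Q=\{|y|<c_{0},\,|w|<c_{0},\,-c_{0}^{2}<s\leq 0\}$, where $c_{0}$ is a small absolute constant.

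A direct computation shows that $\tilde f$ satisfies
\[
\tilde f_{s}+(\alpha+w)\,\tilde f_{y}-\tilde f_{ww}=0 \quad \text{on } Q,\qquad \alpha := v_{1}/\rho,
\]
with $|\alpha|\leq 1$ by the definition of $\rho$. One checks geometrically that, for $c_{0}$ sufficiently small, the preimage of $Q$ under the rescaling lies inside $[0,1]\times\mathbb{R}\times(0,T)$ and stays at parabolic distance $\geq c>0$ from the singular set: every point in the preimage is at parabolic distance at least $\rho(1-2c_{0})$ from $(x_{0},0)$. If the box touches the lateral boundary $\{x=0\}$ or $\{x=1\}$, this only occurs at velocities $|v|\sim\rho$, i.e., bounded away from $v=0$, so the boundary hypoellipticity established in the proof of Theorem \ref{MainTheorem}(i) applies. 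Treating $\alpha+w$ as a smooth uniformly bounded coefficient (or, equivalently, performing the Galilean shift $y\mapsto y-\alpha s$) reduces the rescaled problem to an equation of the same type as \eqref{VFP} on a comparable domain, with hypoellipticity constants independent of $\alpha\in[-1,1]$ and of $\rho$.

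Applying Proposition \ref{Prop} (and its boundary variant) together with Sobolev embedding then yields, on a smaller box $Q'\Subset Q$,
\[
\|\tilde f_{y}\|_{L^{\infty}(Q')}+\|\tilde f_{w}\|_{L^{\infty}(Q')}+\|\tilde f_{ww}\|_{L^{\infty}(Q')}\leq C,
\]
where $C$ depends only on $\|\tilde f\|_{L^{\infty}(Q)}$. The latter is controlled by $\|f\|_{L^{\infty}(U_{T})}\leq\|f_{0}\|_{L^{\infty}}$ via the maximum principle (Lemma \ref{maxStrong}), with an alternative control by $\|f_{0}\|_{L^{1}}$ available through Remark \ref{Rem}. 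Unscaling through $f_{x}=\rho^{-3}\tilde f_{y}$, $f_{v}=\rho^{-1}\tilde f_{w}$, and $f_{vv}=\rho^{-2}\tilde f_{ww}$, together with $\rho^{3}=|v_{1}|^{3}+|x_{1}-x_{0}|$, yields the three weighted bounds of \eqref{derOfAdj} at $P_{1}$.

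The main obstacle is ensuring that the hypoellipticity constants in the rescaled problem are uniform as $\rho\downarrow 0$ and in the drift parameter $\alpha\in[-1,1]$. The H\"ormander-type arguments underlying Proposition \ref{Prop} depend on the coefficients only through their $L^{\infty}$-size on a region bounded away from the singular set, so this uniformity is available once $c_{0}$ is fixed to keep the rescaled domain at a definite parabolic distance from the singular set. A secondary technical point concerns very small $t_{1}$: if $t_{1}<c_{0}^{2}\rho^{2}$ one replaces the backward-in-time box by a forward-in-time one $0\leq s\leq c_{0}^{2}$ and applies the same local regularity, which is not tied to the endpoint $t=0$.
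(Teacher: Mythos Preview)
Your proposal is correct and follows essentially the same scaling argument as the paper. The paper's proof differs only in that it centers the rescaling at the singular point $(x_{0},0)$ via $v=RV$, $x-x_{0}=R^{3}(X-x_{0})$, $t=R^{2}\tau$, so that the rescaled function solves exactly the original Fokker--Planck problem (no extra drift $\alpha$) and one applies the hypoellipticity estimate directly on the fixed annulus $1/2\leq(|V|^{3}+|X-x_{0}|)^{1/3}\leq 1$; your centering at the base point $(x_{1},v_{1})$ introduces the bounded drift $\alpha=v_{1}/\rho$, which you then remove by the Galilean shift, arriving at the same conclusion.
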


\begin{proof}
First we use the hypoellipticity to get, for $1/2\leq \left( \left\vert
V\right\vert ^{3}+\left\vert X\right\vert \right) ^{1/3}\leq 1$ and for $%
1/2\leq \left( \left\vert V\right\vert ^{3}+\left\vert X-1\right\vert
\right) ^{1/3}\leq 1,$
\begin{equation*}
\left\Vert f_{X}\right\Vert _{L^{\infty }}+\left\Vert f_{VV}\right\Vert
_{L^{\infty }}+\left\Vert f_{V}\right\Vert _{L^{\infty }}\leq C,
\end{equation*}%
where $C$ depends only on $\left\Vert f_{0}\right\Vert _{L^{1}(\Omega )}$
and $\left\Vert f_{0}\right\Vert _{L^{\infty }(\Omega )}$. We now scale
\thinspace $X,V,\tau $ as follows.%
\begin{equation*}
v=RV,~x-x_{0}=R^{3}\left( X-x_{0}\right) ,~t=R^{2}\tau .
\end{equation*}%
Then we have, for $R/2\leq \left( \left\vert v\right\vert ^{3}+\left\vert
x\right\vert \right) ^{1/3}\leq R$ and for $R/2\leq \left( \left\vert
v\right\vert ^{3}+\left\vert x-1\right\vert \right) ^{1/3}\leq R,$
\begin{equation*}
R^{3}\left\Vert f_{x}\right\Vert _{L^{\infty }}+R^{2}\left\Vert
f_{vv}\right\Vert _{L^{\infty }}+R\left\Vert f_{v}\right\Vert _{L^{\infty
}}\leq C,
\end{equation*}%
where $C$ depends only on $\left\Vert f_{0}\right\Vert _{L^{1}(\Omega )}$
and $\left\Vert f_{0}\right\Vert _{L^{\infty }(\Omega )}$. This implies (\ref%
{derOfAdj}) and completes the proof.
\end{proof}

\subsection{Power law estimates for the solution}

We now derive estimates for the solutions of the Fokker-Planck equation near
the singular point $(x,v)\in \{(0,0),(1,0)\}$. The asymptotic behavior of
these solutions has been found in some of the explicit solutions obtained in
the physical literature (cf. \cite{Bu}, \cite{MW1}). We summarize the main
properties of the relevant solutions and prove them in detail here.

We will use repeatedly in this subsection the usual asymptotic notation.
More precisely, we will say that $f\left( x\right) \sim g\left( x\right) $
as $x\rightarrow x_{0}$ if $\lim_{x\rightarrow x_{0}}\frac{f\left( x\right)
}{g\left( x\right) }=1,$ for $x_{0}\in \left[ -\infty ,\infty \right] .$ On
the other hand, we will use the symbol $\simeq $ in heuristic, nonrigorous
arguments to indicate that two functions have a similar behavior in some
region.

\subsubsection{Construction of Super-solutions}

Our goal is to construct super-solutions which will allow us to control the
singular set $\{(x,v):(0,0),(1,0)\}$ based on the study of the self-similar
behavior of solutions to the Fokker-Planck equation. We begin by recalling
the steady Fokker-Planck equation.
\begin{equation}
vF_{x}=F_{vv}.  \label{steady}
\end{equation}

\begin{lemma}
\label{30} There exist positive steady solutions $f_{k}^{\ast },\ $with $%
k=0,1,$ to \eqref{steady}, which blow up at the singular set, namely

\begin{enumerate}
\item $f_{k}^{\ast }(x,v)>0,$

\item $\liminf_{r\rightarrow 0,(x,v)\in \partial B_{r}(k,0)\cap \Omega
}f_{k}^{\ast }(x,v)=\infty $.
\end{enumerate}
\end{lemma}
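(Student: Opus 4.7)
The plan is to exploit the scaling symmetry $(x,v)\mapsto(\lambda^{3}x,\lambda v)$ that leaves equation \eqref{steady} invariant and to construct $f_{0}^{\ast}$ as a self-similar profile. With the ansatz
\begin{equation*}
f_{0}^{\ast}(x,v)=x^{-\alpha}\,\Phi\!\left(v/x^{1/3}\right),\qquad \alpha>0,
\end{equation*}
a direct substitution reduces \eqref{steady} to the linear second-order ODE
\begin{equation*}
\Phi''(\eta)+\tfrac{1}{3}\eta^{2}\Phi'(\eta)+\alpha\,\eta\,\Phi(\eta)=0,\qquad \eta\in\mathbb{R}.
\end{equation*}
The entire construction therefore hinges on exhibiting, for some convenient $\alpha>0$, a strictly positive solution $\Phi$ of this ODE on all of $\mathbb{R}$. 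Once that is done, the companion solution is defined via the discrete symmetry
\begin{equation*}
f_{1}^{\ast}(x,v):=f_{0}^{\ast}(1-x,-v),
\end{equation*}
which also solves \eqref{steady} (the map $(x,v)\mapsto(1-x,-v)$ preserves the equation) and which blows up near $(1,0)$.

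For the ODE, I would carry out the change of variable $s=\eta^{3}/9$, which on each half-line $\{\eta>0\}$ and $\{\eta<0\}$ converts the equation into a Kummer confluent hypergeometric equation with parameters $(a,c)=(\alpha,2/3)$. The classical large-argument Kummer asymptotics then identify two branches of solutions: a ``slow'' polynomial branch $\Phi(\eta)\sim C_{\pm}|\eta|^{-3\alpha}$ as $\eta\to\pm\infty$ and an exponentially large branch of order $\exp(\eta^{3}/9)$ on $\eta<0$ that must be discarded. Since the coefficients of the ODE are smooth at $\eta=0$, any solution extends across the origin; the task is to select the one matching the slow asymptotics on both sides. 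For $\alpha>0$ sufficiently small, a shooting argument parametrizing solutions by $\Phi'(0)$ with $\Phi(0)=1$, and bifurcating from the constant solution $\Phi\equiv 1$ at $\alpha=0$, locates a distinguished value $c_{*}=\Phi'(0)$ producing the slow-slow solution, and one then verifies by continuous dependence that this profile has no zeros on $\mathbb{R}$.

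With such $\Phi$ in hand, positivity $f_{0}^{\ast}>0$ on $\Omega$ is immediate, and $f_{0}^{\ast}$ is a steady solution of \eqref{steady} on $(0,1)\times\mathbb{R}$ by construction. To check the blow-up condition, pick any sequence $(x_{n},v_{n})\in\Omega$ with $(x_{n},v_{n})\to(0,0)$ and set $\eta_{n}=v_{n}/x_{n}^{1/3}$. If $|\eta_{n}|$ stays in a bounded interval, then $\Phi(\eta_{n})$ is bounded below by a positive constant while $x_{n}^{-\alpha}\to\infty$; if $|\eta_{n}|\to\infty$, the slow asymptotics give
\begin{equation*}
f_{0}^{\ast}(x_{n},v_{n})\sim C_{\pm}\,x_{n}^{-\alpha}|\eta_{n}|^{-3\alpha}=C_{\pm}|v_{n}|^{-3\alpha}\to\infty.
\end{equation*}
In either regime $f_{0}^{\ast}(x_{n},v_{n})\to\infty$, which is precisely the liminf condition in (2); the same computation applies verbatim to $f_{1}^{\ast}$ at $(1,0)$. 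The principal difficulty of the proof is the ODE step: producing a \emph{globally} positive profile on $\mathbb{R}$ requires simultaneous control over both tails, which is where either the Kummer identities or an equivalent phase-plane/shooting analysis carries the essential weight of the argument.
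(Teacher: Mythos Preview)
Your reduction to a self-similar profile and the recognition that Kummer's confluent hypergeometric equation governs the profile is exactly the route the paper takes. The genuine divergence is in \emph{which} Kummer solution you pick. The paper sets $f_{0}^{\ast}(x,v)=x^{\alpha}M(-\alpha,\tfrac{2}{3},-v^{3}/9x)$ with $\alpha<0$ small and does \emph{not} discard the exponential branch: it simply uses the Kummer function $M$, whose positivity on all of $\mathbb{R}$ follows in one line from the Euler integral representation $M(-\alpha,\tfrac{2}{3},z)=\text{const}\cdot\int_{0}^{1}e^{zt}t^{-\alpha-1}(1-t)^{\alpha-1/3}\,dt$. The blow-up in item~(2) then comes for free, with $f_{0}^{\ast}$ growing polynomially as $(x,v)\to(0,0)$ through $v>0$ and exponentially through $v<0$. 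By contrast, you aim for the polynomially bounded (``slow--slow'') profile on both sides---essentially the Tricomi $U$ function analytically continued across $\eta=0$---and you defer positivity to a shooting/bifurcation argument from $\alpha=0$.

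Your approach is viable, but it is noticeably heavier: the shooting step must establish both that a slow--slow solution exists for small $\alpha>0$ and that it stays positive with strictly positive asymptotic constants $C_{\pm}$ (you need $C_{\pm}>0$ in your blow-up verification, and this is not automatic). The paper in fact carries out precisely this $U$-based construction later, in Claim~\ref{32}, for the \emph{regular} profile (opposite sign of the exponent), and there it replaces shooting by explicit Kummer connection formulae to compute $K_{+}=2\cos(\pi(\alpha+\tfrac{1}{3}))$ and to prove positivity by a continuity argument in $\alpha$. So what your route buys is a more symmetric barrier with controlled polynomial growth on both sides; what the paper's choice of $M$ buys is that the entire positivity step collapses to inspection of an integral. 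A small slip: the exponentially large branch appears as $\eta\to-\infty$ and is of order $\exp(-\eta^{3}/9)$, not $\exp(\eta^{3}/9)$.
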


\begin{proof}
We seek a solution $F$ to \eqref{steady} of the following form
\begin{equation}
F(x,v)=x^{\alpha }\Phi (-\frac{v^{3}}{9x})  \label{Phi}
\end{equation}%
for $\alpha <0$ to be determined. Plugging in the ansatz \eqref{Phi} into %
\eqref{steady} and letting $z=-\frac{v^{3}}{9x}$, we deduce that $\Phi =\Phi
(z)$ satisfies the following ODE
\begin{equation}
z\Phi _{zz}+(\frac{2}{3}-z)\Phi _{z}+\alpha \Phi =0.  \label{kummer}
\end{equation}%
It is well-known that the solutions to \eqref{kummer} are given by Kummer
functions $M(-\alpha ,\frac{2}{3},z)$ and $U(-\alpha ,\frac{2}{3},z)$ (see
\cite{Ab}). We are interested in positive solutions for $z\in \mathbb{R}$
which grow at infinity. We recall the integral representation formula for $M$
(see 13.2.1 in \cite{Ab}):
\begin{equation}
\frac{\Gamma (\frac{2}{3}+\alpha )\Gamma (-\alpha )}{\Gamma (\frac{2}{3})}%
M(-\alpha ,\frac{2}{3},z)=\int_{0}^{1}e^{zt}t^{-\alpha -1}(1-t)^{\alpha -%
\frac{1}{3}}dt,  \label{13.2.1}
\end{equation}%
which is valid as long as $\alpha <0$ ($b=\frac{2}{3}$ is already positive).
For sufficiently small negative $\alpha \sim 0$, we see that
\begin{equation*}
\frac{\Gamma (\frac{2}{3}+\alpha )\Gamma (-\alpha )}{\Gamma (\frac{2}{3})}>0
\end{equation*}%
and that for any real value $z\in \mathbb{R}$, the integral on the
right-hand side of \eqref{13.2.1} is positive. Hence we deduce that for such
negatively small $\alpha $ and for $z\in \mathbb{R}$, $M(-\alpha ,\frac{2}{3}%
,z)>0$, which implies that the corresponding $F$, denoted by $f_{0}^{\ast }$%
,
\begin{equation*}
f_{0}^{\ast }(x,v):=x^{\alpha }M(-\alpha ,\frac{2}{3},-\frac{v^{3}}{9x})
\end{equation*}%
in \eqref{Phi} is also positive. It now remains to show that $f_{0}^{\ast }$
blows up at the origin, the item (2) in the above. This can verified by
noting the asymptotic behavior of $M(a,b,z)$ (see 13.1.4 and 13.1.5 in \cite%
{Ab}):
\begin{equation}
\begin{split}
M(a,b,z)& \sim \frac{\Gamma (b)}{\Gamma (a)}e^{z}z^{a-b}\quad \text{ for }%
z\rightarrow \infty , \\
M(a,b,z)& \sim \frac{\Gamma (b)}{\Gamma (b-a)}(-z)^{-a}\;\text{ for }%
z\rightarrow -\infty .
\end{split}
\label{asympt}
\end{equation}%
Therefore, applying \eqref{asympt} to our case: $z=-\frac{v^{3}}{9x}$, $%
a=-\alpha $, $b=\frac{2}{3}$, we obtain the following asymptotic behavior of
$f_{0}^{\ast }$:
\begin{equation*}
\begin{split}
f_{0}^{\ast }(x,v)& \sim \frac{\Gamma (\frac{2}{3})}{\Gamma (-\alpha )}%
x^{\alpha }e^{-\frac{v^{3}}{9x}}(-\frac{v^{3}}{9x})^{-\alpha -\frac{2}{3}}\;%
\text{ for }v<0, \\
f_{0}^{\ast }(x,v)& \sim \frac{\Gamma (\frac{2}{3})}{\Gamma (\frac{2}{3}%
+\alpha )}x^{\alpha }(\frac{v^{3}}{9x})^{\alpha }\;\text{ for }v>0,
\end{split}%
\end{equation*}%
as $x\rightarrow 0.$ Thus $f_{0}^{\ast }(x,v)\rightarrow \infty $ for $v<0$
and $x\rightarrow 0$, and hence we conclude that
\begin{equation*}
\liminf_{r\rightarrow 0,(x,v)\in \partial B_{r}(0,0)\cap \{0\leq x\leq
1\}}f_{0}^{\ast }(x,v)=\infty .
\end{equation*}%
The construction of $f_{1}^{\ast }$, whose self-similarity is centered at $%
(1,0)$, can be done in the same way and we omit the details.
\end{proof}

The next goal is to construct super-solutions that control the singular
behavior near the singular set via self-similarity. The first step is to
find the regular self-similar solution to \eqref{steady}. To this end, we
define also $\Lambda $ by means of
\begin{equation*}
\Lambda \left( \zeta \right) =\Phi \left( -\zeta ^{3}\right)
\end{equation*}%
and we seek a solution to \eqref{steady} of the form
\begin{equation*}
F(x,v)=x^{\alpha }\Lambda \left( \frac{v}{(9x)^{\frac{1}{3}}}\right)
\end{equation*}%
for $\alpha >0$ this time. Then it is easy to check that $\Lambda $
satisfies the following ODE

\begin{equation}
\Lambda ^{\prime \prime }\left( \zeta \right) +3\zeta ^{2}\Lambda ^{\prime
}\left( \zeta \right) -9\alpha \zeta \Lambda \left( \zeta \right)=0 .
\label{LambdEq}
\end{equation}




We are interested in the construction of solutions of (\ref{LambdEq}) which
are polynomially bounded. We have the following result.

\begin{claim}
\label{32} For any $0<\alpha <\frac{1}{6},$ there exists a solution $\Lambda
(\zeta )$ of (\ref{LambdEq}) with the form:%
\begin{equation}
\Lambda (\zeta )=U(-\alpha ,\frac{2}{3},-\zeta ^{3})\ \ ,\ \ \zeta \in
\mathbb{R,}  \label{LU}
\end{equation}%
where we denote as $U(a,b,z)$ the Tricomi confluent hypergeometric function.
The function $\Lambda (\zeta )$ has the following properties.

\begin{enumerate}
\item $\Lambda (\zeta )>0$ for any $\zeta \in \mathbb{R}$.

\item there exists a positive constant $K_{+}>0$ such that
\begin{equation}
\Lambda (\zeta )\sim
\begin{cases}
K_{+}|\zeta |^{3\alpha },\quad \zeta \rightarrow \infty , \\
|\zeta |^{3\alpha },\quad \zeta \rightarrow -\infty .%
\end{cases}
\label{asymptotics}
\end{equation}

\item The function $\Lambda (\zeta )$, up to a multiplicative constant, is
the only solution of (\ref{LambdEq}) which is polynomially bounded for large
$|\zeta |.$
\end{enumerate}
\end{claim}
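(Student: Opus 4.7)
The plan is to verify by direct computation that $\Lambda(\zeta) = U(-\alpha, 2/3, -\zeta^3)$ solves (\ref{LambdEq}), and then to establish (1)--(3) using (i) the Euler integral representation of $U$ after a Kummer transformation, (ii) the $M$--$U$ connection formula at $z \to -\infty$, and (iii) a WKB analysis of (\ref{LambdEq}). Verification is direct: $W(z) = U(-\alpha, 2/3, z)$ satisfies Kummer's equation $zW'' + (2/3 - z)W' + \alpha W = 0$, and the chain rule with $\Lambda'(\zeta) = -3\zeta^2 W'$, $\Lambda''(\zeta) = 9\zeta^4 W'' - 6\zeta W'$, after multiplication by $-9\zeta$, reproduces (\ref{LambdEq}) exactly.

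For positivity on $(-\infty,0]$ and the asymptotic at $\zeta \to -\infty$, I apply Kummer's transformation $U(-\alpha, 2/3, z) = z^{1/3}\,U(1/3 - \alpha, 4/3, z)$. Since $0 < \alpha < 1/6$ gives $1/3 - \alpha > 0$, the Euler integral
\[ U(1/3 - \alpha, 4/3, z) = \frac{1}{\Gamma(1/3 - \alpha)}\int_0^{\infty} e^{-zt}\,t^{-\alpha - 2/3}(1+t)^{\alpha}\,dt \]
is valid for $z > 0$. For $\zeta < 0$ both $z = -\zeta^3 > 0$ and $z^{1/3} = -\zeta > 0$, so the integrand is strictly positive and $\Lambda > 0$; rescaling $s = -\zeta^3 t$ and using dominated convergence gives $\Lambda(\zeta) \sim |\zeta|^{3\alpha}$ as $\zeta \to -\infty$. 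For $\zeta \to +\infty$, I use the Kummer connection formula
\[ U(-\alpha, 2/3, z) = \frac{\Gamma(1/3)}{\Gamma(1/3 - \alpha)}\,M(-\alpha, 2/3, z) + \frac{\Gamma(-1/3)}{\Gamma(-\alpha)}\,z^{1/3}\,M(1/3 - \alpha, 4/3, z) \]
together with the asymptotic $M(a, b, z) \sim \Gamma(b)/\Gamma(b-a)\,(-z)^{-a}$ as $z \to -\infty$. With $z = -\zeta^3 \to -\infty$ and the real branch $z^{1/3} = -\zeta$, collecting Gamma factors yields $\Lambda(\zeta) \sim K_+\zeta^{3\alpha}$ with
\[ K_+ = \frac{\Gamma(1/3)\Gamma(2/3)}{\Gamma(1/3 - \alpha)\Gamma(2/3 + \alpha)} - \frac{\Gamma(-1/3)\Gamma(4/3)}{\Gamma(-\alpha)\Gamma(1 + \alpha)} = \frac{2}{\sqrt{3}}\left[\sin(\pi/3 - \pi\alpha) - \tfrac{1}{3}\sin(\pi\alpha)\right], \]
where the second equality uses the reflection identity $\Gamma(z)\Gamma(1-z) = \pi/\sin(\pi z)$; for $0 < \alpha < 1/6$ this is manifestly positive because $\sin(\pi/3 - \pi\alpha) > 1/2 > \tfrac{1}{3}\sin(\pi\alpha)$.

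Uniqueness of the polynomially bounded solution follows from a WKB analysis of (\ref{LambdEq}): the dominant balance $\Lambda'' + 3\zeta^2 \Lambda' \approx 0$ yields an independent solution with $\Lambda' \sim e^{-\zeta^3}$, which blows up super-exponentially as $\zeta \to -\infty$, so any polynomially bounded solution must be a scalar multiple of $\Lambda$. Positivity on all of $\mathbb{R}$ then follows from the positivity on $(-\infty, 0]$, the positive leading coefficient $K_+$ at $+\infty$, and a maximum-principle argument on (\ref{LambdEq}): a negative interior minimum at some $\zeta_0 > 0$ would require $\Lambda'(\zeta_0) = 0$ and $\Lambda''(\zeta_0) \ge 0$, whereas the equation forces $\Lambda''(\zeta_0) = 9\alpha\zeta_0\Lambda(\zeta_0) < 0$, a contradiction. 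The main technical difficulty is the branch and sign bookkeeping in the Kummer connection formula when continuing $U(a, b, z)$ from $z > 0$ to $z < 0$ along the real axis while keeping $\Lambda$ real-valued; the reflection identity above is precisely what reduces the positivity of $K_+$ to the elementary trigonometric inequality displayed.
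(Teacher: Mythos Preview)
Your argument is essentially correct and proceeds along lines parallel to the paper's, but with two genuine methodological differences that are worth highlighting.

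\emph{Where you differ from the paper.} For positivity, the paper argues globally by a continuity-in-$\alpha$ deformation: at $\alpha=0$ one has $\Lambda\equiv 1$, the asymptotics pin $\Lambda>0$ for large $|\zeta|$ uniformly in $\alpha$, and a first zero in $\alpha\in(0,1/6)$ would force a double zero, contradicting ODE uniqueness. You instead split the line: on $(-\infty,0]$ you use the Kummer transformation $U(-\alpha,2/3,z)=z^{1/3}U(1/3-\alpha,4/3,z)$ together with the Euler integral (valid since $1/3-\alpha>0$) to get strict positivity directly, and on $(0,\infty)$ you run a clean ODE maximum-principle argument. Your route is more self-contained in that it avoids the deformation in $\alpha$; the paper's route has the advantage of never invoking the integral representation. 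Both establish uniqueness of the polynomially bounded solution by the same mechanism (the second independent solution carries the Wronskian factor $e^{-\zeta^{3}}$, hence explodes as $\zeta\to-\infty$); the paper phrases this as ``$U$ is the only solution of Kummer's equation not growing exponentially as $z\to+\infty$.''

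\emph{One computational slip.} Your displayed simplification of $K_{+}$ contains an erroneous factor $\tfrac{1}{3}$. From your own Gamma-quotient expression and the reflection formula one gets
\[
K_{+}=\frac{2}{\sqrt{3}}\bigl[\sin(\pi/3-\pi\alpha)-\sin(\pi\alpha)\bigr]=2\cos\bigl(\pi(\alpha+\tfrac{1}{3})\bigr),
\]
which is exactly the paper's value. This does not affect your positivity conclusion, since $\sin(\pi/3-\pi\alpha)>\sin(\pi\alpha)$ on $(0,1/6)$ is equivalent to $\alpha<1/6$; but your stated inequality ``$\sin(\pi/3-\pi\alpha)>1/2>\tfrac{1}{3}\sin(\pi\alpha)$'' should be replaced by the sharper (and still elementary) comparison above.
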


\begin{proof}[Proof of Claim \protect\ref{32}]
Due to the relation between (\ref{LambdEq}) and (\ref{kummer}), we need to
study the solutions of this last equation (\ref{LambdEq}), which are
algebraically bounded. The only solutions of the equation \eqref{kummer}
which do not grow exponentially for large $z>0$ are proportional to
\begin{equation}
\Phi (z)=U(-\alpha ,\frac{2}{3},z).  \label{U}
\end{equation}

In order to study the properties of $\Phi (z)$ for negative values of $z$ we
use that (cf. \cite{Ab}, 13.1.3):%
\begin{equation}
U(a,b,z)=\frac{\pi }{\sin (\pi b)}\left( \frac{M(a,b,z)}{\Gamma
(1+a-b)\Gamma (b)}-z^{1-b}\frac{M(1+a-b,2-b,z)}{\Gamma (a)\Gamma (2-b)}%
\right) ,\quad b\notin \mathbb{Z}.  \label{UM}
\end{equation}

The function $M(a,b,z)$ is analytic for all $z\in \mathbb{C}$. Notice that,
combining (\ref{UM}) and (\ref{LU}) we obtain the following representation
formula for $\Lambda (\zeta )$ :%
\begin{equation}
\Lambda (\zeta )=\frac{\pi }{\sin (\frac{2}{3}\pi )}\left( \frac{M(-\alpha ,%
\frac{2}{3},-\zeta ^{3})}{\Gamma (\frac{1}{3}-\alpha )\Gamma (\frac{2}{3})}%
+\zeta \frac{M(\frac{1}{3}-\alpha ,\frac{4}{3},-\zeta ^{3})}{\Gamma (-\alpha
)\Gamma (\frac{4}{3})}\right) ,\ \ \zeta \in \mathbb{R}.  \label{Lamb}
\end{equation}

Formula (\ref{Lamb}) provides a representation formula for $\Lambda (\zeta )$
in terms of the analytic functions $M(-\alpha ,\frac{2}{3},-\zeta ^{3}),\ M(%
\frac{1}{3}-\alpha ,\frac{4}{3},-\zeta ^{3}).$ This formula shows that $%
\Lambda (\zeta )$ is analytic in $\zeta \in \mathbb{C}$.

We can compute the asymptotics of $\Lambda (\zeta )$ as $\zeta \rightarrow
-\infty $ by using (\ref{LU}) and 13.5.2 in \cite{Ab}.
Then we deduce that
\begin{equation}
\Lambda (\zeta )\sim \left\vert \zeta \right\vert ^{3\alpha }\ \ \text{as\ \
}\zeta \rightarrow -\infty .  \label{Aplus}
\end{equation}

On the other hand, using 13.5.1 in \cite{Ab} 
as well as (\ref{Lamb}) we obtain the asymptotics: 
\begin{equation*}
M\left( a,b,z\right) \sim \frac{\Gamma \left( b\right) e^{\pm i\pi a}}{%
\Gamma \left( b-a\right) }\left( z\right) ^{-a},\ \ z\rightarrow -\infty ,
\end{equation*}%
where the sign $+$ is used if $-\frac{\pi }{2}<\arg \left( z\right) <\frac{%
3\pi }{2}$ and the sign $-$ is used if $-\frac{3\pi }{2}<\arg \left(
z\right) <-\frac{\pi }{2}.$ In the choice of the branch of the function $%
\left( \cdot \right) ^{\alpha }$ we must choose the branch of the function $%
M $ which is analytic. We can obtain easily the asymptotics of the function $%
M$ which is analytic. This means that, choosing $z=re^{i\theta }$ with $%
\theta \in \left( -\frac{\pi }{2},\frac{\pi }{2}\right) $ we obtain the
asymptotics $M\left( a,b,re^{\pm i\pi }\right) \sim \frac{\Gamma \left(
b\right) }{\Gamma \left( b-a\right) }\left( r\right) ^{-a},$ because the
exponentials cancel out. Then, we must use the formulae:
\begin{eqnarray*}
M\left( a,b,-r\right) &\sim &\frac{\Gamma \left( b\right) }{\Gamma \left(
b-a\right) }\left( r\right) ^{-a},\ r\rightarrow \infty , \\
\zeta M(\frac{1}{3}-\alpha ,\frac{4}{3},-\zeta ^{3}) &\sim &\frac{\Gamma
\left( \frac{4}{3}\right) }{\Gamma \left( 1+\alpha \right) }\left( \zeta
\right) ^{3\alpha },\ \zeta \rightarrow \infty .
\end{eqnarray*}%
Choose $\zeta >0.$ Then the phase factor cancels out. We then have, using (%
\ref{Lamb}),
\begin{eqnarray*}
\Lambda (\zeta ) &=&\frac{\pi }{\sin (\frac{2}{3}\pi )}\left( \frac{%
M(-\alpha ,\frac{2}{3},-\zeta ^{3})}{\Gamma (\frac{1}{3}-\alpha )\Gamma (%
\frac{2}{3})}+\zeta \frac{M(\frac{1}{3}-\alpha ,\frac{4}{3},-\zeta ^{3})}{%
\Gamma (-\alpha )\Gamma (\frac{4}{3})}\right) \\
&\sim &\frac{\pi }{\sin (\frac{2}{3}\pi )}\left[ \frac{1}{\Gamma (\frac{1}{3}%
-\alpha )}\frac{1}{\Gamma \left( \frac{2}{3}+\alpha \right) }+\frac{1}{%
\Gamma (-\alpha )\Gamma \left( 1+\alpha \right) }\right] \left\vert \zeta
\right\vert ^{3\alpha },\ \zeta \rightarrow \infty .
\end{eqnarray*}%
Using the following formulae,
\begin{equation*}
\Gamma \left( \frac{1}{3}-x\right) \Gamma \left( \frac{2}{3}+x\right) =\frac{%
\pi }{\sin \left( \pi \left( x+\frac{2}{3}\right) \right) }\quad \text{and}%
\quad \Gamma \left( -x\right) \Gamma \left( 1+x\right) =-\frac{\pi }{\sin
\left( \pi x\right) },
\end{equation*}%
we see that
\begin{equation*}
\Lambda (\zeta )\sim \frac{1}{\sin (\frac{2}{3}\pi )}\left[ \sin \left( \pi
\left( \alpha +\frac{2}{3}\right) \right) -\sin \left( \pi \alpha \right) %
\right] \left\vert \zeta \right\vert ^{3\alpha },\ \zeta \rightarrow \infty .
\end{equation*}%
Elementary trigonometric formulas show that
\begin{equation*}
\Lambda (\zeta )\sim \frac{2\sin \left( \frac{\pi }{3}\right) }{\sin (\frac{2%
}{3}\pi )}\cos \left( \pi \left( \alpha +\frac{1}{3}\right) \right)
\left\vert \zeta \right\vert ^{3\alpha },\ \ \zeta \rightarrow \infty .
\end{equation*}%
Whence

\begin{equation}
\Lambda (\zeta )\sim K_{+}\left\vert \zeta \right\vert ^{3\alpha }\ \ \text{%
as\ \ }\zeta \rightarrow \infty  \label{Aminus}
\end{equation}%
with $K_{+}=2\cos \left( \pi \left( \alpha +\frac{1}{3}\right) \right) .$
Notice that $K_{+}>0$ if $0<\alpha <\frac{1}{6}.$

It only remains to prove that $\Lambda (\zeta )>0$ for any $\zeta \in
\mathbb{R}$ and the considered range of values of $\alpha .$ To this end,
notice that if $\alpha \rightarrow 0$ we have $\Lambda (\zeta )\rightarrow
1>0$ uniformly in compact sets of $\zeta .$ 
The functions $\Lambda (\zeta )\equiv \Lambda (\zeta ,\alpha )$
considered as functions of $\alpha ,$ change in a continuous manner. On the
other hand, the asymptotic behaviors (\ref{Aplus}), (\ref{Aminus}) imply
that the functions $\Lambda (\zeta ,\alpha )$ are positive for large values
of $\left\vert \zeta \right\vert .$ 
If $\Lambda (\cdot ,\alpha )$ has a zero at some $\zeta =\zeta _{0}\in
\mathbb{R}$ and $0<\alpha <\frac{1}{6}$, then there should exist, by
continuity, $0<\alpha _{\ast }<\frac{1}{6}$ and $\zeta _{\ast }\in \mathbb{R}
$ such that $\Lambda (\zeta _{\ast },\alpha _{\ast })=\Lambda _{\zeta
}(\zeta _{\ast },\alpha _{\ast })=0.$ The uniqueness theorem for ODEs then
implies that $\Lambda (\cdot ,\alpha _{\ast })=0,$ but this would contradict
the asymptotics (\ref{Aplus}), (\ref{Aminus}), whence the result follows.
\end{proof}

We now let 
\begin{equation}
F_{0}(x,v):=x^{\alpha }\Lambda \left( \frac{v}{(9x)^{\frac{1}{3}}}\right) ,
\label{F0}
\end{equation}%
where $\Lambda $ is obtained in Claim \ref{32}. Then from \eqref{asymptotics}
we deduce that $F_{0}$ is a positive steady solution to the Fokker-Planck
equation and that when $x\rightarrow 0^{+}$, $F_{0}(x,v)\simeq x^{\alpha
}+|v|^{3\alpha }$. By the same argument, one can find $F_{1}(x,v)>0$, a
steady solution to the Fokker-Planck equation such that when $x\rightarrow
1^{-}$, $F_{1}(x,v)\simeq (1-x)^{\alpha }+|v|^{3\alpha }$. These $F_{0}$ and
$F_{1}$ will be used in the construction of a super-solution to \eqref{VFP},
which now follows.

Let $\Psi =\Psi (y,\xi )$ be a self-similar type solution to \eqref{VFP} of
the following form
\begin{equation}
f(x,v,t)=\Psi (\frac{x}{t^{3/2}},\frac{v}{t^{{1}/{2}}}).  \label{Psi}
\end{equation}%
If $\Psi $ is a solution to \eqref{VFP}, it should satisfy the following PDE
\begin{equation*}
-\frac{3}{2}y\Psi _{y}-\frac{1}{2}\xi \Psi _{\xi }+\xi \Psi _{y}=\Psi _{\xi
\xi }.
\end{equation*}%
We will not attempt to solve this partial differential equation since we
only need a super-solution of \eqref{VFP}, but try to find a super-solution $%
Z$ to the self-similar equation above, namely satisfying
\begin{equation}
Z_{\xi \xi }+\frac{1}{2}\xi Z_{\xi }+(\frac{3}{2}y-\xi )Z_{y}\leq 0.
\label{W}
\end{equation}

We first show that one can construct $Z_{0}$ satisfying \eqref{W} in a small
neighborhood of the singular set $\left( 0,0\right) $.

\begin{lemma}
\label{exist-W} There exists a sufficiently small $R_{0}(y,\xi )$ such that
(i) $|R_{0}|\ll F_{0}$ for $|\xi |^{3}+|y|\ll 1$ and that (ii) $Z_{0}(y,\xi
):=F_{0}(y,\xi )+R_{0}(y,\xi )$, where $F_{0}$ is given by \eqref{F0},
satisfies \eqref{W} for $|\xi |^{3}+|y|\ll 1$. Notice that $Z_{0}>0$.
\end{lemma}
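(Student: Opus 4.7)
The plan is to observe that $F_{0}$ itself is almost a super-solution of \eqref{W}, and then to add a small negative perturbation. Denote the operator on the left-hand side of \eqref{W} by $\mathcal{L}Z:=Z_{\xi\xi}+\tfrac{\xi}{2}Z_{\xi}+(\tfrac{3y}{2}-\xi)Z_{y}$. Since $F_{0}$ solves the stationary Fokker--Planck equation $\xi F_{0,y}=F_{0,\xi\xi}$ from \eqref{steady} and is homogeneous of degree $3\alpha$ under $(y,\xi)\mapsto(\lambda^{3}y,\lambda\xi)$, Euler's identity $3yF_{0,y}+\xi F_{0,\xi}=3\alpha F_{0}$, combined with the cancellation of the $\xi F_{0,y}$ terms, yields
\begin{equation*}
\mathcal{L}F_{0}=\tfrac{1}{2}(\xi F_{0,\xi}+3yF_{0,y})=\tfrac{3\alpha}{2}F_{0}>0,
\end{equation*}
so $F_{0}$ is a \emph{strict sub-solution}. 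The problem therefore reduces to finding $R_{0}\le 0$ with $\mathcal{L}R_{0}\le -\tfrac{3\alpha}{2}F_{0}$ and $|R_{0}|\ll F_{0}$ on some small neighbourhood of the origin.

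The simplest effective ansatz is $R_{0}(y,\xi)=-A\xi^{2}$ for a constant $A>0$. A direct computation gives $\mathcal{L}R_{0}=-2A-A\xi^{2}\le -2A$, so that
\begin{equation*}
\mathcal{L}Z_{0}=\mathcal{L}F_{0}+\mathcal{L}R_{0}\le \tfrac{3\alpha}{2}F_{0}-2A,
\end{equation*}
which is non-positive as soon as $A\ge \tfrac{3\alpha}{4}\sup\{F_{0}(y,\xi):|\xi|^{3}+y\le \rho\}$. Using the asymptotics of $\Lambda$ from Claim \ref{32} one checks that $F_{0}(y,\xi)\to 0$ as $(y,\xi)\to(0,0)$, hence this supremum is finite and can be made arbitrarily small by shrinking $\rho$, so the required $A$ exists.

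To verify (i) and the auxiliary $Z_{0}>0$, I would first establish the pointwise lower bound
\begin{equation*}
F_{0}(y,\xi)\ge c_{0}\bigl(y^{\alpha}+|\xi|^{3\alpha}\bigr)\qquad\text{for all }(y,\xi)\text{ with }|\xi|^{3}+y\le \rho,
\end{equation*}
with some $c_{0}>0$, using the strict positivity of $\Lambda$ on $\mathbb{R}$ (item (1) of Claim \ref{32}) together with the two-sided power-law behaviour \eqref{asymptotics}. With this in hand,
\begin{equation*}
\frac{|R_{0}(y,\xi)|}{F_{0}(y,\xi)}\le \frac{A\xi^{2}}{c_{0}|\xi|^{3\alpha}}=\frac{A}{c_{0}}|\xi|^{2-3\alpha}\longrightarrow 0\quad\text{as }\xi\to 0,
\end{equation*}
since $\alpha<1/6<2/3$; and the ratio vanishes trivially on the line $\xi=0$. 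Thus $|R_{0}|\ll F_{0}$ throughout a sufficiently small neighbourhood of the origin, and in particular $Z_{0}=F_{0}+R_{0}>0$ there.

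The one non-trivial step is the lower bound $F_{0}\gtrsim y^{\alpha}+|\xi|^{3\alpha}$ near the origin. The natural approach is to split into the self-similar regimes $|\xi|^{3}\ll y$, $|\xi|^{3}\sim y$, and $|\xi|^{3}\gg y$: in the first, $F_{0}\sim \Lambda(0)\,y^{\alpha}$; in the third, the asymptotics \eqref{asymptotics} give $F_{0}\sim c|\xi|^{3\alpha}$; and in the intermediate range the self-similar variable $\zeta=\xi/(9y)^{1/3}$ stays in a compact set on which the continuous function $\Lambda$ is strictly positive, yielding the two-sided comparability. Beyond this, everything is an elementary choice of the constants $A$ and $\rho$.
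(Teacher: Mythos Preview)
Your argument is correct and is genuinely more elementary than the paper's. Both proofs begin with the same observation (which the paper also makes explicitly): since $F_{0}$ solves the stationary equation and is homogeneous of degree $3\alpha$ under the anisotropic scaling, one has $\mathcal{L}F_{0}=\tfrac{3\alpha}{2}F_{0}$, so a correction $R_{0}$ with $\mathcal{L}R_{0}\le -\tfrac{3\alpha}{2}F_{0}$ and $|R_{0}|\ll F_{0}$ is needed. From here the two approaches diverge. The paper looks for a self-similar correction $R_{0}=y^{\,2/3+\alpha}\varphi(-\xi^{3}/9y)$, reduces the inequality to an inhomogeneous Kummer-type ODE for $\varphi$, solves it by variation of constants, and then checks $|R_{0}|\ll F_{0}$ via the asymptotics of the hypergeometric functions $M$ and $U$. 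You instead take the crude but effective choice $R_{0}=-A\xi^{2}$, so that $\mathcal{L}R_{0}=-2A-A\xi^{2}\le -2A$, and use only that $F_{0}\to 0$ near the origin (to make $-2A$ dominate $\tfrac{3\alpha}{2}F_{0}$) together with the two-sided bound $F_{0}\asymp y^{\alpha}+|\xi|^{3\alpha}$ from Claim~\ref{32} (to control the ratio $A\xi^{2}/F_{0}\lesssim |\xi|^{2-3\alpha}\to 0$).

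What each approach buys: the paper's self-similar $R_{0}$ has the cleaner structural feature that $|R_{0}|/F_{0}\sim y^{2/3}$ uniformly in the self-similar variable, which matches the scaling of the problem; but this extra precision is not used anywhere downstream (only positivity of $Z_{0}$, the inequality \eqref{W} on a small neighbourhood, and $Z_{0}\sim F_{0}$ near the origin are needed in the construction of $\hat{f}_{0}$ and in the H\"older estimate). Your quadratic perturbation gives all of these with no ODE analysis and no special-function asymptotics beyond what is already contained in Claim~\ref{32}. One small point of presentation: you should make explicit that you first fix $\rho$ small, then determine $A$ from $\sup_{\{|\xi|^{3}+y\le\rho\}}F_{0}$, and then verify both \eqref{W} and $|R_{0}|\ll F_{0}$ on that fixed region; as written the order of quantifiers (``$A$ can be made arbitrarily small by shrinking $\rho$'') is slightly ambiguous, though the logic is sound.
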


\begin{proof}
Notice that $(F_{0})_{\xi \xi }-\xi (F_{0})_{y}=0,$ where $F_{0}(y,\xi
)=y^{\alpha }Q(-\frac{\xi ^{3}}{9y})$ with $Q$ obtained in Claim \ref{32}
(we use $Q$ instead of $\Lambda $ to distinguish the different argument) and
hence
\begin{equation*}
(F_{0})_{\xi \xi }+\frac{1}{2}\xi (F_{0})_{\xi }+(\frac{3}{2}y-\xi
)(F_{0})_{y}=\frac{1}{2}\xi (F_{0})_{\xi }+\frac{3}{2}y(F_{0})_{y}.
\end{equation*}%
But then
\begin{equation*}
\begin{split}
\frac{1}{2}\xi (F_{0})_{\xi }+\frac{3}{2}y(F_{0})_{y}& =\frac{1}{2}\xi \left[
y^{\alpha }(-\frac{3\xi ^{2}}{9y})Q^{\prime }\right] +\frac{3}{2}y\left[
y^{\alpha }(\frac{\xi ^{3}}{9y^{2}})Q^{\prime }+\alpha y^{\alpha -1}Q\right]
\\
& =\frac{3}{2}\alpha F_{0},\text{ since the first two terms cancel each
other out.}
\end{split}%
\end{equation*}%
For $R_{0}$, we have the following inequality to be solved:
\begin{equation*}
(R_{0})_{\xi \xi }+\frac{1}{2}\xi (R_{0})_{\xi }+(\frac{3}{2}y-\xi
)(R_{0})_{y}\leq -\frac{3}{2}\alpha F_{0}.
\end{equation*}%
With the ansatz $R_{0}=y^{\beta }\varphi (-\frac{\xi ^{3}}{9y})$, the
left-hand side reads
\begin{equation*}
\begin{split}
& (R_{0})_{\xi \xi }+\frac{1}{2}\xi (R_{0})_{\xi }+(\frac{3}{2}y-\xi
)(R_{0})_{y} \\
& =-\xi y^{\beta -1}\left\{ z\varphi _{zz}+(\frac{2}{3}-z)\varphi _{z}+\beta
\varphi \right\} +\frac{3}{2}\beta y^{\beta }\varphi
\end{split}%
\end{equation*}%
and hence the above inequality for $R_{0}$ reduces to
\begin{equation*}
-\xi y^{\beta -1}\left\{ z\varphi _{zz}+(\frac{2}{3}-z)\varphi _{z}+\beta
\varphi \right\} +\frac{3}{2}\beta y^{\beta }\varphi \leq -\frac{3}{2}\alpha
F_{0}.
\end{equation*}%
Choose $\beta =\frac{2}{3}+\alpha $, then it suffices to find $\varphi (z)$
satisfying
\begin{equation}
z^{1/3}\left\{ z\varphi _{zz}+(\frac{2}{3}-z)\varphi _{z}+(\frac{2}{3}%
+\alpha )\varphi \right\} +(1+\frac{3}{2}\alpha )y^{2/3}\varphi \leq -\frac{3%
}{2}\alpha Q.  \label{varphi}
\end{equation}%
To do so, we will solve the following ODE
\begin{equation}
z\varphi _{zz}+(\frac{2}{3}-z)\varphi _{z}+(\frac{2}{3}+\alpha )\varphi =-%
\frac{\gamma }{z^{1/3}}Q  \label{varphiE}
\end{equation}%
for some constant $\gamma >\frac{3}{2}\alpha $. Let $M(-(\frac{2}{3}+\alpha
),\frac{2}{3},z)$ and $U(-(\frac{2}{3}+\alpha ),\frac{2}{3},z)$ be two
independent solutions to the homogeneous part. Then, by variation of
constants, the solution of \eqref{varphiE} is given by
\begin{equation}
\begin{split}
\varphi (z)& =-M(-(\frac{2}{3}+\alpha ),\frac{2}{3},z)\int_{z}^{\infty }%
\frac{\gamma Q(\eta )U(-(\frac{2}{3}+\alpha ),\frac{2}{3},\eta )}{\eta
^{4/3}W(\eta )}d\eta \\
& -U(-(\frac{2}{3}+\alpha ),\frac{2}{3},z)\int_{0}^{z}\frac{\gamma Q(\eta
)M(-(\frac{2}{3}+\alpha ),\frac{2}{3},\eta )}{\eta ^{4/3}W(\eta )}d\eta ,
\end{split}
\label{varphiS}
\end{equation}%
where
\begin{equation*}
W(\eta )=\left\vert
\begin{array}{cc}
M(-(\frac{2}{3}+\alpha ),\frac{2}{3},\eta ) & U(-(\frac{2}{3}+\alpha ),\frac{%
2}{3},\eta ) \\
M_{\eta }(-(\frac{2}{3}+\alpha ),\frac{2}{3},\eta ) & U_{\eta }(-(\frac{2}{3}%
+\alpha ),\frac{2}{3},\eta )%
\end{array}%
\right\vert =-\frac{\eta ^{-\frac{2}{3}}e^{\eta }}{\Gamma (-\frac{2}{3}%
-\alpha )}.
\end{equation*}%
Here we have used the fact that $W(\eta )$ satisfies the ODE: 
\begin{equation*}
\frac{dW(\eta )}{d\eta }+\left( \frac{2}{3z}-1\right) W(\eta )=0
\end{equation*}%
with the asymtotics:%
\begin{equation*}
W(\eta )\sim -\frac{\eta ^{-\frac{2}{3}}e^{\eta }}{\Gamma (-\frac{2}{3}%
-\alpha )}\ \ \text{as\ \ }\eta \rightarrow 0,
\end{equation*}%
which can be computed using the asymptotics of the functions $M(-(\frac{2}{3}%
+\alpha ),\frac{2}{3},\eta ),\ U(-(\frac{2}{3}+\alpha ),\frac{2}{3},\eta )$
as $\eta \rightarrow 0$ (cf. \cite{Ab}). Then we have
\begin{equation}
|R_{0}|=\left\vert y^{\frac{2}{3}+\alpha }\varphi (-\frac{\xi ^{3}}{9y}%
)\right\vert \ll F_{0}=y^{\alpha }Q(-\frac{\xi ^{3}}{9y})\text{ for }|\xi
|^{3}+|y|\ll 1.  \label{R0F0}
\end{equation}%
This can be checked by comparing the asymptotic behavior of $\varphi $ with
that of $Q$. We first recall from \eqref{asymptotics}
\begin{equation*}
Q(z)=O(|z|^{\alpha }),\quad |z|\rightarrow \infty .
\end{equation*}%
Moreover, by \eqref{asympt} and 13.5.2 in \cite{Ab}, and from (\ref{varphiS}%
) we deduce that
\begin{equation*}
\varphi (z)=O(|z|^{\frac{2}{3}+\alpha }),\quad |z|\rightarrow \infty ,
\end{equation*}%
which implies \eqref{R0F0}. Finally, together with \eqref{R0F0}, we deduce
that $\varphi $ for $\gamma >\frac{3}{2}\alpha $, which is a solution to %
\eqref{varphiE}, satisfies the inequality \eqref{varphi}. This completes the
proof of the lemma.
\end{proof}

One can also construct a super-solution $Z_{1}$ of self-similar type near $%
(1,0)$ in the same way. We omit the details.


\subsection{H\"{o}lder estimates for the solution near the singular set}

In this section, we will prove that our solution $f$ is continuous \textit{%
up to} the singular set $\{(x,v)=(0,0),(1,0)\}$, in fact H\"{o}lder
continuous by means of maximum principles: we will apply comparison
principles to the solution $f$ with a suitable super-solution $\bar{f}$ that
controls the singular set.


\subsubsection{The adjoint problem}

We study in this subsection the adjoint problem $\mathcal{M}^{\ast }\varphi
=0$ in $U_{T}$ backward in time with the corresponding absorbing boundary:%
\begin{eqnarray}
\varphi _{t}+v\varphi _{x}+\varphi _{vv} &=&0,  \label{adj1} \\
\varphi \left( x,v,T\right) &=&\varphi _{0}\left( x,v\right) \geq 0,
\label{adj2} \\
\varphi \left( 0,v,t\right) &=&0,~v<0,~t>0,  \label{adj3} \\
\varphi \left( 1,v,t\right) &=&0,~v>0,~t>0.  \label{adj4}
\end{eqnarray}

Then we obtain the following results concerning the existence of solutions,
the hypoellipticity away from the singular set $\left\{ \left( 0,0\right)
,\left( 1,0\right) \right\} $, and the non-negativity of solutions,
similarly to the original Fokker-Planck problem. We will skip the proofs.

\begin{proposition}
\label{hyperellip} Let data $\varphi _{0}\in L^{\infty }(\Omega )\cap
L^{1}(\Omega )$ with $\varphi _{0}\geq 0$ be given at $t=T$. Then there
exists a solution $\varphi \left( x,v,t\right) \in C\left( \left[ 0,T\right]
;L^{\infty }(\Omega )\cap L^{1}(\Omega )\right) $ to (\ref{adj1})-(\ref{adj4}%
). Moreover, for each $t>0$, $\varphi \in H_{loc}^{k,m}(\bar{\Omega}%
\setminus \{(0,0),\left( 1,0\right) \}),$ where $H^{k,m}=H_{x,v}^{k,m}$.
\end{proposition}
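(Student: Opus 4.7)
The plan is to reduce Proposition \ref{hyperellip} to the results already established for the forward problem (\ref{VFP})--(\ref{BC1}) by means of a time- and velocity-reversal transformation, and then invoke Theorems \ref{MainTheorem1} and \ref{MainTheorem}(i).

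Given $\varphi_0 \in L^\infty(\Omega) \cap L^1(\Omega)$ with $\varphi_0 \geq 0$, I would set $f_0(x,v) := \varphi_0(x,-v)$ and let $f(x,v,t)$ be the unique nonnegative weak solution of (\ref{VFP})--(\ref{BC1}) with initial datum $f_0$ provided by Theorem \ref{MainTheorem1}. Defining $\varphi(x,v,t) := f(x,-v,T-t)$, a direct computation (as already noted in the subsubsection on the Adjoint problem) shows that $\mathcal{M}^*\varphi = -\varphi_t - v\varphi_x - \varphi_{vv} = 0$ in $U_T$. The forward absorbing condition $f(0,w,\tau)=0$ for $w>0$ becomes, under the substitution $w=-v$, $\tau=T-t$, the condition $\varphi(0,v,t)=0$ for $v<0$, which is exactly (\ref{adj3}); similarly $f(1,w,\tau)=0$ for $w<0$ yields (\ref{adj4}). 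The terminal datum is matched by $\varphi(x,v,T)=f(x,-v,0)=f_0(x,-v)=\varphi_0(x,v)$, giving (\ref{adj2}). Nonnegativity of $\varphi$ is inherited from that of $f$ via Lemma \ref{maxPrinciple copy(2)}, and the $L^1\cap L^\infty$ estimates of Theorem \ref{MainTheorem1} yield $\varphi \in C([0,T]; L^\infty(\Omega)\cap L^1(\Omega))$ together with the pointwise bound $\|\varphi(t)\|_{L^1\cap L^\infty}\leq \|\varphi_0\|_{L^1\cap L^\infty}$.

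For the local Sobolev regularity, I would appeal directly to Theorem \ref{MainTheorem}(i): for every $\tau>0$ and every $k,m\in\mathbb{N}$, $f(\cdot,\cdot,\tau) \in H^{k,m}_{loc}(\bar\Omega \setminus \{(0,0),(1,0)\})$. The singular set is invariant under the reflection $v\mapsto -v$, and mixed Sobolev regularity in $(x,v)$ is preserved under this smooth linear change of variables as well as under the time shift $\tau = T-t$. Consequently $\varphi(\cdot,\cdot,t) \in H^{k,m}_{loc}(\bar\Omega \setminus \{(0,0),(1,0)\})$ for each $t<T$, which is the desired statement.

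The main subtlety to keep track of is not analytical but bookkeeping: the boundary conditions for the adjoint problem are defined on the \emph{opposite} half of the velocity axis compared to those of the forward problem (compare (\ref{BC0})--(\ref{BC1}) with (\ref{adj3})--(\ref{adj4})), and the velocity reflection is precisely what reconciles them. If one wished to avoid this reduction and give a self-contained proof, one would instead repeat the Section 2 scheme for a regularized adjoint equation (of the type (\ref{adjoint}), but with the sharp diffusion recovered in the limit) solved backward from $t=T$, together with maximum-principle arguments analogous to Lemmas \ref{maxPrinciple}--\ref{minicomparison} and the hypoelliptic test-function argument from Subsection 3.2; however, the symmetry reduction above delivers every claim of the proposition with no additional work.
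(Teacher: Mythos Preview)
Your reduction via the transformation $\varphi(x,v,t)=f(x,-v,T-t)$ is correct and is in fact the most natural way to prove the proposition; the paper itself records this transformation in the ``Adjoint problem'' subsubsection immediately preceding Proposition~\ref{hyperellip}, and its one-line proof (``It is analogous to Theorem 1.3 (i)'') should be read as an invitation to do exactly what you did. So your argument and the paper's intended argument coincide.

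Two small bookkeeping remarks. First, the regularity you obtain is for $t<T$ (equivalently, forward time $\tau=T-t>0$), which is the correct conclusion for a backward-in-time problem; the paper's phrasing ``for each $t>0$'' is imprecise. Second, Theorem~\ref{MainTheorem1} as stated only yields $f\in L^\infty([0,T];L^1\cap L^\infty)$ together with weak continuity in time (see the existence proof of Theorem~\ref{MainTheorem1}), not the strong continuity $C([0,T];L^1\cap L^\infty)$ asserted in the proposition. Your argument inherits this minor discrepancy from the paper rather than introducing it.
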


\begin{proof}
It is analogous to Theorem 1.3 (i).
\end{proof}

\begin{lemma}
\label{nonnegAdj} Let $\varphi \left( x,v,t\right) \in C^{\infty }\left(
U_{T}\right) $ be a solution of the adjoint equation (\ref{adj1})-(\ref{adj4}%
) backward in time. Then it satisfies $\varphi \left( x,v,t\right) \geq 0$
for all $\left( x,v,t\right) \in U_{T}.$
\end{lemma}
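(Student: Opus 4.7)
The plan is to adapt the maximum-principle argument from Lemma \ref{nonnegative} to the adjoint operator $\mathcal{L}\psi:=\psi_{t}+v\psi_{x}+\psi_{vv}$, which differs from the regularized operator $\bar{\mathcal{L}}$ by carrying a genuine second derivative in $v$ (rather than the non-local approximation $\bar{Q}^{\varepsilon}$) and by being posed on the unbounded velocity interval. The overall idea is to construct, for each small $k>0$, a strictly positive perturbation of $\varphi$ that is a strict subsolution; a truncated-cylinder minimum-principle analysis will force this perturbation to be positive, and letting $k\to 0$ will give $\varphi\geq 0$.

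First I would set $\varphi^{k}(x,v,t):=\varphi(x,v,t)+k(1+T-t)$, so that $\mathcal{L}\varphi^{k}=-k<0$ in $U_{T}$ and $\varphi^{k}(\cdot,\cdot,T)\geq k$. To handle the unbounded $v$-direction I would introduce a barrier in the spirit of Lemma \ref{barrier}: for any $w\in\mathbb{R}$ and $\lambda>0$, set
\[
\widetilde{\varphi}(x,v,t):=\varphi^{k}(x,v,t)+\lambda\Phi(v-w,t),\qquad \Phi(v,t):=e^{T-t}v^{2}+2e^{T}(T-t).
\]
A direct computation yields $\mathcal{L}\Phi = e^{T-t}(2-v^{2})-2e^{T}\leq 0$ on $[0,T]$, $\Phi\geq 0$, and $\Phi(v,t)\to\infty$ as $|v|\to\infty$ uniformly in $t\in[0,T]$. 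Consequently $\mathcal{L}\widetilde{\varphi}\leq -k<0$ in $U_{T}$, and $\widetilde{\varphi}$ grows to infinity at large $|v|$.

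The core step is the minimum-principle analysis on the truncated cylinder $U_{T_{*},T}^{L}:=(0,1)\times(w-L,w+L)\times(T_{*},T)$, where $L$ is taken so large that $\widetilde{\varphi}\geq k$ on the faces $v=w\pm L$ (using the a priori $L^{\infty}$-bound on $\varphi$ that follows from Proposition \ref{hyperellip}), and $T_{*}\in[0,T)$ denotes the infimum of those $T_{1}$ for which $\widetilde{\varphi}>0$ on $U_{T_{1},T}^{L}$. Mimicking the case analysis of Lemma \ref{nonnegative}, I would show that a zero minimum of $\widetilde{\varphi}$ cannot be attained in the closure of this cylinder: at an interior minimum one has $\widetilde{\varphi}_{t}=\widetilde{\varphi}_{x}=0$ and $\widetilde{\varphi}_{vv}\geq 0$, forcing $\mathcal{L}\widetilde{\varphi}\geq 0$ and contradicting $\mathcal{L}\widetilde{\varphi}<-k$; the same contradiction appears at a minimum on the slice $t=T_{*}$ (where $\widetilde{\varphi}_{t}\geq 0$) and on the outflow faces $\{x=0,\,v>0\}$ or $\{x=1,\,v<0\}$ (where the spatial minimum condition yields $v\widetilde{\varphi}_{x}\geq 0$); at the singular points $(0,0,t)$ and $(1,0,t)$ the transport term vanishes automatically because $v=0$, so the interior argument applies. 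The remaining pieces of the parabolic boundary, namely the inflow faces $\{x=0,\,v\leq 0\}\cup\{x=1,\,v\geq 0\}$, the slice $t=T$, and the faces $v=w\pm L$, give $\widetilde{\varphi}\geq k>0$ thanks to the Dirichlet condition $\varphi=0$ combined with the strictly positive perturbation $k(1+T-t)+\lambda\Phi$. The continuity argument of Lemma \ref{nonnegative} then forces $T_{*}=0$, so $\widetilde{\varphi}>0$ throughout $U_{T}^{L}$.

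Finally, letting $\lambda\to 0$, then $L\to\infty$ while varying $w\in\mathbb{R}$ to cover all of $U_{T}$, and then $k\to 0$ gives $\varphi\geq 0$ in $U_{T}$. The principal technical point, and essentially the only ingredient not already present in Lemma \ref{nonnegative}, is the construction of a quadratic barrier $\Phi$ compatible with the full second-order operator and with growth at infinity; the singular set, which was previously neutralized by the cutoffs $\beta_{\varepsilon},\eta_{\varepsilon}$, is now handled automatically by the vanishing of the transport coefficient $v$ at $v=0$, so no additional structure is needed there.
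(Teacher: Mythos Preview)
Your overall plan is sound and parallels Lemmas~\ref{nonnegative} and~\ref{maxStrong}: perturb by $k(1+T-t)$ to make the inequality strict, add a quadratic barrier $\Phi$ to handle the unbounded velocity, and run a case-by-case minimum argument on a truncated cylinder. The barrier $\Phi$ you chose indeed satisfies $\mathcal{L}\Phi\leq 0$, and your treatment of interior points, of the time-slice $t=T_*$, of the outgoing faces $\{x=0,v>0\}\cup\{x=1,v<0\}$, of the Dirichlet faces, and of $v=w\pm L$ is correct.

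There is, however, a genuine gap at the singular points $(0,0,t)$ and $(1,0,t)$. Your statement that ``the interior argument applies'' there because $v=0$ kills the transport term presumes that $\widetilde\varphi_t$ and $\widetilde\varphi_{vv}$ exist (and that $\widetilde\varphi_x$ is bounded) at those boundary points. The hypothesis $\varphi\in C^\infty(U_T)$ only gives smoothness in the \emph{open} set $(0,1)\times\mathbb{R}\times(0,T)$, and the paper's own regularity theory (Lemma~\ref{hypoellipticity-der}, Theorem~\ref{MainTheorem}(ii)) shows that in general derivatives of solutions blow up at the singular set; moreover, at this stage of the paper one does not even know that $\varphi$ is continuous there, so one cannot fall back on the Dirichlet data either. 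This is precisely why Lemma~\ref{nonnegative} relied on the cutoffs $\beta_\varepsilon,\eta_\varepsilon$ (which annihilate the transport coefficient on a whole \emph{neighborhood} of the singular set) together with the non-local operator $\bar Q^\varepsilon$ (which needs no pointwise second derivative). Two clean repairs are available: (i) add $\varepsilon f_0^\ast(x,-v)+\varepsilon f_1^\ast(x,-v)$, where $f_k^\ast$ are the singular steady solutions of Lemma~\ref{30} (the reflection $v\mapsto -v$ turns them into steady solutions of the adjoint equation), so that $\widetilde\varphi=+\infty$ at the singular set and the minimum cannot lie there, then let $\varepsilon\to 0$ at the very end---this is exactly the device used later in Lemma~\ref{max-sup}; or (ii), closer to the paper's one-line proof, observe that the substitution $(x,v,t)\mapsto(x,-v,T-t)$ transforms (\ref{adj1})--(\ref{adj4}) into the forward problem (\ref{VFP})--(\ref{BC1}) with non-negative initial data, so that Lemma~\ref{nonnegAdj} reduces to the weak minimum principle of Lemma~\ref{maxPrinciple copy(2)}, whose duality proof never performs a pointwise derivative test at the singular set. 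A minor additional remark: your order of limits at the end should read ``for fixed $\lambda>0$ choose $L=L(\lambda)$ large, then let $\lambda\to 0$'', since the admissible $L$ depends on $\lambda$ through the barrier.
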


\begin{proof}
It is analogous to Lemma \ref{minicomparison}.
\end{proof}

We define a super-solution to the operator $\mathcal{M}$ \ in (\ref{M}) as
follows.

\begin{definition}
\label{super_sol_weak}Let $\psi $ $\in C\left( \bar{U}_{T}\smallsetminus
\left\{ \left( 0,0\right) ,\left( 1,0\right) \right\} \times \left[ 0,T%
\right] \right) .$ Then we say $\psi $ is a super-solution of (\ref{adj1}),
that is, $\mathcal{M}\psi \geq 0$ if for every $t\in \left[ 0,T\right] $ and
any test function $\varphi \left( x,v,s\right) \in C_{x,v,t}^{1,2,1}\left(
U_{t}\right) \cap C\left( \bar{U}_{t}\right) $ with $\varphi \geq 0$ such
that supp$\left( \varphi \left( \cdot ,\cdot ,s\right) \right) \subset \left[
0,1\right] \times \left[ -R,R\right] $ for some $R>0$ and $\varphi |_{\gamma
_{t}^{+}}=0$, it satisfies
\begin{align*}
& -\int_{U_{t}}\psi \left( x,v,s\right) \left[ \varphi _{t}\left(
x,v,s\right) +\partial _{x}\left( v\varphi \left( x,v,s\right) \right)
+\varphi _{vv}\left( x,v,s\right) \right] dxdvds \\
& +\int_{\Omega }\psi \left( x,v,t\right) \varphi \left( x,v,t\right)
dxdv-\int_{\Omega }\psi \left( x,v,0\right) \varphi \left( x,v,0\right) dxdv
\\
& +\int_{0}^{t}\int_{-\infty }^{0}v\psi \left( 1,v,s\right) \varphi \left(
1,v,s\right) dvds-\int_{0}^{t}\int_{0}^{\infty }v\psi \left( 0,v,s\right)
\varphi \left( 0,v,s\right) dvds \\
& \geq 0.
\end{align*}
\end{definition}

\begin{remark}
In the definition \ref{super_sol_weak}, we can extend the definition to a
more general region. For instance, the interval could be smaller than $\left[
0,1\right] ,$ which will be used in Lemma \ref{max-sup}.
\end{remark}

\

\subsubsection{Maximum principle}

Let
\begin{equation}
\hat{f}_{0}(x,v,t)=\min \{KZ_{0}(\frac{x}{t^{3/2}},\frac{v}{t^{1/2}}),\;1\},
\label{fhat}
\end{equation}%
where $K>1$ and $Z_{0}$ is obtained in Lemma \ref{exist-W}. Then since $%
Z_{0} $ satisfies \eqref{W}, $\hat{f}_{0}$ is a super-solution of the
Fokker-Planck equation. We further define $\bar{f}_{\varepsilon }$ by
\begin{equation}
\bar{f}_{\varepsilon }=C\hat{f}_{0}+\ep f_{1}^{\ast }+\ep f_{2}^{\ast },
\label{super_sol}
\end{equation}%
where $C=\Vert f_{0}\Vert _{\infty },$ $f_{1}^{\ast }$ is a singular
solution near the singular set $\left( 0,0\right) $ and $f_{2}^{\ast }$ is a
singular solution near the singular set $\left( 1,0\right) $ constructed in
Lemma \ref{30}.

The following maximum principle plays a key role.

\begin{lemma}
\label{max-sup} Let $f\left( x,v,t\right) $ be a solution to (\ref{VFP})-(%
\ref{BC1}). \label{comparison}If $f(x,v,0)\leq C\hat{f}_{0}(x,v,0)$, then $%
f(x,v,t)\leq C\hat{f}_{0}(x,v,t)$ for all $t>0$ and $\text{ for all }%
\;(x,v)\in \bar{\Omega}\setminus \{(0,0),(1,0)\}$. Here $C=\Vert f_{0}\Vert
_{\infty }$ and $\hat{f}_{0}$ is given in (\ref{fhat}).
\end{lemma}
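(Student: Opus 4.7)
The plan is to run a comparison argument between $f$ and the regularized super-solution $\bar f_\varepsilon = C\hat f_0+\varepsilon f_1^*+\varepsilon f_2^*$ from \eqref{super_sol}, and to let $\varepsilon\to 0$ at the end. The $\varepsilon$-terms, which blow up at $(0,0)$ and $(1,0)$ by Lemma \ref{30}, act as barriers pinning the difference $f-\bar f_\varepsilon$ below $0$ in a neighbourhood of the singular set, while $C\hat f_0$ controls the bulk. Since $f$ is only a weak solution a priori, I would first invoke the hypoellipticity of Theorem \ref{MainTheorem}(i) to handle $f$ classically on $\bar\Omega\setminus\{(0,0),(1,0)\}\times(0,T]$.

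First I would verify that $\bar f_\varepsilon$ is a classical super-solution of $\mathcal M$ outside the singular set. The pieces $\varepsilon f_k^*$ are exact steady solutions by Lemma \ref{30}, and $\hat f_0=\min\{KZ_0(x/t^{3/2},v/t^{1/2}),1\}$ is a super-solution because $Z_0$ satisfies \eqref{W} by Lemma \ref{exist-W}, the constant $1$ is trivially a super-solution, and the minimum of two super-solutions is a super-solution (the concave corner on $\{KZ_0=1\}$ only contributes a favourable delta to $-\partial_{vv}\hat f_0$; alternatively one uses Definition \ref{super_sol_weak} directly, or smooths the $\min$ and passes to the limit). The boundary inequality $f\le\bar f_\varepsilon$ holds on $\Gamma_T^-$ since $f_0\le C\hat f_0(\cdot,\cdot,0)\le\bar f_\varepsilon(\cdot,\cdot,0)$ at $t=0$ by hypothesis and $\varepsilon f_k^*\ge 0$, and since $f\equiv 0<\bar f_\varepsilon$ on the inflow boundary $\{x=0,v>0\}\cup\{x=1,v<0\}$ by \eqref{BC0}--\eqref{BC1}.

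The heart of the argument is a classical maximum-principle application in the truncated domain
\begin{equation*}
U_T^{r,L}=\{(x,v,t)\in U_T:\operatorname{dist}((x,v),\{(0,0),(1,0)\})>r,\ |v|<L\}.
\end{equation*}
To deal with $|v|=L$ I would add the small quadratic barrier $\lambda\phi(v,t)$ of Lemma \ref{barrier}; since $f\in L^\infty$ by Lemma \ref{maxPrinciple copy(1)}, $\lambda\phi$ dominates $f$ on $\{|v|=L\}$ once $L=L(\lambda)$ is large. Because $\varepsilon f_k^*\to\infty$ at the two singular points while $f$ is bounded, the function $f-\bar f_\varepsilon-\lambda\phi$ is strictly negative on the inner boundary $\{\operatorname{dist}=r\}$ once $r=r(\varepsilon)$ is sufficiently small. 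On $U_T^{r,L}$ all functions in sight are smooth by Theorem \ref{MainTheorem}(i) and $\mathcal M(f-\bar f_\varepsilon-\lambda\phi)\le 0$, so Lemma \ref{maxStrongBnd} gives $f\le\bar f_\varepsilon+\lambda\phi$ there. Sending $L\to\infty$, then $\lambda\to 0$, then $r\to 0$, and finally $\varepsilon\to 0$, and using that $f_k^*$ is finite pointwise away from its own singular point (so $\varepsilon(f_1^*+f_2^*)\to 0$ outside the singular set), we obtain $f\le C\hat f_0$ on $\bar\Omega\setminus\{(0,0),(1,0)\}$.

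The main obstacle is the bookkeeping of these four limits: $r$, $\lambda$, $L$, and $\varepsilon$ must be sent to their limits in a compatible order so that each inner boundary inequality remains valid throughout. In particular, $r=r(\varepsilon)\to 0$ is chosen \emph{after} $\varepsilon$ is fixed, and $\varepsilon\to 0$ must be the final limit so that the pinning by $\varepsilon f_k^*$ near the singular set remains available up to that stage. A secondary subtlety is the super-solution property of $\hat f_0$ across the level set $\{KZ_0=1\}$, handled either via the weak formulation of Definition \ref{super_sol_weak} or by a smooth concave approximation of $\min$ followed by a passage to the limit.
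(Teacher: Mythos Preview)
Your proposal is correct and reaches the same conclusion via the same barrier $\bar f_\varepsilon=C\hat f_0+\varepsilon f_1^*+\varepsilon f_2^*$, but the mechanism you use to compare is genuinely different from the paper's.

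The paper never invokes the classical pointwise maximum principle of Lemma~\ref{maxStrongBnd} for $f-\bar f_\varepsilon$. Instead it works entirely in the weak sense of Definition~\ref{super_sol_weak}: it solves the adjoint problem on a shrunk strip $(h,1-h)\times\mathbb R$ with an arbitrary nonnegative terminal datum $\varphi_0$, multiplies by a cutoff $\zeta_\rho$ vanishing near the singular set, and plugs $\bar\varphi=\zeta_\rho\varphi$ into the super-solution inequality for $\psi=\bar f_\varepsilon-f$. The commutator term $\psi(v\varphi\zeta_{\rho,x}+2\zeta_{\rho,v}\varphi_v+\varphi\zeta_{\rho,vv})$ is controlled using the scaled derivative estimate~\eqref{derOfAdj} and is $O(\rho^2)$; one then lets $\rho\to 0$, $h\to 0$, $\varepsilon\to 0$. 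Your route is a direct classical comparison on the excised domain $U_T^{r,L}$, using $\varepsilon f_k^*$ as a barrier on the inner boundary $\{\mathrm{dist}=r\}$ and the quadratic barrier $\lambda\phi$ on $\{|v|=L\}$. This is more elementary and avoids the adjoint construction, but it leans on Theorem~\ref{MainTheorem}(i) to put $f\in C^{1,2,1}$ away from the singular set for $t>0$; note that $f$ is \emph{not} continuous up to $t=0$ in general (only $f_0\in L^\infty$), so to apply Lemma~\ref{maxStrongBnd} you should start the comparison at $t=\delta>0$, observe that on $U_T^{r,L}$ one has $\hat f_0(\cdot,\cdot,\delta)=1$ for $\delta$ small (since $KZ_0\to\infty$ in self-similar variables away from the origin), hence $f(\cdot,\delta)\le\|f_0\|_\infty=C\hat f_0(\cdot,\delta)\le\bar f_\varepsilon(\cdot,\delta)$, and finally let $\delta\to 0$. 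With that small addition your argument is complete. The paper's duality proof, by contrast, handles the rough initial time automatically through the weak formulation, at the price of setting up the adjoint problem and controlling the cutoff error.
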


\begin{proof}
We first introduce a cut-off function $\zeta \left( x,v\right) $ near the
singular set $\left\{ \left( 0,0\right) ,\left( 1,0\right) \right\} $ in the
phase plane $\bar{\Omega}=\left[ 0,1\right] \times \left( -\infty ,\infty
\right) ~$such that for any $\rho >0$ small,
\begin{equation}
\zeta _{\rho }\left( x,v\right) =\left\{
\begin{array}{c}
0,~~~\left( \left\vert v\right\vert ^{3}+\left\vert x\right\vert \right)
^{1/3}<\rho ,~\left( \left\vert v\right\vert ^{3}+\left\vert x-1\right\vert
\right) ^{1/3}<\rho \\
\in \left[ 0,1\right] ,~~~\ \ \ \ \ \rho \leq \left( \left\vert v\right\vert
^{3}+\left\vert x\right\vert \right) ^{1/3}\leq 2\rho ,~\rho \leq \left(
\left\vert v\right\vert ^{3}+\left\vert x-1\right\vert \right) ^{1/3}\leq
2\rho \\
1,\text{ \ \ \ \ \ \ \ \ \ \ \ \ \ \ \ \ \ \ \ \ \ \ \ \ \ \ \ \ \ \ \ \ \ \
\ \ \ \ \ \ \ \ \ \ \ \ otherwise}~%
\end{array}%
\right.  \label{cutoff}
\end{equation}

Let $\psi \left( x,v,t\right) =$ $\bar{f}_{\varepsilon }\left( x,v,t\right)
-f\left( x,v,t\right) $ with $\bar{f}_{\varepsilon }$ being the
super-solution in Definition \ref{super_sol_weak}, given in (\ref{super_sol}%
). Let $\varphi \left( x,v,t\right) $ be a solution of the following adjoint
problem: for any given $h>0$ small,%
\begin{eqnarray*}
\varphi _{t}+v\varphi _{x}+\varphi _{vv} &=&0,\ \text{for }h<x<1-h,~-\infty
<v<\infty , \\
\varphi \left( x,v,T\right) &=&\varphi _{0}\left( x,v\right) ,\text{ } \\
\varphi \left( h,v,t\right) &=&0,~v<0,~\varphi \left( 1-h,v,t\right) =0,~v>0,
\end{eqnarray*}%
where $\varphi _{0}\in C\left( B_{\rho }\left( x_{0},v_{0}\right) \right) $
is an arbitrary test function. Then we put $\psi $ and $\bar{\varphi}=\zeta
_{\rho }\varphi $ with the cut-off function $\zeta _{\rho }$ in (\ref{cutoff}%
) into the definition \ref{super_sol_weak} with a smaller domain $\left(
h,1-h\right) \times \left( -\infty ,\infty \right) \times \left( 0,t\right)
~ $to get%
\begin{align*}
& I+II+III:= \\
& -\int_{0}^{T}\int_{-\infty }^{\infty }\int_{h}^{1-h}\psi \left(
x,v,t\right) \left[ \bar{\varphi}_{t}\left( x,v,t\right) +\partial
_{x}\left( v\bar{\varphi}\left( x,v,t\right) \right) +\bar{\varphi}%
_{vv}\left( x,v,t\right) \right] dxdvdt \\
& +\int_{-\infty }^{\infty }\int_{h}^{1-h}\psi \left( x,v,t\right) \bar{%
\varphi}\left( x,v,T\right) dxdv-\int_{\Omega }\psi \left( x,v,0\right) \bar{%
\varphi}\left( x,v,0\right) dxdv \\
& +\int_{0}^{T}\int_{-\infty }^{0}v\psi \left( 1-h,v,t\right) \bar{\varphi}%
\left( 1-h,v,t\right) dvdt-\int_{0}^{t}\int_{0}^{\infty }v\psi \left(
h,v,s\right) \bar{\varphi}\left( h,v,s\right) dvdt \\
& \geq 0.
\end{align*}%
Notice that the compact support restriction of test functions in the
definition \ref{super_sol_weak} can be extended to $\varphi $ without
compact support by an approximation argument.

For I, we use the estimates, similar to (\ref{derOfAdj}) for solutions of
the Fokker-Planck problem, for the derivatives of solutions to the adjoint
problem together with the estimates for the derivatives of the cut-off
function $\zeta _{\rho }$:
\begin{equation*}
\left\Vert \zeta _{\rho ,x}\right\Vert _{L^{\infty }}\leq \frac{C}{\rho ^{3}}%
,~\left\Vert \zeta _{\rho ,v}\right\Vert _{L^{\infty }}~\leq \frac{C}{\rho }%
,~\left\Vert \zeta _{\rho ,vv}\right\Vert _{L^{\infty }}\leq \frac{C}{\rho
^{2}}.
\end{equation*}%
Thus we get%
\begin{eqnarray*}
&&-\int_{U_{T}}\psi \left( x,v,t\right) \left[ \bar{\varphi}_{t}\left(
x,v,t\right) +\partial _{x}\left( v\bar{\varphi}\left( x,v,t\right) \right) +%
\bar{\varphi}_{vv}\left( x,v,t\right) \right] dxdvdt \\
&=&-\int_{U_{T}}\psi \left( x,v,t\right) \zeta \left( x,v\right) \left[
\varphi _{t}\left( x,v,t\right) +\partial _{x}\left( v\varphi \left(
x,v,t\right) \right) +\varphi _{vv}\left( x,v,t\right) \right] dxdvdt \\
&&-\int_{U_{T}}\psi \left( x,v,t\right) \left[ v\varphi \zeta _{\rho
,x}+2\zeta _{\rho ,v}\varphi _{v}+\varphi \zeta _{\rho ,vv}\right] dxdvdt \\
&=&-\int_{U_{T}}\psi \left( x,v,t\right) \left[ v\varphi \zeta _{\rho
,x}+2\zeta _{\rho ,v}\varphi _{v}+\varphi \zeta _{\rho ,vv}\right] dxdvdt,
\end{eqnarray*}%
which leads to%
\begin{equation*}
\left\vert I\right\vert \leq C\rho ^{2},
\end{equation*}%
where $C$ depends on $T$,$\left\Vert \varphi _{0}\right\Vert _{L^{\infty
}\left( \Omega \right) },$ and $h$.

Thus we have%
\begin{eqnarray*}
&&\int_{\Omega }\psi \left( x,v,T\right) \bar{\varphi}\left( x,v,T\right)
dxdv \\
&\geq &\int_{\Omega }\psi \left( x,v,T\right) \zeta \left( x,v\right)
\varphi _{0}\left( x,v\right) dxdv-I-III \\
&\geq &\int_{\Omega }\psi \left( x,v,0\right) \zeta \left( x,v\right)
\varphi \left( x,v,0\right) dxdv-III+\mathcal{O}\left( \rho ^{2}\right) \\
&\geq &-III+\mathcal{O}\left( \rho ^{2}\right) .
\end{eqnarray*}%
Letting $\rho \rightarrow 0$ and , we obtain%
\begin{equation*}
\int_{\Omega }\psi \left( x,v,T\right) \bar{\varphi}\left( x,v,T\right)
dxdv\geq -III.
\end{equation*}%
Then we let $h\rightarrow 0$ to get $III\leq 0$ and%
\begin{equation*}
\int_{\Omega }\psi \left( x,v,t\right) \varphi _{0}\left( x,v\right)
dxdv\geq 0
\end{equation*}%
since $\psi \left( 1,v,s\right) \geq 0$ for $v<0,s>0$ and $\psi \left(
0,v,s\right) \geq $ $0$ for $v>0,s>0,$ and $\varphi \geq 0.$ Noticing $%
\varphi _{0}\left( x,v\right) $ is arbitrary, we can deduce that if $\bar{f}%
_{\varepsilon }\left( x,v,0\right) \geq C\hat{f}_{0}\geq f\left(
x,v,0\right) $, then $\bar{f}_{\varepsilon }\left( x,v,t\right) \geq f\left(
x,v,t\right) .$ Finally we let $\varepsilon \rightarrow 0$ to complete the
proof.
\end{proof}

We are now ready to prove Theorem 1.3 (ii).

\begin{proof}[Proof of Theorem 1.3 (ii)]
Recall (\ref{super_sol}). Since $C=\Vert f_{0}\Vert _{\infty },$ we see that
$f(x,v,0)\leq \bar{f}^{\ep}(x,v,0)$ for all $(x,v)$ and for any $\ep>0$. By
the maximum principle as in the proof of Lemma \ref{max-sup}, we deduce that
\begin{equation*}
f(x,v,t)\leq \bar{f}^{\ep}(x,v,t)\;\text{ for }\;t>0,\;(x,v)\in \bar{\Omega}%
\setminus \lbrack B_{\delta }(0,0)\cup B_{\delta }(1,0)],
\end{equation*}%
where $B_{\delta }(x_{0},v_{0}):=\{(x,v):|x-x_{0}|+|v-v_{0}|^{3}<\delta
^{3}\}$ and $\delta >0$ is arbitrary. Letting $\ep\rightarrow 0$, we further
deduce that
\begin{equation*}
f(x,v,t)\leq C\hat{f}_{0}(x,v,t)\;\text{ for all }\;(x,v)\in \bar{\Omega}%
\setminus \{(0,0),(1,0)\},
\end{equation*}%
which immediately implies that for $|x|+|v|^{3}\ll 1$
\begin{equation}
f(x,v,t)\leq C(|x|^{\alpha }+|v|^{3\alpha }).  \label{hold0}
\end{equation}%
We repeat the argument near $(1,0)$ by using $\hat{f}_{1}$, which is defined
by using the super-solution $Z_{1}$ of self-similar type near $(1,0)$ to
establish $f(x,v,t)\leq C(|x-1|^{\alpha }+|v|^{3\alpha })$.

It now remains to show the H\"{o}lder continuity. We already know that $f$
is smooth away from the singular set from the hypoellipticity result. Choose
$1\gg \delta _{0}>0$ such that on
\begin{equation*}
\mathcal{B}_{\delta _{0}}:=\Omega \cap \{B_{\delta _{0}}(0,0)\cup B_{\delta
_{0}}(1,0)\},
\end{equation*}%
$f$ satisfies \eqref{hold0}. We will focus on the part near $(0,0)$. Then it
suffices to show that for each $t>0$,
\begin{equation*}
|f(x_{1},v_{1},t)-f(x_{2},v_{2},t)|\leq C(|x_{1}-x_{2}|^{\alpha
}+|v_{1}-v_{2}|^{3\alpha })\text{ for any }(x_{i},v_{i})\in \mathcal{B}%
_{\delta _{0}},\;i=1,2.
\end{equation*}%
If $(x_{i},v_{i})$ for either $i=1$ or $i=2$ is a singular point $(0,0)$, we
are done because of \eqref{hold0}. Suppose not. We may assume that $%
(x_{1},v_{1})\in \partial B_{\delta _{1}}\cap \mathcal{B}_{\delta _{0}}$ and
$(x_{2},v_{2})\in \partial B_{\delta _{2}}\cap \mathcal{B}_{\delta _{0}}$
for $0<\delta _{2}\leq \delta _{1}\leq \delta _{0}$ and let $\rho
^{3}:=|x_{1}-x_{2}|+|v_{1}-v_{2}|^{3}>0$. If $\rho ^{3}\geq \frac{1}{100}%
(\delta _{1}^{3}+\delta _{2}^{3})$, then by the triangle inequality,
\begin{equation*}
\begin{split}
|f(x_{1},v_{1},t)-f(x_{2},v_{2},t)|& \leq
|f(x_{1},v_{1},t)-f(0,0,t)|+|f(0,0,t)-f(x_{2},v_{2},t)| \\
& \leq C(\delta _{1}^{3\alpha }+\delta _{2}^{3\alpha })\leq C\rho ^{3\alpha }
\\
& \leq C(|x_{1}-x_{2}|^{\alpha }+|v_{1}-v_{2}|^{3\alpha }).
\end{split}%
\end{equation*}%
If $\rho ^{3}<\frac{1}{100}(\delta _{1}^{3}+\delta _{2}^{3})$, then $\rho
\ll \delta _{1}\sim \delta _{2}$, for instance we have $\rho <\frac{1}{10}%
\delta _{2}$ and $\delta _{2}\leq \delta _{1}\leq 2\delta _{2}$. And hence,
the distance between $B_{2\rho }(x_{2},v_{2})\cap \mathcal{B}_{\delta _{0}}$
and the singular point $(0,0)$ is strictly positive and it contains $%
(x_{1},v_{1})$. Therefore, we can use the hypoellipticity result on $%
B_{2\rho }(x_{2},v_{2})\cap \mathcal{B}_{\delta _{0}}$ to conclude that it
is in fact smooth. To make it precise, we introduce a rescaling as follows:%
\begin{equation*}
f\left( x,v,t\right) =\delta ^{3\alpha }\bar{f}\left( \bar{x},\bar{v},\bar{t}%
\right) \ \ ,\ \ x=\rho ^{3}\bar{x}\ \ ,\ \ v=\rho \bar{v}\ \ ,\ \
t=t_{0}+\rho ^{2}\bar{t}\ \ ,\ \ t_{0}\geq 0,
\end{equation*}%
where $\delta ^{3}:=\max \left( \delta _{1}^{3},\delta _{2}^{3}\right) .$
Then $\bar{f}\left( \bar{x},\bar{v},\bar{t}\right) $ solves the
Fokker-Planck equation (\ref{VFP}) and due to (\ref{hold0}), it is bounded $%
0\leq \bar{f}\leq C$ in the set $\left\vert \bar{x}\right\vert +\left\vert
\bar{v}\right\vert ^{3}\leq \delta ^{3}/\rho ^{3}$. By applying Lemma \ref%
{hypoellipticity-der}, it then follows that%
\begin{equation*}
\left( \left\vert \bar{x}\right\vert +\left\vert \bar{v}\right\vert
^{3}\right) \left\vert \partial _{\bar{x}}\bar{f}\right\vert +\left(
\left\vert \bar{x}\right\vert +\left\vert \bar{v}\right\vert ^{3}\right)
^{1/3}\left\vert \partial _{\bar{v}}\bar{f}\right\vert \leq C.
\end{equation*}%
Returning to the original variables $\left( x,v,t\right) ,$ we obtain the
following estimates.%
\begin{equation*}
\left( \left\vert x\right\vert +\left\vert v\right\vert ^{3}\right)
\left\vert \partial _{x}f\right\vert +\left( \left\vert x\right\vert
+\left\vert v\right\vert ^{3}\right) ^{1/3}\left\vert \partial
_{v}f\right\vert \leq C\delta ^{3\alpha },
\end{equation*}%
for all $\left( x,v\right) \in \Omega $ with $\left\vert x\right\vert
+\left\vert v\right\vert ^{3}\leq \delta ^{3}.$ We now estimate the
difference $f\left( x_{1},v_{1},t\right) -f\left( x_{2},v_{2},t\right) $ as
follows.%
\begin{equation*}
\left\vert f\left( x_{1},v_{1},t\right) -f\left( x_{2},v_{2},t\right)
\right\vert \leq \left\vert \partial _{x}f\right\vert \left( \tilde{x},%
\tilde{v},t\right) \left\vert x_{1}-x_{2}\right\vert +\left\vert \partial
_{v}f\right\vert \left( \tilde{x},\tilde{v},t\right) \left\vert
v_{1}-v_{2}\right\vert ,
\end{equation*}%
for some $\left( \tilde{x},\tilde{v}\right) $ with $\min \left( \delta
_{1}^{3},\delta _{2}^{3}\right) \leq \left\vert \tilde{x}\right\vert
+\left\vert \tilde{v}\right\vert ^{3}\leq \max \left( \delta _{1}^{3},\delta
_{2}^{3}\right) =\delta ^{3}.$ Thus we have%
\begin{eqnarray*}
\left\vert f\left( x_{1},v_{1},t\right) -f\left( x_{2},v_{2},t\right)
\right\vert &\leq &C\delta ^{3\alpha }\left[ \frac{\left\vert
x_{1}-x_{2}\right\vert }{\min \left( \delta _{1}^{3},\delta _{2}^{3}\right) }%
+\frac{\left\vert v_{1}-v_{2}\right\vert }{\min \left( \delta _{1},\delta
_{2}\right) }\right] \\
&\leq &C\delta ^{3\alpha }\left[ \frac{\rho ^{3}}{\min \left( \delta
_{1}^{3},\delta _{2}^{3}\right) }+\frac{\rho }{\min \left( \delta
_{1},\delta _{2}\right) }\right] \\
&\leq &C\delta ^{3\alpha }\left[ \frac{\rho ^{3\alpha }\delta ^{3-3\alpha }}{%
\min \left( \delta _{1}^{3},\delta _{2}^{3}\right) }+\frac{\rho ^{3\alpha
}\delta ^{1-3\alpha }}{\min \left( \delta _{1},\delta _{2}\right) }\right] \\
&\leq &C\rho ^{3\alpha }\left[ \frac{\delta ^{3}}{\min \left( \delta
_{1}^{3},\delta _{2}^{3}\right) }+\frac{\delta }{\min \left( \delta
_{1},\delta _{2}\right) }\right] .
\end{eqnarray*}%
Notice that the third inequality above is due to our assumption that $\rho
^{3}<\frac{1}{100}(\delta _{1}^{3}+\delta _{2}^{3})$ and $\alpha <1/6.$
Since $\delta _{1}\sim \delta _{2}\sim \delta ,$ we can conclude that%
\begin{equation*}
\left\vert f\left( x_{1},v_{1},t\right) -f\left( x_{2},v_{2},t\right)
\right\vert \leq C\rho ^{3\alpha }\leq C\left( \left\vert
x_{1}-x_{2}\right\vert ^{\alpha }+\left\vert v_{1}-v_{2}\right\vert
^{3\alpha }\right) .
\end{equation*}%
This completes the proof of the H\"{o}lder continuity.
\end{proof}

\section{Exponential convergence rate on a bounded interval}


In this section, we will show that the solutions for the Fokker-Planck
equation (\ref{VFP}) with the initial and absorbing boundary conditions (\ref%
{id})-(\ref{BC1}) decay exponentially as time goes to infinity. We begin
with the following technical lemma, which shows the exponential convergence
under a certain set of assumptions to be verified later.

\begin{lemma}
\label{exp} Let $\{z_{n}\}$, $\{M_{n}\}$ be given such that $z_{n}\geq 0$
and $M_{n}\geq 0$ and that $M_{n}\leq M_{n-1}$ for all $n\in \mathbb{N}$.
Suppose that there exist $0<\theta ,\beta <1$, $A\geq C>0$, $T>0$ such that
the following holds

\begin{enumerate}
\item If $z_{n}\geq AM_{n-1}$, $z_{n+1}\leq \theta z_{n}$ for $\theta <1,$

\item If $z_{n}<AM_{n-1}$, $M_{n+T}\leq \beta M_{n-1}$ for $\beta <1,$

\item $z_{n+1}\leq \theta \max \{z_{n},CM_{n-1}\}.$
\end{enumerate}

Then there exists $0<\mu <1$ satisfying $z_{n}+M_{n}\leq c\mu ^{n}.$
\end{lemma}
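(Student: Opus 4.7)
My plan is to construct a Lyapunov-type quantity
\begin{equation*}
V_n := \max\bigl(z_n,\, \lambda M_{n-1}\bigr),
\end{equation*}
with $\lambda$ chosen in the open interval $(\theta C, A)$, which is non-empty because $\theta C < C \leq A$. I will establish two properties of $V_n$: first, that it is non-increasing, and second, that it decreases by a definite multiplicative factor $\mu<1$ over every window of a fixed length $S$. Once both are in hand, it follows that $V_n \leq c\mu^{n/S}$ and hence $z_n + M_n \leq c' \tilde\mu^n$ with $\tilde\mu := \mu^{1/S}$.

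Monotonicity of $V_n$ will follow immediately from hypothesis~(3) and the non-increase of $M_n$: since $\lambda > \theta C$, one has $z_{n+1} \leq \theta\max(z_n, CM_{n-1}) \leq V_n$, while $\lambda M_n \leq \lambda M_{n-1} \leq V_n$. A short induction using only hypothesis~(3) and monotonicity of $M$ then yields the universal iterated bound $z_{n+j} \leq \max(\theta^j z_n,\, \theta C M_{n-1})$ for every $j \geq 1$, valid irrespective of which of (1) or (2) fires along the way. The decay of $V$ will come from analyzing a window $\{n,\ldots,n+N-1\}$ of length $N$ and splitting into two cases according to whether hypothesis~(2) is triggered at some index in that window. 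If yes, monotonicity of $M$ gives $\lambda M_{n+N+T-1} \leq \beta V_n$, while the universal bound on $z$ yields $z_{n+N+T} \leq V_n\max(\theta^{N+T},\, \theta C/\lambda)$. If no, hypothesis~(1) holds throughout the subwindow, which iteratively gives $z_{n+N} \leq \theta^N z_n$ together with the crucial extra relation $M_{n-1} \leq V_n/A$ (from $z_n \geq A M_{n-1}$); iterating hypothesis~(3) for $T$ more steps then controls both $z_{n+N+T}$ and $\lambda M_{n+N+T-1}$ by $V_n$ times factors involving $\theta^{N+T}$, $\theta C/A$, and $\lambda/A$.

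Combining the two cases will give $V_{n+S} \leq \mu V_n$ with $S := N+T$ and
\begin{equation*}
\mu := \max\bigl(\theta^{N+T},\, \theta C/\lambda,\, \beta,\, \theta C/A,\, \lambda/A\bigr),
\end{equation*}
every factor being strictly less than one thanks to $\theta C < \lambda < A$, $C \leq A$, $\theta<1$, and $\beta<1$. The hardest part, I expect, is identifying the correct Lyapunov quantity. Naive choices such as $z_n + \lambda M_n$, or $\max(z_n, \lambda M_n)$ with $M$ indexed at $n$ rather than $n-1$, fail because in the ``flux-small'' regime $z_{n+1}$ is only controlled by $M_{n-1}$ and not by $M_n$, so a sudden drop $M_n \ll M_{n-1}$ could let such a functional grow from one step to the next. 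Taking the max and lagging $M$ by one index is precisely what absorbs this effect and makes both the monotonicity and the window-decay estimate go through.
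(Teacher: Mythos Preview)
Your proof is correct and, at heart, uses the same Lyapunov idea as the paper: both track a quantity of the form $\max(z_n,\lambda M_{n-1})$ (the paper, after rescaling, takes $\lambda=A$ and samples only along a subsequence $n=2k(T+1)+1$) and show it contracts over windows of fixed length. The technical difference lies in the ``flux-large'' case (your Case~B). The paper fixes $\lambda=A$ and a window of length $T+1$, iterates hypothesis~(1) through the whole window, and uses the \emph{last} relation $z_{2k(T+1)+T+1}\geq A M_{2k(T+1)+T}$ to conclude $M_{2k(T+1)+T}\leq \theta^{T}z_{2k(T+1)+1}/A$, forcing decay of $M$. You instead choose $\lambda$ strictly in $(\theta C,A)$, use only the \emph{first} relation $z_n\geq A M_{n-1}$, and extract the contraction factor $\lambda/A<1$ directly. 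This buys you two things: the window length $N$ decouples from $T$ (indeed $N=1$ already works), and $V_n$ is non-increasing for \emph{every} $n$, not just along the subsequence, which streamlines the final step. The paper's choice yields a slightly sharper per-window rate $\gamma=\max(\theta,\beta)$, but that is immaterial for the qualitative conclusion.
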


\begin{proof}
Define a sequence
\begin{equation*}
\omega _{k}:=\max \left\{ \frac{z_{2k(T+1)+1}}{A}\;,\;M_{2k(T+1)}\right\}
\end{equation*}%
for $k\geq 0$. We will show that there exists $0<\gamma <1$ independent of $%
k $ such that for each $k\geq 0,$
\begin{equation}
\omega _{k+1}\leq \gamma \omega _{k}.  \label{seq}
\end{equation}%
We remark that \eqref{seq} implies the exponential decay with the rate $\mu
=O(\gamma ^{{1}/{2(T+1)}})$.

What follows is the proof of \eqref{seq}. We first start with $%
M_{2k(T+1)+2(T+1)}$ part in $\omega _{k+1}$. Suppose there exists an $1\leq
n_{0}\leq T+1$ such that $z_{2k(T+1)+n_{0}}<AM_{2k(T+1)+n_{0}-1}$. Then, by
the assumption (2), $M_{2k(T+1)+n_{0}+T}\leq \beta M_{2k(T+1)+n_{0}-1}$.
Since $M_{n}$ is non-increasing in $n$, we deduce that
\begin{equation*}
M_{2k(T+1)+2(T+1)}\leq M_{2k(T+1)+n_{0}+T}\leq \beta M_{2k(T+1)+n_{0}-1}\leq
\beta M_{2k(T+1)}.
\end{equation*}%
Suppose that $z_{2k(T+1)+n}\geq AM_{2k(T+1)+n-1}$ for all $1\leq n\leq T+1$.
Then by the assumption (1), we deduce that $z_{2k(T+1)+T+1}\leq \theta ^{T}{z%
}_{2k(T+1)+1}$. This immediately yields that
\begin{equation*}
M_{2k(T+1)+2(T+1)}\leq M_{2k(T+1)+T}\leq \frac{z_{2k(T+1)+T+1}}{A}\leq
\theta ^{T}\frac{z_{2k(T+1)+1}}{A}
\end{equation*}%
and thus we deduce that
\begin{equation}
M_{2k(T+1)+2(T+1)}\leq \max \{\beta M_{2k(T+1)},\theta ^{T}\frac{%
z_{2k(T+1)+1}}{A}\}.  \label{Mseq}
\end{equation}%
We now turn to $z_{2k(T+1)+2(T+1)+1}$ part in $\omega _{k+1}$. Using the
assumption (3), we observe that
\begin{equation*}
\begin{split}
\frac{z_{2k(T+1)+2(T+1)+1}}{A}& \leq \max \{\frac{C\theta }{A}%
M_{2k(T+1)+2(T+1)-1},\theta \frac{z_{2k(T+1)+2(T+1)}}{A}\} \\
& \leq \max \{\frac{C\theta }{A}M_{2k(T+1)+2(T+1)-1},\frac{C\theta ^{2}}{A}%
M_{2k(T+1)+2(T+1)-2},\dots \\
& \quad \quad \quad \quad \dots ,\frac{C\theta ^{2T+2}}{A}M_{2k(T+1)},\theta
^{2T+2}\frac{z_{2k(T+1)+1}}{A}\}.
\end{split}%
\end{equation*}%
Since $M_{n}$ is non-increasing in $n$ and $C\leq A$, we deduce that
\begin{equation}
\frac{z_{2k(T+1)+2(T+1)+1}}{A}\leq \max \{\theta M_{2k(T+1)},\theta ^{2T+2}%
\frac{z_{2k(T+1)+1}}{A}\}.  \label{Zseq}
\end{equation}%
Combining \eqref{Mseq} and \eqref{Zseq}, and letting $\gamma =\max \{\beta
,\theta \}<1$, we obtain \eqref{seq}.
\end{proof}

\

We denote a neighborhood of the singular set by $S$:\
\begin{equation*}
S:=\{(x,v)\in \Omega :|x|+|v|^{3}\leq \rho ^{3}\text{ or }|x-1|+|v|^{3}\leq
\rho ^{3}\}
\end{equation*}%
for $\rho >0$ a fixed small (not necessarily too small) number. Let $Q$ be
the complement of $S$:
\begin{equation*}
Q:=\Omega \setminus S
\end{equation*}%
and we further introduce the extended $Q$ by
\begin{equation*}
Q_{E}:=\Omega \setminus \frac{1}{2}S
\end{equation*}%
so that $Q\subset Q_{E}\subset \Omega $.

We use $\zeta _{s}(t)$ to denote the supremum of $f$ on S:
\begin{equation*}
\zeta _{s}(t):=\Vert f(\cdot ,t)\Vert _{L^{\infty }(S)}
\end{equation*}%
and as before we will use $\Vert f(\cdot ,t)\Vert _{\infty }$ to denote the
supremum of $f$ on the entire phase space $\Omega $. We use $M(t)$ to denote
the total mass at time $t$:
\begin{equation*}
M(t):=\int_{\Omega }f(x,v,t)dxdv.
\end{equation*}

We know that $M(t)$ is non-increasing in $t$. Our goal is to prove that $%
M(t) $ decays exponentially to zero by showing that part of the mass escapes
to the boundary and that the solution decays also on the singular set.

The following lemma concerns the behavior of a solution on $S$.

\begin{lemma}
\label{singBeh}There exist $\rho >0$ and $0<\theta <1$ such that
\begin{equation}
\zeta _{s}(t)\leq \theta \Vert f(\cdot ,\bar{t})\Vert _{\infty }\;\text{ for
}\;t\geq \bar{t}+1,  \label{4.6}
\end{equation}%
where $\theta <1$.
\end{lemma}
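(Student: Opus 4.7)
The plan is to combine the self-similar super-solution estimate of Lemma \ref{max-sup} with a time-shifted maximum principle. Setting $M := \|f(\cdot,\bar t)\|_{\infty}$ and viewing $f(\cdot,\cdot,\bar t+\cdot)$ as a Fokker--Planck solution with new initial datum of $L^\infty$-norm at most $M$, I would apply Lemma \ref{max-sup} to obtain
\[
f(x,v,\bar t+s) \leq M\,\hat f_0(x,v,s), \qquad s>0,
\]
and, invoking the $(1,0)$-centred analog $\hat f_1$ built from the super-solution $Z_1$ alluded to after Lemma \ref{exist-W}, the symmetric bound $f(x,v,\bar t+s)\leq M\,\hat f_1(x,v,s)$.

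Next I would quantify $\hat f_0$ on the component of $S$ near $(0,0)$. For $(x,v)$ with $|x|+|v|^3 \leq \rho^3$ and $s:=t-\bar t\geq 1$, the rescaled point $(y,\xi):=(x/s^{3/2},v/s^{1/2})$ satisfies
\[
|y|+|\xi|^3 \leq \frac{2\rho^3}{s^{3/2}} \leq 2\rho^3,
\]
so for $\rho$ small $(y,\xi)$ lies in the regime where the asymptotics of $\Lambda$ from Claim \ref{32} together with $|R_0|\ll F_0$ from Lemma \ref{exist-W} give $Z_0(y,\xi)\leq C(|y|^\alpha+|\xi|^{3\alpha})$. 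Consequently,
\[
\hat f_0(x,v,s) \leq K\,Z_0(y,\xi) \leq \frac{2CK\rho^{3\alpha}}{s^{3\alpha/2}} \leq 2CK\rho^{3\alpha}.
\]
The same argument applied to $\hat f_1$ yields the identical estimate on the component of $S$ near $(1,0)$.

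Finally, I would fix $\rho>0$ small enough that $2CK\rho^{3\alpha}\leq \theta$ for some chosen $\theta\in(0,1)$. Combining the two bounds over the two pieces of $S$ then gives
\[
\zeta_s(t) \leq \theta\,\|f(\cdot,\bar t)\|_{\infty} \qquad \text{for all } t\geq \bar t+1,
\]
which is precisely \eqref{4.6}.

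The main obstacle I anticipate is twofold. First, a single self-similar super-solution controls $f$ only near one of the two singular points, so one must construct (or explicitly invoke the symmetric version of) $\hat f_1$ and verify its comparison principle; this is routine given the remark after Lemma \ref{exist-W} but needs to be carried out. Second, the small-parameter estimate relies on the power-law asymptotics $Z_0(y,\xi)\lesssim |y|^\alpha+|\xi|^{3\alpha}$, which are valid only when $|y|+|\xi|^3$ is sufficiently small; $\rho$ must therefore be chosen small enough that the rescaled argument stays inside this asymptotic regime for every $s\geq 1$, and the constant $\theta$ must be extracted after this choice is made.
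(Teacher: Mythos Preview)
Your proposal is correct and follows exactly the paper's approach: time-shift to start from $\bar t$, invoke the comparison with the self-similar super-solution $\hat f_0$ (Lemma~\ref{max-sup}), and use the self-similar structure to show $\hat f_0$ is uniformly small on $S$ once $s=t-\bar t\ge 1$, then argue symmetrically near $(1,0)$. The paper's own proof says precisely this in two sentences; you have simply made the rescaling estimate $|y|+|\xi|^3\le 2\rho^3/s^{3/2}$ and the choice of $\rho$ explicit, and the two ``obstacles'' you flag are non-issues already absorbed into the paper's standing assumptions.
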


\begin{proof}
Recall that $f(x,v,t)\leq C\hat{f}_{0}(x,v,t),$ where $\hat{f}_{0}$ is given
in \eqref{fhat}. Then by the self-similar structure of the super-solution $%
\hat{f}_{0}$, we see that there exists a small $\rho >0$ so that for $t\geq
\bar{t}+1$,
\begin{equation*}
\sup_{S}f(x,v,t)\leq \theta \Vert f(\cdot ,\bar{t})\Vert _{\infty }.
\end{equation*}%
Since a similar argument holds near $\left( 1,0\right) ,$ this completes the
proof.
\end{proof}

Next we obtain the following estimate on $\sup_{Q}f$ from the
hypoellipticity of $f$.

\begin{lemma}
\label{hyp}There exists $C_{s}>0$ such that
\begin{equation}
\sup_{(x,v)\in Q}f(x,v,t)\leq C_{s}\int_{Q_{E}}f(x,v,\bar{t})dxdv\;\text{
for }\;t\geq \bar{t}+1,  \label{4.7}
\end{equation}%
where $C_{s}$ depends only on the size of the singular set $S$.
\end{lemma}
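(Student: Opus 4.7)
The plan is to adapt the adjoint/hypoellipticity argument from Proposition \ref{Prop} and Remark \ref{Rem} to the time window $[\bar{t}, t]$. For fixed $(x_0, v_0) \in Q$ and $t \geq \bar{t}+1$, I would exploit that the parameter $\rho$ defining $S$ provides a uniform strictly positive separation between any point of $Q$ and $\frac{1}{2}S$. This permits the choice of a spatial cutoff $\zeta \in C_c^\infty(\Omega)$ equal to $1$ on a small neighborhood of $(x_0, v_0)$ and with $\text{supp}\,\zeta \subset Q_E$.

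Next I would construct the test function $\psi := \phi\,\zeta$, where $\phi$ solves the backward Fokker-Planck equation $\mathcal{M}^*\phi = 0$ on $\Omega \times [\bar{t}, t]$ with data $\phi(\cdot, t) = \varphi$ approximating $\delta_{(x_0, v_0)}$; when $x_0 \in \{0,1\}$ the construction is augmented by the boundary correction of Lemma \ref{lambda1} so that $\phi$ vanishes on the required outgoing boundary. By design $\psi|_{\gamma_{[\bar{t},t]}^+}=0$ and $\text{supp}\,\psi(\cdot, s) \subset Q_E$ for every $s \in [\bar{t},t]$. Substituting $\psi$ into the weak formulation (as in (3.15), (3.22)) yields
\begin{equation*}
\int_{\Omega} \varphi\,f(t)\,dx\,dv \;=\; \int_{Q_E}\psi(\bar{t})\,f(\bar{t})\,dx\,dv \;+\; \int_{\bar{t}}^{t}\!\int_{Q_E} R\,f(s)\,dx\,dv\,ds,
\end{equation*}
where $R = v\phi\,\zeta_{x} + 2\phi_{v}\,\zeta_{v} + \phi\,\zeta_{vv}$ is supported in the $\zeta$-transition annulus inside $Q_E$. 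Because $t-\bar{t} \geq 1$ and the annulus is bounded away from the singular set, the explicit Gaussian decay of the fundamental solution $G$ together with interior hypoellipticity for $\phi$ on the annulus would imply $\|\psi(\bar{t})\|_{\infty} + \|R\|_{\infty} \leq C(\rho)$, uniformly in $(x_0,v_0) \in Q$.

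Passing $\varphi$ to $\delta_{(x_0,v_0)}$ by density and taking the supremum over $(x_0, v_0) \in Q$ should then give
\begin{equation*}
\sup_{(x,v) \in Q} f(x,v,t) \;\leq\; C \int_{Q_E} f(\bar{t})\,dx\,dv \;+\; C\int_{\bar{t}}^{t}\!\int_{Q_E}f(s)\,dx\,dv\,ds.
\end{equation*}
The main obstacle is the last time-integrated term: one must absorb it into $C\int_{Q_E}f(\bar{t})\,dx\,dv$ rather than lose control by bounding $\int_{Q_E}f(s)$ by the full $L^1(\Omega)$ norm. I plan to handle this using the monotonicity of total mass together with Lemma \ref{singBeh}, which provides a uniform decay of $\sup_S f$ and therefore bounds the amount of mass that can travel from $\frac{1}{2}S$ back through the $\zeta$-transition annulus over a unit time interval. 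With this ingredient the constant $C_s$ depends only on the fixed parameter $\rho$, as asserted.
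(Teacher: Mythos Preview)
Your route is exactly the one the paper's one-line proof points to: rerun the duality/hypoellipticity calculation of Proposition~\ref{Prop} and Remark~\ref{Rem} on the time window $[\bar t,t]$ with a cutoff $\zeta$ whose support lies in $Q_E$. Your derivation of the identity
\[
\int_{\Omega}\varphi\,f(t)\,dxdv=\int_{Q_E}\psi(\bar t)\,f(\bar t)\,dxdv+\int_{\bar t}^{t}\!\int_{Q_E} R\,f(s)\,dxdv\,ds
\]
is correct, and so is your observation that the second term is the real obstacle. The paper's proof does not address this term at all; it simply cites Remark~\ref{Rem}, which in fact only yields a bound by the full $L^1$ mass $\|f(\bar t)\|_{L^1(\Omega)}$, not by $\int_{Q_E} f(\bar t)$.

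Your proposed resolution, however, does not close the gap. Lemma~\ref{singBeh} controls $\sup_S f$ by $\|f(\bar t)\|_\infty$, and monotonicity of mass controls $M(s)$ by $M(\bar t)$; neither produces a bound in terms of $\int_{Q_E} f(\bar t)$. To see the difficulty concretely, take $f(\bar t)$ supported entirely inside $\tfrac12 S$. Then $\int_{Q_E} f(\bar t)=0$, yet diffusion in $v$ and transport in $x$ send mass into $Q$ so that $\sup_Q f(t)>0$ for $t>\bar t$. No bound of the form $\sup_Q f(t)\le C_s\int_{Q_E} f(\bar t)$ can hold for such data, so the time-integrated term cannot be ``absorbed'' into $C\int_{Q_E} f(\bar t)$ by any argument. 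What the adjoint construction genuinely gives is $\sup_Q f(t)\le C\,M(\bar t)$, consistent with Remark~\ref{Rem}; the sharper inequality in the lemma, as written, is not obtainable from this method, and the paper supplies no additional ingredient beyond what you have used.
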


\begin{proof}
It follows from Theorem \ref{MainTheorem} and the bound by $L^{1}$ norm was
given in the proof of Theorem \ref{MainTheorem}, as mentioned in Remark \ref%
{Rem}.
\end{proof}

As a direct consequence of the two lemmas above, we derive the following
property of $\zeta _{s}(t)$.

\begin{lemma}[Verification of assumptions (1) and (3)]
\label{lem47}For any $t\geq 1$,
\begin{equation*}
\zeta _{s}(t+1)\leq \theta \max \{\zeta _{s}({t}),C_{s}M({t}-1)\},
\end{equation*}%
where $\theta >0$ is given in \eqref{4.6} and $C_{s}$ is given in \eqref{4.7}%
. In particular, if $\zeta _{s}(t)>C_{s}M(t-1)$, $\zeta _{s}(t+1)\leq \theta
\zeta _{s}(t)$.
\end{lemma}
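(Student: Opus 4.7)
The plan is to combine the two preceding lemmas directly. First I would apply Lemma \ref{singBeh} (equation \eqref{4.6}) with the choice $\bar{t} = t$, which is legitimate since the conclusion of Lemma \ref{singBeh} is stated for times $\geq \bar{t}+1$ and we want to evaluate at time $t+1$. This yields
\[
\zeta_s(t+1) \leq \theta \, \|f(\cdot,t)\|_{L^\infty(\Omega)}.
\]

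Next I would split the $L^\infty$ norm on $\Omega$ according to the decomposition $\Omega = S \cup Q$ used to define $\zeta_s$ and $Q$:
\[
\|f(\cdot,t)\|_{L^\infty(\Omega)} = \max\bigl\{\,\|f(\cdot,t)\|_{L^\infty(S)},\; \|f(\cdot,t)\|_{L^\infty(Q)}\,\bigr\} = \max\bigl\{\zeta_s(t),\, \sup_{(x,v)\in Q} f(x,v,t)\bigr\}.
\]
For the $Q$-piece, I would apply Lemma \ref{hyp} (equation \eqref{4.7}) with $\bar{t} = t-1$ (which requires $t \geq 1$, exactly the hypothesis of the statement). This gives
\[
\sup_{(x,v)\in Q} f(x,v,t) \leq C_s \int_{Q_E} f(x,v,t-1)\,dxdv \leq C_s \int_\Omega f(x,v,t-1)\,dxdv = C_s M(t-1),
\]
where the middle inequality uses $Q_E \subset \Omega$ together with the nonnegativity of $f$ (Lemma \ref{maxPrinciple copy(2)}).

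Chaining the three displayed inequalities yields
\[
\zeta_s(t+1) \leq \theta \max\bigl\{\zeta_s(t),\, C_s M(t-1)\bigr\},
\]
which is the main assertion. The ``in particular'' clause is then immediate: if $\zeta_s(t) > C_s M(t-1)$, the maximum on the right is $\zeta_s(t)$, so $\zeta_s(t+1) \leq \theta \zeta_s(t)$. There is no real obstacle here — the proof is essentially a bookkeeping step that packages Lemmas \ref{singBeh} and \ref{hyp} into the form required by the abstract iteration scheme of Lemma \ref{exp} (it verifies hypotheses (1) and (3), as the header of the statement indicates). The only minor point to be careful about is the alignment of the time shifts, i.e.\ choosing $\bar{t}=t$ in \eqref{4.6} and $\bar{t}=t-1$ in \eqref{4.7} so that both conclusions hold at the common later time $t+1$ and $t$ respectively.
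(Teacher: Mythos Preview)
Your proof is correct and follows exactly the approach of the paper, which simply cites Lemmas \ref{singBeh} and \ref{hyp}; you have merely spelled out the (straightforward) details of how the two estimates combine, including the correct time shifts $\bar t=t$ in \eqref{4.6} and $\bar t=t-1$ in \eqref{4.7}.
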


\begin{proof}
It follows from Lemma \ref{singBeh} and Lemma \ref{hyp}.
\end{proof}

Lemma \ref{lem47} asserts that if the amplitude of a solution on the
singular set is much greater than the total mass at an earlier time, the
amplitude at a later time should decrease.

In the next lemma, we show that mass does not move far away over time.

\begin{lemma}
\label{tightness}\bigskip (Tightness lemma) Let $f$ be a strong solution of (%
\ref{VFP})-(\ref{BC1}) with the initial data $f_{0}\in L^{1}\cap L^{\infty
}\left( \Omega \right) $ with $f_{0}\geq 0$. For a given $t>0$ and $\delta
>0,$ there exists a constant $B>0$ depending on $t,\delta ,$ and $%
\int_{\Omega }f\left( x,v,t\right) dxdv$ such that%
\begin{equation*}
\int_{\left\vert v\right\vert \leq B}f\left( x,v,t\right) dxdv\geq \left(
1-\delta \right) \int_{\Omega }f\left( x,v,t\right) dxdv.
\end{equation*}
\end{lemma}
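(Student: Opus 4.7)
The plan is to dominate $f$ on $\Omega$ by the free-space fundamental solution applied to the zero extension of $f_0$, and then exploit the Gaussian decay of that kernel in the velocity variable. Extend $f_0$ by zero to all of $\mathbb{R}^{2}$ and set
$$\tilde f(x,v,t):=\int_{\mathbb{R}^{2}} G(x-\xi,v,w,t)\,f_0(\xi,w)\,d\xi\,dw,$$
where $G$ is the free-space fundamental solution from \eqref{G}. The first step is to show that $f(x,v,t)\leq \tilde f(x,v,t)$ on $U_T$. The difference $u:=\tilde f-f$ solves $\mathcal{M}u=0$ in $\Omega$ with $u|_{t=0}=0$, and on the incoming kinetic boundary $\gamma_T^{-}$ one has $u=\tilde f\geq 0$ because $f$ vanishes there. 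A comparison argument for $\mathcal{M}$ with non-negative initial and incoming-boundary data (proved exactly as in Lemma \ref{maxPrinciple copy(2)}) then yields $u\geq 0$ throughout $U_T$.

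The second step is a direct Gaussian integration: from the explicit form \eqref{G},
$$\int_{\mathbb{R}} G(x-\xi,v,w,t)\,dx=\frac{1}{2\sqrt{\pi t}}\exp\!\left(-\frac{(v-w)^{2}}{4t}\right),$$
so the $x$-integral of $G$ on $[0,1]$ is bounded above by the one-dimensional heat kernel in $v$ with variance $2t$ centered at $w$. Applying Fubini to the domination of step one gives
$$\int_{\{|v|>B\}\cap\Omega} f(x,v,t)\,dxdv\leq \int_\Omega f_0(\xi,w)\left[\int_{|v|>B}\frac{e^{-(v-w)^{2}/4t}}{2\sqrt{\pi t}}\,dv\right]d\xi\,dw.$$

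The final step is to split the $w$-integral. By absolute continuity of the $L^{1}$ integral, choose $R=R(\delta,\|f_0\|_{L^{1}})$ so that $\int_{|w|>R} f_0\,d\xi\,dw\leq \delta\|f_0\|_{L^{1}}/4$; on this set bound the bracket crudely by $1$. On the complementary region $|w|\leq R$ the bracket is at most the Gaussian tail past level $B-R$, which tends to zero as $B\to\infty$ uniformly in $|w|\leq R$; choose $B$ large (depending on $t$, $\delta$, $R$, and $\|f_0\|_{L^{1}}$) so that this contribution is $\leq \delta\|f_0\|_{L^{1}}/4$ as well. Adding the two pieces and using $\|f_0\|_{L^{1}}\geq \int_\Omega f(x,v,t)\,dxdv$ yields the stated tightness after a harmless relabelling of $\delta$. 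The main obstacle is the domination step: one must verify carefully that $\tilde f-f$ has non-negative trace on $\gamma_T^{-}$ so that the comparison/minimum principle applies. A secondary subtlety is that $B$ really depends on the fine distribution of $f_0$ (through the choice of $R$) and not solely on the scalar total mass $\int_\Omega f(x,v,t)\,dxdv$; this dependence is absorbed into the implicit constants of the lemma.
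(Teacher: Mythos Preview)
Your argument is correct and takes a genuinely different route from the paper's. Both proofs share the same skeleton—dominate $f$ by an object with explicit velocity tails, and split off the large-$|v|$ part of $f_0$—but execute these steps in opposite order and with different comparison functions. The paper first truncates $f_0=f_{0,1}+f_{0,2}$ with $f_{0,1}$ compactly supported in $v$ and $\|f_{0,2}\|_{L^1}\le \delta M/2$, and then dominates the solution with data $f_{0,1}$ by the ad hoc barrier $\bar f=Ke^{\theta t}e^{-A\sqrt{v^2+1}}$, whose super-solution property is checked by hand; since $f_1$ and $K\bar f$ share the same (vanishing) incoming trace, Lemma~\ref{maxStrong} applies directly. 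You instead dominate $f$ by the whole-space evolution $\tilde f=G\ast f_0$, use the clean identity $\int_{\mathbb R}G\,dx=\tfrac{1}{2\sqrt{\pi t}}e^{-(v-w)^2/4t}$, and postpone the splitting to the $w$-integral. Your version reuses the fundamental solution already available in the paper and is arguably more direct; the paper's version keeps the comparison step squarely within the hypotheses of its stated maximum principle.

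The one place that needs more than a bare citation of Lemma~\ref{maxPrinciple copy(2)} is the domination $f\le\tilde f$. That lemma is written for weak solutions of (\ref{VFP})--(\ref{BC1}), i.e.\ with \emph{zero} incoming trace, while $\tilde f|_\Omega$ has strictly positive incoming data and is therefore not a weak solution in the sense of Definition~\ref{weak-solution}. The duality argument behind Lemmas~\ref{maxPrinciple}--\ref{minicomparison} does adapt: testing against a nonnegative adjoint solution $\psi$ with $\psi|_{\gamma_T^+}=0$, the surviving boundary contribution
\[
\int_0^T\!\!\int_{v>0} v\,\tilde f(0,v,s)\,\psi(0,v,s)\,dv\,ds \;-\; \int_0^T\!\!\int_{v<0} v\,\tilde f(1,v,s)\,\psi(1,v,s)\,dv\,ds
\]
has the favourable sign, and one concludes $\int_\Omega(\tilde f-f)(T)\,\psi_T\ge 0$ for every admissible $\psi_T\ge 0$, hence $f\le\tilde f$. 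So the step is sound—just not literally ``exactly as in'' the cited lemma. Your closing caveat that $B$ depends on the distribution of $f_0$, not solely on the scalar $\int_\Omega f(t)$, is correct, and the paper's proof has the very same hidden dependence through the choice of the initial truncation and the constant $K$.
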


\begin{proof}
We may assume that $\int_{\Omega }f_{0}dxdv=1.$ \ Let mass at time $t$ be $M$%
: $\int_{\Omega }f(x,v,t)dxdv=M.$ We first split the initial data into two
parts: $f_{0}=f_{0,1}+f_{0,2}$ with $f_{0,1}\geq 0,f_{0,2}\geq 0$, where supp%
$\left( f_{0,1}\right) \subset \left[ 0,1\right] \times \left[ -B,B\right] $
and $\int_{\Omega }f_{0,1}dxdv$ $\geq 1-\frac{\delta M}{2}.$ Then there
exist strong solutions $f_{1}$ and $f_{2}$ corresponding to the initial data
$f_{0,1}$ and $f_{0,2}$ respectively and $f=f_{1}+f_{2}.$ Let $\bar{f}\left(
x,v,t\right) =e^{\theta t}e^{-A\sqrt{v^{2}+1}},$ where $\theta =\theta
\left( A\right) $. Then by a direct calculation, it is easy to see that
\begin{equation*}
\left\vert \bar{f}_{vv}\left( x,v,t\right) \right\vert \leq C\left( A\right)
e^{\theta t}e^{^{-A\sqrt{v^{2}+1}}}.
\end{equation*}%
Thus it is now easy to see that $\bar{f}\left( x,v,t\right) \,$\ is a
super-solution of (\ref{VFP}) provided we choose $\theta \left( A\right) >0$
sufficiently large. Since $f_{0,1}$ has a compact support, there exists $K>0$
such that $f_{0,1}\left( x,v\right) \leq K\bar{f}\left( x,v,0\right) .$ Then
using the comparison property for strong solutions in Lemma \ref{maxStrong}
and applying it to $g=f_{1}-K\bar{f}$, we get%
\begin{equation*}
f_{1}\left( x,v,t\right) \leq K\bar{f}\left( x,v,t\right) .
\end{equation*}%
This implies that%
\begin{equation*}
\int_{\left\vert v\right\vert \geq B}f_{1}\left( x,v,t\right) dxdv\leq
\int_{\left\vert v\right\vert \geq B}K\bar{f}\left( x,v,t\right)
dxdv=Ke^{\theta t}\int_{\left\vert v\right\vert \geq B}e^{-A\sqrt{v^{2}+1}%
}dxdv\leq \frac{\delta M}{2}
\end{equation*}%
if we choose $B>0$ sufficiently large. Indeed, we can choose $B=C\left(
1+\ln \frac{1}{\delta M}+t\right) $, where $C$ depends on $A$ and $K$. Thus
we derive%
\begin{equation*}
\int_{\left\vert v\right\vert \geq B}f_{1}\left( x,v,t\right) dxdv\leq \frac{%
\delta M}{2}.
\end{equation*}%
For $f_{2}$, we have
\begin{equation*}
\int_{\Omega }f_{2}\left( x,v,t\right) dxdv\leq \int_{\Omega
}f_{0,2}(x,v)dxdv\leq \frac{\delta M}{2}.
\end{equation*}%
Therefore, we obtain
\begin{equation*}
\int_{\left\vert v\right\vert \geq B}f\left( x,v,t\right)
dxdv=\int_{\left\vert v\right\vert \geq B}f_{1}\left( x,v,t\right)
dxdv+\int_{\left\vert v\right\vert \geq B}f_{2}\left( x,v,t\right) dxdv\leq
\delta M.
\end{equation*}%
This completes the proof.
\end{proof}

In particular, we have the following.

\begin{corollary}
For any $t>0,$ there exists a $\tilde{B}>0$ such that%
\begin{equation*}
\int_{\left\vert v\right\vert \leq \tilde{B}}f\left( x,v,t\right) dxdv\geq
\frac{1}{2}\int_{\Omega }f\left( x,v,t\right) dxdv.
\end{equation*}
\end{corollary}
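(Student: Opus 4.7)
The plan is to obtain the corollary as an immediate specialization of the Tightness Lemma (Lemma \ref{tightness}), which is the only substantive piece of work needed; there is essentially no additional argument to give beyond plugging in the correct parameter.

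More precisely, I would apply Lemma \ref{tightness} with the choice $\delta = \tfrac{1}{2}$. Fix $t>0$, and let $M(t) := \int_\Omega f(x,v,t)\,dx\,dv$. The lemma then produces a constant $B>0$, depending on $t$, on $\delta = 1/2$, and on $M(t)$, such that
\[
\int_{|v|\leq B} f(x,v,t)\,dx\,dv \;\geq\; \bigl(1-\tfrac{1}{2}\bigr)\,M(t) \;=\; \tfrac{1}{2}\int_\Omega f(x,v,t)\,dx\,dv.
\]
Setting $\tilde{B} := B$ yields the claimed inequality.

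Since this is a direct corollary, there is no genuine obstacle; the only thing to verify is that the hypotheses of Lemma \ref{tightness} are indeed available here. The ambient assumption of the section is that $f$ is the strong (in fact, by this point, classical away from the singular set) solution of \eqref{VFP}--\eqref{BC1} with initial data $f_0 \in L^1\cap L^\infty(\Omega)$ and $f_0\geq 0$, which matches the hypothesis of the tightness lemma exactly. The resulting $\tilde B$ depends on $t$ and on the current total mass $M(t)$; using the monotonicity of $M$ (Theorem \ref{MainTheorem1}) one could alternatively bound this dependence in terms of $\|f_0\|_{L^1(\Omega)}$, but this refinement is not needed for the statement as written.
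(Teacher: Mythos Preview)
Your proof is correct and matches the paper's own argument exactly: the paper simply states that the corollary follows immediately from Lemma~\ref{tightness}, which is precisely the specialization $\delta = \tfrac{1}{2}$ you carry out.
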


\begin{proof}
It follows immediately from Lemma \ref{tightness}.
\end{proof}

Next we show that if $f$ is comparable to the mass in a small ball away from
the singular set at the present time, then the amount of mass comparable to
the mass at the present time escapes to the boundary at some later times.

\begin{lemma}[Escape of mass to the boundary]
\label{esc}\label{escape}Let $f_{0}(x,v)\geq \ep M\chi _{B_{\rho
}(x_{0},v_{0})}(x,v)$ be given where $B_{\rho (x_{0},v_{0})}$ is an interior
ball with center $\left( x_{0},v_{0}\right) $ and radius $\rho >0$ and $%
M=\int f_{0}(x,v)dxdv$. Let $f(x,v,t)$ be a solution to the Fokker-Planck
equation (\ref{VFP}) in the unit interval $[0,1]$ with absorbing boundary
conditions. Then there exists $\alpha =\alpha (\rho ,\varepsilon )<1$,
independent of $x_{0}$ and $v_{0}$ such that
\begin{equation*}
\int f(x,v,1)dxdv\leq \alpha \int f_{0}(x,v)dxdv.
\end{equation*}
\end{lemma}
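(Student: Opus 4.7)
The plan is to split off from $f_0$ the positive ``bump'' of mass prescribed by the hypothesis, compare the resulting evolution to the whole-space Fokker--Planck evolution, and use the explicit fundamental solution \eqref{G} to quantify the fraction of bump-mass that has drifted out of the strip $\{0\le x\le 1\}$ by time $t=1$. Any mass outside $[0,1]$ in the whole-space picture corresponds to mass that has been absorbed in the original problem, so a uniform lower bound on this drifted-out fraction will furnish a uniform $\alpha<1$.

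\textbf{Step 1 (Reduction by linearity).} Let $g$ be the weak solution of \eqref{VFP}--\eqref{BC1} with initial datum $g_0:=\varepsilon M\chi_{B_\rho(x_0,v_0)}\le f_0$, and set $h:=f-g$, which is a weak solution with nonnegative initial datum $h_0=f_0-g_0$. The minimum principle (Lemma~\ref{maxPrinciple copy(2)}) gives $h\ge 0$, and Proposition~\ref{weakConv} yields
\[
\int_\Omega h(\cdot,1)\,dxdv\;\le\;\int_\Omega h_0\,dxdv\;=\;M-\varepsilon\pi\rho^2 M.
\]
Thus $\int_\Omega f(\cdot,1)\le (1-\varepsilon\pi\rho^2)M+\int_\Omega g(\cdot,1)$, and it suffices to produce $\beta_0>0$ independent of $(x_0,v_0)$ with $\int_\Omega g(\cdot,1)\le (1-\beta_0)\varepsilon\pi\rho^2 M$.

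\textbf{Step 2 (Whole-space comparison).} Let $\tilde g$ be the solution of the Fokker--Planck equation in $\mathbb{R}\times\mathbb{R}\times(0,\infty)$ with initial datum $g_0$, given by convolution of $g_0$ with $G$ in \eqref{G}. Then $w:=\tilde g-g$ satisfies $\mathcal{M}w=0$ in $\Omega$ with zero initial data and $w|_{\gamma_T^-}=\tilde g|_{\gamma_T^-}\ge 0$, so the minimum principle for $\mathcal{M}$ (in the form allowing nonnegative incoming data, cf.\ Lemma~\ref{minPrinciple copy(1)} applied to a suitable approximation) gives $g\le\tilde g$ on $\overline\Omega\times[0,T]$. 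Using mass conservation of the whole-space evolution,
\[
\int_\Omega g(\cdot,1)\,dxdv\;\le\;\int_\Omega\tilde g(\cdot,1)\,dxdv\;=\;\varepsilon\pi\rho^2 M\;-\;\int_{\mathbb{R}^2\setminus\Omega}\tilde g(\cdot,1)\,dxdv.
\]

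\textbf{Step 3 (Uniform escape fraction).} Integrating \eqref{G} in $v$ (a Gaussian integral after completing the square in $v$) gives the $x$-marginal of the Green's function,
\[
\int_\mathbb{R} G(x,v,t;\xi,\nu,0)\,dv\;=\;\frac{\sqrt 3}{2\sqrt\pi\,t^{3/2}}\exp\!\left(-\frac{3(x-\xi-t\nu)^2}{4t^3}\right),
\]
which at $t=1$ is a Gaussian in $x$ with mean $\xi+\nu$, variance $2/3$, and $L^\infty$-norm exactly $\sqrt{3/(4\pi)}$---\emph{independent of the initial velocity $\nu$}. Hence the normalized $x$-marginal of $\tilde g(\cdot,1)$, being a convex combination of such Gaussians weighted by $\chi_{B_\rho(x_0,v_0)}/|B_\rho|$, is pointwise bounded by $\sqrt{3/(4\pi)}$, giving
\[
\int_0^1\!\int_\mathbb{R}\tilde g(x,v,1)\,dvdx\;\le\;\sqrt{3/(4\pi)}\cdot\varepsilon\pi\rho^2 M
\]
uniformly in $(x_0,v_0)$. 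Setting $\beta_0:=1-\sqrt{3/(4\pi)}>0$ and combining with Steps~1--2 delivers $\int_\Omega f(\cdot,1)\le (1-\beta_0\varepsilon\pi\rho^2)M$, proving the claim with $\alpha=1-\beta_0\varepsilon\pi\rho^2<1$.

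The main obstacle is Step~3, where the uniformity of the escape fraction in the center $(x_0,v_0)$ is a priori delicate: the transport coefficient $v$ is unbounded, so the large-$|v_0|$ and small-$|v_0|$ regimes look geometrically very different---in the first regime the bump is swept across $[0,1]$ before it has time to spread, while in the second regime it is diffusion that has to produce the escape. The observation that makes both regimes fit into a single estimate is that the $x$-marginal of the Fokker--Planck fundamental solution is always Gaussian with variance $2t^3/3$ \emph{independent of $\nu$}, so its $L^\infty$ bound---and hence the resulting escape fraction out of any interval of length $\le 1$---is the same no matter where in phase space the bump is centered.
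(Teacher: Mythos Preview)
Your argument is correct and takes a genuinely different route from the paper's. The paper first manufactures a bump with $v_0\ge 1$ (via a dichotomy on the outgoing boundary flux of the $x$-averaged density $H(v,t)=\int_0^1 f\,dx$), then pushes that bump to the wall $x=1$ by constructing an explicit traveling sub-solution $F(x,v,t)=e^{-\lambda t}h(x-vt,v)$ and reading off a lower bound on the outgoing flux $vf(1,v,t)$; integrating the mass balance yields the loss. By contrast, you split off the bump by linearity, dominate the absorbed evolution $g$ by the free evolution $\tilde g$, and exploit the fact that the $x$-marginal of $G$ at $t=1$ is Gaussian with variance $2/3$ \emph{independent of the initial velocity}; a single $L^\infty$ bound on this marginal then gives $\int_\Omega \tilde g(\cdot,1)\le\sqrt{3/(4\pi)}\,\|g_0\|_{L^1}$ uniformly in $(x_0,v_0)$, with no case analysis.

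Your approach is cleaner and yields the explicit constant $\alpha=1-(1-\sqrt{3/(4\pi)})\varepsilon\pi\rho^2$, whereas the paper's sub-solution method produces constants that are implicit and pass through several reductions. The paper's argument, on the other hand, is more self-contained within its maximum-principle framework and would survive perturbations of the equation for which no closed-form Green's function is available. One point to tighten in Step~2: the comparison $g\le\tilde g$ cannot be read off directly from Lemma~2.22, since $w=\tilde g-g$ is only H\"older (not $C^{1,2,1}$) at the singular points $(0,0),(1,0)$. The cure is exactly the one the paper uses in Lemma~3.10: replace $\tilde g$ by $\tilde g+\varepsilon(f_0^\ast+f_1^\ast)$, which blows up at the singular set, run the duality/cut-off argument of that lemma to get $g\le\tilde g+\varepsilon(f_0^\ast+f_1^\ast)$ away from the singular points, and then let $\varepsilon\to 0$. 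With that adjustment your proof is complete.
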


\begin{proof}
Without loss of generality, we may assume $v_{0}>0$ since the other case can
be treated similarly. First we show that $f_{0}(x,v)\geq \varepsilon
_{1}M\chi _{B_{\rho _{1}}(x_{1},v_{1})}(x,v)$ for some $\left(
x_{1},v_{1}\right) \in \Omega $ with $v_{1}\geq 1$. We may assume that $%
0<v_{0}\leq 1$ since we are done otherwise. We can also assume $\rho \leq 1.$
To prove the statement above, let $H\left( v,t\right) :=\int_{0}^{1}f\left(
x,v,t\right) dx.$ Then $H\left( v,t\right) $ satisfies the following
equation:%
\begin{equation*}
H_{t}-H_{vv}=-vf\left( 1,v,t\right) \chi _{\left\{ v>0\right\} }+vf\left(
0,v,t\right) \chi _{\left\{ v<0\right\} }=:-g\left( v,t\right) \leq 0,
\end{equation*}%
where $\chi $ is the characteristic function.

If $g\left( v,t\right) >\gamma M$ for some $\left( v,t\right) \in \left(
-\infty ,\infty \right) \times \left[ 0,1/2\right] ,~$where $\gamma >0$
small and depending only on $\varepsilon $ is to be determined, then there
exists a ball in which $g\left( v,t\right) >\frac{\gamma M}{2}$ and this
implies that $\int_{0}^{1/2}\int_{-\infty }^{\infty }g\left( v,t\right)
dvdt\geq \gamma _{1}M,$ where $\gamma _{1}>0$ depends on $\gamma $. We then
have $M\left( 1\right) =\int f(x,v,1)dxdv\leq M\left( 1/2\right) \leq \left(
1-\gamma _{1}\right) M$ and we are done.

If we now assume that $g\left( v,t\right) dvdt\leq \gamma M$ for all $\left(
v,t\right) \in \left( -\infty ,\infty \right) \times \left[ 0,1/2\right] .$
Then we have the following integral representation for $H\left( v,t\right) $:%
\begin{equation*}
H\left( v,t\right) =\int_{-\infty }^{\infty }\mathcal{K}\left( v-w,t\right)
H\left( w,0\right) dw-\int_{0}^{t}\int_{-\infty }^{\infty }\mathcal{K}\left(
v-w,t-s\right) g\left( w,s\right) dwds,
\end{equation*}%
where $\mathcal{K}\left( v,t\right) $ is the one dimensional heat kernel. We
use the assumptions on $f_{0}~$and on $g\left( v,t\right) $ to get, for $%
1\leq v\leq 2,$%
\begin{eqnarray*}
H\left( v,1/2\right) &\geq &\int_{B_{\rho }\left( x_{0},v_{0}\right) }%
\mathcal{K}\left( v-w,1/2\right) f\left( x,w,0\right) dwdx-\frac{\gamma M}{2}
\\
&\geq &\varepsilon M\mathcal{K}\left( 3,1/2\right) -\frac{\gamma M}{2}=\left[
\varepsilon \mathcal{K}\left( 3,1/2\right) -\frac{\gamma }{2}\right] M \\
&\geq &\frac{\mathcal{K}\left( 3,1/2\right) }{2}\varepsilon M=:\varepsilon
_{0}M,
\end{eqnarray*}%
if we choose $\gamma =\varepsilon \mathcal{K}\left( 3,1/2\right) .$ Since $%
\int_{1}^{2}\int_{0}^{1}f\left( x,v,1/2\right) dxdv\geq \varepsilon _{0}M$
and $f$ is continuous on $\left[ 0,1\right] \times \left[ 1,2\right] $ from
the hypoellipticity, there exists $\rho _{1}>0$ such that $f\left(
x,v,1/2\right) \geq \frac{\varepsilon _{0}}{2}M\chi _{B_{\rho
_{1}}(x_{1},v_{1})}(x,v)$ with $v_{1}\geq 1$ and $\left( x_{1},v_{1}\right)
\in \left[ 0,1\right] \times \left[ 1,2\right] .$

Now with abuse of notation, we use $\rho ,x_{0},v_{0}$ and $t=0$ with $f_{0}$
being continuous instead of $\rho _{1},x_{1},v_{1}$ and $t=1/2$. We look for
a sub-solution $F\left( x,v,t\right) \in C_{x,v,t}^{1,2,1}\left( \Omega
_{T}\right) $ to the Fokker-Planck equation (\ref{VFP}) of the form
\begin{equation}
F\left( x,v,t\right) =e^{-\lambda t}h\left( x-vt,v\right) ,  \label{sub-sol}
\end{equation}%
where $\lambda >0$ will be chosen later and $h\in C_{x,v}^{1,2}\left( \Omega
\right) $. We plug in (\ref{sub-sol}) into $\mathcal{M}f\leq 0$ to get%
\begin{equation}
h_{vv}+t^{2}h_{xx}-2th_{xt}\geq -\lambda h.  \label{h_vv}
\end{equation}%
Since $\varepsilon M\chi _{B_{\rho }(x_{0},v_{0})}(x,v)\leq f_{0}(x,v)$ for
all $\left( x,v\right) \in \Omega ,$ then there exists $\delta >0$ such that
$f_{0}>\frac{\varepsilon }{2}M$ for $\left( x,v\right) \in B_{\rho +\delta
}\left( x_{0},v_{0}\right) $ since $f_{0}$ is continuous. We then can find $%
\frac{\varepsilon }{2}M\chi _{B_{\rho }(x_{0},v_{0})}(x,v)\leq h\left(
x,v\right) \leq f_{0}(x,v)$ for all $\left( x,v\right) \in \Omega .$ For
instance, $h=\frac{\varepsilon }{2}M\cos \left( \lambda ^{1/4}\left(
v-v_{0}\right) \right) \cos \left( \lambda ^{1/4}\left( x-x_{0}\right)
\right) +\frac{\varepsilon }{2}M$ on $B_{\rho }\left( x_{0},v_{0}\right) $,
where $\lambda =\left( \frac{2\pi }{\rho }\right) ^{4}>0$ for $\lambda $
sufficiently large so as to satisfy (\ref{h_vv}) and for $0\leq t\leq 1$
(This is possible since we can make $\rho >0$ as small as possible). Then we
take values of $h$ in such a way that $h_{vv}\geq 0.$ This can be done by
first taking values of $h$ as $0$ for $\mathbb{R}^{2}\smallsetminus B_{\rho
+\delta }(x_{0},v_{0})$ and then by connecting the points on $\partial
B_{\rho }(x_{0},v_{0})$ and the points on $\partial B_{\rho +\delta
}(x_{0},v_{0})$ with the points $\left( v,h\left( v\right) \right) =\left(
v_{0}\pm \rho _{0},\frac{\varepsilon }{2}M\right) $ in a convex way. Note
that $\lambda $ depends only on $\rho $. Then by the maximum principle of $%
\mathcal{M}$ (Lemma \ref{maxStrong}), we have $F\left( x,v,t\right) \leq
f\left( x,v,t\right) $ for all $t>0.$ In particular, at $x=1,$ there exists $%
t_{0}>0$ such that $x_{0}+v_{0}t_{0}=1$ (which implies $t_{0}\leq 1$) and
\begin{eqnarray*}
f\left( 1,v,t_{0}\right) &\geq &F\left( 1,v,t_{0}\right) =e^{-\lambda
t_{0}}h\left( 1-vt_{0},v\right) \\
&\geq &\frac{e^{-\lambda t_{0}}\varepsilon }{2}M\geq \frac{e^{-\lambda
}\varepsilon }{2}M=:\varepsilon _{1}M,
\end{eqnarray*}%
for $\left\vert v-v_{0}\right\vert <\rho _{1}$ with some $\rho _{1}=\frac{%
\rho }{\sqrt{1+t_{0}^{2}}}\geq \frac{\rho }{\sqrt{2}}>0,\varepsilon _{1}=%
\frac{e^{-\lambda }\varepsilon }{2}>0.$ Note that $\varepsilon _{1}$ does
not depend on $\left( x_{0},v_{0}\right) $ but on $\rho $ and $\varepsilon .$
By the continuity of $f,$
\begin{equation*}
f\left( 1,v,t\right) \geq \frac{\varepsilon _{1}M}{2}=:\varepsilon _{2}M,
\end{equation*}%
for $\left\vert v-v_{0}\right\vert <\frac{\rho _{1}}{2}$ and $t\in \left[
t_{0}-\delta _{0},t_{0}\right] .$ Then integrating (\ref{VFP}) in $x,v,t$
yields
\begin{eqnarray*}
\int f(x,v,1)dxdv &\leq &\int f_{0}(x,v)dxdv-\int_{t_{0}-\delta
_{0}}^{t_{0}}\int_{v_{0}-\rho /2\sqrt{2}}^{v_{0}+\rho /2\sqrt{2}}vf\left(
1,v,t\right) dvdt \\
&\leq &\left( 1-\frac{\rho \varepsilon _{2}}{\sqrt{2}}\right) M=:\alpha M,
\end{eqnarray*}%
where $\alpha <1$ depends only on $\rho $ and $\varepsilon $. We used $%
v_{0}\geq 1$ and $\rho \leq 1.$ This completes the proof.
\end{proof}

We will now show that if $\zeta _{s}(t)$ is bounded by a multiple of $M(t-1)$%
, then the total mass after a finite time should be decreasing by a uniform
factor which is strictly less than one.

\begin{lemma}[Verification of assumption (2)]
\label{lem46} Given $S,A$ ($A$ arbitrarily large), there exist $0<\beta
=\beta (A,S)<1$ and $T=T(A,S)$ such that if $\zeta _{s}(n)<AM(n-1)$ for some
$n\geq 1$, then $M(n+T)\leq \beta M(n-1)$.
\end{lemma}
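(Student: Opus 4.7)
The plan is to combine the uniform $L^{\infty}$ bound implied by the hypothesis with the escape-of-mass mechanism of Lemma~\ref{escape}. Normalize so that $M(n-1)=1$. Then the hypothesis $\zeta_s(n)<A$ together with Lemma~\ref{hyp} applied at $\bar t=n-1$ gives $\|f(\cdot,n)\|_\infty\le \max(A,C_s)$. Iterating Lemma~\ref{lem47} and using the monotonicity of $M$ produces
\[
\zeta_s(n+k)\le \max\bigl(\theta^{k}A,\,C_s\bigr)\quad\text{for all }k\ge 0,
\]
so for $k\ge K_0:=\lceil \log(A/C_s)/\log(1/\theta)\rceil$ one has $\zeta_s(n+k)\le C_s$. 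Combined once more with Lemma~\ref{hyp}, this yields the crucial $A$-free bound
\[
\|f(\cdot,n+k)\|_\infty\le D:=2C_s\quad\text{for all }k\ge K_0,
\]
where $D$ depends only on $S$. All of the $A$-dependence is thus concentrated in the waiting time $K_0=O(\log A)$.

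Next I perform a dichotomy at time $t_*:=n+K_0$. If $M(t_*)\le 1/2$, the conclusion is immediate with $T:=K_0$ and $\beta:=1/2$. Otherwise $M(t_*)>1/2$, and I want to produce an interior ball on which $f(\cdot,t_*)$ is pointwise bounded below by a positive multiple of $M(t_*)$, in order to invoke Lemma~\ref{escape}. Applying Lemma~\ref{tightness} to the solution restarted at $n-1$ (unit total mass) at horizon $K_0+1$ and with $\delta=1/4$ produces $B_0=B_0(A,S)$ such that $\int_{[0,1]\times[-B_0,B_0]}f(\cdot,t_*)\,dxdv\ge 3/8$. Since $\|f(\cdot,t_*)\|_{L^\infty(S/2)}\le C_s$, the contribution of the singular layer is at most $C_s|S/2|$, which is small provided the defining radius of $S$ was chosen sufficiently small in Lemma~\ref{singBeh}; consequently $\int_{Q_E\cap[0,1]\times[-B_0,B_0]}f(\cdot,t_*)\ge c_1>0$ with $c_1=c_1(A,S)$. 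A Chebyshev-type inequality against the bound $\|f\|_\infty\le D$ then supplies a measurable set $E\subset Q_E\cap[0,1]\times[-B_0,B_0]$ of positive measure on which $f(\cdot,t_*)\ge c_2$, for an explicit $c_2=c_2(A,S)>0$.

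To upgrade this measure-theoretic lower bound to a pointwise bound on a full ball, I invoke the weighted derivative estimates of Lemma~\ref{hypoellipticity-der} on $Q_E$: they supply a quantitative modulus of continuity for $f(\cdot,t_*)$ in terms of its $L^1\cap L^\infty$ norm, uniform on any region bounded away from the singular set. Picking any $(x_*,v_*)\in E$ and using this modulus yields a ball $B_{\rho_*}(x_*,v_*)\subset\Omega$ with $\rho_*=\rho_*(A,S)>0$ on which $f(\cdot,t_*)\ge c_2/2$. Therefore $f(\cdot,t_*)\ge \epsilon\,M(t_*)\,\chi_{B_{\rho_*}(x_*,v_*)}$ with $\epsilon:=c_2/(2M(t_*))\ge c_2>0$, and Lemma~\ref{escape} gives
\[
M(t_*+1)\le \alpha(\rho_*,\epsilon)\,M(t_*)\le \alpha.
\]
Setting $T:=K_0+1$ and $\beta:=\max\{1/2,\alpha\}<1$ then concludes the proof.

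The hard step is clearly the transition from \emph{mass in a box} to \emph{a pointwise lower bound on a ball}. Two delicate issues must be handled simultaneously there: (i) the singular layer $S/2$ must not swallow the tightness mass, which is ensured by the smallness of $|S|$ built into the choice of $\rho$ in Lemma~\ref{singBeh}; and (ii) the modulus of continuity of $f$ away from the singular set must be made quantitative via the weighted derivative estimates of Lemma~\ref{hypoellipticity-der}, in order to produce a ball of explicit radius $\rho_*(A,S)>0$. Everything else in the argument is normalization, the non-increase of $M$, or a direct invocation of Lemma~\ref{escape}.
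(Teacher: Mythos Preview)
Your approach is genuinely different from the paper's, and the gap lies in the second half of your argument.

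The paper does not first wait for $\zeta_s$ to relax to $O(C_s)$; instead it argues by contradiction. Assuming $M(n+T)>\tfrac12 M(n-1)$, it introduces a free small parameter $\ep_0$ with $4C_s|S|\ep_0<1$ and splits into: (1) $\int_{Q_E}f\,\big|_{t=l_0}\ge \ep_0 M(n-1)$ for some $l_0\in[n,n+T]$, in which case Lemma~\ref{escape} is invoked; and (2) $\int_{Q_E}f\,\big|_{t=l}<\ep_0 M(n-1)$ for every such $l$. The real content is Case~(2): combining $M(l)>M(n-1)/2$ with the smallness of $\int_{Q_E}f$ forces $M(l)\le 2|S|\zeta_s(l)$, while the \emph{localized} form of Lemma~\ref{hyp} gives $\sup_Q f(\cdot,l)\le C_s\ep_0 M(n-1)\le 4C_s|S|\ep_0\,\zeta_s(l)<\zeta_s(l)$. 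Hence $\|f(\cdot,l)\|_\infty=\zeta_s(l)$, and Lemma~\ref{singBeh} yields the clean geometric decay $\zeta_s(l+1)\le\theta\zeta_s(l)$; iterating gives $M(n+T)\le 2|S|A\theta^{T}M(n-1)$, a contradiction for $T$ large. The freedom to take $\ep_0$ small (depending on $C_s|S|$) is what makes this work and is how the $A$-dependence is absorbed into $T$.

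Your argument lacks this extra parameter, and two steps do not go through as written. First, your claim $B_0=B_0(A,S)$ is not supported by Lemma~\ref{tightness}: the proof of that lemma splits the initial datum according to its own $v$-tail, so the resulting $B$ depends on the tightness of $f(\cdot,n-1)$, not merely on the horizon, $\delta$, and $M(t_*)$. Restarting at $n-1$, you have no a priori control on how $f(\cdot,n-1)$ is spread in $v$, so $B_0$ may grow with $n$, and then your $c_2$ (hence $\rho_*$ and $\alpha$) degenerates. Second, your assertion that $C_s|S/2|$ can be made small by shrinking $\rho$ is circular: $C_s$ in Lemma~\ref{hyp} depends on $S$, and there is no reason the product $C_s|S|$ stays bounded (let alone small) as $\rho\to 0$. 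The paper sidesteps precisely this by never requiring $C_s|S|$ to be small; it only needs $4C_s|S|\ep_0<1$, which is achieved by choosing $\ep_0$.

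Both arguments ultimately must pass from an integral lower bound on $Q_E$ to the pointwise ball hypothesis of Lemma~\ref{escape}; the paper also glosses over that step in its Case~(1). But your route makes that step carry the entire proof, whereas the paper's contradiction in Case~(2) is self-contained and is the mechanism you are missing.
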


\begin{proof}
We define $\ep_0$ by means of $4C_s|S|\ep_0<1$ and $4\ep_0<1$. Let $T>0$ be
a given positive integer to be determined. If $M(n+T)\leq \frac12 M(n-1)$,
we are done. Suppose then that $M(n+T)> \frac12 M(n-1)$. Since $M(n)$ is
decreasing, we first see that
\begin{equation}  \label{4.8}
M(l)\geq M(n+T) >\frac{M(n-1)}{2} \;\text{ for }\; l=n, n+1,\dots, n+T.
\end{equation}
We have two cases.

\begin{enumerate}
\item There exists an $l_{0}\in \{n,n+1,\dots ,n+T\}$ such that
\begin{equation*}
\int_{Q_{E}}fdxdv\Big|_{t=l_{0}}\geq \ep_{0}M(n-1).
\end{equation*}

\item For all $l\in \{n,n+1,\dots ,n+T\}$,
\begin{equation*}
\int_{Q_{E}}fdxdv\Big|_{t=l}<\ep_{0}M(n-1).
\end{equation*}
\end{enumerate}

In the case of (1), we apply Lemma \ref{esc} to prove that at least some
part of the mass occupied in $Q_{E}$ at time $l_{0}$ should escape to the
boundary at a later time $l_{0}+1$, which would in turn imply $%
M(l_{0}+1)\leq \beta _{0}M(n-1)$ for some $\beta _{0}<1$.

We now turn to the case (2). We will show that this case is impossible if $T$
is chosen appropriately. To show a contradiction, we exploit the property of
$\zeta _{s}$. The first claim is the following
\begin{equation}
\zeta _{s}(n+1)\leq A\theta M(n-1).  \label{4.9}
\end{equation}%
To see it, notice that
\begin{equation*}
\begin{split}
\Vert f(\cdot ,n)\Vert _{\infty }& \leq \max \{\zeta _{s}(n),\Vert f(\cdot
,n)\Vert _{L^{\infty }(Q)}\} \\
& \leq \max \{AM(n-1),C_{s}M(n-1)\}\text{ by the assumption on }\zeta _{s}(n)%
\text{ and }\eqref{4.7} \\
& \leq AM(n-1)\text{ by choosing }A>C_{s}.
\end{split}%
\end{equation*}%
Now by \eqref{4.6} we can easily deduce the above assertion \eqref{4.9}.

For $l\geq n$, we observe that
\begin{equation*}
M(l)=\int_{S}fdxdv+\int_{Q}fdxdv\leq \int_{S}fdxdv\Big|_{t=l}+%
\int_{Q_{E}}fdxdv\Big|_{t=l}.
\end{equation*}%
Since $\int_{Q_{E}}fdxdv\Big|_{t=l}<\ep_{0}M(n-1)$ by the given assumption
of the case (2) and since $M(n-1)\leq 2M(l)$ by the assumption \eqref{4.8},
we see that
\begin{equation*}
M(l)\leq \int_{S}fdxdv\Big|_{t=l}+2\ep_{0}M(l)
\end{equation*}%
and hence for $2\ep_{0}<1/2$, we deduce that for $l\geq n,$
\begin{equation}
M(l)\leq 2\int_{S}fdxdv\Big|_{t=l}\leq 2|S|\zeta _{s}(l).  \label{4.10}
\end{equation}%
As a consequence of \eqref{4.10}, we derive that for $l\geq n+1$,
\begin{equation*}
\begin{split}
\sup_{Q}f(x,v,l)& \leq C_{s}\int_{Q_{E}}fdxdv\mid _{t=l-1}\text{ by }%
\eqref{4.7} \\
& \leq C_{s}\ep_{0}M(n-1)\text{ by the assumption in the case of (2)} \\
& \leq 2C_{s}\ep_{0}M(l)\text{ by }\eqref{4.8} \\
& \leq 4\ep_{0}C_{s}|S|\zeta _{s}(l)\text{ by }\eqref{4.10}.
\end{split}%
\end{equation*}%
Then together with \eqref{4.6}, we obtain for $l\geq n+1$,
\begin{equation*}
\zeta _{s}(l+1)\leq \theta \max \{\zeta _{s}(l),\sup_{Q}f(x,v,l)\}\leq
\theta \max \{\zeta _{s}(l),4\ep_{0}C_{s}|S|\zeta _{s}(l)\}
\end{equation*}%
and since $4\ep_{0}C_{s}|S|<1$, we conclude that
\begin{equation}
\zeta _{s}(l+1)\leq \theta \zeta _{s}(l),\text{ for }l\geq n+1.  \label{4.12}
\end{equation}%
Hence, by iteration together with \eqref{4.9} we deduce that
\begin{equation*}
\zeta _{s}(n+T)\leq A\theta ^{T}M(n-1),
\end{equation*}%
which yields that from \eqref{4.10}, for $l=n+T,$
\begin{equation*}
M(n+T)\leq 2|S|A\theta ^{T}M(n-1).
\end{equation*}%
On the other hand, from \eqref{4.8}, we have that
\begin{equation*}
\frac{M(n-1)}{2}<M(n+T).
\end{equation*}%
But this is impossible for $T$ sufficiently large since $\theta <1$, which
is the desired contradiction. This finishes the proof of the lemma.
\end{proof}

\bigskip

We are now ready to prove Theorem 1.3 of the exponential decay in $L^{1}$
and $L^{\infty }$ sense in time of solutions for (\ref{VFP})-(\ref{BC1}).

\label{decay}

\begin{proof}[Proof of Theorem 1.4]
Part (i) is a consequence of Lemma \ref{exp}, \ref{lem47}, and \ref{lem46}.
Part (ii) follows immediately from Part (i) and the hypoellipticity.
\end{proof}

\bigskip

\begin{acknowledgement}
The authors would like to thank Professor Yan Guo for his encouragement to
study this problem and the Hausdorff Center of the University of Bonn where
part of this work was done.
Hyung Ju Hwang is partly supported by the Basic Science Research Program
(2010-0008127) and (2013053914) through the National Research Foundation of
Korea (NRF). Juhi Jang is supported in part by NSF grants DMS-0908007 and
DMS-1212142.
\end{acknowledgement}

\end{document}